\title[Complete minimal surfaces in $\mathbb{R}^4$]{Complete minimal surfaces in $\mathbb{R}^4$ with three embedded planar ends}
\author[J. Lee]{Jaehoon Lee}
\address[]{Jaehoon Lee, School of Mathematics, Korea Institute for Advanced Study, 85 Hoegiro, Dongdaemun-gu, Seoul 02455, Republic of Korea}
\email{jaehoonlee@kias.re.kr}
\author[E. Yeon]{Eungbeom Yeon}
\address[]{Eungbeom Yeon, Department of Mathematical Sciences, Pusan National University, Busan 46241, Republic of Korea}
\email{ebeom.yeon@pusan.ac.kr}
\begin{document}

\newtheorem{theorem}{theorem}[section]
\newtheorem{thm}[theorem]{Theorem}
\newtheorem{lemma}[theorem]{Lemma}
\newtheorem{cor}[theorem]{Corollary}
\newtheorem{prop}[theorem]{Proposition}
\newtheorem{rmk}[theorem]{Remark}
\newtheorem{Ex}[theorem]{Example}
\newtheorem{Question}[theorem]{Question}
\newtheorem{conj}[theorem]{Conjecture}
\newtheorem*{mainthm1}{Main Theorem}
\newtheorem*{mainthm2}{Theorem 2}

\renewcommand{\theequation}{\thesection.\arabic{equation}}
\newcommand{\RNum}[1]{\uppercase\expandafter{\romannumeral #1\relax}}
\newcommand{\R}{\mathbb{R}}
\newcommand{\C}{\mathbb{C}}
\newcommand{\grad}{\nabla}
\newcommand{\laplacian}{\Delta}
\newcommand{\tgamma}{\tilde{\gamma}}
\newcommand{\ttau}{\tilde{\tau}}
\newcommand{\pp}{\Phi}
\newcommand{\tkappa}{\tilde{\kappa}}
\renewcommand{\d}{\textup{d}}
\newlength{\mywidth}
\newcommand\bigfrown[2][\textstyle]{\ensuremath{%
  \array[b]{c}\text{\resizebox{\mywidth}{.7ex}{$#1\frown$}}\\[-1.3ex]#1#2\endarray}}
\newcommand{\arc}[1]{{%
  \setbox9=\hbox{#1}%
  \ooalign{\resizebox{\wd9}{\height}{\texttoptiebar{\phantom{A}}}\cr#1}}}
\newcommand\buildcirclepm[1]{%

  \begin{tikzpicture}[baseline=(X.base), inner sep=-0.1, outer sep=-2]

    \node[draw,circle] (X)  {\footnotesize\raisebox{1ex}{$#1\pm$}};

  \end{tikzpicture}%

}

\subjclass[2020]{53A10, 53C42}
\keywords{Costa-Hoffman-Meeks surface, Lagrangian catenoid, minimal surface, generalized Weierstrass representation, high codimension}

\begin{abstract}
In this paper, we study complete minimal surfaces in $\mathbb{R}^4$ with three embedded planar ends parallel to those of the union of the Lagrangian catenoid and the plane passing through its waist circle. We show that any complete, oriented, immersed minimal surface in $\mathbb{R}^4$ of finite total curvature with genus $1$ and three such ends must be $J$-holomorphic for some almost complex structure $J$. Under the additional assumptions of embeddedness and at least $8$ symmetries, we prove that the number of symmetries must be either $8$ or $12$, and in each case, the surface is uniquely determined up to rigid motions and scalings. Furthermore, we establish a nonexistence result for genus $g\geq2$ when the surface is embedded and has at least $4(g+1)$ symmetries. Our approach is based on a modification of the method of Costa and Hoffman-Meeks in the setting of $\mathbb{R}^4$, utilizing the generalized Weierstrass representation.
\end{abstract}

\maketitle

\section{Introduction}\label{intro}
\setcounter{equation}{0}
Costa's surface was first introduced in $1985$ in \cite{C}, marking a groundbreaking development in the theory of minimal surfaces in $\mathbb{R}^3$. It is regarded as one of the most remarkable discoveries in the field, as it was the first known example of a complete embedded minimal surface of finite total curvature with positive genus. Costa constructed a minimal torus in $\mathbb{R}^3$ with three embedded ends, each asymptotic to either a catenoid or a horizontal plane. He resolved the associated period problem for the Weierstrass data by exploiting properties of elliptic functions. Later, Hoffman and Meeks showed in \cite{HM2} that Costa's surface is indeed embedded and extended the construction to minimal surfaces of higher genus. 

Following \cite{HM2}, the Costa-Hoffman-Meeks surface is often viewed as a desingularization of the union of a catenoid and a horizontal plane, where genus is inserted along the singular intersection curve. This naturally leads to the question of whether a similar construction is possible in $\mathbb{R}^4$, specifically, whether one can desingularize the union of the Lagrangian catenoid and a plane passing through its waist circle along their intersection. This question is particularly intriguing, as constructing surfaces in $\mathbb{R}^4$ by classical desingularization techniques, such as identifying a fundamental piece and extending it, is significantly more challenging due to the complexities inherent in codimension $2$. In this paper, we address this question by making use of the generalized Weierstrass representation together with the analytic and geometric properties of embedded planar ends.

According to the classification of Hoffman and Osserman \cite[Proposition 6.6]{HO}, all doubly-connected minimal surfaces in $\mathbb{R}^4$ with two embedded planar ends are, up to scalings and rigid motions, given by a $1$-parameter family $\mathcal{DC}_a$ that includes the Lagrangian catenoid (see Subsection \ref{subsec21} for details). In Section \ref{pre}, we describe the planes intersecting $\mathcal{DC}_a$ along smooth simple closed curves and show that these are natural candidates for desingularization. We also express the Weierstrass data in terms of the generalized Gauss map and the characterization of having embedded planar ends in Proposition \ref{prop27}.

Since any symmetry of the surface induces a permutation of the asymptotic planes, the choice of these planes places constraints on the possible forms of symmetry. In Section \ref{sym}, we analyze these symmetries induced by the asymptotic planes and show that surfaces in the family $\mathcal{DC}_a$ except the Lagrangian catenoid cannot admit more than $8$ symmetries. This suggests that the union of the Lagrangian catenoid and the center plane passing through its waist circle is the most promising configuration for desingularization from the perspective of symmetry.

In Section \ref{sec3}, we deal with the genus $1$ case. For this case, regardless of the number of symmetries, we show that any immersed minimal surface in $\mathbb{R}^4$ with three embedded planar ends asymptotic to planes parallel to those of the union of the Lagrangian catenoid and the center plane must be $J$-holomorphic for some almost complex structure $J$ (see Theorem \ref{thm1}). The key idea is to represent the Weierstrass data using elliptic functions and to solve the period problem directly, extending the work of Costa \cite{C,C2}. 

Continuing with the genus $1$ case, we further show that when the surface is embedded and its symmetry group has at least $8$ elements, then the number of symmetries must be either $8$ or $12$, and in each case, the surface is uniquely determined up to rigid motions and scalings (see Theorem \ref{thm2} and Theorem \ref{thm3}). This is analogous to the result of \cite{HM2}, where it was shown that embedded minimal surfaces in $\mathbb{R}^3$ with genus $g$ and three embedded ends are completely determined when they have at least $4(g+1)$ symmetries. However, in our case, the uniqueness is proved using the holomorphicity established earlier, which leads to a different approach.

The result in the genus $1$ case, together with the uniqueness of the Costa surface in $\mathbb{R}^3$, naturally suggests that a similar extension to higher genus in $\mathbb{R}^4$ might be possible, as in the case of the Costa-Hoffman-Meeks surface. However, in Section \ref{sec8}, we show that such an extension is not possible. More precisely, we prove that there is no complete, oriented, embedded minimal surface in $\mathbb{R}^4$ with finite total curvature and genus $g\geq2$ that has three embedded planar ends whose asymptotic planes are parallel to those of the union of the Lagrangian catenoid and the plane passing through its waist circle, and which admits at least $4(g+1)$ symmetries (see Theorem \ref{Thm61}).

The proof is based on a modification of the method of Hoffman and Meeks \cite{HM2} adapted to the setting of $\mathbb{R}^4$. When the number of symmetries is at least $4(g+1)$, the underlying Riemann surface can be described as a cyclic branched cover over the Riemann sphere, as in the original work. The key difference lies in the behavior of surfaces in codimension $2$: in codimension $1$, the orientation and its behavior under symmetries at each end is uniquely determined by embeddeness, whereas in codimension $2$, a wider range of possibilities must be carefully addressed. 

The Costa-Hoffman-Meeks surface attains its genus by inserting handles along straight lines, while a holomorphic curve cannot contain a line unless it is a plane. Combined with our nonexistence result for genus $g\geq2$, this highlights a fundamental difference in how genus arises in the genus $1$ case between $\mathbb{R}^3$ and $\mathbb{R}^4$. Furthermore, in Section \ref{finalremarks}, we discuss the appearance of holomorphicity in the $\mathbb{R}^4$ setting, which marks another key distinction from the $3$-dimensional case.

The paper is organized as follows. In Section \ref{pre}, we study the geometry of the Lagrangian catenoid and the family $\mathcal{DC}_a$, and identify natural candidates for desingularization. We also express the Weierstrass data in terms of the image of the generalized Gauss map. In Section \ref{sym}, we analyze the possible symmetry types and show that among the surfaces $\mathcal{DC}_a$, only the Lagrangian catenoid admits $8$ or more symmetries. In Section \ref{RH}, we modify the method of Hoffman and Meeks to construct candidates for the underlying Riemann surface when the surface is embedded and has at least $4(g+1)$ symmetries. Section \ref{sec3} establishes that any immersed minimal surface in $\mathbb{R}^4$ with genus $1$ and three embedded planar ends parallel to those of the union of the Lagrangian catenoid and the center plane must be $J$-holomorphic. In Section \ref{sec5}, we classify such genus $1$ surfaces under the additional assumptions of embeddedness and at least $8$ symmetries. Section \ref{sec8} proves a nonexistence result for genus $g\geq2$ under the assumptions of embeddedness and at least $4(g+1)$ symmetries. Finally, in Section \ref{finalremarks}, we discuss the unexpected appearance of holomorphicity and present several directions for future research.

\section*{Acknowledgements}
JL was supported by a KIAS Individual Grant MG086402 at Korea Institute for Advanced Study. EY was supported by National Research Foundation of Korea NRF-2022R1C1C2013384 and in part by NRF-2021R1A4A1032418. Parts of this work were carried out during visits to the University of Ja\'{e}n and Stanford University. The authors would like to express their sincere gratitude to Professor Ildefonso Castro at the University of Ja\'{e}n and Professor Otis Chodosh at Stanford University for their kind invitations. The authors also thank both institutions for their hospitality and support.

\section{Preliminaries}\label{pre}
\setcounter{equation}{0}
\subsection{Doubly-connected minimal surfaces in $\mathbb{R}^4$}\label{subsec21}
The \textbf{Lagrangian catenoid} $\Sigma_{LC}$ in $\mathbb{R}^4$ is an embedded minimal surface which can be identified with
\begin{align*}
\Sigma_{LC}:=\left\{(z, w)\in\mathbb{C}^2\ |\ zw=1\right\}
\end{align*}
after applying rigid motions and scalings. Throughout this paper we identify $\mathbb{C}^2$ and $\mathbb{R}^4$ via $(z, w)\in\mathbb{C}^2\leftrightarrow (\text{Re}z, \text{Im}z, \text{Re}w, \text{Im}w)\in\mathbb{R}^4$. By setting $z=re^{i\theta}$, each point on the Lagrangian catenoid is expressed as
\begin{align*}
\left(z,\frac{1}{z}\right)=\left(re^{i\theta},\frac{1}{r}e^{-i\theta}\right)=\cos\theta\left(r,0,\frac{1}{r},0\right)+\sin\theta\left(0,r,0,-\frac{1}{r}\right).
\end{align*}
It follows that $\Sigma_{LC}$ is foliated by circles. Moreover, circles of radius $\sqrt{r^2+\frac{1}{r^2}}$ lie on the planes $\left\{(z, w)\in\mathbb{C}^2\ |\ z=r^2\overline{w}\right\}$ and $\left\{(z, w)\in\mathbb{C}^2\ |\ w=r^2\overline{z}\right\}$. In this sense, the plane $\left\{(z, w)\in\mathbb{C}^2\ |\ z=\overline{w}\right\}$ containing the circle of radius $\sqrt{2}$ (corresponding to $r=1$) is referred to as the \textbf{center plane} $\Pi_c$ throughout this paper. Similar to the catenoid in $\mathbb{R}^3$, $\Sigma_{LC}$ has several symmetries:
\begin{itemize}
\item Rotational symmetry: $(z,w)\mapsto (e^{i\theta}z, e^{-i\theta}w)$.
\item Reflection symmetry through the center plane $\Pi_c$: $(z,w)\mapsto (\overline{w}, \overline{z})$.
\end{itemize}

One can observe that the Lagrangian catenoid has two embedded ends that approach planes asymptotically. We will discuss this type of end in more detail in Subsection \ref{wdepe}, while here we assume the notion of embedded planar ends and investigate the direct consequences of this assumption.

If a doubly-connected minimal surface has two embedded planar ends, as in the case of the Lagrangian catenoid, a simple argument using the Gauss-Bonnet theorem shows that its total curvature is $-4\pi$. In \cite[Proposition 6.6]{HO}, Hoffman and Osserman classified complete doubly-connected minimal surfaces in $\mathbb{R}^n$($n\geq3$) with total curvature $-4\pi$. According to their classification, the surfaces that have embedded planar ends are given by
\begin{align*}
\left\{\left(z, az+\frac{b}{z}\right)\in\mathbb{C}^2\ \bigg{|}\ z\in\mathbb{C}\setminus\{0\}\right\}
\end{align*} 
for some $a\in\mathbb{C}$ and $b\in\mathbb{C}\setminus\{0\}$. By applying rigid motions and scalings, we may further assume that $b=1$. Indeed, writing $b=|b|e^{i\theta_b}$ and defining $\hat{b}:=|b|^{\frac{1}{2}}e^{i\frac{1}{2}\theta_b}$, we obtain
\begin{align*}
\frac{1}{\hat{b}}\cdot\left(z,az+\frac{b}{z}\right)=\left(\frac{z}{\hat{b}},a\cdot\frac{z}{\hat{b}}+\frac{\hat{b}}{z}\right)=\left(w, aw+\frac{1}{w}\right),
\end{align*}
where we set $w=\frac{z}{\hat{b}}$ in the last equality. We will denote these doubly-connected minimal surfaces ($b=1$) by $\mathcal{DC}_a$. The case $a=0$ corresponds exactly to the Lagrangian catenoid described above. 

By setting $z=re^{i\theta}$ again, it follows that
\begin{align*}
\left(z, az+\frac{1}{z}\right)=\left(re^{i\theta}, |a|re^{i(\theta+\theta_a)}+\frac{1}{r}e^{-i\theta}\right),
\end{align*}
where we write $a$ as $|a|e^{i\theta_a}$. In $\mathbb{R}^4$, this can be expressed as
\begin{align*}
r\cos\theta\left(1, 0, |a|\cos\theta_a+\frac{1}{r^2}, |a|\sin\theta_a\right)+r\sin\theta\left(0, 1, -|a|\sin\theta_a, |a|\cos\theta_a-\frac{1}{r^2}\right),
\end{align*}
which shows that the doubly-connected minimal surfaces $\mathcal{DC}_a$ are foliated by ellipses lying on planes generated by two vectors
\begin{align*}
\left(1, 0, |a|\cos\theta_a+\frac{1}{r^2}, |a|\sin\theta_a\right)\ \text{and}\ \left(0, 1, -|a|\sin\theta_a, |a|\cos\theta_a-\frac{1}{r^2}\right)
\end{align*}
for each $a\in\mathbb{C}$ and $r>0$. In other words, these planes intersect the doubly-connected minimal surfaces $\mathcal{DC}_a$ precisely along the ellipses. 

We now prove that these are the only planes that intersect $\mathcal{DC}_a$ along smooth simple closed curves.
\begin{lemma}
The only smooth simple closed plane curves on $\mathcal{DC}_a$ are those given by the images of $z=re^{i\theta}\ (0\leq\theta\leq 2\pi)$ for each fixed $r>0$.
\end{lemma}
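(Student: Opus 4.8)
The plan is to pull the curve back to the $z$-plane, turn the planarity condition into two affine-linear equations, and then clear denominators and argue with plane algebraic curves. Write $F(z)=(z,az+1/z)$ for the parametrization of $\mathcal{DC}_a$. Since $F$ is an injective immersion, a smooth simple closed plane curve $C$ on $\mathcal{DC}_a$ is the $F$-image of a smooth simple closed curve $\gamma\subset\mathbb{C}\setminus\{0\}$, and the hypothesis that $C$ lies in an affine $2$-plane means there are two $\mathbb{R}$-linearly independent real-linear functionals on $\mathbb{R}^{4}\cong\mathbb{C}^{2}$, of the form $(z,w)\mapsto\operatorname{Re}(\bar\alpha_{j}z+\bar\beta_{j}w)$ for $j=1,2$, constant (with values $c_{j}$) along $C$. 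Restricting to $\mathcal{DC}_{a}$ and setting $\mu_{j}=\bar\alpha_{j}+a\bar\beta_{j}$, $\nu_{j}=\bar\beta_{j}$, the condition becomes
\[
\operatorname{Re}\!\Big(\mu_{j}z+\frac{\nu_{j}}{z}\Big)=c_{j}\qquad (z\in\gamma,\ j=1,2),
\]
where $(\mu_{1},\nu_{1})$ and $(\mu_{2},\nu_{2})$ are still $\mathbb{R}$-linearly independent in $\mathbb{C}^{2}\cong\mathbb{R}^{4}$. A preliminary remark is that $\mu_{j}\neq0$ and $\nu_{j}\neq0$: if $\nu_{j}=0$, the set $\{\operatorname{Re}(\mu_{j}z)=c_{j}\}$ is a line (or, if also $\mu_{j}=0$, the functional vanishes, contradicting independence), and a simple closed curve cannot lie on a line; if $\mu_{j}=0$, then after multiplying by $z$ the set $\{\operatorname{Re}(\nu_{j}/z)=c_{j}\}$ is a line through the origin or a circle through the origin, which would force $0\in\gamma$.

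The heart of the argument is to multiply the two equations by $|z|^{2}=z\bar z$. The functions
\[
\widetilde P_{j}:=z\bar z\Big(\operatorname{Re}\!\Big(\mu_{j}z+\frac{\nu_{j}}{z}\Big)-c_{j}\Big)=\tfrac{\mu_{j}}{2}z^{2}\bar z+\tfrac{\bar\mu_{j}}{2}z\bar z^{2}-c_{j}z\bar z+\tfrac{\bar\nu_{j}}{2}z+\tfrac{\nu_{j}}{2}\bar z
\]
are polynomials in $(z,\bar z)$ of degree exactly $3$ (using $\mu_{j}\neq0$) that vanish identically on $\gamma$. Since $\gamma$ is infinite and two plane curves with no common component meet in only finitely many points, the cubics $\{\widetilde P_{1}=0\}$ and $\{\widetilde P_{2}=0\}$ share an irreducible component $Z$, and by the identity principle for real-analytic functions all of $\gamma$ lies on $Z$. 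Its degree is not $1$ (a line carries no simple closed curve) and not $3$ (that would make $\widetilde P_{1},\widetilde P_{2}$ scalar multiples of each other, contradicting the $\mathbb{R}$-independence of $(\mu_{1},\nu_{1})$ and $(\mu_{2},\nu_{2})$), so $Z$ is an irreducible conic; since it contains the compact curve $\gamma$, its real locus is an ellipse and hence equals $\gamma$.

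It remains to show this ellipse is a circle centered at the origin. Write $\widetilde P_{j}=D\cdot L_{j}$ where $D$ is the quadratic (real with respect to $z\leftrightarrow\bar z$) defining $Z$ and $L_{j}$ is linear. Matching top-degree parts, the quadratic part of $D$ divides $z\bar z(\mu_{j}z+\bar\mu_{j}\bar z)$; since $\mu_{j}\neq0$ these three linear factors are pairwise non-proportional, so the only divisor respecting $z\leftrightarrow\bar z$ is $z\bar z$, and after rescaling $D=z\bar z+bz+\bar b\bar z+d$ with $b\in\mathbb{C}$, $d\in\mathbb{R}$. Now the $z^{2}$-coefficient of $D L_{j}$ is $\tfrac{b\mu_{j}}{2}$ while $\widetilde P_{j}$ has no $z^{2}$-term, so $b=0$; the constant term of $D L_{j}$ is $d e_{j}$ with $e_{j}=-c_{j}$ (read off from the $z\bar z$-coefficient), so $d c_{j}=0$. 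Since $d\neq0$ (otherwise $D=|z|^{2}$ and $Z=\{0\}$ is not a conic), we get $c_{j}=0$ and $D=|z|^{2}+d$ with $-d>0$. Therefore $\gamma=\{\,|z|^{2}=-d\,\}=\{\,|z|=r\,\}$ with $r=\sqrt{-d}$, as claimed; the converse, that these circles map to plane curves on $\mathcal{DC}_{a}$, was already noted above.

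The main obstacle is the middle step: after passing to algebraic curves one must control where $\gamma$ can live — in particular rule out a genuinely cubic component — and then, having localized $\gamma$ on a conic, extract that the conic is a round circle about the origin rather than an arbitrary ellipse. This last point is precisely where $\mu_{j}\neq0$ is used, through the vanishing of the pure $z^{2}$-coefficient.
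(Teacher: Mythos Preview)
Your proof is correct and takes a genuinely different route from the paper. The paper's argument is differential: it arclength-parametrizes $X(s)=(z(s),az(s)+1/z(s))$, computes $X'\wedge X''$, and observes that two particular Pl\"ucker-type combinations equal $2\operatorname{Re}\!\big((z'/z)^3\big)$ and $2\operatorname{Im}\!\big((z'/z)^3\big)$; planarity forces the normalized Pl\"ucker coordinates to be constant, hence $(z'/z)^3$ lies on a fixed line through the origin, hence so does $z'/z$, and closedness of $z(s)$ pins this line to the imaginary axis, giving $|z|$ constant. Your argument is algebro-geometric: you turn planarity into two real cubic equations $\widetilde P_j=0$ in $(z,\bar z)$, use B\'ezout to force $\gamma$ into a common component, rule out degrees $1$ and $3$, and then factor the conic explicitly to recognize it as $|z|^2+d$. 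What your approach buys is that no derivatives or arclength are needed and the conclusion $\gamma=\{|z|=r\}$ falls out of a coefficient comparison; what the paper's approach buys is a shorter path that avoids any casework on algebraic curves.

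One expository point worth tightening: the sentence ``by the identity principle for real-analytic functions all of $\gamma$ lies on $Z$'' is doing more work than that phrase really justifies, since $\gamma$ is only assumed smooth. The clean version is to set $G=\gcd(\widetilde P_1,\widetilde P_2)$, note that $\{\widetilde P_1=0\}\cap\{\widetilde P_2=0\}=\{G=0\}\cup(\text{finite set})$, hence $\gamma\subset\{G=0\}$ by closedness; then $\deg G\in\{1,2\}$ (degree $3$ would force $\widetilde P_1\propto\widetilde P_2$), degree $1$ is a line, and degree $2$ reducible is a union of lines, leaving only the irreducible conic. Alternatively, one can argue that a $C^\infty$ curve cannot switch between distinct irreducible algebraic components because such components have only finite-order contact; either way, this replaces the identity-principle appeal and makes the degree-$3$ exclusion independent of irreducibility.
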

\begin{proof}
Since the surface $\mathcal{DC}_a$ is a graph, any smooth simple closed curve on $\mathcal{DC}_a$ can be written as the graphical image of a smooth simple closed curve $z=z(s)$ in the $z$-domain, where $s$ denotes the arclength parametrization of $z(s)$. Thus, we may parametrize the curve on $\mathcal{DC}_a$ as
\begin{align*}
X(s)=\left(z(s),az(s)+\frac{1}{z(s)}\right).
\end{align*}
It suffices to show that $|z(s)|$ must be constant in order for $X(s)$ to be a plane curve.

To this end, we compute
\begin{align*}
X'(s)&=\left(z'(s), \left(a-\frac{1}{{z(s)}^2}\right)z'(s)\right),\\
X''(s)&=\left(z''(s), \left(a-\frac{1}{{z(s)}^2}\right)z''(s)+2\frac{{z'(s)}^2}{{z(s)}^3}\right).
\end{align*}
It follows that
\begin{align*}
\left(X'(s)\wedge X''(s)\right)_{23}+\left(X'(s)\wedge X''(s)\right)_{14}&=2\text{Im}\left(\frac{{z'(s)}^3}{{z(s)}^3}\right),\\
\left(X'(s)\wedge X''(s)\right)_{13}-\left(X'(s)\wedge X''(s)\right)_{24}&=2\text{Re}\left(\frac{{z'(s)}^3}{{z(s)}^3}\right),
\end{align*}
where $\left(X'(s)\wedge X''(s)\right)_{jk}$ denotes the coefficient of $E_j\wedge E_k$ for the standard basis $\{E_1, E_2, E_3, E_4\}$ of $\mathbb{R}^4$. 
If both values are zero, then $z'(s)=0$, which contradicts the fact that $s$ is an arclength parametrization. Therefore, these two values cannot vanish simultaneously. This implies that $X'(s)\wedge X''(s)$ is nonzero, and thus $X'(s)$ and $X''(s)$ are linearly independent for all $s$.

If the curve $X(s)$ is lying on a plane, then the Pl\"{u}cker coordinates of planes generated by $X'(s)$ and $X''(s)$ must be constant for all $s$. For the definition of the Pl\"{u}cker coordinates, we refer to Section 3 of \cite{CS}. From this fact, we deduce that the following quantities must also be constant:
\begin{align*}
\frac{2}{\left|X'(s)\wedge X''(s)\right|}\cdot\text{Im}\left(\frac{{z'(s)}^3}{{z(s)}^3}\right), \quad \frac{2}{\left|X'(s)\wedge X''(s)\right|}\cdot\text{Re}\left(\frac{{z'(s)}^3}{{z(s)}^3}\right).
\end{align*}
Here we have applied the Gram-Schmidt orthonormalization to $\left\{X'(s), X''(s)\right\}$ to compute the Pl\"{u}cker coordinates. 

For these two values to be constant, the image of $\frac{{z'(s)}^3}{{z(s)}^3}$ must lie on a line passing through the origin. Since $z'(s)$ cannot be zero, it follows that the image of $\frac{z'(s)}{z(s)}$ is also restricted to a line. However, since $z=z(s)$ is a simple closed curve, there always exists some $s$ for which $\text{Re}\left(\frac{z'(s)}{z(s)}\right)=0$. Therefore, the line containing the image of $\frac{z'(s)}{z(s)}$ corresponds to the imaginary axis. Hence, $\text{Re}\left(\frac{z'(s)}{z(s)}\right)$ must vanish for all $s$, which implies that $|z(s)|$ remains constant.
\end{proof}

Our primary interest lies in the existence of complete embedded minimal surfaces with planar ends parallel to the union of the doubly-connected minimal surfaces $\mathcal{DC}_a$ and the planes characterized in the above lemma. For later use, we introduce the following notations. Each such union has three planar ends that are parallel to one of the triples $(Q_1(a), Q_2(a, r_0), Q_3)$ of planes, where:
\begin{align*}
Q_1(a)&:= \left\{(z, az)\in\mathbb{C}^2\ |\ z\in\mathbb{C}\right\},\\
Q_2(a, r_0)&:=\left\{\left(z, az+\frac{1}{{r_0}^2}\bar{z}\right)\in\mathbb{C}^2\ |\ z\in\mathbb{C}\right\},\\
Q_3&:=\left\{(0,z)\in\mathbb{C}^2\ |\ z\in\mathbb{C}\right\},
\end{align*}
for some $a\in\mathbb{C}$ and $r_0>0$. Among these, $Q_2(a, r_0)$ corresponds to the planes that intersect $\mathcal{DC}_a$ along ellipses, while $Q_1(a)$ and $Q_3$ are associated with the surface $\mathcal{DC}_a$. 

Since these planes pass through the origin, they can be regarded as $\mathbb{R}$-vector subspaces of $\mathbb{R}^4$. We also specify their bases as follows:
\begin{align*}
Q_1(a)&=\mathbb{R}\begin{pmatrix} 1\\0\\|a|\cos\theta_a\\|a|\sin\theta_a\end{pmatrix}\oplus\mathbb{R}\begin{pmatrix} 0\\1\\-|a|\sin\theta_a\\|a|\cos\theta_a\end{pmatrix},\\
Q_2(a, r_0)&=\mathbb{R}\begin{pmatrix} 1\\0\\|a|\cos\theta_a+\frac{1}{{r_0}^2}\\|a|\sin\theta_a\end{pmatrix}\oplus\mathbb{R}\begin{pmatrix} 0\\1\\-|a|\sin\theta_a\\|a|\cos\theta_a-\frac{1}{{r_0}^2}\end{pmatrix},\\
Q_3&=\mathbb{R}\begin{pmatrix} 0\\0\\1\\0\end{pmatrix}\oplus\mathbb{R}\begin{pmatrix} 0\\0\\0\\1\end{pmatrix},
\end{align*}
where $a=|a|e^{i\theta_a}$.

\subsection{Generalized Weierstrass representation}
A key tool in the classification result of Hoffman and Osserman discussed in the previous subsection is the generalized Weierstrass representation, which was extensively studied in \cite{HO}. This framework enables the study of $2$-dimensional minimal surfaces in arbitrary codimension through results from Riemann surface theory. Since this representation also plays a central role in the present paper, we briefly recall the necessary concepts for completeness. For further details, we refer the reader to \cite{HO}.

A complete immersed minimal surface in $\mathbb{R}^N$ with finite total curvature and genus $g$ can be represented by an immersion $X: \Sigma_g\setminus\{q_1,q_2,\cdots,q_k\}\to\mathbb{R}^N$ of a punctured closed Riemann surface $\Sigma_g$ of genus $g$, where
\begin{align}\label{gwr}
X=\text{Re}\int (\phi_1,\phi_2,\cdots,\phi_N)+b
\end{align}
for some $b\in\mathbb{R}^N$. Here, the $\phi_j$'s are meromorphic $1$-forms on $\Sigma_g$ satisfying the following conditions:
\begin{itemize}
\item[(1)] Each $\phi_j$ may have poles at $\{q_1, q_2, \cdots,q_k\}$, and at least one of them must have a pole at each $q_j$.
\item[(2)] $\sum_{j=1}^N\phi_j^2\equiv0$ and $\sum_{j=1}^N|\phi_j|^2>0$.
\item[(3)] The $\phi_j$'s have no real periods.
\end{itemize}
Conversely, if a set of meromorphic $1$-forms satisfies the conditions above, the immersion $X$ given by (\ref{gwr}) can be shown to be a conformal harmonic immersion, yielding a complete minimal surface with finite total curvature and genus $g$ in $\mathbb{R}^N$. In this setting, each $q_j$ corresponds to a limit end of the surface. 

The formula (\ref{gwr}) is referred to as the \textbf{generalized Weierstrass representation}, and the meromorphic $1$-forms are called the (generalized) \textbf{Weierstrass data}. Moreover, the map $\Phi:=[\phi_1,\phi_2,\cdots,\phi_N]:\Sigma_g\to\mathbb{P}^{N-1}$ is regarded as the \textbf{generalized Gauss map}. Due to condition $(2)$, the image of $\Phi$ lies in the quadric $\mathscr{Q}_{N-2}:=\left\{[\zeta_1,\zeta_2,\cdots,\zeta_N]\in\mathbb{P}^{N-1}\ |\ \sum_{j=1}^N\zeta_j^2=0\right\}$.

In the case of $N=4$, it is known that the quadric $\mathscr{Q}_2$ is doubly-ruled, and its ruled structure induces a biholomorphism with $\mathbb{P}^1\times\mathbb{P}^1$. More precisely, $\mathscr{Q}_2$ is ruled by the following families:
\begin{itemize}
\item[] $L_{[a,b]}:=\left\{[ax+by,-i(ax-by),-bx+ay,-i(bx+ay)]\in \mathscr{Q}_2\ |\ [x,y]\in\mathbb{P}^1\right\}$,
\item[] $M_{[a,b]}:=\left\{[ax+by,-i(ax-by),bx-ay,-i(bx+ay)]\in \mathscr{Q}_2\ |\ [x,y]\in\mathbb{P}^1\right\}$,
\end{itemize}
for $[a,b]\in\mathbb{P}^1$. For a point $[\zeta_1, \zeta_2, \zeta_3, \zeta_4]\in \mathscr{Q}_2\setminus\left(L_{[1,0]}\cup M_{[1,0]}\right)$, the two rulings passing through it are $L_{[\zeta_3+i\zeta_4,\zeta_1-i\zeta_2]}$ and $M_{[-\zeta_3+i\zeta_4,\zeta_1-i\zeta_2]}$. Consequently, the map
\begin{align*}
[\zeta_1,\zeta_2,\zeta_3,\zeta_4]\mapsto\left(\frac{\zeta_3+i\zeta_4}{\zeta_1-i\zeta_2},\frac{-\zeta_3+i\zeta_4}{\zeta_1-i\zeta_2}\right)
\end{align*}
from $\mathscr{Q}_2\setminus\left(L_{[1,0]}\cup M_{[1,0]}\right)$ to $\mathbb{C}\times\mathbb{C}$ naturally extends to a biholomorphism between $\mathscr{Q}_2$ and $\mathbb{P}^1\times\mathbb{P}^1$. Composing the generalized Gauss map $\Phi$ with the biholomorphism described above yields two meromorphic functions $G_1$ and $G_2$ on $\Sigma_g$, which can be described as
\begin{align}\label{GaussMeroftn}
G_1=\frac{\phi_3+i\phi_4}{\phi_1-i\phi_2}, \quad G_2=\frac{-\phi_3+i\phi_4}{\phi_1-i\phi_2}
\end{align}
when $\phi_1-i\phi_2\not\equiv0$. In certain cases, the pair $(G_1,G_2)$ is also referred to as the generalized Gauss map.

\begin{Ex}\label{EXDCa}\normalfont
The doubly-connected minimal surface $\mathcal{DC}_a$ is represented by an immersion $X:\mathbb{C}\setminus\{0\}\to\mathbb{R}^4$, where
\begin{align*}
X(z)=\left(\text{Re}z, \text{Im}z, \text{Re}\left(az+\frac{1}{z}\right), \text{Im}\left(az+\frac{1}{z}\right)\right)\in\mathbb{R}^4
\end{align*}
via the identification $\mathbb{C}^2\simeq\mathbb{R}^4$. The Weierstrass data can be computed as
\begin{align*}
\left(\phi_1,\phi_2,\phi_3,\phi_4\right)=2X_zdz=\left(dz,-idz,\left(a-\frac{1}{z^2}\right)dz,-i\left(a-\frac{1}{z^2}\right)dz\right),
\end{align*}
and the generalized Gauss map $\Phi:\mathbb{C}\setminus\{0\}\to\mathbb{P}^3$ is given by
\begin{align*}
\Phi(z)=\left[1,-i,a-\frac{1}{z^2},-i\left(a-\frac{1}{z^2}\right)\right]\in\mathbb{P}^3.
\end{align*}
In this case, since $\phi_1-i\phi_2\equiv0$, the expression (\ref{GaussMeroftn}) for the meromorphic functions $G_1$ and $G_2$ cannot be directly applied. However, the two rulings passing through $\Phi(z)\in\mathbb{P}^3$ are $L_{[1,0]}$ and $M_{[1,a-\frac{1}{z^2}]}$, which determine $G_1$ and $G_2$ as
\begin{align*}
G_1\equiv\infty,\quad G_2=\frac{z^2}{az^2-1}.
\end{align*}
\end{Ex}

\subsection{Weierstrass data for embedded planar ends}\label{wdepe}
In this subsection, we revisit the notion of embedded planar ends in high codimension. In contrast to the codimension $1$ case, an embedded minimal end may asymptotically approach a plane but still wrap around it with multiplicity. Since our main interest lies in multiplicity-one ends, such as those of the Lagrangian catenoid and $\mathcal{DC}_a$, \cite{JL} introduced the following definition for embedded planar ends in high codimension.

Let $X:\Sigma_g\setminus\{q_1,q_2,\cdots,q_k\}\to\mathbb{R}^N$ be a complete minimal immersion with finite total curvature and genus $g$, given by (\ref{gwr}). A minimal immersion $X$ is said to have an \textbf{embedded planar end at $q_j$} if there exists an open neighborhood $U_j$ of $q_j$ such that $X(U_j\setminus\{q_j\})$ is asymptotic to a plane and can be expressed as a graph outside some compact subset of that plane. Moreover, none of the height functions have logarithmic growth. We say that the immersion $X$ has embedded planar ends provided that it has an embedded planar end at each $q_j$.

This definition is consistent with the classical notion of embedded planar ends in $\mathbb{R}^3$. We now recall the following proposition, which provides an equivalent characterization of this definition in terms of the Weierstrass data.
\begin{prop}[Proposition 2.1 in \cite{JL}]\label{PropOrdRes}
A minimal immersion $X$ given by the generalized Weierstrass representation (\ref{gwr}) has embedded planar ends if and only if
\begin{align}\label{OrdRes}
\min_{1\leq j\leq N}\textup{ord}_{q_i}\phi_j=-2\ \text{and}\ \textup{res}_{q_i}\phi_1=\textup{res}_{q_i}\phi_2=\cdots=\textup{res}_{q_i}\phi_N=0
\end{align} 
for all $i=1,2,\cdots,k$.
\end{prop}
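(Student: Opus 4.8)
Since the stated condition is imposed at each puncture separately, I would fix $q=q_i$, choose a holomorphic coordinate $z$ with $q=\{z=0\}$ on a punctured disc $D^\ast=\{0<|z|<\varepsilon\}$, and write $\phi_j=f_j\,\textup{d}z$ with Laurent series $f_j=\sum_{l\ge -m}a_{j,l}z^{l}$, where $m:=-\min_j\textup{ord}_q\phi_j\ge 1$ by condition $(1)$. Set $A:=(a_{1,-m},\dots,a_{N,-m})\in\C^N\setminus\{0\}$ and $R:=(\textup{res}_q\phi_1,\dots,\textup{res}_q\phi_N)=(a_{1,-1},\dots,a_{N,-1})$. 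Since the period of $\phi_j$ around $q$ is $2\pi i\,\textup{res}_q\phi_j$, condition $(3)$ forces $R\in\R^N$, so the primitive $X_j(z)=\text{Re}\big(\sum_{l\ge-m,\,l\ne-1}\tfrac{a_{j,l}}{l+1}z^{l+1}\big)+a_{j,-1}\log|z|+\beta_j$ is single-valued on $D^\ast$. Comparing Laurent coefficients in $\sum_j f_j^2\equiv 0$ (condition $(2)$) gives the two facts I would use throughout: the $z^{-2m}$-coefficient yields $A\cdot A=0$, so writing $A=U-iV$ with $U,V\in\R^N$ we get $|U|=|V|=:c$ and $U\cdot V=0$, and $A\ne 0$ then forces $U,V\ne 0$; hence $P:=\operatorname{span}_{\R}\{U,V\}$ is a genuine $2$-plane and $A\in P\otimes_{\R}\C$ (its component orthogonal to $P$ vanishes). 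Moreover, if $m=1$ the $z^{-2}$-coefficient reads $\sum_j a_{j,-1}^2=0$ with all $a_{j,-1}\in\R$, forcing $R=0$ and contradicting $m=1$; hence $m\ge 2$.

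For the ``only if'' direction, assume $X$ has an embedded planar end at $q$, asymptotic to a plane $\Pi$ and a graph over $\Pi$ outside a compact set. The leading term of $X$ is $\text{Re}\big(\tfrac{1}{1-m}Az^{1-m}\big)$, which lies in $P$ and has modulus $\equiv\tfrac{c}{m-1}|z|^{1-m}\to\infty$; since $X^{\Pi^\perp}$ stays bounded, every unbounded direction of $X$ lies in $P$, so $P$ is the linear part of $\Pi$. Being a graph over $\Pi$, the projection $X^P=\pi_P\circ X$ is a diffeomorphism of a punctured neighbourhood of $q$ onto the complement of a compact subset of $P$, so its winding number around the origin on $\{|z|=\rho\}$ equals $\pm1$ for $\rho$ small. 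But the lower-order terms of $X^P$ are $o(|z|^{1-m})$ there, so $X^P|_{\{|z|=\rho\}}$ is homotopic in $P\setminus\{0\}$ to $\text{Re}\big(\tfrac{1}{1-m}Az^{1-m}\big)|_{\{|z|=\rho\}}$, whose winding number is $\pm(m-1)$; hence $m=2$. With $m=2$ in hand, the $z^{-3}$-coefficient of $\sum_j f_j^2$ reads $A\cdot R=0$, hence $U\cdot R=V\cdot R=0$ and $R\in P^\perp$. Since $A\in P\otimes_{\R}\C$ kills the pole term of $X$ in the $P^\perp$-directions, $X^{P^\perp}(z)=R\log|z|+\beta^{P^\perp}+O(|z|)$ while $|X^P(z)|\sim c|z|^{-1}$, so the height function of the graph over $P$ has the form $-R\log|X^P|+O(1)$; the hypothesis that no height function grows logarithmically forces $R=0$, and together with $m=2$ this is exactly $(\ref{OrdRes})$.

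Conversely, suppose $(\ref{OrdRes})$ holds. Then $f_j=a_{j,-2}z^{-2}+a_{j,0}+a_{j,1}z+\cdots$ and $X(z)=\text{Re}(-A/z)+\beta+O(|z|)$, with $\text{Re}(-A/z)\in P$ of constant modulus $c/|z|$ on each circle $\{|z|=\rho\}$, traversed once. Splitting along $P\oplus P^\perp$: after substituting $w=1/z$, the $P$-component becomes $X^P=\text{Re}(-Aw)+\beta^P+O(|w|^{-1})$, an $\R$-linear isomorphism $w\mapsto\text{Re}(-Aw)$ of $\C$ onto $P$ perturbed by lower-order terms, so $X^P$ is, near $q$, a diffeomorphism onto the complement of a compact subset of $P$ (this also uses that $X$ is an immersion, so $X^P$ is a local diffeomorphism there, its differential of size $\sim|z|^{-2}$ dominating that of $X^{P^\perp}$); meanwhile $X^{P^\perp}(z)=\beta^{P^\perp}+O(|z|)\to\beta^{P^\perp}$. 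Hence near $q$ the surface is a graph over the complement of a compact disc in the affine plane $\beta^{P^\perp}+P$, asymptotic to it, with height function $O(|z|)=O(|X^P|^{-1})$ and in particular no logarithmic growth; so $X$ has an embedded planar end at $q$. Since $q=q_i$ was arbitrary, this proves the equivalence.

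The step I expect to be the real obstacle is the ``only if'' direction, precisely the passage from the three geometric hypotheses to the analytic statements above: that the asymptotic plane must have linear part $P=\operatorname{span}_{\R}\{U,V\}$ (from boundedness of $X^{\Pi^\perp}$ against the blow-up of the leading term), that the graph condition pins the winding number of $X^P$ at $1$ and hence $m=2$, and that condition $(2)$ already confines the residue vector $R$ to $P^\perp$, so that the no-logarithmic-growth hypothesis can eliminate it entirely. Once these are established, everything else is routine manipulation of the Laurent expansion together with the two identities $A\cdot A=0$ and $A\cdot R=0$ coming from $\sum_j\phi_j^2\equiv 0$.
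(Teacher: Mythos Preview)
The paper does not supply its own proof of this proposition; it is quoted verbatim from \cite{JL} (Proposition~2.1 there) without argument. So there is nothing in the present paper to compare your attempt against.

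That said, your proof is correct and follows the standard line of reasoning one would expect for this kind of statement: localise at a puncture, expand the Weierstrass data in Laurent series, and exploit the identity $\sum_j\phi_j^2\equiv 0$ together with the reality of the residues (forced by condition~$(3)$) to pin down both the pole order and the residue vector. The key observations you isolate---that $A\cdot A=0$ makes $P=\operatorname{span}_{\R}\{U,V\}$ a genuine $2$-plane, that the winding-number count forces $m=2$, and that $A\cdot R=0$ confines $R$ to $P^\perp$ so that the no-logarithmic-growth hypothesis kills it---are exactly the ingredients needed, and your converse via the substitution $w=1/z$ is the natural way to exhibit the end as a graph. The only places where a referee might ask for an extra line are the global injectivity of $X^P$ near $q$ (you gesture at this via the linear isomorphism $w\mapsto\text{Re}(-Aw)$ plus a perturbation argument, which is fine but could be spelled out) and the precise meaning of ``no height function has logarithmic growth'' in terms of the graphing coordinates; neither is a genuine gap.
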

\begin{Ex}\normalfont
It follows directly from the Weierstrass data computed in Example \ref{EXDCa} that the doubly-connected minimal surfaces $\mathcal{DC}_a$, including the Lagrangian catenoid $\Sigma_{LC}$ (corresponding to $a=0$), satisfy the conditions in (\ref{OrdRes}).
\end{Ex}

Assume that the immersion $X$ given by (\ref{gwr}) has embedded planar ends. Let $K$ denote the canonical bundle of $\Sigma_g$. The first condition in the above proposition implies that each meromorphic $1$-form $\phi_j$ belongs to $H^0(\Sigma_g, K\otimes [2q_1+2q_2+\cdots+2q_k])$. By applying the Riemann-Roch formula \cite{D} along with the condition on the residues, we now show that the space of meromorphic $1$-forms $\phi_j$ is further restricted to a smaller subspace.

For any holomorphic line bundle $L\to\Sigma_g$, the Riemann-Roch formula states that
\begin{align}\label{RiemRoch}
\dim_{\mathbb{C}}H^0(\Sigma_g,L)-\dim_{\mathbb{C}}H^0(\Sigma_g, K\otimes L^{-1})=\deg L-g+1.
\end{align}
In particular, when $L$ is the trivial line bundle, we have $\dim_{\mathbb{C}}H^0(\Sigma_g, L)=1$ and $\deg L=0$. Substituting these into (\ref{RiemRoch}), we obtain $\dim_{\mathbb{C}}H^0(\Sigma_g, K)=g$. Similarly, setting $L=K$ leads to $\deg K=2g-2$. For $L=[-2q_j]$,
\begin{align*}
0-\dim_{\mathbb{C}}H^0(\Sigma_g, K\otimes [2q_j])=-2-g+1,
\end{align*}
which simplifies to $\dim_{\mathbb{C}}H^0(\Sigma_g,K\otimes [2q_j])=g+1$. 

Repeatedly applying (\ref{RiemRoch}) to line bundles of the form $L=[-2q_{i_1}-2q_{i_2}-\cdots -2q_{i_j}]$, we deduce that
\begin{align}\label{dimcomp}
\dim_{\mathbb{C}}H^0(\Sigma_g, K\otimes [2q_{i_1}+2q_{i_2}+\cdots +2q_{i_j}])=g+2j-1
\end{align}
for any subset $\{i_1,i_2,\cdots, i_j\}\subseteq \{1,2,\cdots,k\}$.

Now, we consider meromorphic $1$-forms $\eta_j$ ($1\leq j\leq k$) such that
\begin{align}\label{ETA}
\eta_j\in H^0(\Sigma_g, K\otimes [2q_j])\setminus H^0(\Sigma_g,K).
\end{align}
Since $\eta_j$ has a pole only at $q_j$, its residues at all other points must be zero. The residue formula then implies that
\begin{align}\label{ResEta}
\text{res}_{q_j}\eta_j=0,
\end{align}
and hence, we obtain $\text{ord}_{q_j}\eta_j=-2$. 

Next, we introduce another set of meromorphic $1$-forms $\eta_{1j}$ ($2\leq j\leq k$) satisfying
\begin{align*}
\eta_{1j}\in H^0(\Sigma_g, K\otimes [2q_1+2q_j])\setminus \left(H^0(\Sigma_g,K)\oplus \mathbb{C}\eta_1\oplus\mathbb{C}\eta_j\right).
\end{align*}
By definition, $\eta_{1j}$ can have poles only at $q_1$ and $q_j$, and its residues vanish at all other points.

\begin{lemma}\label{ResEta1}
The residues of $\eta_{1j}$ at $q_1$ and $q_j$ are both nonzero, i.e.,
\begin{align*}
\textup{res}_{q_1}\eta_{1j}\neq 0\ \text{and}\ \textup{res}_{q_j}\eta_{1j}\neq 0.
\end{align*}
\end{lemma}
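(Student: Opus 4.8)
The plan is to argue by contradiction using the residue theorem together with the dimension count in (\ref{dimcomp}) and the defining property of $\eta_{1j}$. Recall that $\eta_{1j}$ is chosen in $H^0(\Sigma_g, K\otimes [2q_1+2q_j])$ but outside $H^0(\Sigma_g,K)\oplus\mathbb{C}\eta_1\oplus\mathbb{C}\eta_j$, so in particular $\eta_{1j}$ genuinely has a pole of order $2$ at at least one of $q_1, q_j$ (otherwise it would be holomorphic, hence in $H^0(\Sigma_g,K)$). Since $\eta_{1j}$ has poles only at $q_1$ and $q_j$, the residue theorem on the compact Riemann surface $\Sigma_g$ gives $\textup{res}_{q_1}\eta_{1j}+\textup{res}_{q_j}\eta_{1j}=0$. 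Thus it suffices to rule out the case where \emph{both} residues vanish; the two nonvanishing statements then follow simultaneously.

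**The main argument.**

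Suppose for contradiction that $\textup{res}_{q_1}\eta_{1j}=\textup{res}_{q_j}\eta_{1j}=0$. Then $\eta_{1j}$ is a meromorphic $1$-form with poles of order at most $2$ at $q_1$ and $q_j$, holomorphic elsewhere, and with vanishing residues at $q_1, q_j$ (and automatically at all other points). I want to show that such an $\eta_{1j}$ must lie in $H^0(\Sigma_g,K)\oplus\mathbb{C}\eta_1\oplus\mathbb{C}\eta_j$, contradicting the choice of $\eta_{1j}$. To see this, consider the principal parts of $\eta_{1j}$ at $q_1$ and $q_j$. With a local coordinate $t$ vanishing at $q_1$, the pole part of $\eta_{1j}$ is $c_1\,t^{-2}\,dt$ for some $c_1\in\mathbb{C}$ (the $t^{-1}$ coefficient being the residue, which vanishes). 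Similarly $\eta_1$, having $\textup{ord}_{q_1}\eta_1=-2$ and $\textup{res}_{q_1}\eta_1=0$ by (\ref{ResEta}), has pole part $c_1'\,t^{-2}\,dt$ with $c_1'\neq0$. Hence $\eta_{1j}-\tfrac{c_1}{c_1'}\eta_1$ has at worst a simple pole — but with vanishing residue, hence no pole — at $q_1$; likewise, after subtracting a suitable multiple of $\eta_j$, we cancel the pole at $q_j$ as well. The resulting $1$-form $\eta_{1j}-\alpha\eta_1-\beta\eta_j$ is then holomorphic on all of $\Sigma_g$, i.e. lies in $H^0(\Sigma_g,K)$. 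Therefore $\eta_{1j}\in H^0(\Sigma_g,K)\oplus\mathbb{C}\eta_1\oplus\mathbb{C}\eta_j$, the desired contradiction.

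**Expected obstacle and cleanup.**

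The only subtlety — and the step I would state carefully rather than wave through — is the claim that, under the hypothesis of vanishing residues, the pole part of $\eta_{1j}$ at $q_1$ is a scalar multiple of the pole part of $\eta_1$ there: this is exactly where the order-$2$ bound ($\textup{ord}_{q_i}\phi_j\geq -2$, equivalently membership in $K\otimes[2q_1+2q_j]$) combined with residue-freeness forces the principal part to be one-dimensional, spanned by $t^{-2}dt$. One should also confirm that $\textup{res}_{q_1}\eta_1\neq0$ is not needed — indeed (\ref{ResEta}) gives $\textup{res}_{q_1}\eta_1=0$, and what we use is that $\eta_1$ actually attains a pole of order exactly $2$ at $q_1$, which holds since $\eta_1\notin H^0(\Sigma_g,K)$ forces $\textup{ord}_{q_1}\eta_1<0$, hence $=-2$. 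No genuine difficulty arises; the proof is a short exact-sequence/principal-parts argument, and the dimension counts in (\ref{dimcomp}) serve mainly to guarantee that the spaces in which $\eta_1,\eta_j,\eta_{1j}$ are chosen are nontrivial so that the statement is not vacuous.
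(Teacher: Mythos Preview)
Your proof is correct and follows the same approach as the paper: use the residue theorem to reduce to the case where both residues vanish, then derive the contradiction $\eta_{1j}\in H^0(\Sigma_g,K)\oplus\mathbb{C}\eta_1\oplus\mathbb{C}\eta_j$. The paper asserts this last implication in one line (``which leads to''), whereas you spell out the principal-parts cancellation explicitly; your added detail is exactly the content behind the paper's assertion.
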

\begin{proof}
The residue formula implies that
\begin{align*}
\text{res}_{q_1}\eta_{1j}+\text{res}_{q_j}\eta_{1j}=0.
\end{align*}
If $\text{res}_{q_1}\eta_{1j}=0$, then it follows that $\text{res}_{q_j}\eta_{1j}=0$, which leads to
\begin{align*}
\eta_{1j}\in H^0(\Sigma_g,K)\oplus \mathbb{C}\eta_1\oplus\mathbb{C}\eta_j.
\end{align*}
This contradicts the definition of $\eta_{1j}$, thereby completing the proof.
\end{proof}
Taking (\ref{dimcomp}) into account, we observe that $H^0(\Sigma_g, K\otimes [2q_1+2q_2+\cdots+2q_k])$, which has dimension $g+2k-1$, can be decomposed as
\begin{align}\label{Decomp1}
H^0(\Sigma_g, K)\oplus\left(\bigoplus_{1\leq j\leq k}\mathbb{C}\eta_j\right)\oplus\left(\bigoplus_{2\leq j\leq k}\mathbb{C}\eta_{1j}\right).
\end{align}

\begin{lemma}\label{NoEta1}
Suppose that the Weierstrass data $\phi_1, \phi_2,\cdots,\phi_N$ satisfy (\ref{OrdRes}). Then, for all $1\leq i\leq N$, we have 
\begin{align*}
\phi_i\in H^0(\Sigma_g, K)\oplus\left(\bigoplus_{1\leq j\leq k}\mathbb{C}\eta_j\right).
\end{align*}
\end{lemma}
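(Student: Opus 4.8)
The plan is to use the direct-sum decomposition (\ref{Decomp1}) together with the vanishing of all residues required by (\ref{OrdRes}); the residue condition should be exactly strong enough to kill the $\eta_{1j}$-summands.

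First I would observe that the order condition $\min_{1\leq j\leq N}\textup{ord}_{q_i}\phi_j=-2$ forces $\textup{ord}_{q_i}\phi_i\geq-2$ for every $i$ and every puncture, so each $\phi_i$ lies in $H^0(\Sigma_g, K\otimes[2q_1+\cdots+2q_k])$. By (\ref{Decomp1}) I may then write
\[
\phi_i=\omega_i+\sum_{j=1}^k c_j\eta_j+\sum_{j=2}^k d_j\eta_{1j}
\]
for a unique holomorphic $1$-form $\omega_i\in H^0(\Sigma_g,K)$ and unique constants $c_j,d_j\in\mathbb{C}$ (depending on $i$). The goal is to show that all the $d_j$ vanish.

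Next I would read off residues at each puncture. The form $\omega_i$ is holomorphic and so contributes no residue anywhere; by (\ref{ResEta}) and the residue theorem each $\eta_j$ has vanishing residue at every point; and for $j\neq m$ the form $\eta_{1j}$ is holomorphic at $q_m$, since its only possible poles are at $q_1$ and $q_j$. Hence for each $m$ with $2\leq m\leq k$ only the single term $d_m\eta_{1m}$ survives, giving
\[
0=\textup{res}_{q_m}\phi_i=d_m\,\textup{res}_{q_m}\eta_{1m}.
\]
By Lemma \ref{ResEta1} we have $\textup{res}_{q_m}\eta_{1m}\neq0$, so $d_m=0$ for all $2\leq m\leq k$, which yields $\phi_i\in H^0(\Sigma_g,K)\oplus\bigoplus_{1\leq j\leq k}\mathbb{C}\eta_j$ as claimed. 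As a consistency check, once all $d_j$ vanish the residue of $\phi_i$ at $q_1$ is automatically zero, in agreement with the remaining case $m=1$ of (\ref{OrdRes}).

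This argument is essentially residue bookkeeping, so I do not anticipate a genuine obstacle; the only point needing care is confirming in the second step that $\bigoplus_{2\leq j\leq k}\mathbb{C}\eta_{1j}$ is precisely the part of the decomposition (\ref{Decomp1}) carrying nonzero residues, so that the residue hypothesis in (\ref{OrdRes}) is exactly what is needed to eliminate it. The complementarity of the $\eta_j$ and $\eta_{1j}$ spans used implicitly here has already been secured by the dimension count (\ref{dimcomp}) and Lemmas \ref{ResEta1} and the surrounding computation.
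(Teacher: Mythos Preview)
Your proof is correct and follows essentially the same approach as the paper: decompose $\phi_i$ via (\ref{Decomp1}), then use the residue hypothesis at $q_m$ for $2\leq m\leq k$ together with Lemma \ref{ResEta1} to kill each $\eta_{1m}$-coefficient. The only differences are cosmetic---you spell out why $\phi_i$ lies in $H^0(\Sigma_g,K\otimes[2q_1+\cdots+2q_k])$ and add a consistency remark about $q_1$, neither of which appears in the paper's version.
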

\begin{proof}
Let $\phi$ be one of the meromorphic $1$-forms $\phi_i$. Then, by (\ref{Decomp1}), $\phi$ can be expressed as
\begin{align*}
\phi=\omega+\sum_{j=1}^kr_j\eta_j+\sum_{j=2}^ks_j\eta_{1j}
\end{align*}
for some $\omega\in H^0(\Sigma_g, K)$ and $r_1, \cdots, r_k, s_2,\cdots,s_k\in\mathbb{C}$. Computing the residue of $\phi$ at $q_j$ ($j\geq2$), we see that it vanishes due to (\ref{OrdRes}). Furthermore, as the residues of $\eta_i$ are also zero from (\ref{ResEta}), we obtain
\begin{align*}
0=\text{res}_{q_j}\phi=s_j\text{res}_{q_j}\eta_{1j}.
\end{align*}
Applying Lemma \ref{ResEta1}, where $\text{res}_{q_j}\eta_{1j}\neq0$, we conclude that $s_j=0$. This holds for all $j\geq2$, completing the proof.
\end{proof}

In particular, if the minimal immersion $X$ given by (\ref{gwr}) has three embedded planar ends in $\mathbb{R}^4$, then by the above lemma, the Weierstrass data can be expressed in terms of the generalized Gauss map at each end.
\begin{prop}\label{prop27}
Let $X:\Sigma_g\setminus\{q_1,q_2,q_3\}\to\mathbb{R}^4$ be a complete minimal immersion with finite total curvature and genus $g$, represented by (\ref{gwr}). Suppose that $X$ has embedded planar ends, and let $\eta_1, \eta_2, \eta_3$ be chosen as in (\ref{ETA}). If the generalized Gauss map at each end $q_j$ is given by
\begin{align*}
\Phi(q_j)=[a_j, b_j, c_j, d_j]\in\mathbb{P}^3,
\end{align*}
then there exist nonzero complex numbers $\alpha, \beta, \gamma$ and holomorphic $1$-forms 
\begin{align*}
\omega_1,\omega_2,\omega_3,\omega_4\in H^0(\Sigma_g, K),
\end{align*} 
such that the Weierstrass data take the form:
\begin{align*}
&\phi_1=\omega_1+\alpha a_1\eta_1+\beta a_2\eta_2+\gamma a_3\eta_3,\\
&\phi_2=\omega_2+\alpha b_1\eta_1+\beta b_2\eta_2+\gamma b_3\eta_3,\\
&\phi_3=\omega_3+\alpha c_1\eta_1+\beta c_2\eta_2+\gamma c_3\eta_3,\\
&\phi_4=\omega_4+\alpha d_1\eta_1+\beta d_2\eta_2+\gamma d_3\eta_3.
\end{align*}
\end{prop}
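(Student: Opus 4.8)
The plan is to combine Lemma \ref{NoEta1} with a local analysis of the Weierstrass data near each puncture. By Lemma \ref{NoEta1}, each $\phi_i$ $(1\leq i\leq 4)$ lies in $H^0(\Sigma_g, K)\oplus\bigl(\bigoplus_{j=1}^{3}\mathbb{C}\eta_j\bigr)$, so we may write
\begin{align*}
\phi_i=\omega_i+r_{i1}\eta_1+r_{i2}\eta_2+r_{i3}\eta_3
\end{align*}
for uniquely determined $\omega_i\in H^0(\Sigma_g, K)$ and scalars $r_{ij}\in\mathbb{C}$. It then suffices to show that, for each $j$, the vector $(r_{1j},r_{2j},r_{3j},r_{4j})$ is a nonzero multiple of $(a_j,b_j,c_j,d_j)$; the three proportionality constants produced this way will be the desired $\alpha,\beta,\gamma$, and the $\omega_i$ above will be the desired holomorphic $1$-forms.

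To carry this out, fix $j$ and work in a local holomorphic coordinate $z$ centered at $q_j$, writing $\phi_i=f_i(z)\,\d z$. Since $\eta_j\in H^0(\Sigma_g, K\otimes[2q_j])\setminus H^0(\Sigma_g, K)$ has a pole of order exactly $2$ at $q_j$ with vanishing residue by (\ref{ResEta}), we have $\eta_j=(c_{-2}z^{-2}+O(1))\,\d z$ with $c_{-2}\neq 0$, whereas $\omega_i$ and the $\eta_l$ with $l\neq j$ are holomorphic at $q_j$. Hence $f_i=r_{ij}c_{-2}z^{-2}+O(1)$, so $z^2 f_i\to r_{ij}c_{-2}$ as $z\to 0$. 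Since the $\phi_i$ are meromorphic on $\Sigma_g$, the generalized Gauss map $\Phi=[\phi_1:\phi_2:\phi_3:\phi_4]=[f_1:f_2:f_3:f_4]$ is a morphism to $\mathbb{P}^3$ defined at $q_j$; multiplying the homogeneous coordinates by $z^2$ and letting $z\to 0$ gives $\Phi(q_j)=[r_{1j}:r_{2j}:r_{3j}:r_{4j}]$, provided the $r_{ij}$ are not all zero. That degenerate case is excluded by (\ref{OrdRes}) (equivalently Proposition \ref{PropOrdRes}): if $r_{1j}=\cdots=r_{4j}=0$, then every $\phi_i$ would be holomorphic at $q_j$, contradicting $\min_{1\leq i\leq 4}\mathrm{ord}_{q_j}\phi_i=-2$. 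Therefore $(r_{1j},r_{2j},r_{3j},r_{4j})$ is a nonzero representative of $\Phi(q_j)=[a_j:b_j:c_j:d_j]$, so there is some $\lambda_j\in\mathbb{C}\setminus\{0\}$ with $(r_{1j},r_{2j},r_{3j},r_{4j})=\lambda_j(a_j,b_j,c_j,d_j)$. Setting $\alpha:=\lambda_1$, $\beta:=\lambda_2$, $\gamma:=\lambda_3$ then yields exactly the stated formulas.

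Once Lemma \ref{NoEta1} is in hand the argument is essentially bookkeeping, and I do not expect a genuine obstacle. The one step deserving care is the identification of the value of the (holomorphically extended) generalized Gauss map at a puncture with the leading coefficients of the $\eta_j$-component of the $\phi_i$: this relies simultaneously on the fact that $\eta_j$ has pole order exactly $2$ with zero residue at $q_j$ and on the order hypothesis in (\ref{OrdRes}), which is precisely what prevents all of those leading coefficients from vanishing at once and thereby keeps $\Phi(q_j)$ well defined and equal to $[r_{1j}:r_{2j}:r_{3j}:r_{4j}]$.
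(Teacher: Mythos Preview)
Your proof is correct and follows essentially the same approach as the paper's: both invoke Lemma \ref{NoEta1} to decompose each $\phi_i$ and then observe that the Gauss map value $\Phi(q_j)$ is read off from the coefficients of $\eta_j$, since $\eta_j$ is the unique basis element with a pole at $q_j$. Your version is simply more explicit about the local coordinate computation that justifies this identification, while the paper states it directly.
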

\begin{proof}
Since the minimal immersion $X$ has embedded planar ends, Proposition \ref{PropOrdRes} implies that the Weierstrass data satisfy (\ref{OrdRes}). Therefore, by Lemma \ref{NoEta1}, they can be uniquely written as follows:
\begin{align*}
&\phi_1=\omega_1+r_1\eta_1+r_2\eta_2+r_3\eta_3,\\
&\phi_2=\omega_2+s_1\eta_1+s_2\eta_2+s_3\eta_3,\\
&\phi_3=\omega_3+t_1\eta_1+t_2\eta_2+t_3\eta_3,\\
&\phi_4=\omega_4+u_1\eta_1+u_2\eta_2+u_3\eta_3,
\end{align*}
for some holomorphic $1$-forms $\omega_1,\omega_2,\omega_3,\omega_4\in H^0(\Sigma_g, K)$ and complex numbers $r_i,s_i,t_i,u_i\in\mathbb{C}$ ($i=1,2,3$). 

The generalized Gauss map at each point $q_j$ is determined by the coefficients of $\eta_j$, as only $\eta_j$ has a pole at $q_j$. For instance, in the above expression, the generalized Gauss map at $q_1$ is given by
\begin{align*}
\Phi(q_1)=[r_1,s_1,t_1,u_1]\in\mathbb{P}^3.
\end{align*}
Thus, if the generalized Gauss map at $q_1$ is assumed to be $\Phi(q_1)=[a_1,b_1,c_1,d_1]\in\mathbb{P}^3$, then there exists a nonzero complex number $\alpha$ such that
\begin{align*}
r_1=\alpha a_1,\ s_1=\alpha b_1,\ t_1=\alpha c_1,\ \text{and}\ u_1=\alpha d_1.
\end{align*}
Similarly, there exist nonzero complex numbers $\beta$ and $\gamma$ satisfying analogous relations. This establishes the desired expression.
\end{proof}

\section{Symmetry Group}\label{sym}
\setcounter{equation}{0}
Let $S_g\subset \mathbb{R}^4$ be a complete, oriented, embedded minimal surface with finite total curvature and genus $g$. Suppose that $S_g$ has three embedded planar ends, whose asymptotic planes are parallel to the triple $(Q_1(a), Q_2(a, r_0), Q_3)$ (see Subsection \ref{subsec21} for the notation). When the context is clear, we will omit $a$ and $r_0$ and simply write this as $(Q_1, Q_2, Q_3)$.

In this paper, the symmetry of $S_g$ refers to a rigid motion $R=(A, b)$ of the Euclidean space $\mathbb{R}^4$ such that $R(S_g)=S_g$. Here, $A$ is an element of the orthogonal group $O(4)$ and $b$ is a vector in $\mathbb{R}^4$, where the rigid motion $R$ is defined as $Rx=A x+b$ for $x\in\mathbb{R}^4$. The set of all symmetries of $S_g$ forms a group, denoted by $\mathfrak{Sym}(S_g)$.

As $S_g$ is neither totally geodesic nor has finite total curvature, the only possible translation in $\mathfrak{Sym}(S_g)$ is the identity map. Hence, if $(I,b)\in\mathfrak{Sym}(S_g)$, where $I\in O(4)$ is the identity, then $b=0$. Indeed, the following lemma holds:

\begin{lemma}\label{Lem41}
If $(A, b_1)\in\mathfrak{Sym}(S_g)$ and $(A, b_2)\in\mathfrak{Sym}(S_g)$, then $b_1=b_2$.
\end{lemma}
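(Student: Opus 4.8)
The plan is to derive the statement from the observation, already recorded above, that the only translation preserving $S_g$ is the identity. For the reduction, recall that rigid motions compose according to $(A_1,b_1)\circ(A_2,b_2)=(A_1A_2,\,A_1b_2+b_1)$, so that $(A,b_1)^{-1}\circ(A,b_2)=(I,\,A^{-1}(b_2-b_1))$. If both $(A,b_1)$ and $(A,b_2)$ lie in $\mathfrak{Sym}(S_g)$, then this composition does too; hence $S_g$ is invariant under translation by $c:=A^{-1}(b_2-b_1)$, and it suffices to show that $c=0$.

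To see that $S_g+c=S_g$ forces $c=0$, I would use the structure of $S_g$ coming from finite total curvature. By the Chern--Osserman theory (cf. \cite{HO} and Subsection \ref{wdepe}), $S_g$ is properly immersed, is conformally a closed Riemann surface $\Sigma_g$ with three punctures $q_1,q_2,q_3$, and its generalized Gauss map $\Phi$ extends to all of $\Sigma_g$, with well-defined values $\Phi(q_1),\Phi(q_2),\Phi(q_3)$ (this extension is exactly what makes Proposition \ref{prop27} meaningful). Now suppose $c\neq 0$ and pick any $p\in S_g$. Since $S_g+nc=S_g$ for every $n\in\mathbb{Z}$, we have $p+nc\in S_g$, while $|p+nc|\to\infty$; by properness, a subsequence of $(p+nc)$ converges in $\Sigma_g$ to one of the punctures, say $q_j$. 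A translation acts trivially on tangent planes, hence on the Gauss map, so $\Phi(p+nc)=\Phi(p)$ for all $n$; continuity of $\Phi$ at $q_j$ then gives $\Phi(p)=\Phi(q_j)$. As $p$ was arbitrary, $\Phi(S_g)$ is contained in the finite set $\{\Phi(q_1),\Phi(q_2),\Phi(q_3)\}$, so by connectedness $\Phi$ is constant on $S_g$, hence on $\Sigma_g$. But a minimal surface with constant Gauss map lies in an affine $2$-plane and is therefore totally geodesic, contradicting the fact noted above that $S_g$ is not totally geodesic (for instance, it has three ends, whereas a plane has one). Hence $c=0$, so $A^{-1}(b_2-b_1)=0$, i.e. $b_1=b_2$.

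The only delicate point is the appeal to finite-total-curvature theory in the second step: that $X$ is proper off the punctures and that the (generalized) Gauss map extends across them --- precisely the facts recalled in \cite{HO, JL} and Subsection \ref{wdepe} --- together with the elementary principle that a surface with constant Gauss map is flat. An alternative route to $c=0$, which I would mention only in passing, bypasses the Gauss map: translation by $c$ permutes the three ends and carries the asymptotic affine plane of each end to a parallel one, which forces the permutation to be trivial and $c$ to lie in the common direction of the three asymptotic planes; but this direction is $\vec{Q_1}\cap\vec{Q_3}=\{0\}$, as is immediate from the bases of $Q_1(a)$ and $Q_3$ displayed in Subsection \ref{subsec21} (any vector of $Q_1(a)$ has its first two coordinates equal to its coefficients in that basis, hence lies in $Q_3=\{x_1=x_2=0\}$ only if it vanishes).
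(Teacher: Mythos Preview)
Your argument is correct and follows exactly the paper's approach: compose $(A,b_1)^{-1}$ with $(A,b_2)$ to get a pure translation in $\mathfrak{Sym}(S_g)$, then invoke the fact---stated just before the lemma---that the only translation symmetry of $S_g$ is the identity. The paper's proof stops there, while you additionally supply two independent justifications (via constancy of the Gauss map, and via the asymptotic planes) for that preliminary fact; both are sound but not needed for the lemma itself, since the paper has already recorded the fact as given.
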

\begin{proof}
We compute
\begin{align*}
(A, b_1)^{-1}\circ (A, b_2)=(A^{-1}, -A^{-1}b_1)\circ (A, b_2)=(I, A^{-1}b_1-A^{-1}b_2)\in\mathfrak{Sym}(S_g).
\end{align*}
Since the translation in $\mathfrak{Sym}(S_g)$ must be the identity map, it follows that $A^{-1}b_1-A^{-1}b_2=0$. Therefore $b_1=b_2$.
\end{proof}

If $(A, b)\in\mathfrak{Sym}(S_g)$, then $A\in O(4)$ induces a permutation on the set of three planes $\{Q_1, Q_2, Q_3\}$. This indicates that the possible types of symmetries depend on the choice of planes. To investigate this further, we define
\begin{align*}
\Theta_{V,W}:=\sup\left\{\langle v, w\rangle\ |\ v\in V,\ w\in W,\ \langle v,v\rangle=\langle w,w\rangle=1\right\},
\end{align*}
for $2$-dimensional $\mathbb{R}$-vector subspaces $V$, $W$ of $\mathbb{R}^4$. Here, $\langle\cdot,\cdot\rangle$ denotes the Euclidean inner product on $\mathbb{R}^4$. It is clear that 
\begin{align*}
\Theta_{V,W}=\Theta_{W,V}
\end{align*}
and we have the following lemmas.
\begin{lemma}\label{lem42}
For $A\in O(4)$, it holds $\Theta_{V,W}\leq\Theta_{AV,AW}$.
\end{lemma}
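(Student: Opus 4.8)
The plan is to use directly the defining property of the orthogonal group: every $A\in O(4)$ preserves the Euclidean inner product on $\mathbb{R}^4$, and in particular norms. The whole statement will follow by transporting the admissible pairs in the supremum defining $\Theta_{V,W}$ over to admissible pairs for $\Theta_{AV,AW}$ via $A$.

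Concretely, I would first note that if $v\in V$ satisfies $\langle v,v\rangle=1$, then $Av\in AV$ and $\langle Av,Av\rangle=\langle v,v\rangle=1$, so $Av$ is a unit vector of $AV$; likewise $Aw$ is a unit vector of $AW$ whenever $w\in W$ is a unit vector of $W$. Moreover $\langle Av,Aw\rangle=\langle v,w\rangle$. Hence for every pair $(v,w)$ admissible in the supremum defining $\Theta_{V,W}$ (that is, $v\in V$, $w\in W$, $\langle v,v\rangle=\langle w,w\rangle=1$), the pair $(Av,Aw)$ is admissible in the supremum defining $\Theta_{AV,AW}$ and contributes the same value $\langle v,w\rangle=\langle Av,Aw\rangle$. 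Thus the set of real numbers over which one takes the supremum in the definition of $\Theta_{AV,AW}$ contains the corresponding set for $\Theta_{V,W}$, and passing to suprema gives $\Theta_{V,W}\leq\Theta_{AV,AW}$.

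I do not expect any genuine obstacle here: the single point to check is that $A$ restricts to a map from the unit sphere of $V$ into the unit sphere of $AV$, which is immediate since $A$ is a linear isometry. In fact, applying the same argument to $A^{-1}\in O(4)$ together with the subspaces $AV$ and $AW$ yields $\Theta_{AV,AW}\leq\Theta_{A^{-1}(AV),A^{-1}(AW)}=\Theta_{V,W}$, so the inequality is actually an equality; the lemma records only the direction that will be invoked in the subsequent analysis of the permutation of $\{Q_1,Q_2,Q_3\}$ induced by a symmetry.
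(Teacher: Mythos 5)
Your proof is correct and takes essentially the same approach as the paper: both rest on the fact that $A$ preserves inner products and maps unit vectors of $V$, $W$ to unit vectors of $AV$, $AW$. The only (minor) difference is that the paper picks a maximizing pair $(v,w)$ while you compare the two value sets directly, which sidesteps the question of whether the supremum is attained; your closing remark that the inequality is in fact an equality is also the content the paper extracts in Lemma 3.3.
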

\begin{proof}
Let $v\in V$ and $w\in W$ be vectors that achieve $\Theta_{V,W}$. Since $A\in O(4)$ preserves the inner product, it follows that
\begin{align*}
\Theta_{V,W}=\langle v,w\rangle=\langle Av, Aw\rangle\leq\Theta_{AV,AW},
\end{align*}
as desired.
\end{proof}

\begin{lemma}\label{lem43}
If $(A,b)\in\mathfrak{Sym}(S_g)$ and $A\in O(4)$ has order $k$ as a permutation of $\{Q_1, Q_2, Q_3\}$, then $\Theta_{Q_i,Q_j}=\Theta_{A^sQ_i,A^sQ_j}$ for all $1\leq i,j\leq3$ and $1\leq s\leq k$.
\end{lemma}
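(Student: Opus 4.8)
The plan is to reduce everything to the monotonicity statement of Lemma~\ref{lem42} and exploit the fact that iterating $A$ around a full cycle returns each plane to itself. First I would record the structural fact that underlies the hypothesis: since $(A,b)\in\mathfrak{Sym}(S_g)$, the orthogonal part $A$ maps the set of asymptotic ends of $S_g$ to itself, and because the asymptotic planes are (parallel to the) linear subspaces $Q_1,Q_2,Q_3$ through the origin, $A$ induces a permutation $\sigma$ of $\{Q_1,Q_2,Q_3\}$ with $AQ_i=Q_{\sigma(i)}$. By hypothesis this permutation has order $k$, so $\sigma^k=\mathrm{id}$ and hence $A^kQ_i=Q_i$ for every $i$.

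Next I would apply Lemma~\ref{lem42} repeatedly along the orbit. For fixed $i,j$ and any $1\leq s\leq k$, iterating the inequality $\Theta_{V,W}\leq\Theta_{AV,AW}$ (first $s$ times, then another $k-s$ times) yields the chain
\begin{align*}
\Theta_{Q_i,Q_j}\leq\Theta_{AQ_i,AQ_j}\leq\cdots\leq\Theta_{A^sQ_i,A^sQ_j}\leq\cdots\leq\Theta_{A^kQ_i,A^kQ_j}.
\end{align*}
Since $A^kQ_i=Q_i$ and $A^kQ_j=Q_j$, the rightmost term equals $\Theta_{Q_i,Q_j}$, so the entire chain of inequalities collapses to a chain of equalities. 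In particular $\Theta_{Q_i,Q_j}=\Theta_{A^sQ_i,A^sQ_j}$ for every $s$ with $1\leq s\leq k$, which is the claim.

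There is essentially no serious obstacle here; the only points requiring a little care are purely bookkeeping. One must make sure the chain of inequalities is set up so that its two endpoints coincide (this is exactly what forces equality throughout, and is the reason Lemma~\ref{lem42} need only be a one-sided estimate). One should also be explicit that $A$ really permutes the \emph{linear} subspaces $Q_i$ — not merely the affine asymptotic planes — so that "$A^k$ fixes each $Q_i$" is literally an equality of subspaces; this is immediate once the ends are translated to pass through the origin, but it is worth stating since $\Theta_{V,W}$ is defined only for linear subspaces.
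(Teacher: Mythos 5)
Your argument is correct and coincides with the paper's proof: both iterate the one-sided inequality of Lemma \ref{lem42} around the full orbit and use $A^kQ_i=Q_i$ to force every inequality in the chain to be an equality. The additional remarks on why $A$ permutes the linear subspaces $Q_i$ are fine but not needed beyond what the paper already sets up.
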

\begin{proof}
By Lemma \ref{lem42}, we have
\begin{align*}
\Theta_{Q_i,Q_j}\leq\Theta_{AQ_i,AQ_j}\leq\cdots\leq\Theta_{A^kQ_i,A^kQ_j}=\Theta_{Q_i,Q_j}.
\end{align*}
Thus, all inequalities become equalities, proving the lemma.
\end{proof}

Now, by calculating the values of $\Theta$ between $Q_1(=Q_1(a))$, $Q_2(=Q_2(a, r_0))$, and $Q_3$, we obtain 
\begin{align*}
&\Theta_{Q_1,Q_2}=\frac{1}{\sqrt{1+|a|^2}}\cdot\frac{{r_0}^2\left(1+|a|^2\right)+|a|}{\sqrt{\left(1+{r_0}^2|a|\right)^2+{r_0}^4}},\\
&\Theta_{Q_2,Q_3}=\frac{1+{r_0}^2|a|}{\sqrt{\left(1+{r_0}^2|a|\right)^2+{r_0}^4}},\\
&\Theta_{Q_1,Q_3}=\frac{|a|}{\sqrt{1+|a|^2}}.
\end{align*}
Details of these calculations can be found in $\textbf{Appendix\ A}$. From these calculations, we observe that
\begin{align*}
\Theta_{Q_1,Q_3}\neq \Theta_{Q_2,Q_3},\quad \Theta_{Q_1,Q_2}\neq \Theta_{Q_1,Q_3},
\end{align*}
and
\begin{align*}
\Theta_{Q_1,Q_2}=\Theta_{Q_2,Q_3}\ \iff\ {r_0}^2=\frac{1}{\sqrt{1+|a|^2}}.
\end{align*}
Combining these observations with Lemma \ref{lem43}, we conclude that the possible permutations of $\{Q_1, Q_2, Q_3\}$ under $A\in O(4)$ are as follows:
\begin{itemize}
\item[(1)] $Q_1\stackrel{A}{\to}Q_1, Q_2\stackrel{A}{\to}Q_2, Q_3\stackrel{A}{\to}Q_3$, 
\item[(2)] $Q_1\stackrel{A}{\to}Q_3, Q_2\stackrel{A}{\to}Q_2, Q_3\stackrel{A}{\to}Q_1$ (provided that ${r_0}^2=\frac{1}{\sqrt{1+|a|^2}}$).
\end{itemize}
We denote by $\mathfrak{Sym}^{id}(S_g)$ the set of all symmetries corresponding to the identity permutation, as described in case $(1)$. Since the symmetries fall into the two cases mentioned above, this leads to the following lemma:
\begin{lemma}\label{Lem44}
$\mathfrak{Sym}^{id}(S_g)$ is a subgroup of $\mathfrak{Sym}(S_g)$ with an index of at most $2$.
\end{lemma}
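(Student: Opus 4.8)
The plan is to show that $\mathfrak{Sym}^{id}(S_g)$ is closed under the group operations and then bound its index using the permutation action on $\{Q_1,Q_2,Q_3\}$. First I would verify that $\mathfrak{Sym}^{id}(S_g)$ is a subgroup: if $(A,b),(A',b')\in\mathfrak{Sym}^{id}(S_g)$, then each of $A$, $A'$ induces the identity permutation on $\{Q_1,Q_2,Q_3\}$, so the composition $A A'$ also induces the identity permutation (composition of identities is the identity), and likewise $A^{-1}$ induces the identity permutation; since $\mathfrak{Sym}(S_g)$ is already a group and $(AA',\,\cdot\,)$, $(A^{-1},\,\cdot\,)$ are the relevant elements, closure and inverses follow. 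It is nonempty because it contains $(I,0)$.

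Next I would set up the index bound via the homomorphism $\rho:\mathfrak{Sym}(S_g)\to \mathrm{Sym}(\{Q_1,Q_2,Q_3\})$ sending $(A,b)$ to the permutation of the three asymptotic planes induced by $A$. This is well-defined because a symmetry maps ends to ends and hence maps each asymptotic plane to another asymptotic plane (the asymptotic planes are intrinsic to the surface up to the finitely many ends), and it is a group homomorphism since the induced permutation of a composition is the composition of the induced permutations. By definition, $\mathfrak{Sym}^{id}(S_g)=\ker\rho$, so it is automatically a normal subgroup of $\mathfrak{Sym}(S_g)$, and $[\mathfrak{Sym}(S_g):\mathfrak{Sym}^{id}(S_g)]=|\mathrm{image}(\rho)|$.

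Finally I would invoke the preceding analysis — built on Lemmas \ref{lem42} and \ref{lem43} together with the explicit computations of $\Theta_{Q_1,Q_2}$, $\Theta_{Q_2,Q_3}$, $\Theta_{Q_1,Q_3}$ — which shows that the only permutations of $\{Q_1,Q_2,Q_3\}$ realizable by some $A$ with $(A,b)\in\mathfrak{Sym}(S_g)$ are the identity permutation (case (1)) and the transposition swapping $Q_1\leftrightarrow Q_3$ while fixing $Q_2$ (case (2), and only when ${r_0}^2 = (1+|a|^2)^{-1/2}$). Hence $\mathrm{image}(\rho)$ has order at most $2$, giving the desired bound on the index. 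The only mildly delicate point is justifying that $\rho$ is well-defined — i.e., that a symmetry genuinely permutes the \emph{unordered} set of asymptotic directions rather than, say, mixing them with lower-order data — but this is immediate from the fact that each end is asymptotic to a unique plane (through the origin, after translating by the appropriate $b$, using Lemma \ref{Lem41} to pin down $b$) and a rigid motion preserving $S_g$ carries these asymptotic planes among themselves; once $\rho$ is in hand the rest is the formal kernel argument plus the already-established case analysis, so I do not anticipate a substantial obstacle here.
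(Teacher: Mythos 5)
Your proposal is correct and follows essentially the same route as the paper: the paper presents Lemma \ref{Lem44} as an immediate consequence of the preceding case analysis (which shows the only realizable permutations of $\{Q_1,Q_2,Q_3\}$ are the identity and the transposition $Q_1\leftrightarrow Q_3$), and your kernel-of-homomorphism argument is just the standard way of making that deduction explicit. The well-definedness of $\rho$ that you flag is indeed the only point needing care, and your justification — symmetries permute the ends and hence their asymptotic planes — is exactly what the paper assumes when it says a symmetry "induces a permutation on the set of three planes."
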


Let $(A, b)\in\mathfrak{Sym}^{id}(S_g)$. Then, $Q_3$ is an $A$-invariant subspace. Since $A\in O(4)$, its orthogonal complement ${Q_3}^{\perp}$ is also $A$-invariant. $Q_3$ is generated by $(0, 0, 1, 0)\in\mathbb{R}^4$ and $(0, 0, 0, 1)\in\mathbb{R}^4$, while ${Q_3}^{\perp}$ is generated by $(1, 0, 0, 0)\in\mathbb{R}^4$ and $(0, 1, 0, 0)\in\mathbb{R}^4$. Therefore, using the standard basis of $\mathbb{R}^4$, $A$ can be expressed as
\begin{align}\label{eqn441}
A=
\begin{pmatrix}
A_1 & O\\
O & A_2
\end{pmatrix},
\end{align}
where $A_1$ and $A_2$ are $2\times 2$ real orthogonal matrices. Recall that a $2\times 2$ real orthogonal matrix takes one of the following forms:
\begin{align*}
\mathcal{S}_{\lambda}:=
\begin{pmatrix}
\cos\lambda & -\sin\lambda\\
\sin\lambda & \cos\lambda
\end{pmatrix},\quad 
\mathcal{T}_{\lambda}:=
\begin{pmatrix}
\cos\lambda & -\sin\lambda\\
-\sin\lambda & -\cos\lambda
\end{pmatrix}
\ (\lambda\in\mathbb{R}).
\end{align*}
It is also well-known that these matrices satisfy the following relations:
\begin{align*}
\mathcal{S}_{\lambda}\mathcal{T}_{\mu}=\mathcal{T}_{\mu-\lambda},\quad \mathcal{T}_{\mu}\mathcal{S}_{\lambda}=\mathcal{T}_{\mu+\lambda},\quad \mathcal{S}_{\lambda}\mathcal{S}_{\mu}=\mathcal{S}_{\lambda+\mu},\quad \mathcal{T}_{\lambda}\mathcal{T}_{\mu}=\mathcal{S}_{\mu-\lambda}.
\end{align*}
Using the fact that the other two planes, $Q_1(a)$ and $Q_2(a, r_0)$, are also $A$-invariant, we derive a necessary condition for $(A, b)\in\mathfrak{Sym}(S_g)$ to be contained in $\mathfrak{Sym}^{id}(S_g)$:
\begin{lemma}\label{formA}
If $(A, b)\in\mathfrak{Sym}^{id}(S_g)$, then with respect to the standard basis of $\mathbb{R}^4$, $A\in O(4)$ has one of the following forms:
\begin{itemize}
\item[(1)] If $a=0$:
\begin{align*}
A=\begin{pmatrix}
\mathcal{S}_{\lambda} & O\\
O & \mathcal{S}_{-\lambda}
\end{pmatrix}\ \text{or}\ 
\begin{pmatrix}
\mathcal{T}_{\lambda} & O\\
O & \mathcal{T}_{-\lambda}
\end{pmatrix}\ (\lambda\in\mathbb{R}).
\end{align*}
\item[(2)] If $a\neq 0$:
\begin{align*}
A=\pm
\begin{pmatrix}
I & O\\
O & I
\end{pmatrix}\ \text{or}\ 
\pm
\begin{pmatrix}
\mathcal{T}_{\theta_a} & O\\
O & \mathcal{T}_{-\theta_a}
\end{pmatrix}.
\end{align*}
\end{itemize}
In particular, if $a\neq 0$, then $\left|\mathfrak{Sym}^{id}(S_g)\right|\leq 4$.
\end{lemma}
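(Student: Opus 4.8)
The plan is to push the block decomposition (\ref{eqn441}) all the way through by translating the $A$-invariance of the remaining two planes $Q_1(a)$ and $Q_2(a,r_0)$ into algebraic identities between the two orthogonal blocks $A_1,A_2\in O(2)$. I would identify each coordinate pair with $\mathbb{C}$ and use the presentations $Q_1(a)=\{(z,az):z\in\mathbb{C}\}$ and $Q_2(a,r_0)=\{(z,az+r_0^{-2}\bar z):z\in\mathbb{C}\}$ from Subsection \ref{subsec21}, viewing $A_1,A_2$ as $\mathbb{R}$-linear maps $\mathbb{C}\to\mathbb{C}$; recall that $\mathcal{S}_\lambda$ is multiplication by $e^{i\lambda}$, while $\mathcal{T}_\lambda$ is the antilinear map $z\mapsto e^{-i\lambda}\bar z$. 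Writing $M_a$ for multiplication by $a$ and $K$ for conjugation $z\mapsto\bar z$, the invariance of the graph $Q_1(a)$ over the first factor amounts to $A_2\circ M_a=M_a\circ A_1$, and, granted this, the invariance of $Q_2(a,r_0)$ amounts to the additional relation $A_2\circ K=K\circ A_1$ (the summand $r_0^{-2}\bar z$ being a real rescaling, $r_0$ never enters).

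For $a\neq0$ the operator $M_a$ is invertible, so the first relation reads $A_2=M_aA_1M_a^{-1}$. If $A_1=\mathcal{S}_\lambda$ then $A_2=\mathcal{S}_\lambda$ as well, and $A_2\circ K=K\circ A_1$ forces $e^{2i\lambda}=1$, hence $A=\pm I$. If $A_1$ is a reflection $z\mapsto e^{i\lambda}\bar z$, then conjugating by $M_a$ gives $A_2(z)=(a/\bar a)e^{i\lambda}\bar z=e^{2i\theta_a}e^{i\lambda}\bar z$, whereas $A_2\circ K=K\circ A_1$ gives $A_2(z)=e^{-i\lambda}\bar z$; equating the two yields $e^{2i(\lambda+\theta_a)}=1$, i.e. $\lambda\equiv-\theta_a\pmod\pi$, which is exactly $A=\pm\,\mathrm{diag}(\mathcal{T}_{\theta_a},\mathcal{T}_{-\theta_a})$. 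This exhausts the list in (2); since Lemma \ref{Lem41} shows that the translation $b$ is determined by $A$, the assignment $(A,b)\mapsto A$ is injective on $\mathfrak{Sym}^{id}(S_g)$, and the bound $|\mathfrak{Sym}^{id}(S_g)|\leq4$ follows.

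For $a=0$ the plane $Q_1(0)=Q_3^\perp$ is automatically $A$-invariant and gives no information, so the only constraint is $A_2\circ K=K\circ A_1$ coming from $Q_2(0,r_0)=\{(z,r_0^{-2}\bar z):z\in\mathbb{C}\}$. Conjugation by $K$ carries $\mathcal{S}_\lambda$ to $\mathcal{S}_{-\lambda}$ and $\mathcal{T}_\lambda$ to $\mathcal{T}_{-\lambda}$, so $A_1=\mathcal{S}_\lambda$ forces $A_2=\mathcal{S}_{-\lambda}$ and $A_1=\mathcal{T}_\lambda$ forces $A_2=\mathcal{T}_{-\lambda}$, which is precisely the list in (1).

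The only point that requires care is the reflection subcase for $a\neq0$: conjugating the antilinear map $A_1$ by $M_a$ produces the unimodular factor $a/\bar a=e^{2i\theta_a}$, and this must be matched against the antilinear relation coming from $Q_2(a,r_0)$ to pin down $\lambda$ modulo $\pi$. Beyond this bookkeeping, the argument is just the composition calculus for $\mathcal{S}$ and $\mathcal{T}$ recorded before the lemma, so I expect no genuine difficulty.
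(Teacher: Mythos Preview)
Your proof is correct and follows essentially the same approach as the paper: under the identifications $M_a=|a|\mathcal{S}_{\theta_a}$ and $K=\mathcal{T}_0$, your two relations $A_2\circ M_a=M_a\circ A_1$ and $A_2\circ K=K\circ A_1$ are exactly the paper's relations (\ref{rel1}) and (\ref{rel3}), and the ensuing case analysis is the same, merely phrased in complex-linear rather than $2\times2$-matrix language.
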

\begin{proof}
Since $Q_1(a)$ is an $A$-invariant subspace, the following relations hold for some $\alpha, \beta, \gamma, \omega\in\mathbb{R}$:
\begin{align*}
A\begin{pmatrix} 1\\0\\|a|\cos\theta_a\\|a|\sin\theta_a\end{pmatrix}=\alpha\begin{pmatrix} 1\\0\\|a|\cos\theta_a\\|a|\sin\theta_a\end{pmatrix}+\beta\begin{pmatrix} 0\\1\\-|a|\sin\theta_a\\|a|\cos\theta_a\end{pmatrix},
\end{align*}
\begin{align*}
A\begin{pmatrix} 0\\1\\-|a|\sin\theta_a\\|a|\cos\theta_a\end{pmatrix}=\gamma\begin{pmatrix} 1\\0\\|a|\cos\theta_a\\|a|\sin\theta_a\end{pmatrix}+\omega\begin{pmatrix} 0\\1\\-|a|\sin\theta_a\\|a|\cos\theta_a\end{pmatrix}.
\end{align*}
The two vectors used above form the basis of $Q_1(a)$ as it was mentioned at the end of Subsection \ref{subsec21}. Using the expression (\ref{eqn441}), these relations are equivalent to
\begin{align}\label{rel1}
|a|A_2\mathcal{S}_{\theta_a}=|a|\mathcal{S}_{\theta_a}A_1.
\end{align}
Similarly, the $A$-invariance of $Q_2(a, r_0)$ gives the following for some $\tilde{\alpha}, \tilde{\beta}, \tilde{\gamma}, \tilde{\omega}\in\mathbb{R}$:
\begin{align*}
A\begin{pmatrix} 1\\0\\|a|\cos\theta_a+\frac{1}{{r_0}^2}\\|a|\sin\theta_a\end{pmatrix}=\tilde{\alpha}\begin{pmatrix} 1\\0\\|a|\cos\theta_a+\frac{1}{{r_0}^2}\\|a|\sin\theta_a\end{pmatrix}+\tilde{\beta}\begin{pmatrix} 0\\1\\-|a|\sin\theta_a\\|a|\cos\theta_a-\frac{1}{{r_0}^2}\end{pmatrix},
\end{align*}
\begin{align*}
A\begin{pmatrix} 0\\1\\-|a|\sin\theta_a\\|a|\cos\theta_a-\frac{1}{{r_0}^2}\end{pmatrix}=\tilde{\gamma}\begin{pmatrix} 1\\0\\|a|\cos\theta_a+\frac{1}{{r_0}^2}\\|a|\sin\theta_a\end{pmatrix}+\tilde{\omega}\begin{pmatrix} 0\\1\\-|a|\sin\theta_a\\|a|\cos\theta_a-\frac{1}{{r_0}^2}\end{pmatrix}.
\end{align*}
Here, the two vectors used above form the basis of $Q_2(a, r_0)$. Using (\ref{eqn441}), these relations are equivalent to 
\begin{align}\label{rel2}
A_2\left(|a|\mathcal{S}_{\theta_a}+\frac{1}{{r_0}^2}\mathcal{T}_{0}\right)=\left(|a|\mathcal{S}_{\theta_a}+\frac{1}{{r_0}^2}\mathcal{T}_{0}\right)A_1.
\end{align}
By subtracting (\ref{rel1}) from (\ref{rel2}), we obtain
\begin{align}\label{rel3}
A_2\mathcal{T}_{0}=\mathcal{T}_{0}A_1.
\end{align}
Thus, if $(A, b)\in\mathfrak{Sym}^{id}(S_g)$, then $A_1$ and $A_2$ must satisfy (\ref{rel1}) and (\ref{rel3}).

When $a=0$, (\ref{rel1}) is automatically satisfied, leaving only (\ref{rel3}) to be satisfied. Substituting $A_1=\mathcal{S}_{\lambda}$ into (\ref{rel3}), we compute
\begin{align*}
A_2=\mathcal{T}_0 \mathcal{S}_\lambda \mathcal{T}_0^{-1}=\mathcal{T}_{0}\mathcal{S}_{\lambda}\mathcal{T}_{0}=\mathcal{T}_{0}\mathcal{T}_{-\lambda}=\mathcal{S}_{-\lambda}.
\end{align*}
Similarly, for $A_1=\mathcal{T}_{\lambda}$, (\ref{rel3}) yields
\begin{align*}
A_2=\mathcal{T}_0 \mathcal{T}_\lambda \mathcal{T}_0^{-1}=\mathcal{T}_{0}\mathcal{T}_{\lambda}\mathcal{T}_{0}=\mathcal{T}_{0}\mathcal{S}_{-\lambda}=\mathcal{T}_{-\lambda}.
\end{align*}
Therefore, when $a=0$, $A$ must be given by one of the following forms:
\begin{align*}
A=\begin{pmatrix}
\mathcal{S}_{\lambda} & O\\
O & \mathcal{S}_{-\lambda}
\end{pmatrix}\ \text{or}\ 
\begin{pmatrix}
\mathcal{T}_{\lambda} & O\\
O & \mathcal{T}_{-\lambda}
\end{pmatrix}.
\end{align*}

When $a\neq 0$, (\ref{rel1}) provides the condition $A_2\mathcal{S}_{\theta_a}=\mathcal{S}_{\theta_a}A_1$. Substituting $A_1=\mathcal{S}_{\lambda}$, we calculate
\begin{align*}
A_2=\mathcal{S}_{\theta_a}\mathcal{S}_{\lambda}\mathcal{S}_{-\theta_a}=\mathcal{S}_{\lambda}.
\end{align*}
For $A_1=\mathcal{T}_{\lambda}$, we compute
\begin{align*}
A_2=\mathcal{S}_{\theta_a}\mathcal{T}_{\lambda}\mathcal{S}_{-\theta_a}=\mathcal{S}_{\theta_a}\mathcal{T}_{\lambda-\theta_a}=\mathcal{T}_{\lambda-2\theta_a}.
\end{align*}
Next, substituting $(A_1, A_2)=(\mathcal{S}_{\lambda}, \mathcal{S}_{\lambda})$ into (\ref{rel3}) yields 
\begin{align*}
\mathcal{T}_{-\lambda}=\mathcal{S}_{\lambda}\mathcal{T}_{0}=\mathcal{T}_{0}\mathcal{S}_{\lambda}=\mathcal{T}_{\lambda}.
\end{align*}
This implies $\lambda=n\pi$ for some $n\in\mathbb{Z}$, leading to
\begin{align*}
A=\pm
\begin{pmatrix}
I & O\\
O & I
\end{pmatrix}.
\end{align*}
Substituting $(A_1, A_2)=(\mathcal{T}_{\lambda}, \mathcal{T}_{\lambda-2\theta_a})$ into (\ref{rel3}), it follows that
\begin{align*}
\mathcal{S}_{2\theta_a-\lambda}=\mathcal{T}_{\lambda-2\theta_a}\mathcal{T}_{0}=\mathcal{T}_{0}\mathcal{T}_{\lambda}=\mathcal{S}_{\lambda}.
\end{align*}
This implies $\lambda=\theta_a-n\pi$ for some $n\in\mathbb{Z}$, resulting in
\begin{align*}
A=\pm
\begin{pmatrix}
\mathcal{T}_{\theta_a} & O\\
O & \mathcal{T}_{-\theta_a}
\end{pmatrix}.
\end{align*}
Thus, the lemma is proved.
\end{proof}
\begin{rmk}\label{formAAA}\normalfont
When $a=0$, any symmetry $(A,b)\in\mathfrak{Sym}(S_g)\setminus\mathfrak{Sym}^{id}(S_g)$ corresponding to a non-identity permutation that exchanges $Q_1$ and $Q_3$ has its linear part $A$ given by left multiplication of the expression in Lemma \ref{formA} by 
\begin{align*}
\begin{pmatrix}
O& I\\ I& O
\end{pmatrix}.
\end{align*}
In other words, if $(A,b)\in\mathfrak{Sym}(S_g)\setminus\mathfrak{Sym}^{id}(S_g)$, then $A$ takes one of the following forms:
\begin{align*}
A=\begin{pmatrix}
O & \mathcal{S}_{-\lambda}\\
\mathcal{S}_{\lambda} & O
\end{pmatrix}\ \text{or}\ 
\begin{pmatrix}
O & \mathcal{T}_{-\lambda}\\
\mathcal{T}_{\lambda} & O
\end{pmatrix}\ (\lambda\in\mathbb{R}).
\end{align*}
\end{rmk}
The following proposition follows directly from Lemma \ref{Lem44} and Lemma \ref{formA}:
\begin{prop}
If $a\neq 0$, then $\left|\mathfrak{Sym}(S_g)\right|\leq 8$.
\end{prop}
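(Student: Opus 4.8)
The plan is to deduce the bound directly from the two structural facts already proved, with no further geometry required. The mechanism behind both is Lemma \ref{Lem41}: a symmetry $(A,b)\in\mathfrak{Sym}(S_g)$ is completely determined by its linear part $A$, so $(A,b)\mapsto A$ is an injective homomorphism of $\mathfrak{Sym}(S_g)$ into $O(4)$. Consequently, bounding the number of admissible linear parts bounds the order of the relevant groups.

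First I would record that, when $a\neq 0$, Lemma \ref{formA}(2) restricts the linear part of every element of $\mathfrak{Sym}^{id}(S_g)$ to the four matrices
\[
\pm\begin{pmatrix} I & O \\ O & I \end{pmatrix},\qquad \pm\begin{pmatrix} \mathcal{T}_{\theta_a} & O \\ O & \mathcal{T}_{-\theta_a}\end{pmatrix},
\]
so that, by the injectivity noted above, $|\mathfrak{Sym}^{id}(S_g)|\le 4$ (this is the last assertion of Lemma \ref{formA}, and in particular shows $\mathfrak{Sym}^{id}(S_g)$ is finite). Next I would invoke Lemma \ref{Lem44}, which states that $\mathfrak{Sym}^{id}(S_g)$ is a subgroup of $\mathfrak{Sym}(S_g)$ of index at most $2$. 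Combining these via Lagrange's theorem gives
\[
|\mathfrak{Sym}(S_g)| = \big[\mathfrak{Sym}(S_g):\mathfrak{Sym}^{id}(S_g)\big]\cdot|\mathfrak{Sym}^{id}(S_g)| \le 2\cdot 4 = 8,
\]
which also establishes a posteriori that $\mathfrak{Sym}(S_g)$ is finite in this case.

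Since both ingredients are in hand, there is essentially no obstacle; the proof is a one-line consequence of Lemma \ref{Lem44} and Lemma \ref{formA}. The only point worth keeping in mind is that Lemma \ref{formA}(2), read naively, constrains only the possible \emph{linear parts} of symmetries, and one does need Lemma \ref{Lem41} to upgrade that to a bound on the cardinality of $\mathfrak{Sym}^{id}(S_g)$ as a set.
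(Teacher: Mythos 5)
Your proof is correct and follows exactly the route the paper intends: Lemma \ref{formA}(2) (via the injectivity from Lemma \ref{Lem41}) gives $|\mathfrak{Sym}^{id}(S_g)|\le 4$, and Lemma \ref{Lem44} supplies the index-$2$ bound, yielding $|\mathfrak{Sym}(S_g)|\le 8$. The paper states the proposition "follows directly from Lemma \ref{Lem44} and Lemma \ref{formA}," which is precisely your argument.
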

It is worth noting that the Costa-Hoffman-Meeks surface of genus $g$ in $\mathbb{R}^3$ has $4(g+1)$ symmetries, which exceeds $8$ for $g\geq 2$. Given our interest in cases with large symmetry groups, such as the Costa-Hoffman-Meeks surfaces, we restrict our focus to the case where $a=0$ and $r_0=1$ in the remaining part of the paper. In this setting, the ideal configuration we aim to desingularize is the union of the Lagrangian catenoid $\Sigma_{LC}$ and the center plane $\Pi_c$ passing through its waist circle.

\section{Structure of the Underlying Riemann Surface}\label{RH}
\setcounter{equation}{0}
Let $S_g\subset\mathbb{R}^4$ be a complete, oriented, embedded minimal surface with finite total curvature and genus $g\geq 1$. Suppose that $S_g$ has at least $4(g+1)$ symmetries. As mentioned at the end of the previous section, we focus on the case where $a=0$ and $r_0=1$ so that $S_g$ has three embedded planar ends parallel to $Q_1=Q_1(0)$, $Q_2=Q_2(0, 1)$, and $Q_3$. 

Suppose that $S_g$ is represented by an embedding $X:\Sigma_g\setminus\{q_1,q_2,q_3\}\to\mathbb{R}^4$ via the generalized Weierstrass representation (\ref{gwr}), where each point $q_j$ corresponds to an embedded planar end of $S_g$ parallel to $Q_j$. We study the structure of the underlying Riemann surface $\Sigma_g$ by analyzing orientation-preserving symmetries of $S_g$.

\subsection{Orientation-preserving symmetries}
We observed in Lemma \ref{formA} that when $a=0$, if $(A, b)\in\mathfrak{Sym}^{id}(S_g)$, then $A$ takes one of the following forms:
\begin{align*}
A=\begin{pmatrix}
\mathcal{S}_{\lambda} & O\\
O & \mathcal{S}_{-\lambda}
\end{pmatrix}\ \text{or}\ 
\begin{pmatrix}
\mathcal{T}_{\lambda} & O\\
O & \mathcal{T}_{-\lambda}
\end{pmatrix}\ (\lambda\in\mathbb{R}).
\end{align*}
Let $\mathcal{H}\subset\mathfrak{Sym}^{id}(S_g)$ be the subgroup of elements that preserve the orientation. In other words, it consists of all elements $(A, b)\in\mathfrak{Sym}^{id}(S_g)$ where $A$ takes the first form above. Additionally, let $\mathcal{G}\subset\mathfrak{Sym}(S_g)$ denote the set of all orientation-preserving symmetries of $S_g$. The following lemma holds as in \cite{HM2}.
\begin{lemma}\label{Lem51}
$\mathcal{H}\subseteq \mathcal{G}\subseteq \mathfrak{Sym}(S_g)$. If $\mathcal{G}\neq\mathfrak{Sym}(S_g)$, then $\mathcal{G}$ has index $2$ in $\mathfrak{Sym}(S_g)$, and if $\mathcal{H}\neq \mathcal{G}$, then $\mathcal{H}$ has index $2$ in $\mathcal{G}$.
\end{lemma}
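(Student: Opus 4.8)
The plan is to prove the three containments and index statements in order, relying on the structural description of $\mathfrak{Sym}^{id}(S_g)$ from Lemma \ref{formA} (with $a=0$) and elementary facts about how $O(4)$-elements act on orientation.

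First I would verify $\mathcal{H}\subseteq\mathcal{G}$. An element $(A,b)\in\mathcal{H}$ has $A=\mathrm{diag}(\mathcal{S}_\lambda,\mathcal{S}_{-\lambda})$, so $\det A=(\det\mathcal{S}_\lambda)(\det\mathcal{S}_{-\lambda})=1\cdot 1=1$, i.e. $A\in SO(4)$. One then observes that an orientation-preserving rigid motion of $\mathbb{R}^4$ restricts to an orientation-preserving diffeomorphism of the oriented surface $S_g$: since $S_g$ is a minimal (hence oriented by its normal data) surface and $A$ preserves the ambient orientation, the induced self-map of $S_g$ either preserves or reverses the surface orientation; I would argue that for these particular block-diagonal rotations it preserves it — e.g. by noting that $\mathcal{H}$ is connected to the identity through the one-parameter family $(\mathcal{S}_{t\lambda},\mathcal{S}_{-t\lambda})$ of symmetries' linear parts acting continuously, so the sign of the orientation action is constant and equals $+1$ at $t=0$. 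Hence $\mathcal{H}\subseteq\mathcal{G}$, and $\mathcal{G}\subseteq\mathfrak{Sym}(S_g)$ is immediate from the definition.

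Next, for the index-$2$ claims: $\mathcal{G}$ is the kernel of the homomorphism $\epsilon:\mathfrak{Sym}(S_g)\to\{\pm1\}$ sending a symmetry to the sign of its action on the orientation of $S_g$, so $\mathcal{G}$ is normal in $\mathfrak{Sym}(S_g)$ with index $1$ or $2$ depending on whether $\epsilon$ is trivial; thus if $\mathcal{G}\neq\mathfrak{Sym}(S_g)$ the index is exactly $2$. For $\mathcal{H}$ inside $\mathcal{G}$ I would intersect $\mathcal{G}$ with the index-at-most-$2$ subgroup $\mathfrak{Sym}^{id}(S_g)$ of Lemma \ref{Lem44}: inside $\mathfrak{Sym}^{id}(S_g)$ the orientation sign is governed by whether $A$ is of $\mathcal{S}$-type (giving $\det A=1$, the orientation-preserving case, which is $\mathcal{H}$) or of $\mathcal{T}$-type (giving $\det A=-1$), so $\mathcal{H}=\mathcal{G}\cap\mathfrak{Sym}^{id}(S_g)$ has index at most $2$ in $\mathcal{G}$, and hence index exactly $2$ when $\mathcal{H}\neq\mathcal{G}$. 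A small point to check here is that $\mathcal{T}$-type linear parts really do reverse the surface orientation and not merely the ambient one composed with something that cancels — this follows because $\det\mathcal{S}_\lambda=1$ while $\det\mathcal{T}_\lambda=-1$, so $\mathcal{T}$-type elements lie in $\mathfrak{Sym}(S_g)\setminus SO(4)$ and the ambient-orientation-reversing part forces $\epsilon=-1$ given that the normal-bundle contribution is handled consistently.

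The main obstacle I anticipate is the bookkeeping that relates the sign of the \emph{ambient} orientation (i.e. $\det A=\pm1$) to the sign of the action on the \emph{intrinsic} orientation of the $2$-dimensional surface $S_g$ — in codimension $2$ these are not the same thing, since a motion reversing the ambient orientation could in principle preserve the surface orientation while reversing the normal-plane orientation. I would handle this by appealing to the generalized Weierstrass setup: the intrinsic orientation of $S_g$ is the one coming from the conformal structure on $\Sigma_g$, a symmetry acts biholomorphically or anti-biholomorphically on $\Sigma_g$, and $\mathcal{S}$-type block rotations correspond to biholomorphisms while $\mathcal{T}$-type ones correspond to anti-biholomorphisms; this is exactly the dichotomy used in \cite{HM2}, which the excerpt explicitly invokes ("The following lemma holds as in \cite{HM2}"), so the argument is parallel to the three-dimensional case once the list in Lemma \ref{formA} is in hand.
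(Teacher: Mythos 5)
Your group-theoretic skeleton is exactly the paper's: its proof consists of observing that $\mathcal{H}=\mathcal{G}\cap\mathfrak{Sym}^{id}(S_g)$, deducing $[\mathcal{G}:\mathcal{H}]\le 2$ from the index bound of Lemma \ref{Lem44}, and getting $[\mathfrak{Sym}(S_g):\mathcal{G}]\le 2$ from the fact that the composition of two orientation-reversing symmetries is orientation-preserving. Moreover, since the paper \emph{defines} $\mathcal{H}$ as the subgroup of orientation-preserving elements of $\mathfrak{Sym}^{id}(S_g)$, the identity $\mathcal{H}=\mathcal{G}\cap\mathfrak{Sym}^{id}(S_g)$ is essentially definitional and the whole lemma follows formally; none of the orientation bookkeeping you devote most of your effort to is actually required for this particular statement.

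That said, the bookkeeping you do supply contains a concrete error. For the block-diagonal matrices of Lemma \ref{formA} one has $\det\bigl(\operatorname{diag}(\mathcal{T}_\lambda,\mathcal{T}_{-\lambda})\bigr)=(-1)\cdot(-1)=+1$, so the $\mathcal{T}$-type elements of $\mathfrak{Sym}^{id}(S_g)$ lie in $SO(4)$, not in $O(4)\setminus SO(4)$ as you assert; every element of $\mathfrak{Sym}^{id}(S_g)$ preserves the ambient orientation, so $\det A$ cannot separate $\mathcal{H}$ from its complement in $\mathfrak{Sym}^{id}(S_g)$, and the sentence about ``the ambient-orientation-reversing part'' forcing $\epsilon=-1$ rests on a false premise. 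Your connectedness argument for $\mathcal{H}\subseteq\mathcal{G}$ is also not valid as stated: the intermediate rotations $\operatorname{diag}(\mathcal{S}_{t\lambda},\mathcal{S}_{-t\lambda})$ are not symmetries of $S_g$, so they induce no self-map of the surface whose orientation sign could be tracked along the path. The only working route to ``$\mathcal{S}$-type preserves, $\mathcal{T}$-type reverses the surface orientation'' is the one you reach in your final paragraph: under the identification $\mathbb{R}^4\simeq\mathbb{C}^2$ the $\mathcal{S}$-type elements act as $(z,w)\mapsto(e^{i\lambda}z,e^{-i\lambda}w)$ and the $\mathcal{T}$-type ones as $(z,w)\mapsto(e^{-i\lambda}\bar z,e^{i\lambda}\bar w)$, so the induced self-map of $\Sigma_g$ intertwines the generalized Gauss map holomorphically in the first case (this is precisely the biholomorphism lemma the paper proves immediately afterwards) and anti-holomorphically in the second. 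Keep that argument, delete the determinant claims and the homotopy argument, and the proof is correct — though for Lemma \ref{Lem51} itself the cheap definitional route above already suffices.
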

\begin{proof}
Using the above representation of $A$ when $(A,b)\in\mathcal{H}$, it is straightforward to verify that $\mathcal{H}=\mathcal{G}\cap\mathfrak{Sym}^{id}(S_g)$. By Lemma \ref{Lem44}, $\mathcal{H}$ is a subgroup of $\mathcal{G}$ with an index of at most $2$. Furthermore, since the composition of two orientation-reversing symmetries results in an orientation-preserving one, it follows that $\mathcal{G}$ is a subgroup of $\mathfrak{Sym}(S_g)$ with an index of at most $2$.
\end{proof}
\begin{rmk}\normalfont
In \cite{HM2}, the unit normal to the surface at each end is assumed to be parallel to the $x_3$-axis. Under this assumption, $\text{H}$ is defined as the group of all symmetries that are rotations about the $x_3$-axis, while $\text{G}$ denotes the group of orientation-preserving symmetries. Both $\text{H}$ in \cite{HM2} and $\mathcal{H}$ in our setting fix all three ends of the surface. For further details, readers may refer to the proof of \cite[Theorem 6.1]{HM2}.
\end{rmk}
We now prove that each element of $\mathcal{H}$ induces a biholomorphism on $\Sigma_g$. Let $(A, b)\in\mathcal{H}$. Since $S_g$ is given by an embedding $X:\Sigma_g\setminus\{q_1, q_2, q_3\}\to\mathbb{R}^4$, the action of $(A, b)$ on $S_g$ induces a diffeomorphism
\begin{align*}
\mu_{(A, b)}: \Sigma_g\setminus\{q_1,q_2,q_3\}\to\Sigma_g\setminus\{q_1,q_2,q_3\}
\end{align*}
such that $(A, b)\circ X=X\circ \mu_{(A, b)}$. As $(A, b)\in\mathcal{H}$ fixes all the three ends of $S_g$, this diffeomorphism extends to a homeomorphism $\tilde{\mu}_{(A,b)}$ of $\Sigma_g$ by setting
\begin{align*}
\tilde{\mu}_{(A,b)}(p)=
\begin{cases}
\mu_{(A,b)}(p) &\text{if}\ p\in\Sigma_g\setminus\{q_1,q_2,q_3\}\\
p &\text{if}\ p\in\{q_1,q_2,q_3\}
\end{cases}.
\end{align*}
\begin{lemma}
The map $\tilde{\mu}_{(A, b)}: \Sigma_g\to\Sigma_g$ is a biholomorphism.
\end{lemma}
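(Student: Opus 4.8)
The plan is to use the fact that the complex structure on $\Sigma_g$ is the one determined by requiring the Weierstrass immersion $X$ to be conformal, combined with the observation that a rigid motion is a Euclidean isometry, and then to remove the singularities at the three punctures.

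First I would fix the characterization of the Riemann surface structure on $\Sigma_g$: it is the conformal class of the induced metric $ds^{2}:=X^{*}\langle\,\cdot\,,\cdot\,\rangle$, since in the generalized Weierstrass representation (\ref{gwr}) the map $X$ is a conformal harmonic immersion. Because $(A,b)$ is a rigid motion of $\mathbb{R}^{4}$ it preserves the Euclidean inner product, so from the relation $(A,b)\circ X=X\circ\mu_{(A,b)}$ I obtain
\[
\mu_{(A,b)}^{*}\,ds^{2}=\big(X\circ\mu_{(A,b)}\big)^{*}\langle\,\cdot\,,\cdot\,\rangle=\big((A,b)\circ X\big)^{*}\langle\,\cdot\,,\cdot\,\rangle=X^{*}\langle\,\cdot\,,\cdot\,\rangle=ds^{2}.
\]
Hence $\mu_{(A,b)}$ is an isometry of $(\Sigma_g\setminus\{q_1,q_2,q_3\},ds^{2})$, and in particular it is conformal; a conformal diffeomorphism of a Riemann surface is either holomorphic or antiholomorphic. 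To exclude the antiholomorphic case I would use orientations: the orientation that $X$ induces on $S_g$ from the complex orientation of $\Sigma_g$ is precisely the chosen orientation of $S_g$, and $(A,b)\in\mathcal{H}\subseteq\mathcal{G}$ is orientation-preserving on $S_g$ by Lemma \ref{Lem51}, so $\mu_{(A,b)}$ preserves the orientation of $\Sigma_g$. Therefore $\mu_{(A,b)}$ is a biholomorphism of $\Sigma_g\setminus\{q_1,q_2,q_3\}$.

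It then remains to pass to the compact surface. The extension $\tilde\mu_{(A,b)}$ is continuous on $\Sigma_g$ and holomorphic on the complement of the finite set $\{q_1,q_2,q_3\}$, so by Riemann's removable singularity theorem it is holomorphic on all of $\Sigma_g$; applying the same reasoning to $(A,b)^{-1}\in\mathcal{H}$ shows that $\tilde\mu_{(A,b)}^{-1}=\tilde\mu_{(A,b)^{-1}}$ is holomorphic as well, so $\tilde\mu_{(A,b)}$ is a biholomorphism. I do not expect a genuine obstacle here, since the argument is standard; the only points that need care are the orientation bookkeeping that rules out the antiholomorphic alternative and the (routine) verification that a continuous self-map of a Riemann surface which is holomorphic away from finitely many points extends holomorphically across them.
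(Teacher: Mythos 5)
Your proof is correct, but it takes a genuinely different route from the paper's. The paper argues through the generalized Gauss map: it uses the explicit form of $A$ for elements of $\mathcal{H}$ to show that $(G_1,G_2)$ transforms as $(G_1,e^{2i\lambda}G_2)$ under $\mu_{(A,b)}$, applies the chain rule and the holomorphicity of $G_1,G_2$ to conclude $\bar\partial\mu_{(A,b)}=0$ away from the (isolated) common critical points of $G_1$ and $G_2$, and then invokes Riemann's removable singularity theorem at those points as well as at the punctures. Your argument instead exploits that $X$ is a conformal immersion, so the complex structure on the punctured surface is the conformal class of $X^{*}\langle\cdot,\cdot\rangle$; since $(A,b)$ is a Euclidean isometry, $\mu_{(A,b)}$ is an isometry of this metric, hence conformal, and the orientation-preserving hypothesis built into $\mathcal{H}\subseteq\mathcal{G}$ rules out the antiholomorphic alternative. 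What your approach buys is generality and economy: it never uses the explicit matrix form of $A$ or the transformation law of the Gauss map, it applies verbatim to any orientation-preserving symmetry, and it yields holomorphicity on the entire punctured surface at once, so the removable-singularity step is needed only at the three punctures rather than also at the interior branch locus $\mathcal{F}$. The two points you flag as needing care (the orientation bookkeeping identifying the complex orientation of $\Sigma_g$ with the chosen orientation of $S_g$ via $X$, and the continuity-plus-connectedness argument that a conformal diffeomorphism is globally holomorphic or globally antiholomorphic) are indeed the only places where detail must be supplied, and both are routine.
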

\begin{proof}
Assume that $A$ is represented in the form:
\begin{align*}
A=\begin{pmatrix}
\mathcal{S}_{\lambda} & O\\
O & \mathcal{S}_{-\lambda}
\end{pmatrix}
\end{align*}
for some $\lambda\in\mathbb{R}$, with respect to the standard basis of $\mathbb{R}^4$. The generalized Gauss map $(G_1,G_2):\Sigma_g\setminus\{q_1,q_2,q_3\}\to\left(\mathbb{C}\cup\{\infty\}\right)^2$ satisfies
\begin{align*}
\left(G_1\circ\mu_{(A,b)}(p), G_2\circ\mu_{(A,b)}(p)\right)=\left(G_1(p), e^{2i\lambda}G_2(p)\right)
\end{align*}
for all $p\in\Sigma_g\setminus\{q_1,q_2,q_3\}$. Using the holomorphicity of $G_1$ and $G_2$, we derive
\begin{align*}
0=\frac{\partial}{\partial\bar{z}}\left(G_j\circ\mu_{(A,b)}\right)(z)=\frac{\partial G_j}{\partial w}\left(\mu_{(A,b)}(z)\right)\cdot\frac{\partial}{\partial\bar{z}}\mu_{(A,b)}(z),
\end{align*}
where $z$ and $w$ are local complex coordinates of $\Sigma_g$ near $p$ and $\mu_{(A,b)}(p)$, respectively. Thus,
\begin{align*}
\bar{\partial}\mu_{(A,b)}=0
\end{align*}
on $\mu_{(A,b)}^{-1}(\mathcal{F})\cap\Sigma_g\setminus\{q_1,q_2,q_3\}$, where $\mathcal{F}$ is defined as
\begin{align*}
\mathcal{F}:=\left\{p\in\Sigma_g\setminus\{q_1,q_2,q_3\}\ \left|\right.\ \partial G_1(p)=\partial G_2(p)=0\right\}.
\end{align*}

The set $\mathcal{F}$ consists of isolated points since $G_1$ and $G_2$ extend meromorphically to $\Sigma_g$. If $\mathcal{F}$ were not isolated, $G_1$ and $G_2$ would be constant, contradicting the assumption that $S_g$ is not a plane. Consequently, $\mu_{(A,b)}$ is holomorphic on $\Sigma_g$ except at finitely many points. Since $\mu_{(A, b)}$ extends to a homeomorphism $\tilde{\mu}_{(A, b)}$, these points are removable singularities by Riemann's theorem. Therefore $\tilde{\mu}_{(A, b)}$ is a biholomorphism. This completes the proof.
\end{proof}
\begin{rmk}\normalfont
Although the definition of $\mu_{(A, b)}$ depends on the choice of a conformal harmonic embedding $X:\Sigma_g\setminus\{q_1, q_2, q_3\}\to\mathbb{R}^4$, fixing a specific $X$ ensures that the correspondence $(A, b)\mapsto \tilde{\mu}_{(A, b)}$ is one-to-one. This follows directly from the definition of $\mu$. In particular, if $\tilde{\mu}_{(A_1, b_1)}=\tilde{\mu}_{(A_2, b_2)}$, then $A_1=A_2$ and $b_1=b_2$.
\end{rmk}

\subsection{Order of the symmetry group $\mathfrak{Sym}(S_g)$}
By the definition of $\mathcal{H}$ and Lemma \ref{Lem41}, $\mathcal{H}$ can be regarded as a subgroup of the circle group $S^1$ via the map $(A, b)\in\mathcal{H}\mapsto e^{i\lambda}\in S^1$. The biholomorphisms induced by $\mathcal{H}$ fix three points $q_1$, $q_2$, and $q_3$. It is known that the set of biholomorphisms of $\Sigma_g$ fixing three points is finite. Consequently, $\mathcal{H}$ is a finite group. Furthermore, as every finite subgroup of the circle group is cyclic, it follows that $\mathcal{H}$ is cyclic.

Let $(A_{\mathcal{H}}, b_{\mathcal{H}})\in\mathcal{H}$ be a generator corresponding to $e^{i\frac{2\pi}{|\mathcal{H}|}}\in S^1$. Since the number of symmetries is assumed to be at least $4(g+1)$, Lemma \ref{Lem51} implies that 
\begin{align*}
|\mathcal{H}|\geq g+1\geq 2.
\end{align*}
Thus, $(A_{\mathcal{H}}, b_{\mathcal{H}})$ is not the identity and acts freely on $\mathbb{R}^4\setminus\{p_0\}$, where $p_0:=(I-A_{\mathcal{H}})^{-1}b_{\mathcal{H}}$. For simplicity, we denote $\tilde{\mu}_{\mathcal{H}}:=\tilde{\mu}_{(A_{\mathcal{H}}, b_{\mathcal{H}})}$. Following \cite{HM2}, consider the quotient map
\begin{align*}
\mathcal{Q}_{\mathcal{H}}: \Sigma_g\to\Sigma_g/\left\langle\tilde{\mu}_{\mathcal{H}}\right\rangle.
\end{align*}
The branch points of $\mathcal{Q}_{\mathcal{H}}$ are precisely the fixed points of $\tilde{\mu}_{\mathcal{H}}$ given by $\{q_1, q_2, q_3\}\cup X^{-1}(p_0)$, where $X^{-1}(p_0)$ may possibly be empty. Denote the number of branch points by $n$. Then $n\geq 3$, and each branch point has a branching order $|\mathcal{H}|-1$.

By the Riemann-Hurwitz formula \cite{D}, we have
\begin{align*}
|\mathcal{H}|\chi\left(\Sigma_g/\left\langle\tilde{\mu}_{\mathcal{H}}\right\rangle\right)=\chi\left(\Sigma_g\right)+n(|\mathcal{H}|-1)\geq 2(1-g)+3g=2+g>0.
\end{align*}
This implies that $\Sigma_g/\left\langle\tilde{\mu}_{\mathcal{H}}\right\rangle$ is a sphere $S^2$ with Euler characteristic $\chi\left(\Sigma_g/\left\langle\tilde{\mu}_{\mathcal{H}}\right\rangle\right)=2$. Substituting this value into the above formula yields
\begin{align*}
|\mathcal{H}|=1+\frac{2g}{n-2}.
\end{align*}
Since $|\mathcal{H}|\geq g+1$, we obtain $n\leq 4$. Therefore, $n=3$ or $n=4$, and in each case, $|\mathcal{H}|=2g+1$ or $|\mathcal{H}|=g+1$, respectively. Furthermore, since the number of symmetries satisfies 
\begin{align*}
\left|\mathfrak{Sym}(S_g)\right|\geq 4(g+1)>2(2g+1),
\end{align*}
it follows that both inclusions $\mathcal{H}\subseteq\mathcal{G}$ and $\mathcal{G}\subseteq\mathfrak{Sym}(S_g)$ in Lemma \ref{Lem51} have index $2$. Consequently, under the assumption $\left|\mathfrak{Sym}(S_g)\right|\geq 4(g+1)$, the total number of symmetries must be either $4(g+1)$ or $4(2g+1)$. 

Combining these observations results in the following proposition:
\begin{prop}\label{prop55}
If $\left|\mathfrak{Sym}(S_g)\right|\geq 4(g+1)$, then the quotient map $\mathcal{Q}_{\mathcal{H}}:\Sigma_g\to\Sigma_g/\left\langle\tilde{\mu}_{\mathcal{H}}\right\rangle$ is a $|\mathcal{H}|$-fold cyclic branched covering over $S^2$, satisfying one of the following:
\begin{itemize}
\item[(1)] $|\mathcal{H}|=2g+1$ with branch points $\{q_1, q_2, q_3\}$.
\item[(2)] $|\mathcal{H}|=g+1$ with branch points $\{q_0, q_1, q_2, q_3\}$, where $q_0\in\Sigma_g$ is the unique point such that $X(q_0)=p_0\left(=(I-A_{\mathcal{H}})^{-1}b_{\mathcal{H}}\right)$.
\end{itemize}
In both cases, the total number of symmetries is given by $\left|\mathfrak{Sym}(S_g)\right|=4|\mathcal{H}|$.
\end{prop}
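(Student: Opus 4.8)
The plan is simply to package together the facts about $\mathcal{H}$ already established in this section and run them through the Riemann--Hurwitz formula; there is nothing genuinely new, so I would lay it out in three short steps plus a remark on the one delicate point. \textbf{Step 1 (the generator).} First I record that $\mathcal{H}$ embeds into $S^1$ by $(A,b)\mapsto e^{i\lambda}$ (Lemma \ref{Lem41}), is finite because the biholomorphisms it induces all fix $q_1,q_2,q_3$ and an automorphism group of a surface of genus $g\geq1$ fixing three points is finite, and is therefore cyclic. Fixing a generator $(A_{\mathcal{H}},b_{\mathcal{H}})$ and combining Lemma \ref{Lem51} with the hypothesis $|\mathfrak{Sym}(S_g)|\geq4(g+1)$ gives $|\mathcal{H}|\geq g+1\geq2$, hence $A_{\mathcal{H}}\neq I$ (otherwise $b_{\mathcal{H}}=0$ and the generator is trivial). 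Since $A_{\mathcal{H}}$ is a pair of planar rotations through the angle $2\pi/|\mathcal{H}|\notin2\pi\mathbb{Z}$, each nontrivial power of $(A_{\mathcal{H}},b_{\mathcal{H}})$ has invertible linear-part-minus-identity and thus the single fixed point $p_0=(I-A_{\mathcal{H}})^{-1}b_{\mathcal{H}}$, so $\langle(A_{\mathcal{H}},b_{\mathcal{H}})\rangle$ acts freely on $\mathbb{R}^4\setminus\{p_0\}$.

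\textbf{Step 2 (branch locus).} Next I would identify the fixed set of $\tilde{\mu}_{\mathcal{H}}$: a point $p$ is fixed exactly when $p\in\{q_1,q_2,q_3\}$ (each element of $\mathcal{H}$ preserves every end) or $X(p)=p_0$, and the latter, $X$ being an embedding, accounts for at most one point $q_0$. Hence the branch locus of the cyclic quotient $\mathcal{Q}_{\mathcal{H}}$ is $\{q_1,q_2,q_3\}$, possibly together with $q_0$, so the number $n$ of branch points is $3$ or $4$; and because $\langle\tilde{\mu}_{\mathcal{H}}\rangle$ is cyclic of order $|\mathcal{H}|$ and acts freely off this set, each branch point is totally ramified with branching order $|\mathcal{H}|-1$.

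\textbf{Step 3 (Riemann--Hurwitz and the symmetry count).} Since $\tilde{\mu}_{\mathcal{H}}$ is holomorphic, the quotient $\Sigma_g/\langle\tilde{\mu}_{\mathcal{H}}\rangle$ is closed and orientable, and the Riemann--Hurwitz formula gives
\begin{align*}
|\mathcal{H}|\,\chi\!\left(\Sigma_g/\langle\tilde{\mu}_{\mathcal{H}}\rangle\right)=\chi(\Sigma_g)+n(|\mathcal{H}|-1)\geq 2(1-g)+3g=g+2>0,
\end{align*}
which forces $\chi\left(\Sigma_g/\langle\tilde{\mu}_{\mathcal{H}}\rangle\right)=2$, i.e. the quotient is $S^2$. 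Feeding $\chi=2$ back in yields $|\mathcal{H}|=1+\tfrac{2g}{n-2}$, and $|\mathcal{H}|\geq g+1$ forces $n\leq4$; therefore either $n=3$ with $|\mathcal{H}|=2g+1$, or $n=4$ with $|\mathcal{H}|=g+1$ and the extra branch point equal to $q_0$. Finally, in both cases $2|\mathcal{H}|\leq4g+2<4(g+1)\leq|\mathfrak{Sym}(S_g)|$, so neither inclusion $\mathcal{H}\subseteq\mathcal{G}\subseteq\mathfrak{Sym}(S_g)$ of Lemma \ref{Lem51} can have index $1$; both must have index $2$, giving $|\mathfrak{Sym}(S_g)|=4|\mathcal{H}|$.

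\textbf{The main obstacle.} Since the analytic input is all absorbed in the earlier lemmas, the only point needing real care is Step 2: one must check that the fixed set of $\tilde{\mu}_{\mathcal{H}}$ is \emph{exactly} $\{q_1,q_2,q_3\}\cup X^{-1}(p_0)$ with no hidden fixed points, and that the stabilizer of each such point is all of $\langle\tilde{\mu}_{\mathcal{H}}\rangle$ so the branching order is precisely $|\mathcal{H}|-1$. The first is the intertwining relation $(A_{\mathcal{H}},b_{\mathcal{H}})\circ X=X\circ\mu_{\mathcal{H}}$ combined with the freeness of the rigid motion on $\mathbb{R}^4\setminus\{p_0\}$ from Step 1; the second follows because near each $q_j$ and near $q_0$ the action of $\tilde{\mu}_{\mathcal{H}}$ linearizes to multiplication by a primitive $|\mathcal{H}|$-th root of unity, $\mathcal{H}$ acting faithfully.
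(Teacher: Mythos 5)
Your proposal is correct and follows essentially the same route as the paper: $\mathcal{H}$ is finite cyclic, the branch locus of $\mathcal{Q}_{\mathcal{H}}$ is $\{q_1,q_2,q_3\}\cup X^{-1}(p_0)$, Riemann--Hurwitz forces the quotient to be $S^2$ with $|\mathcal{H}|=1+\tfrac{2g}{n-2}$, and the bound $2(2g+1)<4(g+1)$ forces both inclusions of Lemma \ref{Lem51} to have index $2$. Your extra observation that every nontrivial power of $(A_{\mathcal{H}},b_{\mathcal{H}})$ fixes only $p_0$ (so each branch point is totally ramified) is a point the paper asserts without comment, and it is a welcome bit of added care rather than a departure.
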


\subsection{Cyclic coverings}
Let $\mathcal{B}$ denote the set of branch points of $\mathcal{Q}_{\mathcal{H}}$. By restricting $\mathcal{Q}_{\mathcal{H}}$ to $\Sigma_g\setminus\mathcal{B}$, we obtain an unbranched cyclic covering over $S^2\setminus\mathcal{Q}_{\mathcal{H}}(\mathcal{B})$. This cyclic covering is determined by the lifting behavior of loops in $\pi_1\left(S^2\setminus\mathcal{Q}_{\mathcal{H}}\right)$ under $\mathcal{Q}_{\mathcal{H}}$. More precisely, for each $q_j\in\mathcal{B}$, let $\gamma_{q_j}(t)$ be a loop in $S^2\setminus\mathcal{Q}_{\mathcal{H}}(\mathcal{B})$ that encircles $\mathcal{Q}_{\mathcal{H}}(q_j)$ counterclockwise exactly once, and let $\tilde{\gamma}_{q_j}(t)$ denote its lift. The endpoints of $\tilde{\gamma}_{q_j}(t)$ differ by $\left(\tilde{\mu}_{\mathcal{H}}\right)^{N_{q_j}}$, where $0\leq N_{q_j}<|\mathcal{H}|$. Since the loops $\gamma_{q_j}$ generate $\pi_1\left(S^2\setminus\mathcal{Q}_{\mathcal{H}}\right)$, the integers $N_{q_j}$ determine the kernel of the homomorphism
\begin{align*}
\pi_1\left(S^2\setminus\mathcal{Q}_{\mathcal{H}}\right)\to \mathbb{Z}/|\mathcal{H}|\mathbb{Z}.
\end{align*}
Cyclic coverings over $S^2\setminus\mathcal{Q}_{\mathcal{H}}(\mathcal{B})$ with the same kernel are equivalent. Therefore, the integers $N_{q_j}$ characterize the equivalence class of cyclic coverings.

In \cite{HM2}, the integers $N_{q_j}$ were computed geometrically using a generator that rotates clockwise by $\frac{2\pi}{|\mathcal{H}|}$ about the $x_3$-axis. In the codimension $1$ setting, an embedding divides $\mathbb{R}^3$ into two regions, with all ends being parallel. This property uniquely determines both the orientation of each end and its rotation under the generator. By contrast, in cases where the codimension is greater than $1$, the orientations of the ends are no longer uniquely determined from the embeddedness. Nevertheless, the integers $N_{q_j}$ can still be computed for each possible configuration of orientations, allowing the explicit construction of an equivalent cyclic covering in each case as in \cite{HM2}.

Now, we explain how the integers $N_{q_j}$ are calculated in our case using the image of the generalized Gauss map. First, consider the case where $q\in\Sigma_g\setminus\{q_1, q_2, q_3\}$ is a branch point. This implies that the tangent plane $T_{X(q)}S_g$ is $A_{\mathcal{H}}$-invariant. Recall that the generalized Gauss map is given by $[2X_z]=[X_x-iX_y]\in\mathbb{P}^3$, where $z=x+iy$ is a local complex coordinate of $\Sigma_g$. Hence, if the image of the generalized Gauss map at $q$ is $[v_1+iv_2]\in\mathbb{P}^3$ for some $v_1, v_2\in\mathbb{R}^4$, then the oriented orthonormal basis of $T_{X(q)}S_g$ can be chosen as 
\begin{align*}
\mathfrak{B}=\left\{\frac{v_1}{|v_1|}, -\frac{v_2}{|v_2|}\right\}.
\end{align*}
Since $T_{X(q)}S_g$ is $A_{\mathcal{H}}$-invariant, the matrix representation of $A_{\mathcal{H}}|_{T_{X(q)}S_g}$ with respect to $\mathfrak{B}$ is given by
\begin{align*}
[A_{\mathcal{H}}|_{T_{X(q)}S_g}]_{\mathfrak{B}}=\mathcal{S}_{\pm\frac{2\pi}{|\mathcal{H}|}}.
\end{align*}
Then the integer $N_q$ is determined as
\begin{align*}
N_q=
\begin{cases}
1\ &\text{if}\ [A_{\mathcal{H}}|_{T_{X(q)}S_g}]_{\mathfrak{B}}=\mathcal{S}_{\frac{2\pi}{|\mathcal{H}|}},\\
|\mathcal{H}|-1\ &\text{if}\ [A_{\mathcal{H}}|_{T_{X(q)}S_g}]_{\mathfrak{B}}=\mathcal{S}_{-\frac{2\pi}{|\mathcal{H}|}}.
\end{cases}
\end{align*}

Next, let $q\in\Sigma_g$ be a branch point corresponding to an end, i.e., $q\in\{q_1, q_2, q_3\}$. When the generalized Gauss map at $q$ is given by $[v_1+iv_2]\in\mathbb{P}^3$, let us choose an oriented orthonormal basis in the same manner as described above. In this case, a reversal in orientation occurs when considering the rotation with respect to $A_{\mathcal{H}}$. Therefore, the above values should be reversed:
\begin{align*}
N_q=
\begin{cases}
|\mathcal{H}|-1\ &\text{if}\ [A_{\mathcal{H}}|_{V_{\mathfrak{B}}}]_{\mathfrak{B}}=\mathcal{S}_{\frac{2\pi}{|\mathcal{H}|}},\\
1\ &\text{if}\ [A_{\mathcal{H}}|_{V_{\mathfrak{B}}}]_{\mathfrak{B}}=\mathcal{S}_{-\frac{2\pi}{|\mathcal{H}|}},
\end{cases}
\end{align*}
where $V_{\mathfrak{B}}$ denotes an $\mathbb{R}$-vector space generated by $\mathfrak{B}$.

Since $S_g$ has three embedded planar ends parallel to $Q_1(0)$, $Q_2(0, 1)$, and $Q_3$, we may write the images of the generalized Gauss map at $q_1$, $q_2$, and $q_3$ as
\begin{align}\label{GMI}
\begin{cases}
\Phi(q_1)=[1, \sigma_1i, 0, 0]\in\mathbb{P}^3,\quad \Phi(q_2)=[1, \sigma_2i, 1, -\sigma_2i]\in\mathbb{P}^3,\\ 
\Phi(q_3)=[0, 0, 1, -\sigma_3i]\in\mathbb{P}^3,
\end{cases}
\end{align}
for some $\sigma_1, \sigma_2, \sigma_3\in\{-1,1\}$. Based on the above argument, we compute the following result:
\begin{lemma}\label{Lem56}
For $1\leq j\leq 3$, we have $N_{q_j}\equiv\sigma_j$ \textup{(mod} $|\mathcal{H}|)$.
\end{lemma}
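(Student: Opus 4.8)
The plan is to unwind the definition of $N_{q_j}$ given just above the statement, applying it to each of the three ends using the explicit images of the generalized Gauss map recorded in \eqref{GMI}. For each $j$, the recipe says: write $\Phi(q_j)=[v_1+iv_2]$ with $v_1,v_2\in\mathbb{R}^4$, form the oriented orthonormal frame $\mathfrak{B}=\{v_1/|v_1|,-v_2/|v_2|\}$ of the asymptotic plane $Q_j$, compute the matrix of $A_{\mathcal{H}}\big|_{Q_j}$ with respect to $\mathfrak{B}$, and read off whether it equals $\mathcal{S}_{2\pi/|\mathcal{H}|}$ or $\mathcal{S}_{-2\pi/|\mathcal{H}|}$; then, because these are end points, the values of $N_{q_j}$ are the \emph{reversed} ones: $|\mathcal{H}|-1$ in the $+$ case and $1$ in the $-$ case, i.e. $N_{q_j}\equiv -1$ or $N_{q_j}\equiv +1\pmod{|\mathcal{H}|}$ respectively. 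So the whole claim reduces to checking, for each $j$, that the sign of the rotation angle of $A_{\mathcal{H}}|_{Q_j}$ in the frame $\mathfrak{B}$ is $-\sigma_j\cdot\frac{2\pi}{|\mathcal{H}|}$.

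First I would fix notation for the generator. From Lemma~\ref{formA} with $a=0$, the linear part of an element of $\mathcal{H}$ has the block form $\mathrm{diag}(\mathcal{S}_\lambda,\mathcal{S}_{-\lambda})$ on the splitting $\mathbb{R}^4=\{x_{34}=0\}\oplus\{x_{12}=0\}$, so writing $\lambda_0=2\pi/|\mathcal{H}|$ we have $A_{\mathcal{H}}=\mathrm{diag}(\mathcal{S}_{\lambda_0},\mathcal{S}_{-\lambda_0})$, i.e. on the $(z,w)\in\mathbb{C}^2$ picture $A_{\mathcal{H}}\colon(z,w)\mapsto(e^{i\lambda_0}z,e^{-i\lambda_0}w)$. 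Now I treat the three ends in turn. For $q_1$: $\Phi(q_1)=[1,\sigma_1 i,0,0]=[(1,0,0,0)+i(0,\sigma_1,0,0)]$, so $Q_1$ is the $z$-plane $\{w=0\}$ and $\mathfrak{B}=\{(1,0,0,0),\,-\sigma_1(0,1,0,0)\}=\{e_1,-\sigma_1 e_2\}$. Restricted to the $z$-plane, $A_{\mathcal{H}}$ acts by $\mathcal{S}_{\lambda_0}$ in the frame $\{e_1,e_2\}$; conjugating by the change of frame $\{e_1,e_2\}\to\{e_1,-\sigma_1 e_2\}$ (which is $\mathrm{diag}(1,-\sigma_1)$) leaves $\mathcal{S}_{\lambda_0}$ fixed if $\sigma_1=1$ and sends it to $\mathcal{S}_{-\lambda_0}$ if $\sigma_1=-1$ — i.e. the matrix in $\mathfrak{B}$ is $\mathcal{S}_{\sigma_1\lambda_0}$. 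Since $q_1$ is an end, the reversal rule gives $N_{q_1}=1$ when $\sigma_1=-1$ and $N_{q_1}=|\mathcal{H}|-1$ when $\sigma_1=1$, which is exactly $N_{q_1}\equiv\sigma_1\pmod{|\mathcal{H}|}$. For $q_3$: $\Phi(q_3)=[0,0,1,-\sigma_3 i]=[(0,0,1,0)+i(0,0,0,-\sigma_3)]$, so $Q_3$ is the $w$-plane $\{z=0\}$ with $\mathfrak{B}=\{e_3,\sigma_3 e_4\}$, and there $A_{\mathcal{H}}$ acts by $\mathcal{S}_{-\lambda_0}$ in $\{e_3,e_4\}$; conjugating by $\mathrm{diag}(1,\sigma_3)$ turns this into $\mathcal{S}_{-\sigma_3\lambda_0}$, and the end-reversal rule again yields $N_{q_3}\equiv\sigma_3$.

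For $q_2$ the plane $Q_2=Q_2(0,1)$ is the "diagonal" plane $\{w=\bar z\}$, spanned (per Subsection~\ref{subsec21}) by $(1,0,1,0)$ and $(0,1,0,-1)$; from $\Phi(q_2)=[1,\sigma_2 i,1,-\sigma_2 i]=[(1,0,1,0)+i(0,\sigma_2,0,-\sigma_2)]$ we get $v_1=(1,0,1,0)$, $v_2=\sigma_2(0,1,0,-1)$, hence (after normalizing, which only rescales and does not affect the rotation sign) $\mathfrak{B}=\{\tfrac1{\sqrt2}(1,0,1,0),\,-\tfrac{\sigma_2}{\sqrt2}(0,1,0,-1)\}$. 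Here the one genuinely non-trivial computation is to verify that $Q_2$ is indeed $A_{\mathcal{H}}$-invariant and to determine the angle: since $A_{\mathcal{H}}(z,w)=(e^{i\lambda_0}z,e^{-i\lambda_0}w)$ preserves $\{w=\bar z\}$ (because $\overline{e^{i\lambda_0}z}=e^{-i\lambda_0}\bar z$), and in the coordinate $z$ on $Q_2$ it acts by multiplication by $e^{i\lambda_0}$, its matrix in the real frame $\{(1,0,1,0),(0,1,0,-1)\}$ is $\mathcal{S}_{\lambda_0}$; conjugating into $\mathfrak{B}$ by $\mathrm{diag}(1,-\sigma_2)$ produces $\mathcal{S}_{\sigma_2\lambda_0}$, and the end-reversal rule gives $N_{q_2}\equiv\sigma_2\pmod{|\mathcal{H}|}$. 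Assembling the three cases completes the proof. The main obstacle — really the only place where care is needed — is the $q_2$ computation: one must correctly identify the real $2\times2$ matrix of $A_{\mathcal{H}}$ in the chosen (skew, not orthonormal-a-priori) basis of the tilted plane $Q_2$ and track how the Gram–Schmidt normalization and the sign $\sigma_2$ interact, making sure the orientation convention "$\{v_1/|v_1|,-v_2/|v_2|\}$" is applied consistently with the generalized Gauss map convention $[X_x-iX_y]$; everything else is bookkeeping with the $\mathcal{S},\mathcal{T}$ relations already recorded in the paper.
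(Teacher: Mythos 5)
Your overall strategy is exactly the paper's: the paper's proof also just applies the end-reversal recipe to the frames $\mathfrak{B}$ read off from (\ref{GMI}) (it writes out $q_1$ in detail and declares $q_2,q_3$ similar), so there is no methodological difference. Your treatment of $q_3$ is correct and consistent. However, the $q_1$ and $q_2$ cases as written contain a sign error. Conjugating $\mathcal{S}_{\lambda_0}$ by the change of frame $\{e_1,e_2\}\to\{e_1,-\sigma_1 e_2\}$, i.e.\ by $\mathrm{diag}(1,-\sigma_1)$, \emph{reverses} the rotation when $\sigma_1=1$ (since $\mathcal{T}_0\mathcal{S}_{\lambda}\mathcal{T}_0=\mathcal{S}_{-\lambda}$) and \emph{fixes} it when $\sigma_1=-1$ (the conjugating matrix is then the identity). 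You assert the opposite, and so obtain $[A_{\mathcal{H}}|_{Q_1}]_{\mathfrak{B}}=\mathcal{S}_{\sigma_1\lambda_0}$ where the correct value — the one the paper computes — is $\mathcal{S}_{-\sigma_1\lambda_0}$. The same flip occurs at $q_2$, where you conjugate $\mathcal{S}_{\lambda_0}$ by $\mathrm{diag}(1,-\sigma_2)$ and again report $\mathcal{S}_{\sigma_2\lambda_0}$ instead of $\mathcal{S}_{-\sigma_2\lambda_0}$. (Your identification of the matrix of $A_{\mathcal{H}}|_{Q_2}$ in the unnormalized frame $\{(1,0,1,0),(0,1,0,-1)\}$ as $\mathcal{S}_{\lambda_0}$ is correct; only the subsequent conjugation is off.)

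This error is then masked by a second one: from your (incorrect) matrix $\mathcal{S}_{\sigma_1\lambda_0}$ and the end rule you correctly deduce ``$N_{q_1}=|\mathcal{H}|-1$ when $\sigma_1=1$ and $N_{q_1}=1$ when $\sigma_1=-1$,'' but this is $N_{q_1}\equiv-\sigma_1$, not $N_{q_1}\equiv\sigma_1$ (recall $|\mathcal{H}|-1\equiv-1$, and $|\mathcal{H}|>2$ in general), so the final sentence of that case does not follow from what precedes it. The two mistakes cancel to give the true statement of the lemma, but in a lemma whose entire content is this sign — it feeds directly into Lemma \ref{Lem58} and Proposition \ref{prop59} via $\sum N_q\equiv0$ — a derivation with compensating sign errors is not acceptable. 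The fix is mechanical: the correct chain is $[A_{\mathcal{H}}|_{Q_j}]_{\mathfrak{B}}=\mathcal{S}_{-\sigma_j\frac{2\pi}{|\mathcal{H}|}}$ for all three ends, after which the reversal table gives $N_{q_j}=1\equiv\sigma_j$ when $\sigma_j=1$ and $N_{q_j}=|\mathcal{H}|-1\equiv-1=\sigma_j$ when $\sigma_j=-1$.
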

\begin{proof}
Let us first compute $N_{q_1}$. As explained above, we choose an oriented orthonormal basis $\mathfrak{B}=\{w_1, w_2\}$, where
\begin{align*}
w_1:=
\begin{pmatrix}
1\\ 0\\ 0\\ 0
\end{pmatrix},\quad 
w_2:=
\begin{pmatrix}
0\\ -\sigma_1\\ 0\\ 0
\end{pmatrix}.
\end{align*}
From these, we compute
\begin{align*}
\begin{cases}
A_{\mathcal{H}}w_1=\cos\left(\frac{2\pi}{|\mathcal{H}|}\right)w_1-\sigma_1\sin\left(\frac{2\pi}{|\mathcal{H}|}\right)w_2,\\
A_{\mathcal{H}}w_2=\sigma_1\sin\left(\frac{2\pi}{|\mathcal{H}|}\right)w_1+\cos\left(\frac{2\pi}{|\mathcal{H}|}\right)w_2.
\end{cases}
\end{align*}
This implies that the matrix representation of $A_{\mathcal{H}}|_{V_{\mathfrak{B}}}$ with respect to $\mathfrak{B}$ is $\mathcal{S}_{-\sigma_1\frac{2\pi}{|\mathcal{H}|}}$. Since $q_1$ corresponds to an end, we have
\begin{align*}
N_{q_1}\equiv\sigma_1\ (\text{mod}\ |\mathcal{H}|).
\end{align*}
The remaining values can be computed in a similar way, completing the proof.
\end{proof}
The product of the loops $[\gamma_q]\in\pi_1\left(S^2\setminus\mathcal{Q}_{\mathcal{H}}(\mathcal{B})\right)$, taken over all branch points $q\in\mathcal{B}$, is null-homotopic. As a result, the following lemma holds:
\begin{lemma}\label{Lem57}
$\sum_{q\in\mathcal{B}}N_q\equiv 0$ \textup{(mod} $|\mathcal{H}|)$.
\end{lemma}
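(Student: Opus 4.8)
The plan is to deduce the congruence directly from the monodromy description of the cyclic covering, using the relation in $\pi_1$ recalled immediately before the statement. First I would fix a basepoint $*\in S^2\setminus\mathcal{Q}_{\mathcal{H}}(\mathcal{B})$ and a standard system of loops: for each branch point $q\in\mathcal{B}$ a simple loop $\gamma_q$ based at $*$ that encircles $\mathcal{Q}_{\mathcal{H}}(q)$ once counterclockwise and bounds a disk meeting $\mathcal{Q}_{\mathcal{H}}(\mathcal{B})$ only in $\mathcal{Q}_{\mathcal{H}}(q)$, chosen so that a suitable ordered product of the classes $[\gamma_q]$ is null-homotopic in $S^2\setminus\mathcal{Q}_{\mathcal{H}}(\mathcal{B})$. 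This is possible because $S^2$ with $|\mathcal{B}|$ punctures has fundamental group generated by the $[\gamma_q]$ subject to the single relation that this product is trivial, which is exactly the geometric fact invoked just above.

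Next I would recall that, since $\tilde{\mu}_{\mathcal{H}}$ generates a cyclic group of deck transformations, the restriction $\mathcal{Q}_{\mathcal{H}}\colon\Sigma_g\setminus\mathcal{B}\to S^2\setminus\mathcal{Q}_{\mathcal{H}}(\mathcal{B})$ is a regular covering and is therefore classified by its monodromy homomorphism
\[
\rho\colon\pi_1\bigl(S^2\setminus\mathcal{Q}_{\mathcal{H}}(\mathcal{B}),\,*\bigr)\longrightarrow\mathbb{Z}/|\mathcal{H}|\mathbb{Z},
\]
which sends the class of a loop to the exponent $N$ for which lifting that loop changes the endpoint by $\tilde{\mu}_{\mathcal{H}}^{\,N}$. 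By the very definition of the integers $N_q$ given above, $\rho([\gamma_q])\equiv N_q\pmod{|\mathcal{H}|}$ for every $q\in\mathcal{B}$.

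Finally I would apply $\rho$ to the relation $\prod_{q\in\mathcal{B}}[\gamma_q]=1$. Since $\mathbb{Z}/|\mathcal{H}|\mathbb{Z}$ is abelian, the ordering is irrelevant and $\rho\bigl(\prod_{q\in\mathcal{B}}[\gamma_q]\bigr)=\sum_{q\in\mathcal{B}}N_q$ in $\mathbb{Z}/|\mathcal{H}|\mathbb{Z}$, while $\rho(1)=0$; hence $\sum_{q\in\mathcal{B}}N_q\equiv0\pmod{|\mathcal{H}|}$. There is no analytic difficulty here: the only point needing care is the compatibility of the chosen loops $\gamma_q$ — common basepoint, consistent orientation, and a common ordering realizing the product relation — and this is precisely the input recalled before the statement, so the remainder is a formal manipulation.
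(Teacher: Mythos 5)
Your proposal is correct and is essentially the argument the paper intends: the paper states Lemma \ref{Lem57} as an immediate consequence of the fact that the product of the loops $[\gamma_q]$ is null-homotopic, combined with the monodromy homomorphism $\pi_1\left(S^2\setminus\mathcal{Q}_{\mathcal{H}}(\mathcal{B})\right)\to\mathbb{Z}/|\mathcal{H}|\mathbb{Z}$ already introduced when defining the integers $N_q$. You have simply written out the same formal manipulation in full detail, so there is nothing to add.
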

For genus $g\geq 2$, we deduce the following lemma from Proposition \ref{prop55}, Lemma \ref{Lem56}, and Lemma \ref{Lem57}:
\begin{lemma}\label{Lem58}
If $\left|\mathfrak{Sym}(S_g)\right|\geq 4(g+1)$ and $g\geq 2$, then $|\mathcal{H}|=g+1$.
\end{lemma}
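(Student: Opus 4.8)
The plan is to invoke Proposition \ref{prop55}, which, under the standing hypothesis $\left|\mathfrak{Sym}(S_g)\right|\geq 4(g+1)$, tells us that exactly one of two possibilities occurs: either $|\mathcal{H}|=2g+1$ with branch locus $\mathcal{B}=\{q_1,q_2,q_3\}$, or $|\mathcal{H}|=g+1$ with branch locus $\mathcal{B}=\{q_0,q_1,q_2,q_3\}$. Since the two conclusions differ only in the value of $|\mathcal{H}|$, it suffices to rule out the first possibility whenever $g\geq 2$.

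First I would assume, for contradiction, that $|\mathcal{H}|=2g+1$ and $\mathcal{B}=\{q_1,q_2,q_3\}$. By Lemma \ref{Lem56}, the branching data satisfy $N_{q_j}\equiv\sigma_j\ (\text{mod}\ |\mathcal{H}|)$ for $j=1,2,3$, where each $\sigma_j\in\{-1,1\}$ comes from the description (\ref{GMI}) of the generalized Gauss map at the three ends. Summing over $j$ and applying Lemma \ref{Lem57}, which asserts $\sum_{q\in\mathcal{B}}N_q\equiv 0\ (\text{mod}\ |\mathcal{H}|)$, yields
\[\sigma_1+\sigma_2+\sigma_3\equiv 0\ (\text{mod}\ 2g+1).\]

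Next I would extract the contradiction by a size-and-parity argument. On the one hand $|\sigma_1+\sigma_2+\sigma_3|\leq 3$, while $g\geq 2$ forces $2g+1\geq 5>3$; hence the congruence above can only hold if $\sigma_1+\sigma_2+\sigma_3=0$. On the other hand $\sigma_1+\sigma_2+\sigma_3$ is a sum of three odd integers, hence odd, so it cannot vanish. This contradiction eliminates the first alternative in Proposition \ref{prop55}, leaving only $|\mathcal{H}|=g+1$, as claimed.

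There is essentially no serious obstacle: once Proposition \ref{prop55} together with Lemmas \ref{Lem56} and \ref{Lem57} are in hand, the argument is a short counting observation. The only point requiring (minor) care is that the hypothesis $g\geq 2$ is precisely what makes $2g+1$ strictly exceed the maximal possible absolute value $3$ of $\sigma_1+\sigma_2+\sigma_3$ — for $g=1$ one has $2g+1=3$, and then $\sigma_1+\sigma_2+\sigma_3\equiv 0\ (\text{mod}\ 3)$ can indeed hold (for instance with all $\sigma_j$ equal), which is why the genus $1$ case genuinely allows the branch structure with $|\mathcal{H}|=2g+1=3$ and is handled separately.
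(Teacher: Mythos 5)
Your proposal is correct and follows essentially the same route as the paper: invoke Proposition \ref{prop55} to reduce to ruling out $|\mathcal{H}|=2g+1$, then combine Lemma \ref{Lem56} and Lemma \ref{Lem57} to get $\sigma_1+\sigma_2+\sigma_3\equiv 0\ (\mathrm{mod}\ 2g+1)$, which fails because the sum of three terms in $\{-1,1\}$ is odd (hence nonzero) and bounded by $3<2g+1$. Your closing remark about why the $g=1$ case escapes this argument is accurate but not needed for the lemma.
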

\begin{proof}
By Proposition \ref{prop55}, if $\left|\mathfrak{Sym}(S_g)\right|\geq 4(g+1)$, then $|\mathcal{H}|$ is either $g+1$ or $2g+1$. Suppose that $|\mathcal{H}|=2g+1$. Then, according to Proposition \ref{prop55}, the branch points of $\mathcal{Q}_{\mathcal{H}}$ are given by $\mathcal{B}=\{q_1, q_2, q_3\}$.

By applying Lemma \ref{Lem56} and Lemma \ref{Lem57}, we have
\begin{align*}
\sum_{q\in\mathcal{B}}N_q\equiv \sigma_1+\sigma_2+\sigma_3\equiv 0\ (\text{mod}\ 2g+1).
\end{align*}
Since $\sigma_j\in\{-1,1\}$ for all $1\leq j\leq 3$, this implies that $1\leq |\sigma_1+\sigma_2+\sigma_3|\leq 3$. The above congruence cannot hold for these values since $2g+1>3$ for $g\geq 2$. Therefore, $|\mathcal{H}|=g+1$.
\end{proof}
We conclude this subsection by constructing cyclic coverings equivalent to $\mathcal{Q}_{\mathcal{H}}$ under the assumption that $\left|\mathfrak{Sym}(S_g)\right|\geq 4(g+1)$ and $g\geq 2$.
\begin{prop}\label{prop59}
Suppose that $\left|\mathfrak{Sym}(S_g)\right|\geq4(g+1)$ and $g\geq 2$. Let $S_g$ be oriented such that the generalized Gauss map at $q_1\in\Sigma_g$ is $[1, i, 0, 0]\in\mathbb{P}^3$. Then, the quotient map $\mathcal{Q}_{\mathcal{H}}:\Sigma_g\to\Sigma_g/\left\langle\tilde{\mu}_{\mathcal{H}}\right\rangle$ is equivalent to one of the following maps $\overline{\mathcal{C}}_g\to\overline{\mathcal{C}}_g/\langle\nu_g\rangle$, where $\nu_g(z,w)=\left(z, e^{i\frac{2\pi}{g+1}}w\right)$:
\begin{itemize}
\item[(1)] For $(N_{q_0}, N_{q_2}, N_{q_3})=(g, 1, g)$,
\begin{align*}
\overline{\mathcal{C}}_g:=\left\{(z, w)\in\left(\mathbb{C}\cup\{\infty\}\right)\times\left(\mathbb{C}\cup\{\infty\}\right)\ |\ w^{g+1}=z^g(z+1)^g(z-1)\right\}.
\end{align*}
\item[(2)] For $(N_{q_0}, N_{q_2}, N_{q_3})=(1, g, g)$,
\begin{align*}
\overline{\mathcal{C}}_g:=\left\{(z, w)\in\left(\mathbb{C}\cup\{\infty\}\right)\times\left(\mathbb{C}\cup\{\infty\}\right)\ |\ w^{g+1}=z(z+1)^g(z-1)\right\}.
\end{align*}
\item[(3)] For $(N_{q_0}, N_{q_2}, N_{q_3})=(g, g, 1)$,
\begin{align*}
\overline{\mathcal{C}}_g:=\left\{(z, w)\in\left(\mathbb{C}\cup\{\infty\}\right)\times\left(\mathbb{C}\cup\{\infty\}\right)\ |\ w^{g+1}=z^g(z+1)(z-1)\right\}.
\end{align*}
\item[(4)] For $g=3$ and $(N_{q_0}, N_{q_2}, N_{q_3})=(1, 1, 1)$,
\begin{align*}
\overline{\mathcal{C}}_g:=\left\{(z, w)\in\left(\mathbb{C}\cup\{\infty\}\right)\times\left(\mathbb{C}\cup\{\infty\}\right)\ |\ w^4=z(z+1)(z-1)\right\}.
\end{align*}
\end{itemize}
In each case, $q_0$, $q_1$, $q_2$, and $q_3$ correspond to $(z,w)=(0,0)$, $(1,0)$, $\infty$, and $(-1,0)$, respectively.
\end{prop}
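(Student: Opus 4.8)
The plan is to determine the monodromy data of the cyclic cover $\mathcal Q_{\mathcal H}$, then use one additional orientation-preserving symmetry to fix the positions of the four branch points on the quotient sphere, and finally read off the algebraic model from the classification of cyclic branched covers of $S^2$.

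First, by Lemma~\ref{Lem58} we have $|\mathcal H|=g+1$, and Proposition~\ref{prop55} tells us that $\mathcal Q_{\mathcal H}\colon\Sigma_g\to\Sigma_g/\langle\tilde\mu_{\mathcal H}\rangle\cong S^2$ is a $(g+1)$-fold cyclic branched covering branched exactly over $\mathcal B=\{q_0,q_1,q_2,q_3\}$, each with branching order $g$, where $q_0=X^{-1}(p_0)$ and $p_0=(I-A_{\mathcal H})^{-1}b_{\mathcal H}$. Orienting $S_g$ so that $\Phi(q_1)=[1,i,0,0]$, Lemma~\ref{Lem56} gives $N_{q_j}\equiv\sigma_j\pmod{g+1}$, hence $N_{q_1}=1$ and $N_{q_2},N_{q_3}\in\{1,g\}$ (using $g\equiv-1$). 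One also checks $N_{q_0}\in\{1,g\}$: since $A_{\mathcal H}$ has eigenvalues $e^{\pm2\pi i/(g+1)}$, it acts on the invariant plane $T_{p_0}S_g$ as a rotation by $\pm2\pi/(g+1)$, so the computation of $N_q$ at a non-end branch point recalled just before Lemma~\ref{Lem56} forces $N_{q_0}\in\{1,g\}$. Then Lemma~\ref{Lem57} yields
\begin{align*}
N_{q_0}+1+N_{q_2}+N_{q_3}\equiv0\pmod{g+1}.
\end{align*}
Replacing each of $N_{q_0},N_{q_2},N_{q_3}$ by its residue $\pm1$ modulo $g+1$, the left side is an integer in $\{-2,0,2,4\}$; for $g\ge2$ (hence $g+1\ge3$) it is divisible by $g+1$ only if it equals $0$, or it equals $4$ with $g=3$. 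The first case forces exactly one of $N_{q_0},N_{q_2},N_{q_3}$ to be $1$ (the other two being $g$), producing the combinatorial types (1)--(3); the second forces $N_{q_0}=N_{q_2}=N_{q_3}=1$ and $g=3$, producing type (4).

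Next I would normalize the branch points. Since $\mathfrak{Sym}^{id}(S_g)$ is the kernel of the permutation action on $\{Q_1,Q_2,Q_3\}$ it is normal, and so is $\mathcal H=\mathcal G\cap\mathfrak{Sym}^{id}(S_g)$; moreover $\mathcal H\subsetneq\mathcal G$ by Lemma~\ref{Lem51} (as $|\mathfrak{Sym}(S_g)|=4(g+1)=4|\mathcal H|$), so there is an orientation-preserving symmetry $\tau\in\mathcal G\setminus\mathfrak{Sym}^{id}(S_g)$, realizing the transposition $Q_1\leftrightarrow Q_3$. The induced biholomorphism $\tilde\mu_\tau$ of $\Sigma_g$ swaps $q_1\leftrightarrow q_3$, fixes $q_2$, and also fixes $q_0$: conjugation by $\tau$ is an automorphism of the cyclic group $\mathcal H$, so $\tau(A_{\mathcal H},b_{\mathcal H})\tau^{-1}$ is a power of $(A_{\mathcal H},b_{\mathcal H})$, hence a rigid rotation with the same fixed point $p_0$, whence $\tau(p_0)=p_0$ and, as $X$ is injective, $\tilde\mu_\tau(q_0)=q_0$. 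By normality of $\mathcal H$, $\tilde\mu_\tau$ descends to a holomorphic involution $\bar\tau$ of $S^2$ fixing $\mathcal Q_{\mathcal H}(q_0)$ and $\mathcal Q_{\mathcal H}(q_2)$ and interchanging the two distinct points $\mathcal Q_{\mathcal H}(q_1),\mathcal Q_{\mathcal H}(q_3)$. Choosing the holomorphic coordinate $z$ on $S^2$ with $\mathcal Q_{\mathcal H}(q_0)=0$ and $\mathcal Q_{\mathcal H}(q_2)=\infty$ turns $\bar\tau$ into $z\mapsto cz$; being a nontrivial involution it has $c=-1$, so after rescaling $z$ we get $\mathcal Q_{\mathcal H}(q_1)=1$ and $\mathcal Q_{\mathcal H}(q_3)=-1$.

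Finally, restricting $\mathcal Q_{\mathcal H}$ gives the connected cyclic cover of $\overline{\mathbb C}\setminus\{0,1,-1,\infty\}$ whose monodromy sends the positive loops around $0,1,-1,\infty$ to $\tilde\mu_{\mathcal H}^{N_{q_0}},\tilde\mu_{\mathcal H},\tilde\mu_{\mathcal H}^{N_{q_3}},\tilde\mu_{\mathcal H}^{N_{q_2}}$. The normalization of $\{w^{g+1}=z^{N_{q_0}}(z-1)(z+1)^{N_{q_3}}\}\subset(\overline{\mathbb C})^2$, with deck transformation $\nu_g(z,w)=(z,e^{2\pi i/(g+1)}w)$, is a cyclic cover of $\overline{\mathbb C}$ with the same branch locus and the same monodromy exponents (the exponent at $\infty$ is automatically $\equiv-(N_{q_0}+1+N_{q_3})\equiv N_{q_2}\pmod{g+1}$ by Lemma~\ref{Lem57}), and since one of these exponents is $N_{q_1}=1$ the deck generators are matched by $\tilde\mu_{\mathcal H}\leftrightarrow\nu_g$; hence the two covers are equivalent, the equivalence extends over the four branch points by Riemann's removable singularity theorem, and it carries $q_0,q_1,q_2,q_3$ to $(0,0),(1,0),\infty,(-1,0)$. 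Substituting $N_{q_1}=1$ and the four combinatorial types found above gives precisely the four stated equations. I expect the branch-point normalization to be the delicate step: one must exhibit the extra orientation-preserving symmetry $\tau$ and verify both that the induced biholomorphism fixes the non-end branch point $q_0$ and that it descends to the quotient sphere --- this is where normality of $\mathcal H$ in $\mathfrak{Sym}(S_g)$ and the injectivity of the embedding $X$ enter, and where the codimension-$2$ subtleties (several sign choices $\sigma_j$ being a priori allowed) must be controlled.
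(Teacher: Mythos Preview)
Your proposal is correct and follows essentially the same strategy as the paper: determine $|\mathcal H|=g+1$ and the four-point branch locus from Proposition~\ref{prop55} and Lemma~\ref{Lem58}, list the admissible monodromy triples $(N_{q_0},N_{q_2},N_{q_3})$ via Lemmas~\ref{Lem56}--\ref{Lem57}, normalize the fourth branch value to $-1$ using an extra symmetry, and then match with the standard algebraic model of a cyclic cover.

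The only noteworthy difference is in the normalization step. You pick an orientation-preserving $\tau\in\mathcal G\setminus\mathfrak{Sym}^{id}(S_g)$, show directly that it fixes $q_0$ and $q_2$ and swaps $q_1\leftrightarrow q_3$, and conclude that the induced holomorphic involution on $S^2$ is $z\mapsto -z$. The paper instead invokes the full quotient $\mathfrak{Sym}(S_g)/\mathcal H\cong\mathbb Z_2\times\mathbb Z_2$ acting on $S^2$ and uses the two orientation-reversing elements as anticonformal reflections $z\mapsto e^{i\theta}\bar z$ across orthogonal lines, following \cite{HM2}; the composite of these reflections is the $\pi$-rotation sending $\mathcal Q_{\mathcal H}(q_1)$ to $\mathcal Q_{\mathcal H}(q_3)$. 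Your route is slightly more economical (a single holomorphic involution suffices), while the paper's stays closer to the Hoffman--Meeks template; both rely implicitly on the normality of $\mathcal H$ in $\mathfrak{Sym}(S_g)$, which you make explicit.
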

\begin{proof}
By Proposition \ref{prop55} and Lemma \ref{Lem58}, it follows that $|\mathcal{H}|=g+1$, and the set of branch points of $\mathcal{Q}_{\mathcal{H}}$ is $\mathcal{B}=\{q_0, q_1, q_2, q_3\}$. Let us set $\mathcal{Q}_{\mathcal{H}}(q_0)=0$, $\mathcal{Q}_{\mathcal{H}}(q_1)=1$, and $\mathcal{Q}_{\mathcal{H}}(q_2)=\infty$. Here we have identified $\Sigma_g/\left\langle\tilde{\mu}_{\mathcal{H}}\right\rangle$ with the Riemann sphere $\mathbb{C}\cup\{\infty\}$.

Using the same argument as in \cite{HM2}, we deduce that $\mathcal{Q}_{\mathcal{H}}(q_3)=-1$. For the sake of completeness, we briefly outline the argument. By Lemma \ref{Lem51} and Proposition \ref{prop55}, the group $\mathfrak{Sym}(S_g)/\mathcal{H}$ has order $4$ and acts on $\Sigma_g/\left\langle\tilde{\mu}_{\mathcal{H}}\right\rangle\simeq\mathbb{C}\cup\{\infty\}$. Moreover, if $(A, b)\in\mathfrak{Sym}(S_g)$, then $(A,b)\circ(A,b)$ preserves orientation and fixes all ends. This implies that $(A,b)\circ(A,b)\in\mathcal{H}$, and hence $\mathfrak{Sym}(S_g)/\mathcal{H}$ is isomorphic to $\mathbb{Z}_2\times\mathbb{Z}_2$. 

Each orientation-reversing element in this group corresponds to an anticonformal involution of $\mathbb{C}$ fixing $0\in\mathbb{C}$, necessarily of the form $e^{i\theta}\overline{z}$. These two elements correspond to reflections across two orthogonal lines. Hence, $\mathcal{Q}_{\mathcal{H}}(q_3)$ must be the $\pi$-rotation of $\mathcal{Q}_{\mathcal{H}}(q_1)$, that is, $\mathcal{Q}_{\mathcal{H}}(q_3)=-\mathcal{Q}_{\mathcal{H}}(q_1)=-1$. For further details, refer to \cite[Section 6]{HM2}.

On the other hand, by Lemma \ref{Lem57}, the following congruence holds:
\begin{align*}
N_{q_0}+N_{q_1}+N_{q_2}+N_{q_3}\equiv 0\ (\text{mod}\ g+1).
\end{align*}
Assuming that the generalized Gauss map at $q_1$ is $[1, i, 0, 0]\in\mathbb{P}^3$, i.e., $\sigma_1=1$ in (\ref{GMI}), it follows from Lemma \ref{Lem56} that $N_{q_1}=1$. Substituting this into the congruence gives
\begin{align*}
N_{q_0}+1+N_{q_2}+N_{q_3}\equiv 0\ (\text{mod}\ g+1).
\end{align*}
Since $N_{q_j}\equiv \pm 1(\text{mod}\ g+1)$ for all $j$, the possible triples $(N_{q_0}, N_{q_2}, N_{q_3})$ for $g\geq 2$ are:
\begin{align*}
(N_{q_0}, N_{q_2}, N_{q_3})=(g, 1, g),\ (1, g, g),\ (g, g, 1),\ (1, 1, 1).
\end{align*}
The first three cases hold for any $g\geq 2$, while the last case holds only for $g=3$. We note that these numbers are computed from the geometry of $S_g$.

For each of the triples obtained above, consider the Riemann surface $\mathcal{C}_g$ defined by
\begin{align*}
\mathcal{C}_g:=\left\{(z, w)\in\left(\mathbb{C}\setminus\{-1,0,1\}\right)\times\left(\mathbb{C}\setminus\{0\}\right)\ |\ w^{g+1}=z^{N_{q_0}}(z+1)^{N_{q_3}}(z-1)\right\}.
\end{align*}
Define an automorphism $\nu_g:\mathcal{C}_g\to\mathcal{C}_g$ by $\nu_g(z,w)=\left(z, e^{i\frac{2\pi}{g+1}}w\right)$. The quotient map $\mathcal{C}_g\to\mathcal{C}_g/\langle\nu_g\rangle$ is then given by the $z$-projection map
\begin{align*}
\left\{(z, w)\in\left(\mathbb{C}\setminus\{-1,0,1\}\right)\times\left(\mathbb{C}\setminus\{0\}\right)\ |\ w^{g+1}=z^{N_{q_0}}(z+1)^{N_{q_3}}(z-1)\right\}\\
\stackrel{z}{\longrightarrow}\ \mathbb{C}\setminus\{-1,0,1\}.
\end{align*}
This provides a cyclic covering over $\mathbb{C}\setminus\{-1,0,1\}$. Moreover, the automorphism $\nu_g$ extends in a straightforward manner to the compactified Riemann surface $\overline{\mathcal{C}}_g$, where
\begin{align*}
\overline{\mathcal{C}}_g:=\left\{(z, w)\in\left(\mathbb{C}\cup\{\infty\}\right)\times\left(\mathbb{C}\cup\{\infty\}\right)\ |\ w^{g+1}=z^{N_{q_0}}(z+1)^{N_{q_3}}(z-1)\right\}.
\end{align*}
The covering map also naturally extends to $\overline{\mathcal{C}}_g$ via the $z$-projection. This yields a branched cyclic covering over $\mathbb{C}\cup\{\infty\}$.

The integers $N_{\bullet}$ at the branch points $(0,0), (1,0), \infty, (-1,0)\in\overline{\mathcal{C}}_g$, which determine the lifting behavior of the covering map $\overline{\mathcal{C}}_g\to\mathbb{C}\cup\{\infty\}$, are calculated directly from the Riemann surface equation with respect to $\nu_g$ as follows:
\begin{align*}
N_{(0,0)}=N_{q_0},\quad N_{(1,0)}=N_{q_1},\quad N_{\infty}=N_{q_2},\quad N_{(-1,0)}=N_{q_3}.
\end{align*}
Thus, the cyclic covering constructed is equivalent to $\mathcal{Q}_{\mathcal{H}}$ for the given $(N_{q_0}, N_{q_2}, N_{q_3})$, completing the proof.
\end{proof}

\section{Holomorphicity of Immersed Minimal Tori}\label{sec3}
\setcounter{equation}{0}
In this section, we aim to prove the following theorem:
\begin{thm}\label{thm1}
Let $S_1 $ be an immersed minimal surface in $\mathbb{R}^4$ of finite total curvature with genus $1$ and three embedded planar ends. Assume that each of the three planar ends is parallel to one of the three planes that are asymptotic to the ends of the Lagrangian catenoid $\Sigma_{LC}$ and to the center plane $\Pi_c$. Then the surface $S_1$ must be a $J$-holomorphic curve for some almost complex structure $J$.
\end{thm}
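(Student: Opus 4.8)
The plan is to set up the Weierstrass data explicitly on a genus-$1$ Riemann surface and solve the period problem, following the method of Costa but adapted to $\mathbb{R}^4$. Write $\Sigma_1 = \mathbb{C}/\Lambda$ for a lattice $\Lambda$, and let $q_1, q_2, q_3$ be the three punctures corresponding to the planar ends parallel to $Q_1(0)$, $Q_2(0,1)$, $Q_3$ (as fixed in \eqref{GMI}). By Proposition \ref{prop27}, since the ends are embedded and planar, each $\phi_i$ lies in $H^0(\Sigma_1, K)\oplus(\bigoplus_j \mathbb{C}\eta_j)$, where $\eta_j\in H^0(\Sigma_1, K\otimes[2q_j])\setminus H^0(\Sigma_1,K)$; on the torus $H^0(\Sigma_1,K)$ is $1$-dimensional, spanned by $dz$, and each $\eta_j$ is (up to scaling and adding multiples of $dz$) $\bigl(\wp(z-z_j)+c_j\bigr)\,dz$ for the appropriate translate of the Weierstrass $\wp$-function. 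Using the generalized Gauss map values $[a_j,b_j,c_j,d_j]$ at each $q_j$ from \eqref{GMI}, Proposition \ref{prop27} then pins down the $\phi_i$ up to the constants $\alpha,\beta,\gamma$, the positions $z_j$, the lattice $\Lambda$, and a holomorphic part. First I would normalize by a translation so that $z_1+z_2+z_3$ is a fixed value, and impose the conformality relation $\sum \phi_i^2\equiv 0$; this is a constraint among the data that should, as in Costa's original computation, force relations among $z_j$ and among $\alpha,\beta,\gamma$.

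Next I would attack the period conditions: the $\phi_i$ must have no real periods over the $1$-cycles of $\Sigma_1$ and no residues at the $q_j$ (the residue vanishing is automatic from Proposition \ref{PropOrdRes}/Lemma \ref{NoEta1}, which is already built in). The periods over the two homology generators of the torus give a system of equations in the parameters. The crucial observation — and the conceptual heart of the argument — is that for the specific triple of asymptotic planes coming from $\Sigma_{LC}\cup\Pi_c$, the Gauss map data forces one of the two components of the generalized Gauss map, say $G_1$, to be \emph{constant} (compare Example \ref{EXDCa}, where $G_1\equiv\infty$ for $\mathcal{DC}_a$). Concretely, from \eqref{GMI} one sees that $\phi_1-i\phi_2$ and $\phi_3+i\phi_4$ (or the analogous combination for $G_2$) are proportional as meromorphic $1$-forms, because at all three ends the Gauss map lies on a common ruling line of the quadric $\mathscr{Q}_2$. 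A meromorphic function on a compact Riemann surface that is constant means $\Phi$ maps $\Sigma_1$ into a single ruling $L_{[a,b]}$ (or $M_{[a,b]}$) of $\mathscr{Q}_2$; by the standard correspondence (see the discussion after \eqref{GaussMeroftn}, and the Hoffman–Osserman theory in \cite{HO}), this is exactly the condition that $X$ is $J$-holomorphic for the orthogonal almost complex structure $J$ determined by that ruling. So the theorem would follow once I verify that the three prescribed Gauss-map values are collinear on $\mathscr{Q}_2$ in the appropriate sense and that this collinearity propagates from the three points to all of $\Sigma_1$.

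The step I expect to be the main obstacle is showing that this constancy is \emph{forced}, not merely consistent: a priori $\Phi$ could be a nonconstant map into $\mathbb{P}^1\times\mathbb{P}^1$ whose two coordinate functions $G_1, G_2$ both vary, while still taking the prescribed values at $q_1,q_2,q_3$. To rule this out I would argue via the period problem itself: set up the Weierstrass data with $G_1$ \emph{not} assumed constant — so $\phi_1-i\phi_2$ has a genuine divisor of zeros and poles dictated by the ends and by the degree of $G_1$ — and show that the resulting period equations are overdetermined and have no solution unless $G_1$ degenerates to a constant. This is where the elliptic-function identities (Legendre's relation, the quasi-periodicity of $\wp$ and $\zeta$, and the location constraints on $z_j$ forced by conformality) do the real work, exactly as the period computations did in \cite{C,C2}; the difference is that in $\mathbb{R}^4$ the "extra" freedom in the codimension is precisely absorbed by allowing $\Phi$ to land on a ruling line, so the only surviving solutions are the $J$-holomorphic ones. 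A subsidiary point to handle carefully is that there are two rulings, hence two candidate structures $J$ and $-J$ (orientation); either choice gives a valid conclusion, so I would simply record that $S_1$ is $J$-holomorphic for one of them. Once constancy of $G_1$ (or $G_2$) is established, the $J$-holomorphicity is immediate from the identification of $\mathscr{Q}_2$ with $\mathbb{P}^1\times\mathbb{P}^1$ and the classical fact that minimal surfaces with Gauss map in a single ruling are holomorphic curves for the corresponding complex structure on $\mathbb{R}^4$.
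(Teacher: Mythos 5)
Your overall frame --- write the Weierstrass data on the torus via Proposition \ref{prop27} with elliptic functions, impose $\sum\phi_j^2\equiv 0$ and the period conditions, and show the only survivors are those with one component of $(G_1,G_2)$ constant --- is the same as the paper's. But the step you call the ``conceptual heart'' contains a genuine error. You assert that from (\ref{GMI}) the Gauss-map values at the three ends ``lie on a common ruling line of $\mathscr{Q}_2$,'' so that $\phi_3+i\phi_4$ and $\phi_1-i\phi_2$ are proportional. This is false in general: the hypotheses fix the asymptotic planes only up to the end orientations, so $\Phi(q_2)=[1,\sigma_2 i,1,-\sigma_2 i]$ and $\Phi(q_3)=[0,0,1,-\sigma_3 i]$ carry undetermined signs $\sigma_2,\sigma_3\in\{\pm1\}$. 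With the normalization $\Phi(q_1)=[1,i,0,0]$ the three values lie on a common ruling precisely when $\sigma_2=\sigma_3=1$; for instance $\sigma_2=-1$ puts $\Phi(q_2)$ on $M_{[1,0]}$ while $\Phi(q_1)$ sits on $M_{[0,1]}$ (and the analogous failure occurs for the $L$-rulings). The entire content of the theorem is the elimination of these non-collinear sign configurations, and that cannot be ``verified'' from (\ref{GMI}); it has to be proved. Moreover, even when the three values are collinear, agreement of $G_1$ (or $G_2$) at three points does not by itself propagate to constancy on all of $\Sigma_1$ --- that too requires the conformality identity.

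The second problem is that you misallocate where the work happens. In the paper, the identity $\sum_{j}\phi_j^2\equiv 0$, expanded against the explicit $\eta_j$'s of Lemma \ref{etacase} and exploited as a polynomial identity in $\wp(z)$ (using $(\wp')^2=4\prod(\wp-e_k)$ and a parity-of-degree argument), already forces $\phi_1\equiv-i\phi_2$, $\phi_3\equiv i\phi_4$ in every case except one: both $q_1,q_3$ half-periods with $(\sigma_2,\sigma_3)=(-1,1)$ (Lemmas \ref{etacase1}, \ref{etacase2}). Only that residual one-parameter family of non-holomorphic data reaches the period problem, and killing it is not the routine Legendre-relation bookkeeping you describe: it requires showing $\operatorname{rk}M_{\tau,\wp_1,\wp_3}=2$ forces $|\wp(q_1)|=|\wp(q_3)|$, hence a real $j$-invariant, hence $\tau\in\partial F$ or $\tau$ purely imaginary, and then excluding each case via theta-function expansions, the value of the weight-two Eisenstein series, and Costa's inequality $c\,\mathrm{Re}(-\mu+e_1)>2\pi$. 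Your sketch would need to discover the exceptional configuration, derive its explicit coefficients, and carry out this analytic exclusion; as written it assumes away the former and underestimates the latter.
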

The proof relies on several properties of elliptic functions. In Subsection \ref{Elliptic}, we review the necessary background on elliptic functions used throughout our argument. In Subsection \ref{etaweierstrass}, we express the Weierstrass data using Proposition \ref{prop27} and analyze when the sum of squares vanishes. This leads to two distinct cases: data corresponding to a $J$-holomorphic curve (Lemma \ref{etacase1} and case (1) of Lemma \ref{etacase2}), and non-holomorphic data (case (2) of Lemma \ref{etacase2}). Finally, in Subsection \ref{periodcompute}, we show that the non-holomorphic data must have real periods, thus completing the proof of the theorem.

\subsection{Elliptic functions}\label{Elliptic}
Let $S_1$ be given by an immersion $X: \Sigma_1\setminus\{q_1, q_2, q_3\}\to\mathbb{R}^4$ via the generalized Weierstrass representation (\ref{gwr}). Here, each point $q_j$ corresponds to an embedded planar end of $S_1$ parallel to $Q_j$. Let $\Sigma_1$ and $q_2$ be identified with $\mathbb{C}/\Lambda(1,\tau)$ and $\overline{0} \in \mathbb{C}/\Lambda(1,\tau)$, respectively, where
$$\Lambda(1,\tau) := \{m+n\tau \ | \ m,n\in \mathbb{Z}\}$$ 
for some 
$$\tau \in \left\{  x+iy \in \mathbb{C} \ \bigg{|} \  x^2 + y^2 \ge 1, y>0, -\frac{1}{2} \le x \le \frac{1}{2} \right\} =: F. $$ 
The set $F$ is called the fundamental domain for the isomorphism classes of Riemann surfaces described as $\mathbb{C}/\Lambda$. By abuse of notation, we write points on $\Sigma_1$ using the same complex coordinates induced by the lattice $\Lambda(1,\tau)$ on $\mathbb{C}/\Lambda(1,\tau)$. As usual, we define the Weierstrass $\wp$-function $\wp(z;\tau)$ on $\mathbb{C}/\Lambda(1,\tau)$ by the differential equation
\begin{align}\label{pftn}
(\wp^\prime)^2 = 4(\wp-e_1)(\wp-e_2)(\wp-e_3),
\end{align}
where $e_1 = \wp\left(\frac{1}{2};\tau\right)$, $e_2 = \wp\left(\frac{\tau}{2};\tau\right)$, and $e_3 = \wp\left(\frac{1+\tau}{2};\tau\right)$ are the half-period values of $\wp(z;\tau)$. We write $\wp(z)$ for $\wp(z;\tau)$ when the value of $\tau$ is clear from context. We record the following lemma for later use.

\begin{lemma} \label{jreal}
Let $\wp(z;\tau)$ be the Weierstrass $\wp$ function
$$ \left(\wp'\right)^2 = 4\wp^3 - g_2 \wp -g_3 = 4(\wp-e_1)(\wp-e_2)(\wp-e_3).$$ 
Assume that $|e_i |= |e_j| $ for some distinct $i,j \in \{1,2,3\}$. Then, the $j$-invariant defined by 
$$ j(\tau)= 1728 \frac{g_2^3}{g_2^3 - 27 g_3^2}$$ 
is real-valued.
\end{lemma}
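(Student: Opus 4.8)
The plan is to express everything symmetrically in terms of the half-period values $e_1,e_2,e_3$ and show that the hypothesis $|e_i|=|e_j|$ forces the $j$-invariant to be fixed by complex conjugation. Recall the standard symmetric-function identities
\begin{align*}
e_1+e_2+e_3&=0,\\
e_1e_2+e_2e_3+e_3e_1&=-\tfrac{1}{4}g_2,\\
e_1e_2e_3&=\tfrac{1}{4}g_3,
\end{align*}
which come from comparing $4\wp^3-g_2\wp-g_3=4(\wp-e_1)(\wp-e_2)(\wp-e_3)$. Hence $g_2=-4(e_1e_2+e_2e_3+e_3e_1)$ and $g_3=4e_1e_2e_3$, and $j(\tau)=1728\,g_2^3/(g_2^3-27g_3^2)=1728\,g_2^3/\Delta$ with $\Delta=16(e_1-e_2)^2(e_2-e_3)^2(e_3-e_1)^2$ the discriminant (using the classical identity $g_2^3-27g_3^2=\Delta$). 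So it suffices to show that $\overline{g_2^3/\Delta}=g_2^3/\Delta$, i.e. that $g_2^3/\Delta$ is real.

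First I would reduce to a normalized configuration. Since $j$ is invariant under the simultaneous scaling $e_k\mapsto \lambda e_k$ (both $g_2^3$ and $\Delta$ scale by $\lambda^6$), and since $e_1+e_2+e_3=0$, I can use the hypothesis $|e_i|=|e_j|$ to arrange, after multiplying all three $e_k$ by a suitable unit complex number, that the two equal-modulus roots are complex conjugates of each other. Concretely, say $|e_1|=|e_2|$; writing $e_1=\rho e^{i\alpha}$, $e_2=\rho e^{i\beta}$, multiply through by $e^{-i(\alpha+\beta)/2}$ so that the two become $\rho e^{i\gamma}$ and $\rho e^{-i\gamma}$ for $\gamma=(\alpha-\beta)/2$; they are now conjugate. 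The third root is then $e_3=-(e_1+e_2)=-2\rho\cos\gamma$, which is real. Thus after scaling we are in the situation $\{e_1,e_2,e_3\}=\{\zeta,\bar\zeta,t\}$ with $t\in\mathbb{R}$.

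Now the point is that the multiset $\{e_1,e_2,e_3\}$ is invariant under complex conjugation. Therefore each elementary symmetric function $e_1+e_2+e_3$, $e_1e_2+e_2e_3+e_3e_1$, $e_1e_2e_3$ is real, so $g_2$ and $g_3$ are real; likewise $\Delta=16\prod_{i<j}(e_i-e_j)^2$ is real because conjugation permutes the factors $\{(e_i-e_j)^2\}$ among themselves. Hence $g_2^3/\Delta\in\mathbb{R}$, and consequently $j(\tau)=1728\,g_2^3/(g_2^3-27g_3^2)$ is real-valued. (One should note $\Delta\neq0$ since $\wp$ is nondegenerate, so the quotient is well-defined.) This completes the argument.

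There is no serious obstacle here; the only thing to be careful about is the normalization step — making sure that the rescaling used to turn the equal-modulus pair into a conjugate pair genuinely leaves $j$ unchanged, which is immediate from the weighted-homogeneity of $g_2$ and $g_3$ (weights $4$ and $6$) under $e_k\mapsto\lambda e_k$. I would also mention explicitly the classical identity $g_2^3-27g_3^2=16\prod_{i<j}(e_i-e_j)^2$ so that the reduction to "$\Delta$ is real" is transparent; alternatively one can avoid it entirely and just argue directly that $g_2,g_3\in\mathbb{R}$ implies $j(\tau)\in\mathbb{R}$, which already follows from the reality of the elementary symmetric functions alone, without ever invoking $\Delta$.
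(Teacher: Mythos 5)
Your proof is correct, and it takes a genuinely different route from the paper's. The paper proceeds via Cardano's formula: it writes $e_1,e_2,e_3$ explicitly in terms of the radicals $R_1,R_2$ built from $g_3\pm\sqrt{g_3^2-g_2^3/27}$, runs through the three cases of which pair has equal modulus, cubes the resulting relations to deduce $\left(R_1/\overline{R_1}\right)^3=\left(R_2/\overline{R_2}\right)^3$, and concludes that $g_2^3$ and $g_3^2$ lie on a common line through the origin, whence $j(\tau)\in\mathbb{R}$. Your argument instead exploits the weighted homogeneity of $g_2$ and $g_3$ (degrees $2$ and $3$ in the $e_k$) to rotate the root triple by a unit complex number without changing $j$, placing the equal-modulus pair into complex-conjugate position; the trace-zero relation $e_1+e_2+e_3=0$ then forces the third root to be real, so the cubic $4\prod(x-e_k)$ has real coefficients, giving $g_2,g_3\in\mathbb{R}$ directly. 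Your route is more elementary and sidesteps the branch ambiguities inherent in the radical expressions (the paper's cubing step discards information, which is harmless for the implication needed but requires some care to state cleanly); it also exposes the conceptual reason the lemma holds, namely that the curve can be rotated to be defined over $\mathbb{R}$. The paper's route, by contrast, works directly with $g_2,g_3$ and lands immediately on the collinearity statement it wants. The detour through $\Delta=16\prod_{i<j}(e_i-e_j)^2$ in your write-up is, as you note yourself, unnecessary — reality of $g_2$ and $g_3$ already suffices — and the only hypotheses you rely on ($e_1+e_2+e_3=0$, distinctness of the $e_k$ so that $\rho>0$ and the denominator of $j$ is nonzero) are all available.
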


\begin{proof}
Since $e_1, e_2$ and $e_3$ are distinct roots of the cubic equation
$$t^3- \frac{g_2}{4}t+\frac{g_3}{4}=0,$$ 
we have by Cardano's formula 
$$ \{ e_1, e_2, e_3\} = \{R_1+R_2, \omega^2 R_1+\omega R_1, \omega R_1+\omega^2 R_2\}$$
where 
\begin{align*} 
R_1 = \frac{1}{2} \sqrt[3]{g_3 + \sqrt{g_3^2-\frac{g_2^3}{27}}}, \quad R_2 = \frac{1}{2} \sqrt[3]{g_3 - \sqrt{g_3^2-\frac{g_2^3}{27}}},
\end{align*}
and
\begin{align*}
\omega = -\frac{1}{2} + \frac{\sqrt3}{2}i.
\end{align*}
If $|e_i |= |e_j| $ holds for some distinct $i,j \in \{1,2,3\}$, one of the following equations must holds.
\begin{align*}
&|\omega^2 R_1+\omega R_1|=|R_1 + R_2|, \\
&|R_1 + R_2|=|\omega R_1+\omega^2 R_2|, \\
&|\omega R_1+\omega^2 R_2|=|\omega^2 R_1+\omega R_1|. 
\end{align*}
From elementary calculations, we see that the above equations are equivalent to 
\begin{align*}
\overline{R_1}R_2 &= R_1 \overline{R_2} \omega, \\
\overline{R_1}R_2 &= R_1 \overline{R_2}\omega^2, \\
\overline{R_1}R_2 &= R_1 \overline{R_2},
\end{align*}
repectively. But taking the cube of both sides, we obtain 
$$\left( \frac{\ R_1\ }{\overline{R_1}}\right)^3 =\left( \frac{\ R_2\ }{\overline{R_2}}\right)^3. $$
This implies that 
$$\arg\left( g_3 + \sqrt{g_3^2-\frac{g_2^3}{27}}\right) - \arg\left( g_3 - \sqrt{g_3^2-\frac{g_2^3}{27}}\right) = m\pi$$
for some integer $m$. Then, we see that $g_3^2$ and $g_2^3$ lie on the same line through the origin in the complex plane. It now follows from the definition that the $j$-invariant $j(\tau)$ is a real number.
\end{proof}

The circumstances under which $j(\tau)$ takes real values are well-known. We include a brief explanation of this fact for the sake of completeness.

\begin{lemma} \label{jreal2}
$j(\tau)\in \mathbb{R}$ if and only if $\tau \in \partial F$ or $\tau \in F\cap \{ z \in \mathbb{C} \ | \  \textup{Re}(z) = 0 \}$.
\end{lemma}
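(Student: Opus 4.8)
The plan is to deduce the statement directly from the elementary modular properties of $j$. First I would recall the Fourier expansion $j(\tau)=q^{-1}+744+\sum_{n\ge 1}c_nq^{n}$ with $q=e^{2\pi i\tau}$ and all $c_n\in\mathbb{Z}$; since the coefficients are real this yields the reflection identity $\overline{j(\tau)}=j(-\overline{\tau})$, valid for every $\tau$ in the upper half-plane. I would also observe that the involution $\tau\mapsto -\overline{\tau}$, which merely flips the sign of $\textup{Re}\,\tau$, preserves the upper half-plane and maps the closed fundamental domain $F$ onto itself, since every inequality defining $F$ is symmetric under $\textup{Re}\,\tau\mapsto -\textup{Re}\,\tau$. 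Combining these two facts, $j(\tau)\in\mathbb{R}$ if and only if $j(\tau)=j(-\overline{\tau})$, which is an equality between the $j$-values of two points of $F$.

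Next I would invoke the classical fact that $j$ is injective on the interior of $F$ and that on $\partial F$ the only coincidences of $j$-values arise from the identifications $\tau\sim\tau+1$, gluing the line $\textup{Re}\,\tau=-\tfrac12$ to $\textup{Re}\,\tau=\tfrac12$, and $\tau\sim-1/\tau$, gluing the two halves of the arc $|\tau|=1$. Applying this to the pair $\tau$ and $-\overline{\tau}$ leaves exactly three possibilities: (i) $-\overline{\tau}=\tau$, i.e. $\textup{Re}\,\tau=0$; (ii) $-\overline{\tau}=\tau\pm1$, which forces $\textup{Re}\,\tau=\mp\tfrac12$ and hence, since $\tau\in F$, $|\tau|\ge1$; or (iii) $-\overline{\tau}=-1/\tau$, which, using $\tau\overline{\tau}=|\tau|^{2}$, forces $|\tau|=1$. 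The loci in (ii) and (iii) together make up exactly $\partial F$, while (i) is precisely $F\cap\{z\in\mathbb{C}\mid\textup{Re}(z)=0\}$; this establishes the ``only if'' direction.

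For the converse I would run the computation in reverse. If $\textup{Re}\,\tau=0$ then $-\overline{\tau}=\tau$; if $\textup{Re}\,\tau=\pm\tfrac12$ then $-\overline{\tau}=\tau\mp1$, so $j(-\overline{\tau})=j(\tau)$ by the $1$-periodicity of $j$; and if $|\tau|=1$ then $-\overline{\tau}=-1/\tau$, so $j(-\overline{\tau})=j(\tau)$ by the modular relation $j(-1/\tau)=j(\tau)$. In every one of these cases $\overline{j(\tau)}=j(-\overline{\tau})=j(\tau)$, so $j(\tau)$ is real, finishing the proof.

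The only point requiring care is the invoked description of when two points of $F$ share a $j$-value, and in particular one should check that it does not break down at the elliptic points $\tau=i$ and $\tau=e^{i\pi/3}$, where the stabilizer in $\mathrm{PSL}(2,\mathbb{Z})$ is larger; this causes no trouble, since at $\tau=i$ one has $-\overline{\tau}=\tau$ and at $\tau=e^{i\pi/3}$ one has $-\overline{\tau}=\tau-1$, so both are already covered by cases (i)--(ii). As this description is standard, I would cite it rather than reprove it, after which the lemma follows essentially formally; there is no substantial analytic obstacle.
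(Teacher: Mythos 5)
Your proof is correct, and it is organized around a somewhat different mechanism than the paper's. The paper cites two facts from Donaldson (that $j$ is a bijection on $\mathrm{SL}_2(\mathbb{Z})$-orbits and that $j$ is real on the lines with integer or half-integer real part) and then supplies the single computation $\begin{pmatrix}1&0\\1&1\end{pmatrix}e^{i\theta}=\tfrac12+\tfrac12\tan(\tfrac{\theta}{2})i$ to transport the unit arc onto the line $\mathrm{Re}=\tfrac12$; the ``only if'' direction is essentially absorbed into the citations. You instead derive the reflection identity $\overline{j(\tau)}=j(-\overline{\tau})$ from the real $q$-expansion, observe that $\tau\mapsto-\overline{\tau}$ preserves $F$, and reduce the reality of $j(\tau)$ to the classification of $j$-coincidences between points of $F$ (equality, the $\tau\sim\tau\pm1$ gluing of the vertical edges, or the $\tau\sim-1/\tau$ gluing of the arc), which yields the three loci $\mathrm{Re}\,\tau=0$, $\mathrm{Re}\,\tau=\pm\tfrac12$, $|\tau|=1$ exactly. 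This buys you a self-contained proof of both implications (including the ``only if'' direction, which the paper treats only implicitly), at the cost of invoking the standard description of the boundary identifications of the fundamental domain; your remark that the elliptic points $i$ and $e^{i\pi/3}$ cause no trouble is the right check to make, and the argument is complete.
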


\begin{proof}

It is known that $j(\tau)$ is invariant under $\text{SL}_2(\mathbb{Z})$ action on $\tau$ and bijectively maps the $\text{SL}_2(\mathbb{Z})$-orbits onto $\mathbb{C}$. It is also known that $j$ takes real values for those $\tau$ with integer or half-integer real parts (see \cite{D}, Chapter 6.3.1 and 6.3.2 for all the facts above). Note that for $e^{i\theta} \in \partial F$, the $\text{SL}_2(\mathbb{Z})$ action gives
\[
\begin{pmatrix}
1 & 0 \\
1 & 1
\end{pmatrix} e^{i\theta} = \frac{1}{2} +  \frac{1}{2} \tan \left(\frac{\theta}{2}\right) i,
\]
showing that every $\tau \in \partial F \cap \{z\in \mathbb{C}\ |\ |z| =1\}$ is $\text{SL}_2(\mathbb{Z})$-equivalent to one of elements of the form $\frac{1}{2} +ci  \text{ } (c\ge \frac{\sqrt3}{2})$.
\end{proof}

\subsection{The Weierstrass representation}\label{etaweierstrass}
We first determine the meromorphic $1$-forms satisfying (\ref{ETA}).

\begin{lemma} \label{etacase}
We set meromorphic $1$-forms $\eta_1, \eta_2$ and $\eta_3$ as 
\begin{align*}
\eta_j  = \begin{cases}
 \frac{\wp^\prime(z) + \wp^\prime (q_j) + \frac{\wp^{\prime\prime} (q_j)}{\wp^{\prime}(q_j)} (\wp(z) -\wp(q_j))}{(\wp(z) - \wp(q_j))^2} dz =:D_j(z) dz &\text{if  \quad } q_j \notin \left\{ \frac{1}{2}, \frac{\tau}{2}, \frac{1+\tau}{2}\right\}\\
\frac{1}{\wp(z)-\wp(q_j)} dz =: H_j(z) dz &\text{if \quad } q_j \in \left\{ \frac{1}{2}, \frac{\tau}{2}, \frac{1+\tau}{2}\right\}\\
\end{cases}
\end{align*}
for $j=1$, $3$, and 
\begin{align*}
\eta_2 = \wp(z) dz.
\end{align*} Then this choice of meromorphic $1$-forms satisfies (\ref{ETA}). 
\end{lemma}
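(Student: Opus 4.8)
The plan is to verify directly that each of $\eta_1,\eta_2,\eta_3$ is a meromorphic $1$-form on the torus $\Sigma_1=\mathbb{C}/\Lambda(1,\tau)$ whose only pole is a genuine double pole at $q_j$ and which is holomorphic everywhere else, in particular at $q_2=\overline 0$. By the definition of (\ref{ETA}) this is exactly the assertion that $\eta_j\in H^0(\Sigma_1,K\otimes[2q_j])\setminus H^0(\Sigma_1,K)$. Since $dz$ is a nowhere-vanishing holomorphic $1$-form spanning $H^0(\Sigma_1,K)$, the order of $\eta_j$ at any point equals the order of its coefficient function, so everything reduces to order counting for elliptic functions. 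The only properties of $\wp$ needed are: $\wp$ is even with a double pole at $\overline 0$ and holomorphic elsewhere on $\mathbb{C}/\Lambda$; $\wp'$ is odd, with a triple pole at $\overline 0$, vanishing precisely at the three half-periods modulo $\Lambda$; and $\wp(z)=\wp(w)$ iff $z\equiv\pm w\pmod\Lambda$.

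For $\eta_2=\wp(z)\,dz$ the claim is immediate: $\wp$ has a single double pole at $q_2$ and no other pole, so $\eta_2\in H^0(\Sigma_1,K\otimes[2q_2])\setminus H^0(\Sigma_1,K)$, and $\mathrm{res}_{q_2}\eta_2=0$ since $\wp$ is even, consistently with (\ref{ResEta}). For $j\in\{1,3\}$ with $q_j$ a half-period, write $\wp(q_j)=e_k$; then $\wp-e_k$ has a double pole at $q_2$ and, being of order two, its only zero is at $q_j$ and must be double, so $H_j=(\wp-e_k)^{-1}$ has a double pole at $q_j$, a double zero at $q_2$, and is holomorphic elsewhere. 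Hence $\eta_j=H_j(z)\,dz$ has the required form.

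The only case with any content is $j\in\{1,3\}$ with $q_j$ not a half-period (so also $q_j\neq q_2$, whence $q_j\not\equiv-q_j$ and $\wp'(q_j)\neq 0$, making $D_j$ well defined). Here the denominator $(\wp(z)-\wp(q_j))^2$ has double zeros exactly at $z\equiv q_j$ and $z\equiv-q_j$ and a pole of order four at $q_2$, while the numerator $N(z):=\wp'(z)+\wp'(q_j)+\tfrac{\wp''(q_j)}{\wp'(q_j)}\bigl(\wp(z)-\wp(q_j)\bigr)$ has a pole of order three at $q_2$ and is holomorphic elsewhere; consequently $D_j$ has a simple zero, hence no pole, at $q_2$. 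Using that $\wp$ is even and $\wp'$ odd, one checks $N(-q_j)=0$ and $N'(-q_j)=0$ — the coefficient $\wp''(q_j)/\wp'(q_j)$ is chosen precisely so that the second identity holds — so $N$ vanishes to order at least two at $-q_j$, cancelling the double zero of the denominator there, while $N(q_j)=2\wp'(q_j)\neq 0$ leaves exactly a double pole at $q_j$. Since the denominator vanishes nowhere else, these are the only candidate poles, so $\eta_j=D_j(z)\,dz\in H^0(\Sigma_1,K\otimes[2q_j])\setminus H^0(\Sigma_1,K)$, and $\mathrm{res}_{q_j}\eta_j=0$ follows from the residue theorem as in (\ref{ResEta}).

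The one step with any friction is thus the short local computation at $z\equiv-q_j$ verifying $N(-q_j)=N'(-q_j)=0$; all other verifications are immediate order bookkeeping. One could also observe that $D_j(z)\,dz$ coincides with $\wp(z-q_j)\,dz$ up to scaling and an additive multiple of $dz$, which makes the pole structure transparent, but the explicit rational expression in $\wp(z)$ is what will be needed for the period computation in Subsection \ref{periodcompute}, so verifying it directly is preferable.
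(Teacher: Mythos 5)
Your verification is correct. The order bookkeeping for $\eta_2$ and for the half-period case is right, and the key computation in the generic case — that $N(-q_j)=N'(-q_j)=0$ because $\wp$ and $\wp''$ are even while $\wp'$ is odd, so the double zero of $(\wp(z)-\wp(q_j))^2$ at $-q_j$ is cancelled, while $N(q_j)=2\wp'(q_j)\neq0$ leaves a genuine double pole at $q_j$ and the pole orders at $q_2=\overline{0}$ give a simple zero of $D_j$ there — is exactly what is needed. Note that the paper does not actually prove this lemma; it simply cites Lemma 3.3 of \cite{JL}, so your argument supplies the direct verification that the paper omits. Your closing observation that $D_j(z)\,dz=a\,\wp(z-q_j)\,dz+b\,dz$ is also sound (an elliptic function with a single double pole necessarily has vanishing residue there, hence is an affine image of $\wp(\cdot-q_j)$), and as you say the explicit rational form in $\wp(z)$ is the one used later for the period computations.
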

\begin{proof}
See \cite[Lemma 3.3]{JL} for the proof. 
\end{proof}

Let $\eta_1$, $\eta_2$, and $\eta_3$ be chosen according to Lemma \ref{etacase}. After setting an appropriate orientation, we assume that the generalized Gauss map at $q_1$ is $[1,i,0,0] \in \mathbb{P}^3$. As in (\ref{GMI}), the generalized Gauss map at $q_2$ and $q_3$ are given by
\begin{align*}
\Phi(q_2)=[1,\sigma_2 i,1,-\sigma_2 i]\in\mathbb{P}^3,\quad \Phi(q_3)=[0,0,1,-\sigma_3 i]\in\mathbb{P}^3
\end{align*}
for some $\sigma_2, \sigma_3\in\{-1,1\}$. Since all holomorphic $1$-forms on  $\mathbb{C}/\Lambda(1,\tau)$ are constant multiples of $dz$, we may express the Weierstrass data as in Proposition \ref{prop27} as follows: 
\begin{equation}\label{vvv1}
\left\{
\begin{alignedat}{4}
\phi_1 &= a_0 dz+ \alpha \eta_1 + \beta \eta_2 + 0  \\
\phi_2 &= b_0 dz + i \alpha \eta_1 + i\sigma_2\beta \eta_2+0  \\
\phi_3 &= c_0 dz +  0 + \beta \eta_2+ \gamma \eta_3 \\
\phi_4 &= d_0 dz + 0 - i\sigma_2 \beta \eta_2 - i\sigma_3 \gamma \eta_3
\end{alignedat}
\right.
\end{equation}
for some complex numbers $a_0$, $b_0$, $c_0$, and $d_0$, and some nonzero complex numbers $\alpha$, $\beta$, and $\gamma$. 

Let $I_{H} := \{\frac{1}{2}, \frac{\tau}{2}, \frac{1+\tau}{2} \}$. By Lemma \ref{etacase}, we see that there are four cases in the choice of $(\eta_1, \eta_3)$ depending on whether $q_1$ and $q_3$ are in $I_H$ or not. In the following lemmas, we first aim to narrow down the possible Weierstrass data for the desired surfaces, should they exist, through algebraic operations based on the identity $\sum_{j=1}^4\phi_j^2\equiv0$. 

\begin{lemma}\label{etacase1}
We choose $\eta_1$ and $\eta_3$ as in Lemma \ref{etacase}, according to one of the three following cases:
\begin{align*}
\textup{(a)}\ q_1\notin I_H,\ q_3\notin I_H.\quad \textup{(b)}\ q_1\in I_H,\ q_3\notin I_H.\quad \textup{(c)}\ q_1\notin I_H,\ q_3\in I_H.
\end{align*}
Then, the Weierstrass data given by (\ref{vvv1}) satisfy the identity $\sum_{j=1}^4\phi_j^2\equiv0$ if and only if $\phi_1 \equiv -i \phi_2$ and $\phi_3 \equiv i \phi_4$. This corresponds to the Weierstrass data of a $J$-holomorphic curve for some almost complex structure $J$.
\end{lemma}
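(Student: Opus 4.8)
The plan is to turn the equation $\sum_{j=1}^4\phi_j^2\equiv 0$ into an identity among elliptic functions and then read off its consequences from the Laurent expansions at $q_1,q_2,q_3$. Writing $\phi_j=f_j\,dz$, the condition reads $\sum_{j=1}^4 f_j^2\equiv 0$, and I would factor
\[
\sum_{j=1}^4 f_j^2=(f_1+if_2)(f_1-if_2)+(f_3+if_4)(f_3-if_4).
\]
Substituting (\ref{vvv1}) — with $\eta_1/dz$ and $\eta_3/dz$ being $D_1,D_3$ or $H_1,H_3$ as in Lemma \ref{etacase} — each of $P:=f_1+if_2$, $Q:=f_1-if_2$, $R:=f_3+if_4$, $S:=f_3-if_4$ becomes an affine combination of $1$, $\eta_1/dz$, $\wp$, $\eta_3/dz$ whose coefficients are built from $\alpha,\beta,\gamma,a_0,\dots,d_0$ and the factors $1\pm\sigma_2$, $1\pm\sigma_3$; in particular $Q=(a_0-ib_0)+2\alpha\,(\eta_1/dz)+\beta(1+\sigma_2)\wp$, so the coefficient of $\eta_1/dz$ in $Q$ is always $2\alpha\neq0$. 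The identity to be analyzed is $PQ+RS\equiv0$. The "if" direction is immediate: $\phi_1\equiv-i\phi_2$ and $\phi_3\equiv i\phi_4$ give $\phi_2^2=-\phi_1^2$ and $\phi_4^2=-\phi_3^2$, so the sum of squares vanishes; moreover $-\phi_3+i\phi_4\equiv0$ while $\phi_1-i\phi_2=2\phi_1\not\equiv0$ (note $\phi_1\not\equiv0$ since $\alpha\neq0$), so the Gauss-map component $G_2$ is identically $0$, i.e.\ $\Phi$ maps into a single line of one of the two rulings of $\mathscr{Q}_2$, which is a standard characterization of $J$-holomorphic curves in $\mathbb{R}^4$.

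For the converse I would use the following structural facts: $\eta_1$, $\wp\,dz$, $\eta_3$ have double poles with vanishing residue at the distinct points $q_1$, $q_2$, $q_3$ and are holomorphic elsewhere, and $Q$ always has a genuine double pole (with zero residue) at $q_1$ because its $\eta_1/dz$-coefficient is $2\alpha\neq0$. Expanding $PQ+RS\equiv0$ near $q_1$: since $P,R,S$ are holomorphic there, the order $-2$ and order $-1$ coefficients of $PQ$ at $q_1$ equal $\lambda P(q_1)$ and $\lambda P'(q_1)$ for a constant $\lambda\neq0$, and their vanishing forces $P(q_1)=0$ and $P'(q_1)=0$. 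If $\sigma_2=1$, then $P=a_0+ib_0$ is constant, hence $P\equiv0$, i.e.\ $\phi_1\equiv-i\phi_2$; then $PQ\equiv0$, so $RS\equiv0$, and since the coefficient of $\wp$ in $R$ is $2\beta\neq0$ we get $S\equiv0$, which (as $\eta_3/dz$ is nonconstant and $\gamma\neq0$) forces $\sigma_3=1$ and $c_0-id_0=0$, i.e.\ $\phi_3\equiv i\phi_4$ — exactly the asserted conclusion. If $\sigma_2=-1$, then $P=a_0+ib_0+2\beta\wp$, so $P'(q_1)=2\beta\wp'(q_1)=0$ forces $\wp'(q_1)=0$, i.e.\ $q_1\in I_H$; this is impossible in cases (a) and (c).

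It then remains to exclude $\sigma_2=-1$ in case (b), where $q_1\in I_H$ (so $\eta_1=H_1\,dz$) but $q_3\notin I_H$ (so $\eta_3=D_3\,dz$), which I would do by rerunning the pole analysis at $q_3$. If $\sigma_3=1$, then $RS$ contains the term $4\beta\gamma\,\wp D_3$; since $D_3$ has a zero-residue double pole at $q_3$ and $\wp'(q_3)\neq0$ (as $q_3\notin I_H$), this product contributes an order $-1$ term at $q_3$ with coefficient a nonzero multiple of $\beta\gamma\,\wp'(q_3)$, whereas $PQ$ is holomorphic at $q_3$ — a contradiction. If $\sigma_3=-1$, then $R=c_0+id_0$ is constant while $S$ has a zero-residue double pole at $q_3$ with nonzero leading coefficient $2\gamma\cdot(\text{leading coeff of }D_3)$, so matching the order $-2$ part of $PQ+RS\equiv0$ at $q_3$ forces $R\equiv0$, hence $PQ\equiv0$; but $P$ has a pole at $q_2$ and $Q$ a pole at $q_1$, so $PQ\not\equiv0$ — again a contradiction. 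Hence in all of cases (a), (b), (c) we are forced into $\sigma_2=\sigma_3=1$, $\phi_1\equiv-i\phi_2$, $\phi_3\equiv i\phi_4$.

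The step I expect to be most delicate is the bookkeeping of this case analysis over $\sigma_2,\sigma_3\in\{\pm1\}$ together with the $I_H$-membership of $q_1,q_3$, and correctly isolating in each impossible branch the Laurent coefficient that fails to cancel. The subtlety is that at a half-period $q_j$ the products $\wp\cdot H_j$ are even around $q_j$ and carry no order $-1$ term, so there one instead linearizes via $\tfrac{\wp}{\wp-\wp(q_j)}=1+\wp(q_j)\,\tfrac{1}{\wp-\wp(q_j)}$ and chases a residual nonzero constant such as $4\alpha\beta$ or $4\beta\gamma$, whereas at a generic point $q_j\notin I_H$ the term $\wp\cdot D_j$ does carry an order $-1$ part proportional to $\wp'(q_j)\neq0$. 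This contrast is exactly why the three cases (a), (b), (c) collapse to the holomorphic data, and it is also why the excluded configuration $q_1,q_3\in I_H$ (handled in Lemma \ref{etacase2}) must be treated separately and there admits, in addition, a non-holomorphic branch.
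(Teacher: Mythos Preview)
Your argument is correct and takes a genuinely different route from the paper's. The paper proceeds by brute-force expansion: it writes out $\sum_j\phi_j^2$, multiplies through by $(\wp-\wp(q_1))^2(\wp-\wp(q_3))^2$, isolates the terms containing $\wp'$, squares both sides and uses $(\wp')^2=4(\wp-e_1)(\wp-e_2)(\wp-e_3)$ to get a polynomial identity in $\wp$; comparing degrees (and observing that the right-hand side is a perfect square in $\wp$) forces both sides to vanish, after which evaluating at $\wp=\wp(q_1)$ and $\wp=\wp(q_3)$ picks off the coefficients and yields $\sigma_2=\sigma_3=1$, $a_0+ib_0=0$, $c_0-id_0=0$. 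Your approach instead factors $\sum_j f_j^2=PQ+RS$ along the two rulings of $\mathscr{Q}_2$ and reads off the obstruction directly from the Laurent expansion at $q_1$ (and, in case~(b), at $q_3$): the residue-free double pole of $Q$ at $q_1$ forces $P(q_1)=P'(q_1)=0$, which in cases (a), (c) immediately pins $\sigma_2=1$ via $\wp'(q_1)\neq0$, and the rest follows from $RS\equiv0$. The trade-off is clear: the paper's method is purely algebraic and uniform across the three cases but involves long expressions; yours is shorter, more structural (it makes the link to the ruling/$G_2$ picture explicit), and isolates exactly why the half-period hypothesis matters---namely, the presence or absence of an order~$-1$ term in products like $\wp\cdot D_j$ at $q_j$---which also explains transparently why case $q_1,q_3\in I_H$ behaves differently in Lemma~\ref{etacase2}.
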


\begin{proof}
For clarity, we rewrite meromorphic $1$-forms $\eta_1$ and $\eta_3$ of the three cases in the statement as follows:
\begin{align*}
(\eta_1, \eta_3) =
\begin{cases}
(D_1(z) \, dz,\; D_3(z) \, dz)  \cdots &\textup{{Case (a)}} \\
(H_1(z) \, dz,\; D_3(z) \, dz)\cdots &\textup{{Case (b)}} \\
(D_1(z) \, dz,\; H_3(z) \, dz)\cdots &\textup{{Case (c)}} 
\end{cases}.
\end{align*}
The if direction is clear, so we assume that $\sum_{j=1}^4\phi_j^2\equiv0$ holds and prove that $\phi_1 \equiv -i \phi_2$ and $\phi_3 \equiv i \phi_4$. 
\bigskip

\paragraph{\textbf{\underline{Case (a)}}} When $\eta_1=D_1(z)dz$ and $\eta_3=D_3(z)dz$, we have
\begin{align*}
0&=\left(a_0+\alpha D_1(z)+\beta\wp(z)\right)^2+\left(b_0+i\alpha D_1(z)+i\sigma_2\beta\wp(z)\right)^2\\
&\qquad+\left(c_0+\beta\wp(z)+\gamma D_3(z)\right)^2+\left(d_0-i\sigma_2\beta\wp(z)-i\sigma_3\gamma D_3(z)\right)^2\\
&=\left(a_0^2+b_0^2+c_0^2+d_0^2\right)+2\alpha(a_0+ib_0)\frac{\wp'(z)+\wp'(q_1)+\frac{\wp''(q_1)}{\wp'(q_1)}\left(\wp(z)-\wp(q_1)\right)}{\left(\wp(z) - \wp(q_1)\right)^2}\\
&\qquad+2\beta(a_0+i\sigma_2b_0+c_0-i\sigma_2d_0)\wp(z)\\
&\qquad+2\gamma(c_0-i\sigma_3d_0)\frac{\wp'(z)+\wp'(q_3)+\frac{\wp''(q_3)}{\wp'(q_3)}\left(\wp(z)-\wp(q_3)\right)}{\left(\wp(z)-\wp(q_3)\right)^2}\\
&\qquad+2\alpha\beta(1-\sigma_2)\wp(z)\frac{\wp'(z)+\wp'(q_1)+\frac{\wp''(q_1)}{\wp'(q_1)}\left(\wp(z)-\wp(q_1)\right)}{\left(\wp(z) - \wp(q_1)\right)^2}\\
&\qquad+2\beta\gamma(1-\sigma_2\sigma_3)\wp(z)\frac{\wp'(z)+\wp'(q_3)+\frac{\wp''(q_3)}{\wp'(q_3)}\left(\wp(z)-\wp(q_3)\right)}{\left(\wp(z)-\wp(q_3)\right)^2}
\end{align*}
from the identity $\sum_{j=1}^4\phi_j^2\equiv0$. Multiplying by $\left(\wp(z)-\wp(q_1)\right)^2\left(\wp(z)-\wp(q_3)\right)^2$ and isolating the terms involving $\wp'(z)$, we obtain
\begin{align*}
&-2\wp'(z)\left[\alpha(a_0+ib_0)\left(\wp(z)-\wp(q_3)\right)^2+\gamma(c_0-i\sigma_3d_0)\left(\wp(z)-\wp(q_1)\right)^2\right.\\
&\qquad\left.+\alpha\beta(1-\sigma_2)\wp(z)\left(\wp(z)-\wp(q_3)\right)^2+\beta\gamma(1-\sigma_2\sigma_3)\wp(z)\left(\wp(z)-\wp(q_1)\right)^2\right]\\
&=\left(a_0^2+b_0^2+c_0^2+d_0^2\right)\left(\wp(z)-\wp(q_1)\right)^2\left(\wp(z)-\wp(q_3)\right)^2\\
&\qquad+2\alpha(a_0+ib_0)\left(\wp'(q_1)+\frac{\wp''(q_1)}{\wp'(q_1)}\left(\wp(z)-\wp(q_1)\right)\right)\left(\wp(z)-\wp(q_3)\right)^2\\
&\qquad+2\beta(a_0+i\sigma_2b_0+c_0-i\sigma_2d_0)\wp(z)\left(\wp(z)-\wp(q_1)\right)^2\left(\wp(z)-\wp(q_3)\right)^2\\
&\qquad+2\gamma(c_0-i\sigma_3d_0)\left(\wp'(q_3)+\frac{\wp''(q_3)}{\wp'(q_3)}\left(\wp(z)-\wp(q_3)\right)\right)\left(\wp(z)-\wp(q_1)\right)^2\\
&\qquad+2\alpha\beta(1-\sigma_2)\wp(z)\left(\wp'(q_1)+\frac{\wp''(q_1)}{\wp'(q_1)}\left(\wp(z)-\wp(q_1)\right)\right)\left(\wp(z)-\wp(q_3)\right)^2\\
&\qquad+2\beta\gamma(1-\sigma_2\sigma_3)\wp(z)\left(\wp'(q_3)+\frac{\wp''(q_3)}{\wp'(q_3)}\left(\wp(z)-\wp(q_3)\right)\right)\left(\wp(z)-\wp(q_1)\right)^2.
\end{align*}
Squaring both sides and applying (\ref{pftn}), it follows that
\begin{align*}
&16\left(\wp(z)-e_1\right)\left(\wp(z)-e_2\right)\left(\wp(z)-e_3\right)\\
&\times\left[\alpha(a_0+ib_0)\left(\wp(z)-\wp(q_3)\right)^2+\gamma(c_0-i\sigma_3d_0)\left(\wp(z)-\wp(q_1)\right)^2\right.\\
&\qquad\left.+\alpha\beta(1-\sigma_2)\wp(z)\left(\wp(z)-\wp(q_3)\right)^2+\beta\gamma(1-\sigma_2\sigma_3)\wp(z)\left(\wp(z)-\wp(q_1)\right)^2\right]^2\\
&=\left[\left(a_0^2+b_0^2+c_0^2+d_0^2\right)\left(\wp(z)-\wp(q_1)\right)^2\left(\wp(z)-\wp(q_3)\right)^2\right.\\
&\qquad+2\alpha(a_0+ib_0)\left(\wp'(q_1)+\frac{\wp''(q_1)}{\wp'(q_1)}\left(\wp(z)-\wp(q_1)\right)\right)\left(\wp(z)-\wp(q_3)\right)^2\\
&\qquad+2\beta(a_0+i\sigma_2b_0+c_0-i\sigma_2d_0)\wp(z)\left(\wp(z)-\wp(q_1)\right)^2\left(\wp(z)-\wp(q_3)\right)^2\\
&\qquad+2\gamma(c_0-i\sigma_3d_0)\left(\wp'(q_3)+\frac{\wp''(q_3)}{\wp'(q_3)}\left(\wp(z)-\wp(q_3)\right)\right)\left(\wp(z)-\wp(q_1)\right)^2\\
&\qquad+2\alpha\beta(1-\sigma_2)\wp(z)\left(\wp'(q_1)+\frac{\wp''(q_1)}{\wp'(q_1)}\left(\wp(z)-\wp(q_1)\right)\right)\left(\wp(z)-\wp(q_3)\right)^2\\
&\qquad\left.+2\beta\gamma(1-\sigma_2\sigma_3)\wp(z)\left(\wp'(q_3)+\frac{\wp''(q_3)}{\wp'(q_3)}\left(\wp(z)-\wp(q_3)\right)\right)\left(\wp(z)-\wp(q_1)\right)^2\right]^2.
\end{align*}
Since the Weierstrass $\wp$-function takes infinitely many values, both sides of the above equation must be identical as polynomials in $\wp(z)$. Comparing their degrees and noting that the right-hand side is a square of a polynomial in $\wp(z)$, we conclude that both sides must vanish identically. 

In particular, we have
\begin{align}\label{RRRRRR1}
&\alpha(a_0+ib_0)\left(\wp(z)-\wp(q_3)\right)^2+\gamma(c_0-i\sigma_3d_0)\left(\wp(z)-\wp(q_1)\right)^2\nonumber\\
&+\alpha\beta(1-\sigma_2)\wp(z)\left(\wp(z)-\wp(q_3)\right)^2+\beta\gamma(1-\sigma_2\sigma_3)\wp(z)\left(\wp(z)-\wp(q_1)\right)^2\equiv0
\end{align}
from the left-hand side. Evaluating (\ref{RRRRRR1}) at $z=q_1$ and $z=q_3$, and using the fact that $\wp(q_1)\neq\wp(q_3)$, we obtain
\begin{align}\label{RRRRRR2}
\alpha(a_0+ib_0)+\alpha\beta(1-\sigma_2)\wp(q_1)=0
\end{align}
and
\begin{align}\label{RRRRRR3}
\gamma(c_0-i\sigma_3d_0)+\beta\gamma(1-\sigma_2\sigma_3)\wp(q_3)=0.
\end{align}
Substituting (\ref{RRRRRR2}) and (\ref{RRRRRR3}) into (\ref{RRRRRR1}) yields
\begin{align*}
\alpha\beta(1-\sigma_2)\left(\wp(z)-\wp(q_1)\right)&\left(\wp(z)-\wp(q_3)\right)^2\\
&+\beta\gamma(1-\sigma_2\sigma_3)\left(\wp(z)-\wp(q_1)\right)^2\left(\wp(z)-\wp(q_3)\right)\equiv0.
\end{align*}
Since $\wp(q_1)\neq\wp(q_3)$, we conclude that
\begin{align*}
\alpha\beta(1-\sigma_2)=\beta\gamma(1-\sigma_2\sigma_3)=0.
\end{align*}
As $\alpha$, $\beta$, and $\gamma$ are all nonzero by assumption, it follows that $\sigma_2=\sigma_3=1$. Substituting these into (\ref{RRRRRR2}) and (\ref{RRRRRR3}), we find
\begin{align*}
a_0+ib_0=0,\quad c_0-id_0=0.
\end{align*}
Hence, from (\ref{vvv1}), we conclude that $\phi_1 \equiv -i \phi_2$ and $\phi_3 \equiv i \phi_4$, completing the proof for this case.
\bigskip

\paragraph{\textbf{\underline{Case (b)}}}
For the case where $\eta_1=H_1(z)dz$ and $\eta_3=D_3(z)dz$, similar computations yield
\begin{align*}
&16\gamma^2\left(\wp(z)-e_1\right)\left(\wp(z)-e_2\right)\left(\wp(z)-e_3\right)\left(\wp(z)-\wp(q_1)\right)^2\left[c_0-i\sigma_3d_0+\beta(1-\sigma_2\sigma_3)\wp(z)\right]^2\\
&=\left[\left(a_0^2+b_0^2+c_0^2+d_0^2\right)\left(\wp(z)-\wp(q_1)\right)\left(\wp(z)-\wp(q_3)\right)^2\right.\\
&\qquad+2\alpha(a_0+ib_0)\left(\wp(z)-\wp(q_3)\right)^2\\
&\qquad+2\beta(a_0+i\sigma_2b_0+c_0-i\sigma_2d_0)\wp(z)\left(\wp(z)-\wp(q_1)\right)\left(\wp(z)-\wp(q_3)\right)^2\\
&\qquad+2\gamma(c_0-i\sigma_3d_0)\left(\wp'(q_3)+\frac{\wp''(q_3)}{\wp'(q_3)}\left(\wp(z)-\wp(q_3)\right)\right)\left(\wp(z)-\wp(q_1)\right)\\
&\qquad+2\alpha\beta(1-\sigma_2)\wp(z)\left(\wp(z)-\wp(q_3)\right)^2\\
&\qquad\left.+2\beta\gamma(1-\sigma_2\sigma_3)\wp(z)\left(\wp'(q_3)+\frac{\wp''(q_3)}{\wp'(q_3)}\left(\wp(z)-\wp(q_3)\right)\right)\left(\wp(z)-\wp(q_1)\right)\right]^2.
\end{align*}
As in the first case, both sides must agree as polynomials in $\wp(z)$. Since the degrees of both sides have different parity, and the right-hand side is a square of a polynomial, it follows that both sides must vanish identically.

From the left-hand side, we obtain the conditions
\begin{align*}
c_0-i\sigma_3d_0=0,\quad 1-\sigma_2\sigma_3=0.
\end{align*}
If $\sigma_2=\sigma_3=1$, then $c_0-id_0=0$. Substituting these into the right-hand side and using the fact that it vanishes identically, we find
\begin{align*}
\left(a_0^2+b_0^2\right)\left(\wp(z)-\wp(q_1)\right)+2\alpha(a_0+ib_0)+2\beta(a_0+ib_0)\wp(z)\left(\wp(z)-\wp(q_1)\right)\equiv0.
\end{align*}
This implies that $a_0+ib_0=0$. Hence, from (\ref{vvv1}), we conclude that $\phi_1 \equiv -i \phi_2$ and $\phi_3 \equiv i \phi_4$, which corresponds to the Weierstrass data of a $J$-holomorphic curve.

On the other hand, if $\sigma_2=\sigma_3=-1$, then $c_0+id_0=0$. Substituting these into the right-hand side and using the vanishing condition, we obtain
\begin{align*}
&\left(a_0^2+b_0^2\right)\left(\wp(z)-\wp(q_1)\right)+2\alpha(a_0+ib_0)\\
&\qquad+2\beta(a_0-ib_0)\wp(z)\left(\wp(z)-\wp(q_1)\right)+4\alpha\beta\wp(z)\equiv0.
\end{align*}
The coefficient of $\wp(z)^2$ must vanish, yielding $a_0-ib_0=0$. Substituting this back into the above identity gives
\begin{align*}
4\alpha a_0+4\alpha\beta\wp(z)\equiv0.
\end{align*}
Since $\alpha\beta\neq0$, this identity cannot hold, leading to a contradiction. This completes the proof for this case.
\bigskip

\paragraph{\textbf{\underline{Case (c)}}}
For the case where $\eta_1=D_1(z)$ and $\eta_3=H_3(z)$, we follow the same procedure as before and obtain
\begin{align*}
&16\alpha^2\left(\wp(z)-e_1\right)\left(\wp(z)-e_2\right)\left(\wp(z)-e_3\right)\left(\wp(z)-\wp(q_3)\right)^2\left[a_0+ib_0+\beta(1-\sigma_2)\wp(z)\right]^2\\
&=\left[\left(a_0^2+b_0^2+c_0^2+d_0^2\right)\left(\wp(z)-\wp(q_1)\right)^2\left(\wp(z)-\wp(q_3)\right)\right.\\
&\qquad+2\alpha(a_0+ib_0)\left(\wp'(q_1)+\frac{\wp''(q_1)}{\wp'(q_1)}\left(\wp(z)-\wp(q_1)\right)\right)\left(\wp(z)-\wp(q_3)\right)\\
&\qquad+2\beta(a_0+i\sigma_2b_0+c_0-i\sigma_2d_0)\wp(z)\left(\wp(z)-\wp(q_1)\right)^2\left(\wp(z)-\wp(q_3)\right)\\
&\qquad+2\gamma(c_0-i\sigma_3d_0)\left(\wp(z)-\wp(q_1)\right)^2\\
&\qquad+2\alpha\beta(1-\sigma_2)\wp(z)\left(\wp'(q_1)+\frac{\wp''(q_1)}{\wp'(q_1)}\left(\wp(z)-\wp(q_1)\right)\right)\left(\wp(z)-\wp(q_3)\right)\\
&\qquad\left.+2\beta\gamma(1-\sigma_2\sigma_3)\wp(z)\left(\wp(z)-\wp(q_1)\right)^2\right]^2.
\end{align*}
As in the previous cases, both sides must be identically zero.

From the vanishing of the left-hand side, we obtain
\begin{align*}
\sigma_2=1,\quad a_0+ib_0=0.
\end{align*}
Combined with the fact that the right-hand side also vanishes identically, this leads to the following identity:
\begin{align*}
\left(c_0^2+d_0^2\right)\left(\wp(z)-\wp(q_3)\right)&+2\beta(c_0-id_0)\wp(z)\left(\wp(z)-\wp(q_3)\right)\\
&+2\gamma(c_0-i\sigma_3d_0)+2\beta\gamma(1-\sigma_3)\wp(z)\equiv0.
\end{align*}
This implies that $c_0-id_0=0$ and $\sigma_3=1$. Therefore, by (\ref{vvv1}), we conclude that $\phi_1 \equiv -i \phi_2$ and $\phi_3 \equiv i \phi_4$, completing the proof.
\end{proof}

We now turn to the last remaining case, where both $q_1$ and $q_3$ are half-periods.
\begin{lemma}\label{etacase2}
We choose $\eta_1$ and $\eta_3$ as in Lemma \ref{etacase}, according to the case
\begin{align*}
q_1\in I_H,\ q_3 \in I_H.
\end{align*}
Then, the Weierstrass data given by (\ref{vvv1}) satisfy the identity $\sum_{j=1}^4\phi_j^2\equiv0$ if and only if one of the following two cases holds:
\begin{itemize}
\item[(1)] $\phi_1 \equiv -i \phi_2$ and $\phi_3 \equiv i \phi_4$, corresponding to the Weierstrass data of a $J$-holomorphic curve.
\item[(2)] The coefficients $a_0$, $b_0$, $c_0$, and $d_0$ are given by
\begin{align*}
a_0&=-\beta\wp(q_1)-\frac{\alpha + \gamma}{\wp(q_3)-\wp(q_1)},\\
b_0&= i\beta\wp(q_1)-i\frac{\alpha+\gamma}{\wp(q_3)-\wp(q_1)},\\
c_0&=-\beta\wp(q_3)+\frac{\alpha + \gamma}{\wp(q_3)-\wp(q_1)},\\
d_0&=-i\beta\wp(q_3)-i\frac{\alpha+\gamma}{\wp(q_3)-\wp(q_1)}.
\end{align*}
In this case, we have $\sigma_2=-1$ and $\sigma_3=1$, and hence the generalized Gauss map at $q_1$ and $q_3$ are
\begin{align*}
\Phi(q_1)=[1,-i,1,i]\in\mathbb{P}^3,\quad \Phi(q_3)=[0,0,1,-i]\in\mathbb{P}^3.
\end{align*} 
\end{itemize}
\end{lemma}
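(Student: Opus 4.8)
The plan is to follow the same strategy as in Lemma \ref{etacase1}, namely to substitute (\ref{vvv1}) into the identity $\sum_{j=1}^4 \phi_j^2 \equiv 0$ and extract algebraic constraints on the coefficients, but the computation here will be noticeably cleaner. In this case $\eta_1 = H_1(z)\,dz = \frac{dz}{\wp(z)-\wp(q_1)}$, $\eta_3 = H_3(z)\,dz = \frac{dz}{\wp(z)-\wp(q_3)}$, and $\eta_2 = \wp(z)\,dz$ are all rational functions of $\wp(z)$ alone, with no occurrence of $\wp'(z)$; so, unlike in Lemma \ref{etacase1}, substituting into the sum of squares will produce a rational identity in $\wp(z)$, and there will be no need to isolate $\wp'(z)$-terms, square, and appeal to (\ref{pftn}). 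Writing $P = \wp(z)$, $p_1 = \wp(q_1)$, $p_3 = \wp(q_3)$, I would first expand the sum of squares; because $\sigma_2^2 = \sigma_3^2 = 1$, the $\alpha^2 H_1^2$, $\beta^2\wp^2$, and $\gamma^2 H_3^2$ contributions cancel in pairs, leaving a rational function of $P$ whose only denominators are $(P-p_1)$ and $(P-p_3)$. Multiplying through by $(P-p_1)(P-p_3)$ gives a polynomial in $P$ of degree at most $3$, which must vanish identically since $\wp(z)$ assumes infinitely many values.

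The core of the argument is then to read off the coefficients of this polynomial. The coefficient of $P^3$, together with $\beta \neq 0$, forces $a_0 + i\sigma_2 b_0 + c_0 - i\sigma_2 d_0 = 0$; the remaining polynomial has degree at most $2$, so its leading coefficient must vanish as well, giving $(a_0^2 + b_0^2 + c_0^2 + d_0^2) + 2\alpha\beta(1-\sigma_2) + 2\beta\gamma(1-\sigma_2\sigma_3) = 0$. Evaluating at $P = p_1$ and $P = p_3$ --- which is legitimate because $p_1 \neq p_3$, since $q_1$ and $q_3$ are distinct half-periods --- and using $\alpha, \gamma \neq 0$ produces the relations $a_0 + i b_0 + \beta(1-\sigma_2)p_1 = 0$ and $c_0 - i\sigma_3 d_0 + \beta(1-\sigma_2\sigma_3)p_3 = 0$. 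These four relations are in fact also sufficient, since a polynomial of degree at most $1$ that vanishes at the two distinct points $p_1, p_3$ is identically zero. It then remains to run through the four sign choices $(\sigma_2, \sigma_3) \in \{\pm 1\}^2$. For $(\sigma_2,\sigma_3) = (1,1)$ the relations collapse to $a_0 + ib_0 = 0$ and $c_0 - id_0 = 0$, i.e. $\phi_1 \equiv -i\phi_2$ and $\phi_3 \equiv i\phi_4$, which is case (1). For $(1,-1)$ and $(-1,-1)$ the remaining relations force $a_0^2 + b_0^2 = c_0^2 + d_0^2 = 0$, so the vanishing of the $P^2$-coefficient reduces to $\alpha\beta = 0$ or $\beta\gamma = 0$, contradicting $\alpha\beta\gamma \neq 0$. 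For $(\sigma_2, \sigma_3) = (-1, 1)$ the relations become a linear system in $a_0, b_0, c_0, d_0$ with the closed forms in case (2) as its unique solution; the Gauss map values at the remaining ends then follow directly from (\ref{GMI}). To finish the reverse implication in case (2), I would substitute those explicit coefficients back into (\ref{vvv1}) and check $\sum_{j=1}^4 \phi_j^2 \equiv 0$ by a direct finite computation.

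I expect the main obstacle to be purely organizational rather than conceptual: tracking the signs $\sigma_2, \sigma_3$ consistently through the expansion, matching each polynomial coefficient to the right equation, and --- in case (2) --- verifying that the explicit formulas both solve the linear system and, upon back-substitution, satisfy the sum-of-squares identity. There should be no genuine analytic difficulty beyond what already appears in Lemma \ref{etacase1}; the absence of $\wp'(z)$ in fact makes this case strictly easier, the only real subtlety being that the degree-$3$ term must be eliminated before the evaluation-at-$p_1,p_3$ step can be applied.
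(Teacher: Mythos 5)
Your proposal is correct and follows essentially the same route as the paper: multiply the sum of squares by $(\wp(z)-\wp(q_1))(\wp(z)-\wp(q_3))$ to obtain a cubic polynomial identity in $\wp(z)$, kill the $\wp^3$ coefficient to get $a_0+i\sigma_2b_0+c_0-i\sigma_2d_0=0$, evaluate at $\wp(q_1)\neq\wp(q_3)$ and use the vanishing of the remaining quadratic's leading coefficient to get the other three relations, then run the four-way case analysis on $(\sigma_2,\sigma_3)$. The only cosmetic difference is the order in which you extract the condition $a_0^2+b_0^2+c_0^2+d_0^2+2\alpha\beta(1-\sigma_2)+2\beta\gamma(1-\sigma_2\sigma_3)=0$ (directly as the $\wp^2$ coefficient rather than by back-substitution), and you should note that in the $(\sigma_2,\sigma_3)=(-1,1)$ case this last relation is quadratic but factors as $(a_0+ib_0)(a_0-ib_0)+(c_0-id_0)(c_0+id_0)+4\beta(\alpha+\gamma)$, which becomes linear in $c_0+id_0$ after substituting the other relations — exactly as in the paper.
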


\begin{proof}
We have $\eta_1=H_1(z)dz$ and $\eta_3=H_3(z)dz$ by Lemma \ref{etacase}. We proceed similarly to the proof of Lemma \ref{etacase1}. From the identity $\sum_{j=1}^4\phi_j^2\equiv0$, we compute
\begin{align*}
\sum_{j=1}^4\phi_j^2&=\left[\left(a_0^2+b_0^2+c_0^2+d_0^2\right)+2\alpha(a_0+ib_0)\frac{1}{\wp(z)-\wp(q_1)}\right.\\
&\qquad+2\beta(a_0+i\sigma_2+c_0-i\sigma_2d_0)\wp(z)+2\gamma(c_0-i\sigma_3d_0)\frac{1}{\wp(z)-\wp(q_3)}\\
&\qquad\left.+2\alpha\beta(1-\sigma_2)\wp(z)\frac{1}{\wp(z)-\wp(q_1)}+2\beta\gamma(1-\sigma_2\sigma_3)\wp(z)\frac{1}{\wp(z)-\wp(q_3)}\right](dz)^2\\
&\equiv0.
\end{align*}
Multiplying both sides by $\left(\wp(z)-\wp(q_1)\right)\left(\wp(z)-\wp(q_3)\right)$, we obtain the following identity:
\begin{align*}
&\left(a_0^2+b_0^2+c_0^2+d_0^2\right)\left(\wp(z)-\wp(q_1)\right)\left(\wp(z)-\wp(q_3)\right)+2\alpha(a_0+ib_0)\left(\wp(z)-\wp(q_3)\right)\\
&\qquad+2\beta(a_0+i\sigma_2b_0+c_0-i\sigma_2d_0)\wp(z)\left(\wp(z)-\wp(q_1)\right)\left(\wp(z)-\wp(q_3)\right)\\
&\qquad+2\gamma(c_0-i\sigma_3d_0)\left(\wp(z)-\wp(q_1)\right)\\
&\qquad+2\alpha\beta(1-\sigma_2)\wp(z)\left(\wp(z)-\wp(q_3)\right)+2\beta\gamma(1-\sigma_2\sigma_3)\wp(z)\left(\wp(z)-\wp(q_1)\right)\equiv0.
\end{align*}
Since $\wp(z)$ takes infinitely many values, this identity must hold as a polynomial in $\wp(z)$. Therefore, the coefficient of the leading term $\wp(z)^3$ must vanish, which gives
\begin{align}\label{YYYYYY1}
a_0+i\sigma_2b_0+c_0-i\sigma_2d_0=0
\end{align}
as $\beta\neq0$. 

Substituting (\ref{YYYYYY1}) into the expression, the identity becomes 
\begin{align*}
&\left(a_0^2+b_0^2+c_0^2+d_0^2\right)\left(\wp(z)-\wp(q_1)\right)\left(\wp(z)-\wp(q_3)\right)+2\alpha(a_0+ib_0)\left(\wp(z)-\wp(q_3)\right)\\
&\qquad+2\gamma(c_0-i\sigma_3d_0)\left(\wp(z)-\wp(q_1)\right)\\
&\qquad+2\alpha\beta(1-\sigma_2)\wp(z)\left(\wp(z)-\wp(q_3)\right)+2\beta\gamma(1-\sigma_2\sigma_3)\wp(z)\left(\wp(z)-\wp(q_1)\right)\equiv0.
\end{align*}
Evaluating this identity at $z=q_1$ and $z=q_3$, and using $\wp(q_1)\neq\wp(q_3)$, we obtain
\begin{align}\label{YYYYYY2}
a_0+ib_0+\beta(1-\sigma_2)\wp(q_1)=0
\end{align}
and
\begin{align}\label{YYYYYY3}
c_0-i\sigma_3d_0+\beta(1-\sigma_2\sigma_3)\wp(q_3)=0.
\end{align}
Here we have also used the assumption that $\alpha, \beta, \gamma\neq0$. Substituting (\ref{YYYYYY2}) and (\ref{YYYYYY3}) into the identity yields 
\begin{align*}
&\left[\left(a_0^2+b_0^2+c_0^2+d_0^2\right)+2\alpha\beta(1-\sigma_2)+2\beta\gamma(1-\sigma_2\sigma_3)\right]\left(\wp(z)-\wp(q_1)\right)\left(\wp(z)-\wp(q_3)\right)\\
&\equiv0,
\end{align*}
which implies
\begin{align}\label{YYYYYY4}
a_0^2+b_0^2+c_0^2+d_0^2+2\alpha\beta(1-\sigma_2)+2\beta\gamma(1-\sigma_2\sigma_3)=0.
\end{align}
In what follows, we examine equations (\ref{YYYYYY1}), (\ref{YYYYYY2}), (\ref{YYYYYY3}), and (\ref{YYYYYY4}) by dividing into cases according to the possible values of $\sigma_2$ and $\sigma_3$.

If $\sigma_2=1$ and $\sigma_3=1$, then it follows that $a_0+ib_0=0$ and $c_0-id_0=0$. From (\ref{vvv1}), we see that $\phi_1 \equiv -i \phi_2$ and $\phi_3 \equiv i \phi_4$, which correspond to the Weierstrass data of a $J$-holomorphic curve.

If $\sigma_2=1$ and $\sigma_3=-1$, then (\ref{YYYYYY1}) and (\ref{YYYYYY2}) imply that $a_0+ib_0=0$ and $c_0-id_0=0$. Substituting these into (\ref{YYYYYY4}) yields $4\beta\gamma=0$, which contradicts the assumption that $\beta, \gamma\neq0$. 

If $\sigma_2=-1$ and $\sigma_3=-1$, then (\ref{YYYYYY1}) and (\ref{YYYYYY3}) imply that $a_0-ib_0=0$ and $c_0+id_0=0$. Substituting these into (\ref{YYYYYY4}) gives $4\alpha\beta=0$, which contradicts the assumption that $\alpha, \beta\neq0$.

If $\sigma_2=-1$ and $\sigma_3=1$, then we have from (\ref{YYYYYY2}) and (\ref{YYYYYY3}) that
\begin{align*}
a_0+ib_0=-2\beta\wp(q_1),\quad c_0-id_0=-2\beta\wp(q_3).
\end{align*}
Using these and $a_0-ib_0=-c_0-id_0$ obtained from (\ref{YYYYYY1}), it follows from (\ref{YYYYYY4}) that
\begin{align*}
0&=a_0^2+b_0^2+c_0^2+d_0^2+4\alpha\beta+4\beta\gamma\\
&=(a_0+ib_0)(a_0-ib_0)+(c_0-id_0)(c_0+id_0)+4\beta(\alpha+\gamma)\\
&=2\beta\wp(q_1)(c_0+id_0)-2\beta\wp(q_3)(c_0+id_0)+4\beta(\alpha+\gamma).
\end{align*}
This implies that
\begin{align*}
c_0+id_0=\frac{2(\alpha+\gamma)}{\wp(q_3)-\wp(q_1)}.
\end{align*}
Combining the above observations, we find that $a_0$, $b_0$, $c_0$, and $d_0$ are determined as in case $(2)$. Conversely, it is straightforward to verify that the Weierstrass data given by (\ref{vvv1}) with these coefficients satisfy the identity $\sum_{j=1}^4\phi_j^2\equiv0$, completing the proof.
\end{proof}

\subsection{Period computation}\label{periodcompute}
We aim to obtain an equivalent condition for the real periods of the Weierstrass data in case (2) of Lemma \ref{etacase2} to vanish. The Weierstrass data is given as follows:
\begin{align*} 
\phi_1 &= \left(-\beta\wp(q_1)-\frac{\alpha + \gamma}{\wp(q_3)-\wp(q_1)}\right) dz + \alpha \eta_1 + \beta \eta_2+0, \\
\phi_2 &= \left(i\beta\wp(q_1)-i\frac{\alpha+\gamma}{\wp(q_3)-\wp(q_1)}\right)dz + i \alpha \eta_1 - i \beta \eta_2+0, \nonumber\\
\phi_3 &= \left(-\beta\wp(q_3)+\frac{\alpha + \gamma}{\wp(q_3)-\wp(q_1)}\right) dz +0+  \beta \eta_2+ \gamma \eta_3, \nonumber\\
\phi_4 &= \left(-i\beta\wp(q_3)-i\frac{\alpha+\gamma}{\wp(q_3)-\wp(q_1)}\right)dz +0+ i \beta \eta_2 -i \gamma \eta_3,\nonumber
\end{align*}
for some nonzero complex numbers $\alpha$, $\beta$, and $\gamma$. Here, 
\begin{align*}
\eta_1 = \frac{1}{\wp(z) - \wp(q_1)} dz, \quad \eta_2 = \wp(z) dz, \quad \eta_3 = \frac{1}{\wp(z) - \wp(q_3)}dz.
\end{align*} 
We note that $q_1, q_3\in I_H$ in this case, and $q_2$ has been identified with $0$.

Let us define two paths $C_1, C_2:[0,1]\to \mathbb{C}$ as 
\begin{align*}
C_1(t) = t+v_0 \tau , \quad C_2(t) = u_0 + t \tau, 
\end{align*}
where $0<u_0,v_0<1$. We remark that $C_1$ and $C_2$ generate a homology basis. We define $\mu$ by
\begin{align*}
\int_{C_1}\wp(z) dz &= \zeta(v_0\tau) - \zeta (1+v_0 \tau) =:\mu,
\end{align*} 
where $\zeta$ denotes the Weierstrass zeta function. By the Legendre relation, we also have:
\begin{align*}
\int_{C_2}\wp(z) dz &= \zeta(u_0) - \zeta (u_0+ \tau) =\tau \mu + 2\pi i.
\end{align*}
Denote  $$\wp_1 := \wp(q_1),  \quad \wp_3 := \wp(q_3).$$

\begin{lemma} \label{periodlemma}
Consider the matrix $M_{\tau,\wp_1,\wp_3}$ defined by
\begin{align*}
M_{\tau,\wp_1,\wp_3}=\begin{pmatrix}
\overline{ \wp_1 -\mu} &  -\frac{\mu - \wp_1}{(\wp_4 - \wp_1)(\wp_3 - \wp_1)} + \frac{1}{\wp_3 - \wp_1} &\frac{1}{\wp_3 - \wp_1}  \\
\overline{\wp_3 -  \mu} & -\frac{1 }{\wp_3 - \wp_1}  &-\frac{\mu-\wp_3}{(\wp_4 - \wp_3)(\wp_1 - \wp_3)} - \frac{1}{\wp_3 - \wp_1}  \\
\overline{ \wp_1 \tau -\tau\mu - 2\pi i} & -\frac{\tau(\mu-\wp_1) + 2\pi i }{(\wp_4 - \wp_1)(\wp_3 - \wp_1)} + \frac{\tau}{\wp_3 - \wp_1} & \frac{\tau}{\wp_3 -\wp_1}  \\
 \overline{ \wp_3 \tau- \tau\mu - 2\pi i}& -\frac{\tau}{\wp_3 - \wp_1}  & -\frac{\tau(\mu - \wp_3) + 2\pi i}{(\wp_4 - \wp_3)(\wp_1 - \wp_3)}- \frac{\tau}{\wp_3 - \wp_1}
\end{pmatrix}.
\end{align*}
Then the Weierstrass data given in case (2) of Lemma (\ref{etacase2}) have vanishing real periods if and only if there exist nonzero complex numbers $\alpha, \beta,$ and $\gamma$ such that 
\begin{align}\label{period condition}
M_{\tau,\wp_1,\wp_3}\cdot\begin{pmatrix}
\bar{\beta} \\
\alpha  \\
\gamma
\end{pmatrix} = 0.
\end{align}
\end{lemma}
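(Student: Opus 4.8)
The plan is to convert the eight real period conditions $\operatorname{Re}\int_{C_k}\phi_j=0$ $(j=1,2,3,4;\ k=1,2)$ into the four complex equations recorded by the rows of $M_{\tau,\wp_1,\wp_3}$. By Proposition \ref{PropOrdRes}, each $\phi_j$ is closed on $\Sigma_1\setminus\{q_1,q_2,q_3\}$ with vanishing residues at the punctures, so the real periods descend to a homomorphism on $H_1(\Sigma_1;\mathbb{Z})$, which is generated by the classes of $C_1$ and $C_2$; choosing $u_0,v_0$ generically so that $C_1$ and $C_2$ miss $\overline{0}$ and the half-periods, all real periods vanish if and only if $\operatorname{Re}\int_{C_k}\phi_j=0$ for these eight pairs.

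First I would record the two algebraic relations built into case (2) of Lemma \ref{etacase2}. Substituting the coefficients $a_0,b_0,c_0,d_0$ from that case and $\eta_2=\wp\,dz$ into (\ref{vvv1}) gives, after a short computation,
\[
\phi_2=i\phi_1-2i\beta\bigl(\wp(z)-\wp_1\bigr)\,dz,\qquad \phi_4=-i\phi_3+2i\beta\bigl(\wp(z)-\wp_3\bigr)\,dz.
\]
Write $\ell^{(1)}=1$, $\ell^{(2)}=\tau$ for $\int_{C_k}dz$ and $\mu^{(1)}=\mu$, $\mu^{(2)}=\tau\mu+2\pi i$ for $\int_{C_k}\wp\,dz$, and set $s_k:=\beta\bigl(\mu^{(k)}-\wp_1\ell^{(k)}\bigr)$ and $s_k':=\beta\bigl(\mu^{(k)}-\wp_3\ell^{(k)}\bigr)$. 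From the first relation, $\int_{C_k}\phi_2=i\int_{C_k}\phi_1-2is_k$, so the pair $\operatorname{Re}\int_{C_k}\phi_1=\operatorname{Re}\int_{C_k}\phi_2=0$ is equivalent to the single complex identity $\int_{C_k}\phi_1=s_k-\overline{s_k}$; from the second relation, the pair $\operatorname{Re}\int_{C_k}\phi_3=\operatorname{Re}\int_{C_k}\phi_4=0$ is equivalent to $\int_{C_k}\phi_3=s_k'-\overline{s_k'}$. Thus the eight real conditions collapse to four complex ones, matching the four rows of $M$.

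Next I would compute the remaining periods $\int_{C_k}\eta_1$ and $\int_{C_k}\eta_3$. Since $q_1,q_3\in I_H$, the numbers $\wp_1=\wp(q_1)$ and $\wp_3=\wp(q_3)$ are two of the three half-period values $e_1,e_2,e_3$; write $\wp_4$ for the third. Differentiating $\wp'/(\wp-\wp_1)$, then using (\ref{pftn}) and $e_1+e_2+e_3=0$, one obtains
\[
\frac{d}{dz}\!\left(\frac{\wp'(z)}{\wp(z)-\wp_1}\right)=2\bigl(\wp(z)-\wp_1\bigr)-\frac{2(\wp_4-\wp_1)(\wp_3-\wp_1)}{\wp(z)-\wp_1},
\]
and the analogue with $\wp_1$ replaced by $\wp_3$. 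Since $\wp'/(\wp-\wp_1)$ is elliptic it contributes no period, so integrating the resulting formula for $1/(\wp-\wp_1)$ over $C_k$ yields
\[
\int_{C_k}\eta_1=\frac{\mu^{(k)}-\wp_1\ell^{(k)}}{(\wp_4-\wp_1)(\wp_3-\wp_1)},\qquad \int_{C_k}\eta_3=\frac{\mu^{(k)}-\wp_3\ell^{(k)}}{(\wp_4-\wp_3)(\wp_1-\wp_3)}.
\]

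Finally I would substitute and simplify. Using $a_0=-\beta\wp_1-\frac{\alpha+\gamma}{\wp_3-\wp_1}$ in $\int_{C_k}\phi_1=a_0\ell^{(k)}+\alpha\int_{C_k}\eta_1+\beta\mu^{(k)}$, the terms in $\beta$ combine to give $s_k$, so the equation $\int_{C_k}\phi_1=s_k-\overline{s_k}$ becomes $\overline{s_k}+\alpha\bigl(\int_{C_k}\eta_1-\tfrac{\ell^{(k)}}{\wp_3-\wp_1}\bigr)-\tfrac{\gamma\ell^{(k)}}{\wp_3-\wp_1}=0$, a linear relation in $(\overline\beta,\alpha,\gamma)$ which, after inserting the formula for $\int_{C_k}\eta_1$, is precisely the first row ($k=1$) and third row ($k=2$) of $M_{\tau,\wp_1,\wp_3}$ up to an overall sign; the same manipulation with $c_0=-\beta\wp_3+\frac{\alpha+\gamma}{\wp_3-\wp_1}$ and $\int_{C_k}\phi_3=s_k'-\overline{s_k'}$ produces the second and fourth rows. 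I expect the real difficulty to be bookkeeping rather than conceptual: tracking the complex conjugations correctly, verifying that the $s_k$ and $s_k'$ terms cancel so that what survives is genuinely $\mathbb{R}$-linear in $(\overline\beta,\alpha,\gamma)$, and matching the coefficients entry by entry with the displayed matrix. A minor preliminary point is the generic choice of $u_0$ and $v_0$ making $C_1,C_2$ a homology basis disjoint from $\{\overline{0}\}\cup I_H$.
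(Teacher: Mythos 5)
Your proposal is correct and follows essentially the same route as the paper: reduce the eight real period conditions along $C_1,C_2$ to four $\mathbb{R}$-linear complex equations in $(\overline{\beta},\alpha,\gamma)$ and match them with the rows of $M_{\tau,\wp_1,\wp_3}$; your identities $\phi_2=i\phi_1-2i\beta(\wp-\wp_1)\,dz$, $\phi_4=-i\phi_3+2i\beta(\wp-\wp_3)\,dz$ and the resulting collapse to $\int_{C_k}\phi_1=s_k-\overline{s_k}$, $\int_{C_k}\phi_3=s_k'-\overline{s_k'}$ all check out against the paper's direct listing of the eight integrals. The only (harmless) variation is that you obtain $\int_{C_k}\frac{dz}{\wp-\wp_1}=\frac{\mu^{(k)}-\wp_1\ell^{(k)}}{(\wp_4-\wp_1)(\wp_3-\wp_1)}$ from the exact form $d\bigl(\wp'/(\wp-\wp_1)\bigr)$ rather than from the addition-formula identity $\frac{1}{\wp-e_i}=\frac{\wp(z-\omega_i)-e_i}{(e_j-e_i)(e_k-e_i)}$ used in the paper; both give the same periods.
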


\begin{proof}
First, we recall the following equations involving the Weierstrass $\wp$-function. These equations can be derived directly from the addition formula for the Weierstrass $\wp$-function \cite[equation (4), p.~82]{S}. 
\begin{align*}
\frac{1}{\wp(z) - e_1} &= \frac{1}{(e_3- e_1)(e_2 - e_1)}\bigg{[} \wp \left( z - \frac{1}{2} \right)- e_1\bigg{]},\\
\frac{1}{\wp(z) - e_2} &= \frac{1}{(e_3- e_2)(e_1 - e_2)}\bigg{[}\wp\left( z - \frac{\tau}{2}\right)-e_2\bigg{]},\\
\frac{1}{\wp(z) - e_3} &= \frac{1}{(e_1- e_3)(e_2 - e_3)}\bigg{[}\wp \left( z - \frac{1+\tau}{2}\right)-e_3\bigg{]}.
\end{align*}
From these, we compute the following integrals:
\begin{align*}
\int_{C_1} \frac{1}{\wp(z) - e_1} dz =  \frac{\mu - e_1}{(e_3- e_1)(e_2 - e_1)} , \quad \int_{C_2} \frac{1}{\wp(z) - e_1} dz =  \frac{\tau\mu + 2\pi i - \tau e_1}{(e_3- e_1)(e_2 - e_1)}, \\
\int_{C_1} \frac{1}{\wp(z) - e_2} dz =  \frac{\mu - e_2}{(e_1- e_2)(e_3 - e_2)} ,\quad\int_{C_2} \frac{1}{\wp(z) - e_2} dz =  \frac{\tau\mu + 2\pi i - \tau e_2}{(e_1- e_2)(e_3 - e_2)},\\
\int_{C_1} \frac{1}{\wp(z) - e_3} dz =  \frac{\mu - e_3}{(e_1- e_3)(e_2 - e_3)} ,\quad\int_{C_2} \frac{1}{\wp(z) - e_3} dz =  \frac{\tau\mu + 2\pi i - \tau e_3}{(e_1- e_3)(e_2 - e_3)}.
\end{align*}
From now on, we let $q_4\in \left\{\frac{1}{2}, \frac{\tau}{2}, \frac{1+\tau}{2}\right\} \setminus \{q_1, q_3\}.$ Again, denote $$\wp_4 = \wp(q_4).$$ Accordingly, we compute the periods of the $1$-forms $\phi_j$ along the paths $C_1$ and $C_2$ as follows:
\begin{align*}
\int_{C_1}\phi_1 &= -\frac{\alpha + \gamma}{\wp_3 - \wp_1 }- \beta \wp_1 + \alpha \frac{\mu - \wp_1}{(\wp_4 -\wp_1)(\wp_3 - \wp_1)} +\beta \mu,\\
\int_{C_2}\phi_1 &= -\frac{(\alpha + \gamma)\tau}{\wp_3 - \wp_1 }- \beta \wp_1\tau + \alpha \frac{\tau\mu +2\pi i - \tau \wp_1}{(\wp_4 -\wp_1)(\wp_3 - \wp_1)} +\beta (\tau\mu + 2\pi i),\\
\int_{C_1}\phi_2 &= -i \frac{\alpha + \gamma}{\wp_3 - \wp_1 }+i \beta \wp_1 +i \alpha \frac{\mu - \wp_1}{(\wp_4 -\wp_1)(\wp_3 - \wp_1)}-i\beta \mu,\\
\int_{C_2}\phi_2 &= -i\frac{(\alpha + \gamma)\tau}{\wp_3 - \wp_1 }+i \beta \wp_1\tau + i\alpha \frac{\tau\mu +2\pi i - \tau \wp_1}{(\wp_4 -\wp_1)(\wp_3 - \wp_1)} -i\beta (\tau\mu + 2\pi i),\\
\int_{C_1}\phi_3 &= \frac{\alpha + \gamma}{\wp_3 - \wp_1 }- \beta \wp_3+\beta\mu +\gamma \frac{\mu - \wp_3}{(\wp_4 - \wp_3)(\wp_1 - \wp_3)},\\
\int_{C_2}\phi_3 &= \frac{(\alpha + \gamma)\tau}{\wp_3 - \wp_1 }- \beta \wp_3 \tau +\beta (\tau\mu + 2\pi i) +\gamma \frac{\tau \mu + 2\pi i -\tau \wp_3}{(\wp_4 - \wp_3)(\wp_1 - \wp_3)} ,\\
\int_{C_1}\phi_4 &= -i \frac{\alpha + \gamma}{\wp_3 - \wp_1 }-i \beta \wp_3 +i \beta \mu-i\gamma \frac{\mu-\wp_3}{(\wp_4 - \wp_3)(\wp_1 - \wp_3)} ,\\
\int_{C_2}\phi_4 &=- i\frac{(\alpha + \gamma)\tau}{\wp_3 - \wp_1 }  -i\beta \wp_3\tau+i\beta(\tau\mu + 2\pi i) -i\gamma \frac{\tau \mu + 2\pi i -\tau \wp_3}{(\wp_4 - \wp_3)(\wp_1 - \wp_3)} .
\end{align*}

Since the real parts of all periods must vanish, we immediately obtain from the first and third rows of the above integrals:
\begin{align*}
\overline{\beta \wp_1 - \beta \mu} - \alpha\left( \frac{\mu - \wp_1}{(\wp_4 - \wp_1)(\wp_3 - \wp_1)} - \frac{1}{\wp_3 - \wp_1}\right) + \gamma \left( \frac{1}{\wp_3 - \wp_1}\right)=0.
\end{align*}
Similarly, we obtain the following additional conditions:
\begin{align*}
&\overline{\beta \wp_1 \tau - \beta (\tau\mu + 2\pi i)} - \alpha\left( \frac{\tau(\mu-\wp_1) + 2\pi i }{(\wp_4 - \wp_1)(\wp_3 - \wp_1)} - \frac{\tau}{\wp_3 - \wp_1}\right) + \gamma \left( \frac{\tau}{\wp_3 -\wp_1}\right)=0,\\
&\overline{\beta \wp_3 - \beta \mu}  - \gamma \left( \frac{\mu - \wp_3}{(\wp_4 - \wp_3)(\wp_1 - \wp_3)}\right)- \frac{\alpha + \gamma}{\wp_3 - \wp_1}=0,\\
&\overline{\beta \wp_3 \tau- \beta (\tau\mu + 2\pi i)}  - \gamma \left( \frac{\tau(\mu - \wp_3) + 2\pi i}{(\wp_4 - \wp_3)(\wp_1 - \wp_3)}\right)- \frac{(\alpha + \gamma)\tau}{\wp_3 - \wp_1}=0.
\end{align*}
Arranging terms, we derive the following system:
\begin{align*}
&\overline{\beta (\wp_1 -\mu)} - \alpha\left( \frac{\mu - \wp_1}{(\wp_4 - \wp_1)(\wp_3 - \wp_1)} - \frac{1}{\wp_3 - \wp_1}\right) + \gamma \left( \frac{1}{\wp_3 - \wp_1}\right)=0,\\
&\overline{\beta (\wp_3 -  \mu)} -\alpha \frac{1 }{\wp_3 - \wp_1} - \gamma \left( \frac{\mu - \wp_3}{(\wp_4 - \wp_3)(\wp_1 - \wp_3)} + \frac{1}{\wp_3 - \wp_1} \right)=0,\\
&\overline{\beta (\wp_1 \tau -\tau\mu - 2\pi i)} - \alpha\left( \frac{\tau(\mu-\wp_1) + 2\pi i }{(\wp_4 - \wp_1)(\wp_3 - \wp_1)} - \frac{\tau}{\wp_3 - \wp_1}\right) + \gamma \left( \frac{\tau}{\wp_3 -\wp_1}\right)=0,\\
&\overline{\beta (\wp_3 \tau- \tau\mu - 2\pi i)} - \alpha \frac{\tau}{\wp_3 - \wp_1} - \gamma \left( \frac{\tau(\mu - \wp_3) + 2\pi i}{(\wp_4 - \wp_3)(\wp_1 - \wp_3)}+ \frac{\tau}{\wp_3 - \wp_1}\right)=0.
\end{align*}

This system of equations is equivalent to 
\begin{align*}
M_{\tau,\wp_1,\wp_3}\cdot\begin{pmatrix}
\bar{\beta} \\
\alpha  \\
\gamma
\end{pmatrix} = 0
\end{align*}
for some nonzero complex numbers $\alpha,\beta$ and  $\gamma$.
\end{proof}

It is necessary that $\textup{rk}M_{\tau,\wp_1,\wp_3} \le 2$ in order to have a nontrivial solution to (\ref{period condition}). In the next lemma, we show that $\textup{rk}M_{\tau,\wp_1,\wp_3} \ge 2$ and seek an equivalent condition for $\textup{rk}M_{\tau,\wp_1,\wp_3}=2$.

\begin{lemma}\label{rank}
Let $M_{\tau,\wp_1,\wp_3}$ be defined as in Lemma \ref{periodlemma}. Then, $\textup{rk}M_{\tau,\wp_1,\wp_3}\ge 2$, and it is equal to $2$ if and only if the following two conditions are satisfied:
\begin{enumerate}
\item$|\wp_1| = |\wp_3|,$ 
\item$\wp_1 -\mu 
\overline{\wp_3-\mu} = \frac{\textup{Im} \tau}{\pi}\left( |\wp_1-\mu|^2 +(\wp_4-\mu) \overline{\wp_3 - \wp_1}\right).$
\end{enumerate}
\end{lemma}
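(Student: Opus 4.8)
The plan is to treat this as an explicit rank computation for the $4\times 3$ matrix $M:=M_{\tau,\wp_1,\wp_3}$ (write $R_1,\dots,R_4$ for its rows), exploiting the fact that the bottom two rows are, column by column, $\tau$ times the top two rows up to additive $2\pi i$-corrections. For the inequality $\textup{rk}\,M\ge 2$ I would simply exhibit a nonvanishing $2\times 2$ minor: taking rows $1,3$ and columns $2,3$, the $(3,3)$-entry equals $\tau$ times the $(1,3)$-entry and the $(3,2)$-entry equals $\tau$ times the $(1,2)$-entry minus $\tfrac{2\pi i}{(\wp_4-\wp_1)(\wp_3-\wp_1)}$, so this minor equals $\tfrac{2\pi i}{(\wp_3-\wp_1)^2(\wp_4-\wp_1)}\neq 0$, the nonvanishing being clear since $\wp_1,\wp_3,\wp_4$ are the three distinct half-period values $e_1,e_2,e_3$ (as $\{q_1,q_3,q_4\}=I_H$).

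For the characterization of $\textup{rk}\,M=2$, the first step is the rank-preserving row reduction $R_3\mapsto R_3-\tau R_1$ and $R_4\mapsto R_4-\tau R_2$. Using $\overline{\tau z}=\bar\tau\,\bar z$ and $\bar\tau-\tau=-2i\,\textup{Im}\,\tau$, these become $R_3'=(P,Q,0)$ and $R_4'=(R,0,S)$ with $P=2i\bigl(\pi-(\textup{Im}\,\tau)\,\overline{\wp_1-\mu}\bigr)$, $R=2i\bigl(\pi-(\textup{Im}\,\tau)\,\overline{\wp_3-\mu}\bigr)$ and $Q=-\tfrac{2\pi i}{(\wp_4-\wp_1)(\wp_3-\wp_1)}$, $S=-\tfrac{2\pi i}{(\wp_4-\wp_3)(\wp_1-\wp_3)}$, both nonzero. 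Since $Q$ and $S$ lie in different coordinates, $R_3'$ and $R_4'$ are linearly independent, and therefore $\textup{rk}\,M=2$ holds if and only if $R_1$ and $R_2$ both lie in $\textup{span}_{\mathbb C}(R_3',R_4')=\{x\in\mathbb C^3:QS\,x_1=SP\,x_2+QR\,x_3\}$, which is a pair of scalar equations.

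To make these explicit I would first simplify the entries of $M$ involved: the $(1,2)$-entry equals $\tfrac{\wp_4-\mu}{(\wp_3-\wp_1)(\wp_4-\wp_1)}$, the $(2,3)$-entry equals $-\tfrac{\wp_4-\mu}{(\wp_4-\wp_3)(\wp_3-\wp_1)}$, and the $(1,3)$- and $(2,2)$-entries are $\pm\tfrac{1}{\wp_3-\wp_1}$. Substituting these together with $P,Q,R,S$ and clearing common denominators, the membership condition for $R_1$ becomes $\pi\,\overline{(\wp_1-\mu)}=(\textup{Im}\,\tau)\bigl(|\wp_3-\mu|^2+(\wp_4-\mu)\overline{(\wp_1-\wp_3)}\bigr)-\pi(\wp_3-\mu)$, and that for $R_2$ is the same identity with $\wp_1$ and $\wp_3$ interchanged. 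Subtracting the two and taking the real part: the left-hand side is purely imaginary and drops out, the cross terms $\wp_1\overline{\wp_3}-\wp_3\overline{\wp_1}$ are purely imaginary, and after substituting $\wp_4=-\wp_1-\wp_3$ (legitimate since $e_1+e_2+e_3=0$) all $\mu$-dependence cancels, leaving precisely $|\wp_1|^2=|\wp_3|^2$, which is condition $(1)$. Inserting condition $(1)$ back into the $R_1$-identity and simplifying once more with $\wp_1+\wp_3+\wp_4=0$ then reduces it to condition $(2)$; conversely, substituting $(1)$ and $(2)$ and running the computation backwards recovers both membership conditions, hence $\textup{rk}\,M=2$.

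I expect the bookkeeping in the last step to be the main obstacle: one must keep careful track of the conjugated $\mu$-terms — present because the period constraints are ``real part $=0$'' conditions — and recognize that the identity $\sum_i e_i=0$ is exactly what collapses the $\mu$-dependence into the modulus condition $|\wp_1|=|\wp_3|$. Clearing denominators and dividing through by $\textup{Im}\,\tau$ introduces no spurious solutions, since $\wp_1,\wp_3,\wp_4$ are pairwise distinct and $\textup{Im}\,\tau>0$, so the converse direction is just the same computation read backwards.
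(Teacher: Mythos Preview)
Your proof is correct and follows essentially the same route as the paper. Both arguments perform the row reduction $R_3\mapsto R_3-\tau R_1$, $R_4\mapsto R_4-\tau R_2$ (the paper additionally rescales by $-\tfrac{1}{2\pi i}$ and clears denominators via column operations, introducing the abbreviations $A=\wp_1-\mu$, $B=\wp_3-\wp_1$, $C=\wp_4-\wp_1$), observe that the reduced third and fourth rows are independent, and then characterize $\textup{rk}\,M=2$ by the two scalar conditions $R_1,R_2\in\textup{span}(R_3',R_4')$. The only cosmetic differences are: the paper extracts condition~$(1)$ via $\overline{(E_2)}-(E_1)$ rather than $\textup{Re}\bigl((E_1)-(E_2)\bigr)$ (both yield $|B|^2=C\bar B+\bar C B$), and reads condition~$(2)$ directly off $(E_2)$ rather than from $(E_1)$ combined with~$(1)$; note that your route actually produces $\overline{(2)}$, which is of course equivalent to $(2)$.
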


\begin{proof}
This follows from straightforward calculations. For convenience, let us define $$A:=\wp_1-\mu, \quad B:= \wp_3- \wp_1, \quad C := \wp_4 - \wp_1.$$ Applying row operations, we have the following:
\begin{align}\label{Mrow}
\textup{rk}M_{\tau,\wp_1,\wp_3} &=\textup{rk}\begin{pmatrix}
\overline{ \wp_1 -\mu} & \frac{ \wp_4-\mu}{(\wp_3-\wp_1)(\wp_4-\wp_1)}&\frac{1}{\wp_3-\wp_1}  \\
\overline{\wp_3 - \mu} & -\frac{1}{\wp_3-\wp_1} & \frac{\wp_4 - \mu}{(\wp_4-\wp_3)(\wp_1-\wp_3)} \\
\frac{\textup{Im} \tau}{\pi} \overline{\wp_1 - \mu}-1 & \frac{1}{(\wp_3-\wp_1)(\wp_4-\wp_1)} & 0 \\
\frac{\textup{Im} \tau}{\pi} \overline{\wp_3 - \mu}-1& 0  & \frac{1}{(\wp_4-\wp_3)(\wp_1-\wp_3)}
\end{pmatrix}\nonumber\\
&= \textup{rk}\begin{pmatrix}
\overline{A} &  \frac{A+C}{BC}&\frac{1}{B}  \\
\overline{A}+\overline{B} & -\frac{1}{B} &\frac{A+C}{B(B-C)} \\
\frac{\textup{Im} \tau}{\pi} \overline{A}-1 & \frac{1}{BC} & 0 \\
\frac{\textup{Im} \tau}{\pi} (\overline{A}+\overline{B})-1& 0  & \frac{1}{B(B-C)}
\end{pmatrix}. 
\end{align}
Then, applying column operations, we obtain
\begin{align*}
\textup{rk}M_{\tau,\wp_1,\wp_3}= \textup{rk}\begin{pmatrix}
\overline{A} &  A+C&B-C  \\
\overline{A}+\overline{B} & -C &A+C \\
\frac{\textup{Im} \tau}{\pi} \overline{A}-1 & 1 & 0 \\
\frac{\textup{Im} \tau}{\pi} (\overline{A}+\overline{B})-1& 0  & 1
\end{pmatrix}. 
\end{align*}
From this, we see that $\textup{rk}M_{\tau,\wp_1,\wp_3} \ge 2$. In particular, $\textup{rk}M_{\tau,\wp_1,\wp_3} = 2$ if and only if the first and second rows of the matrix can be expressed as linear combinations of the third and fourth rows. As a result, we have the following:
\begin{align*}
\textup{rk}M_{\tau,\wp_1,\wp_3} =2 \Leftrightarrow   \left\{\begin{matrix}
\bar{A} = \left( \frac{\text{Im}\tau}{\pi} \bar{A} -1\right) (A+C) + \left( \frac{\text{Im}\tau}{\pi} (\bar{A}+\bar{B}) -1\right)(B-C),\\ \bar{A}+\bar{B} = - \left(\frac{\text{Im}\tau}{\pi} \bar{A} -1 \right)C + \left(\frac{\text{Im}\tau}{\pi} (\bar{A}+\bar{B}) -1\right)(A+C).
\end{matrix}\right.
\end{align*}
Expanding both equations, we obtain
\begin{align}\label{expand1}
A+\bar{A} +B = \frac{\text{Im}\tau}{\pi} \left( |A|^2 + |B|^2 +\bar{A}B -C\bar{B}\right)
\end{align}
and
\begin{align}\label{expand2}
A+\bar{A} +\bar{B} = \frac{\text{Im}\tau}{\pi} \left( |A|^2 + A\bar{B} +C\bar{B}\right). 
\end{align}
Taking the complex conjugate of (\ref{expand2}) and subtracting (\ref{expand1}) yield
\begin{align}\label{expand3}
|B|^2 = C\bar{B}+\bar{C}B.
\end{align}

Equation (\ref{expand2}) directly gives 
$$\wp_1 -\mu+ 
\overline{\wp_3-\mu} = \frac{\textup{Im} \tau}{\pi}\left( |\wp_1-\mu|^2 +(\wp_4-\mu) \overline{\wp_3 - \wp_1}\right).$$
It follows from (\ref{expand3}) that 
\begin{align*}
|\wp_3 -\wp_1|^2 =(\wp_4 - \wp_1)(\overline{\wp_3}-\overline{\wp_1})+(\overline{\wp_4}-\overline{\wp_1})(\wp_3 - \wp_1).
\end{align*}
Using the identity $\wp_1+\wp_3+\wp_4 =0$, the above equation becomes: 
\begin{align*}
&|\wp_3|^2 + |\wp_1|^2 - \wp_3\overline{\wp_1} - \overline{\wp_3}\wp_1 = \\-&|\wp_3|^2 - 2 \wp_1\overline{\wp_3} + \wp_3\overline{\wp_1}+ 2|\wp_1|^2 - |\wp_3|^2 -2\overline{\wp_1}\wp_3 + \overline{\wp_3}\wp_1 + 2|\wp_1|^2.
\end{align*}
Simplifying both sides, this is equivalent to 
$$ |\wp_1|^2 = |\wp_3|^2.$$
In conclusion, we find that  
$$\textup{rk}M_{\tau,\wp_1,\wp_3} = 2$$ is equivalent to following system of equations:
\begin{align*}
\begin{cases}
|\wp_1| = |\wp_3|, \\
\wp_1-\mu + 
\overline{\wp_3-\mu}= \frac{\textup{Im} \tau}{\pi}\left( |\wp_1-\mu|^2 +(\wp_4-\mu) \overline{\wp_3 - \wp_1}\right).
\end{cases}
\end{align*}
\end{proof}

\begin{lemma}\label{lemmakernel}
Suppose that $\textup{rk}M_{\tau,\wp_1,\wp_3} = 2$, and consider a nontrivial solution 
$$\begin{pmatrix}
\bar{\beta} \\
\alpha  \\
\gamma
\end{pmatrix} \in \textup{ker}M_{\tau,\wp_1,\wp_3}$$
for some complex numbers $\alpha,\beta$ and $\gamma$. Then, it follows that
$\alpha \beta \gamma \neq 0$.
\end{lemma}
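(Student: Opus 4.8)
The plan is to treat the assertion as a statement purely about the one-dimensional space $\ker M_{\tau,\wp_1,\wp_3}$: I would read off its generator from the normal form already obtained in the proof of Lemma~\ref{rank}, and then eliminate the two degenerate possibilities using the identities that $\textup{rk}M_{\tau,\wp_1,\wp_3}=2$ forces. Throughout, $\wp_4$ denotes $\wp(q_4)$, where $q_4$ is the third half-period, so that $\wp_1,\wp_3,\wp_4$ are pairwise distinct and $\wp_1+\wp_3+\wp_4=0$.

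First I would locate the kernel. In the proof of Lemma~\ref{rank}, $M_{\tau,\wp_1,\wp_3}$ is transformed by row operations and by scaling its second and third columns by $(\wp_3-\wp_1)(\wp_4-\wp_1)$ and $(\wp_3-\wp_1)(\wp_3-\wp_4)$ into a matrix whose last two rows are $\bigl(\tfrac{\textup{Im}\tau}{\pi}\overline{\wp_1-\mu}-1,\,1,\,0\bigr)$ and $\bigl(\tfrac{\textup{Im}\tau}{\pi}\overline{\wp_3-\mu}-1,\,0,\,1\bigr)$. These last two rows are obviously linearly independent, so when $\textup{rk}=2$ the kernel of the transformed matrix is spanned by $\bigl(1,\,1-\tfrac{\textup{Im}\tau}{\pi}\overline{\wp_1-\mu},\,1-\tfrac{\textup{Im}\tau}{\pi}\overline{\wp_3-\mu}\bigr)$. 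Undoing the (nonzero) column scalings, $\ker M_{\tau,\wp_1,\wp_3}$ is then spanned by a vector whose $\bar\beta$-, $\alpha$-, and $\gamma$-coordinates are, up to a common scalar, $1$, a nonzero multiple of $1-\tfrac{\textup{Im}\tau}{\pi}\overline{\wp_1-\mu}$, and a nonzero multiple of $1-\tfrac{\textup{Im}\tau}{\pi}\overline{\wp_3-\mu}$. Hence $\beta\neq0$ automatically, while $\alpha=0$ forces $\wp_1-\mu=\tfrac{\pi}{\textup{Im}\tau}$ and $\gamma=0$ forces $\wp_3-\mu=\tfrac{\pi}{\textup{Im}\tau}$ — in either case a \emph{positive real} number.

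The heart of the argument is then to rule out both of these. Writing $t:=\tfrac{\pi}{\textup{Im}\tau}>0$, $A:=\wp_1-\mu$, $B:=\wp_3-\wp_1$, $C:=\wp_4-\wp_1$ (so that $A+B=\wp_3-\mu$), the case $\alpha=0$ amounts to $A=t$ and the case $\gamma=0$ to $A+B=t$, hence $\bar B=t-\bar A$. I would substitute each of these into identity~(\ref{expand2}) from the proof of Lemma~\ref{rank} and simplify using $\tfrac{\textup{Im}\tau}{\pi}t=1$; in both cases the identity collapses to the single relation $C\bar B=t^2$. Since $C\bar B$ is thus real and positive, identity~(\ref{expand3}) gives $|B|^2=2\,\textup{Re}(C\bar B)=2t^2=2C\bar B$, which forces $C=\tfrac12 B$; combined with $\wp_1+\wp_3+\wp_4=0$, the equation $2(\wp_4-\wp_1)=\wp_3-\wp_1$ yields $\wp_4=0$. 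But $\wp_4=0$ makes $0$ a root of $4x^3-g_2x-g_3$ (see~(\ref{pftn})), so $g_3=0$ and $j(\tau)=1728$; as $\tau$ lies in the fundamental domain $F$, on which $j$ is injective with $j(i)=1728$, we get $\tau=i$. For $\tau=i$ the lattice $\mathbb{Z}+i\mathbb{Z}$ is invariant under multiplication by $i$, giving $\zeta(iz)=-i\zeta(z)$, and together with the Legendre relation $\tau\zeta(\tfrac12)-\zeta(\tfrac\tau2)=\pi i$ this gives $\zeta(\tfrac12)=\tfrac\pi2$, hence $\mu=\zeta(v_0\tau)-\zeta(1+v_0\tau)=-2\zeta(\tfrac12)=-\pi$, while $\textup{Im}\tau=1$ gives $t=\pi$. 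Then $\alpha=0$ would force $\wp_1=(\wp_1-\mu)+\mu=\pi-\pi=0=\wp_4$, contradicting $\wp_1\neq\wp_4$, and $\gamma=0$ would likewise force $\wp_3=0=\wp_4$. Therefore $\alpha\neq0$ and $\gamma\neq0$, and with $\beta\neq0$ this gives $\alpha\beta\gamma\neq0$.

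I expect the main difficulty to be the elimination of the two degenerate cases: verifying that the two \emph{a priori} different degeneracy conditions $A=t$ and $A+B=t$ both reduce, through (\ref{expand2}) and (\ref{expand3}), to the same clean conclusion $\wp_4=0$ — and, relatedly, keeping the correspondence between $\ker M_{\tau,\wp_1,\wp_3}$ and the column-scaled normal form precise enough that one is sure the $\bar\beta$-slot is the entry that can never vanish. The identification $g_3=0\Rightarrow\tau=i$ and the evaluation $\zeta(\tfrac12)=\tfrac\pi2$ at $\tau=i$ are standard once that point is reached.
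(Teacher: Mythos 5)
Your argument is correct, and up to the point where you derive $\wp_4=0$ it coincides with the paper's proof: the paper likewise observes that $\bar\beta=0$ would force $\alpha=\gamma=0$, extracts from the reduced form the alternative $\overline{A}=\pi/\mathrm{Im}\,\tau$ or $\overline{A}+\overline{B}=\pi/\mathrm{Im}\,\tau$, feeds either one into (\ref{expand2}) to get $C\overline{B}=(\pi/\mathrm{Im}\,\tau)^2$, and then uses (\ref{expand3}) to conclude $B=2C$, hence $\wp_4=0$ and $\wp_1+\wp_3=0$. Where you genuinely diverge is the endgame. The paper keeps the derived relation $(\pi/\mathrm{Im}\,\tau)^2=2|\wp_1|^2$, invokes the fact that $\wp_4=0$ forces $\tau$ to be $\mathrm{SL}_2(\mathbb{Z})$-equivalent to $i$, transports the relation to the square lattice via the modular transformation law for $\wp$, and rules it out using the explicit half-period values at $\tau=i$, namely $|e_k(i)|^2\in\{0,\,(\Gamma(1/4)^4/8\pi)^2\}$, neither of which equals $\pi^2/2$. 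You instead note that $\wp_4=0$ gives $g_3=0$, hence $j(\tau)=1728$, and since $\tau$ was normalized to lie in $F$ and the only point of $F$ in the orbit of $i$ is $i$ itself, you obtain $\tau=i$ on the nose; then $\mu=-\pi$ and $t=\pi$ turn the two degeneracy conditions into $\wp_1=0=\wp_4$ or $\wp_3=0=\wp_4$, contradicting the distinctness of the half-period values. Your route is arguably cleaner: it needs only $\zeta(\tfrac12)=\tfrac{\pi}{2}$ at $\tau=i$ (Legendre plus the $i$-invariance of the square lattice) rather than the lemniscatic constant, and it bypasses the transformation formula entirely. The one sentence to phrase with care is ``$j$ is injective on $F$'': $j$ identifies boundary points of the closed region $F$, so what you actually need is that $j^{-1}(1728)\cap F=\{i\}$, which holds because $i$ is the unique fixed point in $F$ of the arc identification $z\mapsto -1/z$ — the same kind of statement the paper relies on in Lemma \ref{jreal2}.
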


\begin{proof}
Suppose that $\alpha\beta\gamma=0$. We use the same notation as in the proof of the previous lemma. Since the kernel of $M_{\tau, \wp_1, \wp_3}$ is invariant under row operations, from (\ref{Mrow}), the following must hold:
\begin{align*}
&\left(\frac{\text{Im}\tau }{\pi} \overline{A} -1\right)\overline{\beta} + \frac{1}{BC}\alpha = 0,\\
&\left( \frac{\text{Im}\tau}{\pi} \left( \overline{A}+\overline{B} \right) -1 \right) \overline{\beta} + \frac{1}{B(B-C)}\gamma =0.
\end{align*}
If $\overline{\beta}=0$, then $\alpha =\gamma =0$, which contradicts the assumption that the solution is nontrivial. Hence $\overline{\beta}\neq 0$, and it follows that 
$$\alpha \beta \gamma =B^2C(B-C) \left(\frac{\text{Im}\tau }{\pi} \overline{A} -1\right) \left( \frac{\text{Im}\tau}{\pi} \left( \overline{A}+\overline{B} \right) -1 \right) |\beta|^2 \overline{\beta}=0. $$
Since $B, C, B-C\neq0$, this implies that at least one of the following holds:
$$ \overline{A} = \frac{\pi}{\text{Im}\tau}, \quad  \overline{A}+\overline{B} = \frac{\pi}{\text{Im}\tau}. $$ 
From (\ref{expand2}), we observe that both of the above relations imply 
\begin{align}\label{expand4}
C\overline{B} = \left (\frac{\pi}{\text{Im}\tau}\right)^2.
\end{align}

This implies that $C\overline{B}\in\mathbb{R}$. Then, from (\ref{expand3}), we have
\begin{align*}
|B|^2=2C\overline{B},
\end{align*} 
which gives $B=2C$, since $B\neq0$. As $B=\wp_3-\wp_1$ and $C=\wp_4-\wp_1$, we obtain
\begin{align*}
\wp_1 + \wp_3 = 2\wp_4.
\end{align*}
Using the identity $\wp_1+\wp_3+\wp_4=0$, it follows that
\begin{align}\label{final relation} 
\wp_4 = 0, \quad \wp_1+\wp_3 = 0.
\end{align}
Substituting (\ref{final relation}) into (\ref{expand4}) now gives
\begin{align}\label{relation4}
\left( \frac{\pi}{\textup{Im}\tau}\right)^2 = 2|\wp_1|^2.
\end{align}

It is known that $\wp_4=0$ implies that $\tau$ is $\text{SL}_2(\mathbb{Z})$-equivalent to $i$ (see \cite{EZ}). Let
\begin{align*}
\tau = \frac{ai+b}{ci+d}
\end{align*}
for some $\begin{pmatrix} 
a&b\\
c&d
\end{pmatrix} \in \text{SL}_2(\mathbb{Z})$. By the transformation formula for the Weierstrass $\wp$-function, we compute 
\begin{align*}
\wp\left(q_1;\tau\right)=\wp\left(q_1 ; \frac{ai+b}{ci+d}\right) = (ci+d)^2\wp((ci+d)q_1;i).
\end{align*}
Moreover, since $ad-bc=1$, we have $$\text{Im} \tau=\text{Im}\left(\frac{ai+b}{ci+d}\right)= \frac{1}{|ci+d|^2}.$$ Combining these, we deduce that
\begin{align}\label{expand9}
\left(\text{Im}\tau\right)^2\left|\wp(q_1;\tau)\right|^2=\left|\wp((ci+d)q_1;i)\right|^2.
\end{align}
Since $q_1\in\left\{\frac{1}{2}, \frac{\tau}{2}, \frac{1+\tau}{2}\right\}$, it follows that
\begin{align*}
(ci+d)q_1\in\left\{\frac{ci+d}{2}, \frac{ai+b}{2}, \frac{(a+c)i+(b+d)}{2}\right\}.
\end{align*}
Thus, it follows from (\ref{expand9}) that (\ref{relation4}) holds when one of the half-period values $\{e_{1}(i), e_2(i), e_3(i) \}$ becomes $\frac{\pi^2}{2}$. 

However, direct computations show that 
\begin{align*}
\left|\wp\left(\frac{1+i}{2};i\right)\right|^2=0
\end{align*}
and
\begin{align*}
\left|\wp\left(\frac{1}{2};i\right)\right|^2=\left|\wp\left(\frac{i}{2};i\right)\right|^2=\left( \frac{\Gamma^4(\frac{1}{2})}{8\pi} \right)^2\neq\frac{\pi^2}{2},
\end{align*}
which imply that (\ref{relation4}) cannot be satisfied. Therefore, we must have $\alpha\beta\gamma \neq 0,$ which completes the proof.
\end{proof}

\begin{rmk}\label{periodrmk}
\normalfont
We showed in Lemma \ref{periodlemma} and Lemma \ref{lemmakernel} that the Weierstrass data do not admit real periods if and only if $\text{rk}M_{\tau,\wp_1,\wp_3}=2$. Then, from Lemma \ref{rank}, the period condition can be expressed explicitly in terms of equations involving the Weierstrass $\wp$-function.
\end{rmk}
We now complete the proof of Theorem \ref{thm1} using the properties of elliptic functions. 

\begin{proof}[Proof of Theorem \ref{thm1}]
As concluded in Remark \ref{periodrmk}, it suffices to verify that the two conditions in Lemma \ref{rank} cannot be satisfied simultaneously. We assume that 
\begin{align}\label{ponepthree}
|\wp_1| = |\wp_3|
\end{align}
holds, and show that
\begin{align}\label{donghwi}
\wp_1 -\mu+ 
\overline{\wp_3-\mu} = \frac{\textup{Im} \tau}{\pi}\left( | \wp_1-\mu|^2 +(\wp_4-\mu) \overline{\wp_3 - \wp_1}\right)
\end{align}
does not hold. 

From Lemma \ref{jreal}, (\ref{ponepthree}) implies that the $j$-invariant associated with the Weierstrass $\wp$-function must be real. Then, by Lemma \ref{jreal2}, it follows that $\tau$ must lie on the boundary $\partial F$ or on the imaginary axis within $F$. We now consider the following two cases:
\begin{align} \label{two cases}
&\left\{ 
\begin{aligned}
& \text{\underline{Case 1}: } \tau \in \partial F \setminus \{i\}, \\
& \text{\underline{Case 2}: } \tau \in F \cap \{ z \in \mathbb{C}\ |\ \mathrm{Re}(z) = 0 \}.
\end{aligned}
\right.
\end{align}

For \underline{Case 1}, it suffices to consider for $\tau= \frac{1}{2} + ci $, where $c\ge \frac{\sqrt3}{2}$, since other value in $\partial F \setminus \{i\}$ is $\text{SL}_2(\mathbb{Z})$ equivalent to some $\tau$ of the form $\tau= \frac{1}{2} + ci$ for $c\ge \frac{\sqrt3}{2} $. In this case, we have the convenient fact that $e_1 \in \mathbb{R}^+$. This can be seen from the explicit expression for $e_1, e_2$ and $e_3$ in terms of the Jacobi theta functions, as follows (See \cite{C}, p. 609 or \cite{C2}, p. 294 for more details):
\begin{align}  \label{thetae1}
&\left\{ 
\begin{aligned}
3e_1 &= \pi^2 \left( \vartheta_3^4 + \vartheta_4^4 \right), \\
3e_2 &= -\pi^2 \left( \vartheta_2^4 + \vartheta_3^4 \right), \\
3e_3 &= \pi^2 \left( \vartheta_2^4 - \vartheta_4^4 \right),
\end{aligned}
\right.
\end{align}
where the Jacobi theta functions are given by 
\begin{align*} 
&\left\{ 
\begin{aligned}
\vartheta_2 &= 2 q^{\frac{1}{4}}\sum_{n=0}^\infty q^{n(n+1)},\\
\vartheta_3 &= 1 + 2 \sum_{n=1}^\infty q^{n^2},\\
\vartheta_4 &= 1 + 2 \sum_{n=1}^\infty (-1)^n q^{n^2},
\end{aligned}
\right.
\end{align*}
and $q = e^{i \tau\pi} = ie^{c\pi}$. 

We first analyze the condition (\ref{ponepthree}). Then, according to the assignment of $q_1,q_3 \in I_H$, one of the following must hold: $$|e_1| = |e_2|, \quad |e_2|=|e_3| \quad \text{or}\quad |e_1| = |e_3|.$$ If either $|e_1| = |e_2|$ or $|e_1| = |e_3|$ holds, then using the relation $e_1 +e_2+e_3 = 0$ and the fact that $e_1\in\mathbb{R^+}$, it follows that $$e_2, e_3 \in \left\{ e^{i\frac{2\pi}{3}}e_1,  e^{i\frac{4\pi}{3}}e_1 \right\}.$$ 
Furthermore, in these cases we have $g_2(\tau) =0$, and hence $j(\tau)=0$, which implies that  $\tau=\frac{1}{2}+\frac{\sqrt3}{2} i$. Consequently, we are led to consider the following subcases:
\begin{align*} 
&\left\{ 
\begin{aligned}
& \text { \underline{Case 1-(a)} : $\tau=\frac{1}{2}+\frac{\sqrt3}{2} i, \text{ } \wp_1=e_1, \text{ } \wp_3 = e^{\pm i\frac{2\pi}{3}}e_1$}, \\ 
&  \text { \underline{Case 1-(b)} : $\tau=\frac{1}{2}+\frac{\sqrt3}{2} i,\text{ }\wp_3=e_1, \text{ }\wp_1 = e^{\pm i\frac{2\pi}{3}}e_1$}, \\
&  \text { \underline{Case 1-(c)} : $\tau=\frac{1}{2}+ci\text{ } (c\ge \frac{\sqrt3}{2}),\text{ }\wp_1=e_2,\text{ }\wp_3=e_3 $}, \\
&  \text { \underline{Case 1-(d)} : $\tau=\frac{1}{2}+ci\text{ } (c\ge \frac{\sqrt3}{2}),\text{ }\wp_3=e_2,\text{ }\wp_1=e_3 $}. \\
\end{aligned}
\right.
\end{align*}

For \underline{Case 1-(a)}, we have $\wp_4=e^{\mp i\frac{2\pi}{3}}$, and equation (\ref{donghwi}) becomes:
\begin{align*} 
e_1-\mu + \overline{e^{\pm i\frac{2\pi}{3}}e_1-\mu} = \frac{\sqrt3}{2\pi} \left( |e_1-\mu|^2 +e_1\left(e^{\mp i\frac{2\pi}{3}}e_1-\mu \right) \overline{\left(e^{\pm i\frac{2\pi}{3}}-1 \right)}\right).
\end{align*}
It is known that for $\tau=\frac{1}{2}+\frac{\sqrt3}{2} i$, the Eisenstein series of weight 2, $E_2(\tau)$, satisfies $$\mu = -\frac{\pi^2}{3}E_2(\tau) =- \frac{\pi^2}{3}\frac{2 \sqrt3 }{\pi} = -\frac{2\sqrt3\pi}{3}.$$ For further details on the Eisenstein series of weight $2$, refer to \cite{CL} and the references therein. Taking the imaginary part of both sides, we obtain:
$$ \mp \left( \frac{\sqrt3}{2}\right)e_1 = \frac{\sqrt3}{2\pi}e_1 \left( \pm\frac{\sqrt3}{2} \mu \pm \sqrt{3}e_1\right)= \frac{\sqrt3}{2\pi}e_1 \left( \mp\pi \pm \sqrt3 e_1\right),$$
which simplifies to the equation $e_1=0$. This contradicts the fact that $e_1\in\mathbb{R}^+$. An almost identical argument shows that \underline{Case 1-(b)} also leads to $e_1=0$, which must therefore be excluded as well.

In \underline{Case 1-(c)}, we have $\wp_4=e_1$, and equation (\ref{donghwi}) becomes:
\begin{align} \label{case1c}
e_2-\mu + \overline{ e_3-\mu} = \frac{c}{\pi} \left( |e_2-\mu|^2 + \left( e_1-\mu \right) \left( \overline{e_3} - \overline{e_2} \right) \right).
\end{align}
We recall the following identities (See \cite{C}, p. 608-610):
\begin{align} \label{qseries}
&\left\{ 
\begin{aligned}
-\mu + e_1 &= \pi^2 - 8\pi^2 \sum_{n=1}^\infty (-1)^n \frac{nq^{2n}}{1 - q^{2n}}, \\
-\mu + e_2 &= -8\pi^2 \sum_{n=1}^\infty \frac{nq^n}{1 - q^{2n}}, \\
-\mu + e_3 &= 8\pi^2 \sum_{n=1}^\infty \frac{(-1)^{n+1} n q^n}{1 - q^{2n}},
\end{aligned}
\right.
\end{align}
where \( q = e^{i\pi \tau}\). Substituting the expressions from \eqref{qseries} into \eqref{case1c}, and comparing imaginary parts (noting that \( q \) is purely imaginary), we obtain:
\begin{align*}
-16\pi^2 \sum_{n=1}^\infty \frac{(2n-1) q^{2n-1}}{1 - q^{4n-2}} 
= \frac{c}{\pi} \left( 
16\pi^2 \sum_{n=1}^\infty \overline{\frac{(2n-1) q^{2n-1}}{1 - q^{4n-2}}} 
\left( \pi^2 - 8\pi^2 \sum_{n=1}^\infty (-1)^n \frac{n q^{2n}}{1 - q^{2n}} \right) 
\right).
\end{align*}
This leads to the identity:
\begin{align*}
16\pi^2 \sum_{n=1}^\infty \frac{(2n-1) q^{2n-1}}{1 - q^{4n-2}} 
\cdot \left[ 
c \pi \left( 1 - 8 \sum_{n=1}^\infty (-1)^n \frac{n q^{2n}}{1 - q^{2n}} \right) - 1 
\right] = 0.
\end{align*}
Therefore, at least one of the following must hold:
\begin{align}
\label{e2e3}
\sum_{n=1}^\infty \frac{(2n-1) q^{2n-1}}{1 - q^{4n-2}} &= 0, \\
\label{other}
c \left( 1 - 8 \sum_{n=1}^\infty (-1)^n \frac{n q^{2n}}{1 - q^{2n}} \right) &= \frac{1}{\pi}.
\end{align}
However, equation \eqref{e2e3} cannot hold, as it would imply \( e_2 = e_3 \) via the identities in \eqref{qseries}, which contradicts the fact that the values \( e_1, e_2, e_3 \) are distinct.

On the other hand, observe that equation \eqref{other} is equivalent to
\begin{align} \label{otherr}
c (-\mu + e_1) =\pi,
\end{align}
by the expression for \( -\mu + e_1 \) in \eqref{qseries}. However, it was shown in \cite[Proposition 3(a)]{C} that
\begin{align*}
c\text{Re}(-\mu+e_1)>2\pi.
\end{align*}
Since $-\mu+e_1\in\mathbb{R}$, it follows that $c(-\mu+e_1)>2\pi$, which contradicts equation \eqref{otherr}. This contradiction excludes \underline{Case 1-(c)}. Moreover, the same relations \eqref{e2e3} and \eqref{other} arise in \underline{Case 1-(d)} through an identical argument. Hence, \underline{Case 1-(d)} must also be excluded.

We now turn to \underline{Case 2}, where $\tau \in F \cap \{ z \in \mathbb{C}\ |\ \mathrm{Re}(z) = 0 \}$. When \( \tau \) is purely imaginary, we have \( q = e^{i\pi\tau} \in \mathbb{R} \), and equation \eqref{thetae1} implies that \( e_1, e_2, e_3 \in \mathbb{R} \). Then, equation \eqref{ponepthree} shows that the set \( \{e_1, e_2, e_3\} \) must be of the form $\{t, -t, 0\}$ for some $t \in \mathbb{R}$. 

Referring back to the structure of the half-period values in \eqref{thetae1}, this situation occurs only when \( e_3 = 0 \) and \( e_1 = -e_2 \). These conditions yield the $j$-invariant $j(\tau)=0$, and thus we find that $\tau$ is $\text{SL}_2(\mathbb{Z})$-equivalent to $i$. Since $\tau \in F \cap \{ z \in \mathbb{C}\ |\ \mathrm{Re}(z) = 0\}$, we must have $\tau=i$, and consequently $\wp_1=-\wp_3=\pm e_1(i)$.

Since we have $$\mu =- \frac{\pi^2}{3}E_2(i)=-\pi$$ and \( e_1(i) \in \mathbb{R}^+ \), (\ref{donghwi}) becomes
\[
2\pi = \frac{1}{\pi} \left( \left(\pi \pm e_1(i)\right)^2 \mp 2 e_1(i) \pi \right),
\]
which simplifies to
\begin{equation} \label{false2}
\pi = e_1(i).
\end{equation}
However, using the classical identity
\[
e_1(i) = \frac{\Gamma\left( \frac{1}{4} \right)^4}{8\pi} \approx 6.875,
\]
we see that equation \eqref{false2} is false. In conclusion, if we assume equation (\ref{ponepthree}), neither \underline{Case 1} nor \underline{Case 2} satisfies equation \eqref{donghwi}. Hence, the proof is complete.
\end{proof}


\section{Classification of Embedded Minimal Tori with $\left|\mathfrak{Sym}(S_1)\right|\geq8$}\label{sec5}
\setcounter{equation}{0}

In Theorem~\ref{thm1}, we established that any immersed minimal surface $S_1$ in $\mathbb{R}^4$ of finite total curvature, genus $1$, and with three embedded planar ends, whose asymptotic planes are parallel to those of $\Sigma_{LC} \cup \Pi_c$, must necessarily be \( J \)-holomorphic for some almost complex structure $J$. To prove this, we showed that the associated Weierstrass data must satisfy $ \phi_2 =i \phi_1$ and $\phi_4 = -i \phi_3$. In this case, the vanishing of the real periods implies that the periods of $\phi_1$ and $\phi_3$ vanish, allowing us to define well-defined meromorphic functions 
\begin{align*}
f:=\int \phi_1,\quad g:=\int \phi_3
\end{align*}
on $\Sigma_1$. By (\ref{gwr}), up to translation, the surface $S_1$ is represented as the image of
\begin{align*}
\left(\text{Re}f, -\text{Im}f, \text{Re}g, \text{Im}g\right)\in\mathbb{R}^4
\end{align*}
on $\Sigma_1\setminus\{q_1, q_2, q_3\}$, which corresponds to $(\overline{f}, g)\in\mathbb{C}^2$ under the identification $\mathbb{R}^4\simeq\mathbb{C}^2$. As before, $q_1$, $q_2$, and $q_3$ are the points corresponding to the ends of $S_1$ parallel to the planes $Q_1$, $Q_2$, and $Q_3$, respectively.
In this section, we classify those among such surfaces which are embedded and satisfy $\left|\mathfrak{Sym}(S_1)\right|\geq 8$.

For computational convenience, we reflect $S_1$ across the hyperplane $\{x_2=0\}$, so that it has embedded planar ends parallel to those of $\Sigma_{LC}\cup\{(z,w)\in\mathbb{C}^2\ |\ z=w\}$. Then, $S_1$ is represented in the form $(f, g)$, where $f$ and $g$ are the meromorphic functions described above. We have the following lemma:
\begin{lemma}
If $S_1$ is embedded and satisfies $\left|\mathfrak{Sym}(S_1)\right|\geq 8$, then $S_1$ is asymptotic, up to translation, to $\Sigma_{LC}\cup\{(z,w)\in\mathbb{C}^2\ |\ z=w\}$.
\end{lemma}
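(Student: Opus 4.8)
The plan is to reduce everything to a statement about the degree and divisor structure of the two meromorphic functions $f$ and $g$ on the torus $\Sigma_1$, and then to use the symmetry hypothesis $\left|\mathfrak{Sym}(S_1)\right|\geq 8$ to pin down the configuration exactly. First I would analyze the ends: since the immersion $(f,g)$ has embedded planar ends at $q_1,q_2,q_3$ parallel to $Q_1(0)=\{(z,0)\}$, $Q_2(0,1)=\{(z,z)\}$, and $Q_3=\{(0,z)\}$, Proposition~\ref{PropOrdRes} applied to the holomorphic data $\phi_1=df$, $\phi_3=dg$ forces each $\phi_j$ to have a double pole with vanishing residue at each $q_i$ where it is not holomorphic, hence $f$ and $g$ each have a simple pole at certain $q_i$'s. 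Concretely, the asymptotic plane being $Q_1$ means $f$ has a simple pole at $q_1$ while $g$ is regular there; the plane $Q_3$ means $g$ has a simple pole at $q_3$ while $f$ is regular; and the plane $Q_2$ (the diagonal) means both $f$ and $g$ have a simple pole at $q_2$ with equal leading coefficients (after the reflection normalization). Counting poles, $f$ has exactly the simple poles $q_1,q_2$, so $\deg f = 2$, and likewise $\deg g = 2$ with simple poles $q_2,q_3$. Thus $f$ and $g$ are both degree-two elliptic functions, i.e.\ each is (an affine image of) a Weierstrass $\wp$-function or a translate thereof, branched over four points.

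Next I would bring in the symmetries. By Theorem~\ref{thm1} the surface is $J$-holomorphic and represented by $(\bar f, g)$ (before the reflection) or $(f,g)$ (after), and the analysis of Section~\ref{sym}–\ref{RH} shows that for $\left|\mathfrak{Sym}(S_1)\right|\geq 8$ the orientation-preserving rotational subgroup $\mathcal{H}$ fixes all three ends, is cyclic, and acts on $\Sigma_1$ by biholomorphisms; with $g=1$ the Riemann–Hurwitz count from Proposition~\ref{prop55} forces $|\mathcal{H}|\in\{2,3\}$ and hence $\left|\mathfrak{Sym}(S_1)\right|\in\{8,12\}$. The generator $\tilde\mu_{\mathcal{H}}$ acts on the pair $(f,g)$ by the linear part $A_{\mathcal{H}}$, which by Lemma~\ref{formA} (case $a=0$) has the block form $\mathrm{diag}(\mathcal{S}_\lambda,\mathcal{S}_{-\lambda})$; translated into $\mathbb{C}^2$ this means $f\mapsto e^{i\lambda}f$ and $g\mapsto e^{-i\lambda}g$ up to the induced automorphism of $\Sigma_1$. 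Since $f$ and $g$ are degree-two functions whose polar divisors are permuted compatibly with the fixed ends, the automorphism $\tilde\mu_{\mathcal{H}}$ must fix $q_1,q_2,q_3$ and therefore, being a nontrivial automorphism of the torus fixing three points, it must be the elliptic involution (order $2$) or an order-$3$ automorphism of the equianharmonic torus. In either case the product $fg$ is invariant and has only poles at $q_2$ (a double pole) plus possibly $q_1,q_3$ cancelled — more precisely, tracking zeros: $fg$ has a double pole at $q_2$ and is holomorphic at $q_1,q_3$, so $\deg(fg)\leq 2$; combined with $fg$ being $\tilde\mu_{\mathcal{H}}$-invariant and the normalization at the ends one finds $fg$ is constant. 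Rescaling, $fg\equiv 1$, which is exactly the equation defining $\Sigma_{LC}$ away from the plane piece, and the leftover component $\{g=\text{pole of }f\}$ corresponds to the diagonal plane $\{z=w\}$; the asymptotic statement then follows since $f$ near $q_1$ behaves like $z$ and $g=1/f$ like $1/z$, matching $\Sigma_{LC}$, while near $q_2$ both blow up along the diagonal.

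The main obstacle I anticipate is the bookkeeping that turns ``$fg$ is a low-degree invariant function'' into ``$fg$ is constant'' — one must rule out the possibility that $fg$ is a nonconstant degree-$\leq 2$ elliptic function whose poles at $q_2$ and whose zeros are arranged symmetrically, and this is where the precise residue/leading-coefficient normalization at the three ends (the equality of leading coefficients of $f$ and $g$ at $q_2$ forced by the diagonal plane $Q_2(0,1)$) has to be used, together with the constraint that $\tilde\mu_{\mathcal{H}}$ acts on $f$ and $g$ with reciprocal scaling factors. A secondary subtlety is handling the two cases $|\mathcal{H}|=2$ and $|\mathcal{H}|=3$ uniformly: in the order-$3$ case the torus is forced to be equianharmonic ($\tau=e^{i\pi/3}$) and one should check the normalization of $f,g$ is consistent with that extra rigidity, but this does not change the conclusion that $fg$ is constant. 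Once $fg\equiv c$ is established, choosing the scaling so that $c=1$ and invoking that both functions are degree two with the prescribed simple poles identifies $(f,g)$ with the standard parametrization $z\mapsto(z,1/z)$ of $\Sigma_{LC}$ on one coordinate chart and shows the asymptotic planes are precisely those of $\Sigma_{LC}\cup\{z=w\}$, up to the translation allowed in the generalized Weierstrass representation.
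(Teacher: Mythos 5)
Your proposal aims at the wrong target. The lemma is a statement about the \emph{positions} of the three asymptotic planes: a priori the ends of $S_1$ are asymptotic to $\{(\alpha,w)\}$, $\{(z,\beta)\}$, and some plane $Q$ parallel to the diagonal, and the entire content of the lemma is that these three planes are concurrent (namely $(\alpha,\beta)\in Q$), so that a single translation places them at $\{z=0\}$, $\{w=0\}$, $\{z=w\}$, the asymptotic planes of $\Sigma_{LC}\cup\{z=w\}$. You instead try to prove $fg\equiv 1$, i.e.\ that $S_1$ lies on the Lagrangian catenoid itself. That is false: $\Sigma_{LC}$ is a doubly-connected genus-$0$ surface and cannot contain a genus-$1$ surface with three ends, and the surfaces classified in Theorems \ref{thm2} and \ref{thm3} have $fg$ manifestly non-constant (for the $12$-symmetry example $\{zw(z-w)=1\}$ one has $fg=1/(f-g)$). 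Your own bookkeeping already exposes the problem: $f$ and $g$ each have a genuine simple pole at $q_2$ with nonzero leading coefficient, so $fg$ has a double pole there and hence degree $2$; the step ``one finds $fg$ is constant'' cannot be carried out. In addition, the divisor count ``$fg$ is holomorphic at $q_1,q_3$'' presupposes $g(q_1)=0$ and $f(q_3)=0$, which is precisely the concurrency statement to be proved, so the argument is also circular.

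The paper's proof is a short rigid-motion argument that bypasses the function theory: the generator $(A_{\mathcal H},b_{\mathcal H})$ of $\mathcal H$ is nontrivial (since $|\mathcal H|\ge 2$ by Lemma \ref{Lem51}) and preserves each asymptotic plane, hence fixes the intersection point $(\alpha,\beta)$ of the first two planes; being an isometry preserving $Q$, it also fixes the orthogonal projection of $(\alpha,\beta)$ onto $Q$ (the unique nearest point of $Q$); and since such a rigid motion has a unique fixed point, that projection equals $(\alpha,\beta)$, i.e.\ $(\alpha,\beta)\in Q$. If you wish to salvage a function-theoretic route, the correct statement to extract from the symmetry is not that $fg$ is constant but that, after translating the unique fixed point of $(A_{\mathcal H},b_{\mathcal H})$ to the origin, each of $f$, $g$, and $f-g$ is multiplied by a nontrivial root of unity under the induced automorphism of $\Sigma_1$; this forces the finite limiting values of $g$ at $q_1$, of $f$ at $q_3$, and of $f-g$ at $q_2$ to vanish, which is exactly the concurrency of the three planes.
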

\begin{proof}
We may assume that the surface \( S_1 \) is asymptotic to the three planes \( \{(\alpha, w)\in\mathbb{C}^2\ |\ w\in\mathbb{C} \} \), \( \{(z,\beta)\in\mathbb{C}^2\ |\ z\in\mathbb{C}\} \), and a plane \( Q \), where \( Q \) is parallel to the plane \( \{(z,w)\in\mathbb{C}^2\ |\ z = w \} \). We claim that \( Q \) must pass through the point \( (\alpha, \beta) \in \mathbb{C}^2 \). 

Let $(A_{\mathcal{H}},b_{\mathcal{H}})$ be the generator of $\mathcal{H}$ as in Section \ref{RH}. By Lemma \ref{Lem51}, we have $|\mathcal{H}|\geq2$, so $(A_{\mathcal{H}},b_{\mathcal{H}})$ is nontrivial.
Since the asymptotic planes are preserved under the action of $(A_{\mathcal{H}},b_{\mathcal{H}})$, the point $(\alpha, \beta)$ must be fixed by this transformation.
Let \( (\alpha', \beta') \) be the orthogonal projection of \( (\alpha, \beta) \) onto the plane \( Q \). Then we have: 
$$|(\alpha^\prime, \beta^\prime) - (\alpha, \beta)| = |(A_{\mathcal{H}},b_{\mathcal{H}})\cdot(\alpha^\prime, \beta^\prime)- (A_{\mathcal{H}},b_{\mathcal{H}})\cdot(\alpha, \beta) | = |(\alpha^{\prime\prime}, \beta^{\prime\prime}) - (\alpha, \beta)|, $$
where $(\alpha^{\prime\prime}, \beta^{\prime\prime}):=(A_{\mathcal{H}},b_{\mathcal{H}})\cdot (\alpha^\prime, \beta^\prime)$. Since $(A_{\mathcal{H}},b_{\mathcal{H}})$ fixes the plane $Q$, the point \( (\alpha'', \beta'') \) also lies in \( Q \). Therefore, the projection must be fixed, implying that \( (\alpha', \beta') = (\alpha'', \beta'') \), and hence \( (\alpha, \beta) \in Q \). 
It follows that, after a translation, $S_1$ is asymptotic to $\Sigma_{LC}\cup\{(z,w)\in\mathbb{C}^2\ |\ z=w\}$.
\end{proof}

In the remainder of this section, by the above lemma, we assume that $S_1$ is asymptotic to $\Sigma_{LC}\cup\{(z,w)\in\mathbb{C}^2\ |\ z=w\}$. By Proposition \ref{prop55}, there are only two possible values for $\left|\mathfrak{Sym}(S_1)\right|$ when $\left|\mathfrak{Sym}(S_1)\right|\geq 8$, namely $8$ or $12$. In the case $\left|\mathfrak{Sym}(S_1)\right|=8$, the surface $S_1$ is uniquely determined, up to rigid motions, as follows. 

\begin{thm} \label{thm2}
Let $S_1$ be given by an embedding $(f,g):\Sigma_1\setminus\{q_1, q_2, q_3\}\to\mathbb{C}^2$, and suppose that $\left|\mathfrak{Sym}(S_1)\right|=8$. Then, the Riemann surface $\Sigma_1$ can be identified with 
\[
\left\{ (x, y) \in \left(\mathbb{C}\cup\{\infty\}\right)^2\ \middle|\ y^2 = (x - \lambda_1)(x - \lambda_2)(x - \lambda_3)(x - \lambda_4) \right\}
\]
for some distinct complex numbers $\lambda_1, \lambda_2, \lambda_3,\lambda_4$, where the points \( (\lambda_i, 0) \) correspond to the ends \( q_i \) for \( 1 \leq i \leq 3 \).
The parameter \( \lambda_4 \) is determined by the relation
\[
\lambda_4 = \frac{\lambda_1 \lambda_2 + \lambda_2 \lambda_3 - 2 \lambda_1 \lambda_3}{2 \lambda_2 - \lambda_1 - \lambda_3}.
\]
In particular, the Riemann surface \( \Sigma_1 \) can be identified with the square torus \( \mathbb{C} / \Lambda(1, i) \).
Moreover, the meromorphic functions \( f \) and \( g \) are given by
\[
f = \alpha \cdot \frac{y}{(x - \lambda_1)(x - \lambda_2)}, \quad
g = \alpha \cdot \frac{\lambda_3 - \lambda_1}{\lambda_2 - \lambda_1} \cdot \frac{y}{(x - \lambda_2)(x - \lambda_3)},
\]
for some constant \( \alpha \in \mathbb{C}\setminus\{0\} \).
\end{thm}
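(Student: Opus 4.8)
The plan is to use the order-two element of $\mathcal{H}$ to present $\Sigma_1$ as a double branched cover of the sphere, to read off $f$ and $g$ from the eigenspaces of the resulting hyperelliptic involution together with their divisors, and then to invoke one further symmetry — necessarily the one interchanging the ends $q_1$ and $q_3$ — in order to locate $\lambda_4$ and identify the conformal type of $\Sigma_1$. Concretely, since $g=1$ and $\left|\mathfrak{Sym}(S_1)\right|=4|\mathcal{H}|$ by Proposition~\ref{prop55}, the hypothesis $\left|\mathfrak{Sym}(S_1)\right|=8$ gives $|\mathcal{H}|=2$, so $\mathcal{Q}_{\mathcal{H}}\colon\Sigma_1\to\Sigma_1/\langle\tilde{\mu}_{\mathcal{H}}\rangle\cong\mathbb{C}\cup\{\infty\}$ is a double branched cover with exactly four branch points $q_0,q_1,q_2,q_3$. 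Setting $\lambda_i:=\mathcal{Q}_{\mathcal{H}}(q_i)$ for $1\le i\le3$ and $\lambda_4:=\mathcal{Q}_{\mathcal{H}}(q_0)$ presents $\Sigma_1$ as $\{y^2=\prod_{i=1}^{4}(x-\lambda_i)\}$ with $(\lambda_i,0)\leftrightarrow q_i$, and $\tilde{\mu}_{\mathcal{H}}$ becomes the hyperelliptic involution $\iota(x,y)=(x,-y)$, it being the unique involution of $\Sigma_1$ whose fixed set is $\{(\lambda_i,0)\}$. By Lemma~\ref{formA} with $a=0$, together with the facts that the generator of $\mathcal{H}$ preserves the orientation of $S_1$ and has order $2$, its linear part equals $-I_4$; the translation part vanishes because the three asymptotic planes pass through the origin, so $\tilde{\mu}_{\mathcal{H}}$ is realized by $(z,w)\mapsto(-z,-w)$, and hence $f\circ\iota=-f$ and $g\circ\iota=-g$.

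Next I would determine $f$ and $g$. A meromorphic function on $\Sigma_1$ anti-invariant under $\iota$ has the form $y\,r(x)$ with $r$ rational. Because $S_1$ has multiplicity-one embedded planar ends, Proposition~\ref{PropOrdRes} forces $\textup{d}f$ and $\textup{d}g$ to have double poles at the corresponding punctures, so $f$ (the first $\mathbb{C}^2$-coordinate) has simple poles exactly at $q_1$ and $q_2$ and is finite elsewhere, while $g$ (the second coordinate) has simple poles exactly at $q_2$ and $q_3$; moreover $f(q_3)=0$ and $g(q_1)=0$. Writing $f=y\,r(x)$, computing $\operatorname{div}(y)$, and using both that $\operatorname{div}(r(x))$ has even order at every branch point and that an extra zero of $f$ off the branch locus would give $f$ three zeros against only two poles, one obtains $r(x)=\alpha/[(x-\lambda_1)(x-\lambda_2)]$, and similarly for $g$; thus
\begin{align*}
f=\frac{\alpha\,y}{(x-\lambda_1)(x-\lambda_2)},\qquad g=\frac{\gamma\,y}{(x-\lambda_2)(x-\lambda_3)}
\end{align*}
for some nonzero constants $\alpha,\gamma$, both vanishing at $q_0$. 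Imposing that the end at $q_2$ be asymptotic to a plane parallel to $\{z=w\}$ through the origin — equivalently, that $f$ and $g$ have equal residues at $q_2$ (so the end runs off in the direction $(1,1)$) and that $(f-g)(q_2)=0$ — and evaluating these residues in the uniformizer $y$ at $q_2$, one solves for $\gamma$ as an explicit multiple of $\alpha$ by a ratio of the differences $\lambda_i-\lambda_j$ (here it is convenient to also normalize $\lambda_1,\lambda_2,\lambda_3$ by a M\"{o}bius change of $x$); the condition $(f-g)(q_2)=0$ then holds automatically.

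It remains to locate $\lambda_4$ and identify the torus. Since $\mathfrak{Sym}^{id}(S_1)$ contains symmetries reversing the orientation of $S_1$ (the $\mathcal{T}$-type of Lemma~\ref{formA}), it is not contained in the group $\mathcal{G}$ of orientation-preserving symmetries; and since $|\mathcal{G}|=\left|\mathfrak{Sym}^{id}(S_1)\right|=4$ (by Lemma~\ref{Lem44} and the structure of $\mathfrak{Sym}^{id}$ when $a=0$) while $\mathcal{G}\cap\mathfrak{Sym}^{id}(S_1)=\mathcal{H}$ has order $2$, there is some $\rho\in\mathcal{G}\setminus\mathfrak{Sym}^{id}(S_1)$. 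Such $\rho$ realizes the only nontrivial permutation of $\{Q_1,Q_2,Q_3\}$ permitted by Section~\ref{sym}, namely the transposition $Q_1\leftrightarrow Q_3$ (admissible since $r_0=1$), so $\rho$ fixes $q_2$ and $q_0$ and swaps $q_1$ and $q_3$. As $|\mathcal{G}|=4$, the group $\mathcal{G}$ is abelian, so $\rho$ commutes with $\iota=\tilde{\mu}_{\mathcal{H}}$ and descends to a M\"{o}bius involution of $\mathbb{C}\cup\{\infty\}$ fixing $\lambda_2$ and $\lambda_4$ and interchanging $\lambda_1$ and $\lambda_3$. Such an involution exists precisely when the cross-ratio $(\lambda_1,\lambda_3;\lambda_2,\lambda_4)$ equals $-1$, which rearranges to
\begin{align*}
\lambda_4=\frac{\lambda_1\lambda_2+\lambda_2\lambda_3-2\lambda_1\lambda_3}{2\lambda_2-\lambda_1-\lambda_3};
\end{align*}
the value $-1$ is the harmonic (lemniscatic) case $j(\Sigma_1)=1728$, so $\Sigma_1\cong\mathbb{C}/\Lambda(1,i)$. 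Feeding the transformation rule $f\circ\rho=c\,g$, $g\circ\rho=c'\,f$ (with $c,c'$ read off from the linear part of $\rho$) back into the formulas above then fixes $\gamma$ in the asserted form.

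The step I expect to be the main obstacle is the symmetry bookkeeping in the third paragraph: forcing the extra symmetry to be the $Q_1\leftrightarrow Q_3$ transposition (equivalently, ruling out an order-four symmetry fixing all three ends), getting the commutation with $\iota$ and the descent to $\mathbb{C}\cup\{\infty\}$ exactly right, and then turning the resulting M\"{o}bius involution and the action of $\rho$ on $(f,g)$ into the precise algebraic relations for $\lambda_4$ and $\gamma$. By contrast, the divisor computation for $f$ and $g$ and the residue computation at $q_2$ are routine once the pattern of zeros and poles has been correctly extracted from the three asymptotic planes.
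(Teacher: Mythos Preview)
Your proposal is correct and reaches the same conclusions, but by a genuinely different route for the key step of locating $\lambda_4$. The paper writes down the specific symmetry $(f,g)\mapsto(ig,if)$, explicitly computes the induced automorphism $(\tilde{x},\tilde{y})$ of $\Sigma_1$ as a rational map in $(x,y)$, substitutes into the defining equation $\tilde{y}^2=\prod(\tilde{x}-\lambda_i)$, and extracts the relation on $\lambda_4$ from the resulting polynomial identity; this is about a page of algebra. Your approach --- descend $\rho$ to a M\"{o}bius involution of the quotient sphere fixing $\lambda_2,\lambda_4$ and swapping $\lambda_1,\lambda_3$, then invoke the harmonic cross-ratio criterion --- is cleaner and bypasses that computation entirely, while the identification with the square torus is immediate from $j=1728$. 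The determination of $f$ and $g$ from their divisors is essentially the same in both arguments.

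Two remarks. First, the paper does more than derive necessary conditions: after obtaining the relation on $\lambda_4$ it explicitly constructs both the $(f,g)\mapsto(ig,if)$ symmetry \emph{and} an orientation-reversing reflection $(f,g)\mapsto(e^{i\theta_0}\bar f,e^{i\theta_0}\bar g)$, thereby confirming that the resulting surface actually carries eight symmetries (so the classification is non-vacuous and genuinely determines a unique surface). Your sketch stops at the necessary direction; the converse would need to be supplied. Second, your claim $|\mathfrak{Sym}^{id}(S_1)|=4$ is not justified --- Lemma~\ref{formA} only restricts the form of elements, it does not guarantee that $\mathcal{T}$-type symmetries occur --- but you do not actually need it: $|\mathcal{G}|=4$ and $\mathcal{G}\cap\mathfrak{Sym}^{id}(S_1)=\mathcal{H}$ has order $2$ already force $\rho\in\mathcal{G}\setminus\mathfrak{Sym}^{id}(S_1)$ to exist. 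The ``obstacle'' you flag (forcing the transposition, descent to the sphere) is in fact routine: the permutation is the only nontrivial one allowed by Section~\ref{sym}; $\rho$ preserves the branch set and cannot swap an end with the regular point $q_0$; and the descended map squares to the identity since it fixes four points on the sphere, regardless of the order of $\rho$ upstairs.
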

\begin{proof}
Suppose that the Riemann surface $\Sigma_1$ is given by 
\begin{align*}
\left\{ (x, y) \in \left(\mathbb{C}\cup\{\infty\}\right)^2\ \middle|\ y^2 = (x - \lambda_1)(x - \lambda_2)(x - \lambda_3)(x - \lambda_4) \right\}
\end{align*}
for some distinct complex numbers $\lambda_1$, $\lambda_2$, $\lambda_3$, and $\lambda_4$. Consider the biholomorphism $\tilde{\mu}_{\mathcal{H}}$ as in Section \ref{RH}. Since $|\mathcal{H}|=2$ by Proposition \ref{prop55}, $\tilde{\mu}_{\mathcal{H}}$ is an involution. Moreover, again by Proposition \ref{prop55}, it has exactly four fixed points $\{q_0,q_1,q_2,q_3\}$, where $q_0\in\Sigma_1$ is the unique point such that $f(q_0), g(q_0))$ is the origin.

Since $\tilde{\mu}_{\mathcal{H}}$ is an involution with four fixed points on a genus $1$ Riemann surface, it must coincide with the hyperelliptic involution. Therefore, the fixed points $q_0$, $q_1$, $q_2$, and $q_3$ must correspond to points of the form $(\lambda_j, 0)$ on $\Sigma_1$. After renumbering the indices, we may assume that $(\lambda_i,0)$ corresponds to $q_i$ for $1\leq i\leq 3$, and $(\lambda_4,0)$ corresponds to $q_0$.

To ensure that $S_1$ has embedded planar ends, the meromorphic function $f$ must have simple poles only at $q_1=(\lambda_1,0)$ and $q_2=(\lambda_2,0)$, while $g$ must have simple poles only at $q_2=(\lambda_2,0)$ and $q_3=(\lambda_3,0)$. On the other hand, since the end at $q_3=(\lambda_3,0)$ approaches $w$-axis, $f$ must vanish at $(\lambda_3,0)$; similarly, $g$ vanishes at $q_1=(\lambda_1,0)$. Additionally, since $S_1$ passes through the origin at $q_0=(\lambda_4,0)$, both $f$ and $g$ must vanish at $(\lambda_4,0)$ as well.

Using the fact that a meromorphic function must have the same number of poles and zeros, we conclude that $f$ and $g$ are given by
\begin{align*}
f = \alpha \cdot \frac{y}{(x - \lambda_1)(x - \lambda_2)}, \quad
g = \beta \cdot \frac{y}{(x - \lambda_2)(x - \lambda_3)},
\end{align*}
for some $\alpha,\beta\in\mathbb{C}\setminus\{0\}$. Moreover, since the end at $q_2=(\lambda_2,0)$ is asymptotic to the plane $\{(z,w)\in\mathbb{C}^2\ |\ z=w\}$, we must have
\begin{align*}
\left(\frac{g}{f}\right)(q_2)=\frac{\beta}{\alpha}\cdot\frac{\lambda_2-\lambda_1}{\lambda_2-\lambda_3}=1,
\end{align*} 
which gives
\begin{align}\label{alphabetarel}
\beta=\frac{\lambda_2-\lambda_3}{\lambda_2-\lambda_1}\alpha.
\end{align}
It was shown in Section $5$ of \cite{JL} that the pair of meromorphic functions $(f, g)$ defined as above gives rise to a minimal embedding with three embedded planar ends. For the proof, we refer the reader to \cite{JL}.

We now determine the values of $\lambda_1$, $\lambda_2$, $\lambda_3$, and $\lambda_4$ for which the surface admits $8$ symmetries. According to Lemma \ref{formA} and Remark \ref{formAAA}, for the surface to satisfy $\left|\mathfrak{Sym}(S_1)\right| = 8$, the immersion must be invariant under the following two transformations for some real number $\theta_0$: 
\begin{align}\label{symfg}
(f,g)\mapsto (ig, if)
\end{align}
and
\begin{align}\label{symfgfgfg}
(f,g)\mapsto (e^{i\theta_0}\bar{f}, e^{i\theta_0}\bar{g}).
\end{align}

We first consider the symmetry given by (\ref{symfg}). Since $S_1$ is embedded, this symmetry induces a diffeomorphism on $\Sigma_1\setminus\{q_1,q_2,q_3\}$. Thus, for each $(x,y)\in\Sigma_1\setminus\{q_1,q_2,q_3\}$, there exists $\left(\tilde{x},\tilde{y}\right)=\left(\tilde{x}(x,y),\tilde{y}(x,y)\right)\in\Sigma_1\setminus\{q_1,q_2,q_3\}$ such that 
\begin{align} \label{symfg1}
i \beta \cdot \frac{y}{(x - \lambda_2)(x - \lambda_3)}= \alpha \cdot \frac{\tilde{y}}{(\tilde{x} - \lambda_1)(\tilde{x} - \lambda_2)},
\end{align}
\begin{align}\label{symfg12}
i \alpha \cdot \frac{y}{(x - \lambda_1)(x - \lambda_2)}= \beta \cdot \frac{\tilde{y}}{(\tilde{x} - \lambda_2)(\tilde{x} - \lambda_3)}.
\end{align}
Dividing \eqref{symfg1} by (\ref{symfg12}), we obtain
\[
\frac{\beta (x - \lambda_1)}{\alpha (x - \lambda_3)} = \frac{\alpha (\tilde{x} - \lambda_3)}{\beta (\tilde{x} - \lambda_1)}.
\]
Solving for \( \tilde{x} \) gives:
\begin{align} \label{xtilde}
\tilde{x}
&= \frac{\lambda_3 (\lambda_2 - \lambda_1)^2 (x - \lambda_3) - \lambda_1 (\lambda_2 - \lambda_3)^2 (x - \lambda_1)}{(\lambda_2 - \lambda_1)^2 (x - \lambda_3) - (\lambda_2 - \lambda_3)^2 (x - \lambda_1)} \nonumber\\
&= \frac{(\lambda_2^2 - \lambda_1 \lambda_3)x - \lambda_2(\lambda_1 \lambda_2 + \lambda_2 \lambda_3 - 2 \lambda_1 \lambda_3)}{(2\lambda_2 - \lambda_1 - \lambda_3)x - (\lambda_2^2 - \lambda_1 \lambda_3)}.
\end{align}
Here we have used (\ref{alphabetarel}). Substituting (\ref{xtilde}) into \eqref{symfg1}, we find
\begin{align} \label{ytilde}
\tilde{y}
= -i \cdot \frac{(\lambda_1 - \lambda_2)^2 (\lambda_2 - \lambda_3)^2 y}{\left[(2 \lambda_2 - \lambda_1 - \lambda_3)x - (\lambda_2^2 - \lambda_1 \lambda_3)\right]^2}.
\end{align}

Substituting \eqref{xtilde} and \eqref{ytilde} into the defining equation of the Riemann surface, 
\begin{align*}
\tilde{y}^2 = (\tilde{x} - \lambda_1)(\tilde{x} - \lambda_2)(\tilde{x} - \lambda_3)(\tilde{x} - \lambda_4),
\end{align*}
and eliminating $y$ using 
\begin{align*}
y^2= (x - \lambda_1)(x - \lambda_2)(x - \lambda_3)(x - \lambda_4),
\end{align*}
we obtain a rational expression in $x$. After clearing denominators, the resulting expression must vanish identically as a polynomial in $x$, which leads to the following identity:
\begin{align} \label{lamb4}
\lambda_4 (2\lambda_2 - \lambda_1 - \lambda_3) = \lambda_1 \lambda_2 + \lambda_2 \lambda_3 - 2\lambda_1 \lambda_3.
\end{align}
Conversely, if this identity is satisfied, then (\ref{xtilde}) and (\ref{ytilde}) give rise to a diffeomorphism of $\Sigma_1$ which satisfies (\ref{symfg1}) and (\ref{symfg12}). This confirms that $S_1$ admits the symmetry of the form (\ref{symfg}). 

Having established the condition (\ref{lamb4}) for symmetry (\ref{symfg}), we now show that any choice of $\lambda_1$, $\lambda_2$, $\lambda_3$, and $\lambda_4$ satisfying this identity also yields a symmetry of the form (\ref{symfgfgfg}). Let $\theta_0$ be a real number such that 
\begin{align}\label{hathathat}
e^{2i\theta_0}=\frac{\alpha\beta(\overline{\alpha}+\overline{\beta})(\alpha-\beta)}{\overline{\alpha}\overline{\beta}(\alpha+\beta)(\overline{\alpha}-\overline{\beta})}.
\end{align}
Define
\begin{align}\label{hatx}
\hat{x}=\frac{\lambda_3 \alpha \overline{\beta} (\overline{x}-\overline{\lambda_1}) - \lambda_1 \beta \overline{\alpha} (\overline{x} - \overline{\lambda_3} )}{\alpha \overline{\beta} (\overline{x} - \overline{\lambda_1}) - \beta \overline{\alpha} (\overline{x} - \overline{\lambda_3})}
\end{align}
and
\begin{align}\label{haty}
\hat{y}=e^{i\theta_0}\frac{\overline{\alpha}(\hat{x}-\lambda_1)(\hat{x}-\lambda_2)}{\alpha(\overline{x}-\overline{\lambda_1})(\overline{x}-\overline{\lambda_2})}\overline{y}
\end{align}
for each $(x,y)\in\Sigma_1$. 

We first show that $(\hat{x}, \hat{y})\in\Sigma_1$. A direct computation shows that (\ref{hatx}) is equivalent to 
\begin{align}\label{hatxx}
\frac{\overline{\alpha}(\overline{x}-\overline{\lambda_3})}{\overline{\beta}(\overline{x}-\overline{\lambda_1})}=\frac{\alpha(\hat{x}-\lambda_3)}{\beta(\hat{x}-\lambda_1)}.
\end{align} 
Next, from (\ref{alphabetarel}) and (\ref{hatx}), we compute
\begin{align*}
\hat{x}-\lambda_2&=\frac{(\lambda_3-\lambda_2)\alpha\overline{\beta}(\overline{x}-\overline{\lambda_1})+(\lambda_2-\lambda_1)\beta\overline{\alpha}(\overline{x}-\overline{\lambda_3})}{\alpha \overline{\beta} (\overline{x} - \overline{\lambda_1}) - \beta \overline{\alpha} (\overline{x} - \overline{\lambda_3})}\\
&=\frac{\left((\lambda_3-\lambda_2)\alpha\overline{\beta}+(\lambda_2-\lambda_1)\beta\overline{\alpha}\right)(\overline{x}-\overline{\lambda_2})}{\alpha \overline{\beta} (\overline{x} - \overline{\lambda_1}) - \beta \overline{\alpha} (\overline{x} - \overline{\lambda_3})},
\end{align*}
which gives
\begin{align}\label{hatxxx}
\frac{\hat{x}-\lambda_2}{\overline{x}-\overline{\lambda_2}}=\frac{(\lambda_3-\lambda_2)\alpha\overline{\beta}+(\lambda_2-\lambda_1)\beta\overline{\alpha}}{\alpha \overline{\beta} (\overline{x} - \overline{\lambda_1}) - \beta \overline{\alpha} (\overline{x} - \overline{\lambda_3})}.
\end{align}
From (\ref{alphabetarel}) and (\ref{lamb4}), we have
\begin{align*}
\lambda_4=\frac{\beta\lambda_1+\alpha\lambda_3}{\alpha+\beta}.
\end{align*}
Using this, it follows from (\ref{hatx}) that
\begin{align*}
\hat{x}-\lambda_4&=\frac{(\lambda_3-\lambda_4)\alpha\overline{\beta}(\overline{x}-\overline{\lambda_1})+(\lambda_4-\lambda_1)\beta\overline{\alpha}(\overline{x}-\overline{\lambda_3})}{\alpha \overline{\beta} (\overline{x} - \overline{\lambda_1}) - \beta \overline{\alpha} (\overline{x} - \overline{\lambda_3})}\\
&=\left(\frac{|\beta|^2\alpha+|\alpha|^2\beta}{\alpha+\beta}\right)\frac{(\lambda_3-\lambda_1)(\overline{x}-\overline{\lambda_4})}{\alpha \overline{\beta} (\overline{x} - \overline{\lambda_1}) - \beta \overline{\alpha} (\overline{x} - \overline{\lambda_3})},
\end{align*}
and hence
\begin{align}\label{hatxxxx}
\frac{\hat{x}-\lambda_4}{\overline{x}-\overline{\lambda_4}}=\left(\frac{|\beta|^2\alpha+|\alpha|^2\beta}{\alpha+\beta}\right)\frac{(\lambda_3-\lambda_1)}{\alpha \overline{\beta} (\overline{x} - \overline{\lambda_1}) - \beta \overline{\alpha} (\overline{x} - \overline{\lambda_3})}.
\end{align}

Now, squaring both sides of (\ref{haty}) and using the defining equation of $\Sigma_1$, we compute
\begin{align*}
\hat{y}^2&=e^{2i\theta_0}\left(\frac{\overline{\alpha}}{\alpha}\right)^2\frac{(\hat{x}-\lambda_1)^2(\hat{x}-\lambda_2)^2(\overline{x}-\overline{\lambda_3})(\overline{x}-\overline{\lambda_4})}{(\overline{x}-\overline{\lambda_1})(\overline{x}-\overline{\lambda_2})}\\
&=e^{2i\theta_0}\left(\frac{\overline{\alpha}}{\alpha}\right)^2\frac{(\hat{x}-\lambda_1)(\overline{x}-\overline{\lambda_3})(\hat{x}-\lambda_2)(\overline{x}-\overline{\lambda_4})}{(\hat{x}-\lambda_3)(\overline{x}-\overline{\lambda_1})(\overline{x}-\overline{\lambda_2})(\hat{x}-\lambda_4)}(\hat{x}-\lambda_1)(\hat{x}-\lambda_2)(\hat{x}-\lambda_3)(\hat{x}-\lambda_4).
\end{align*}
Substituting (\ref{hathathat}), (\ref{hatxx}), (\ref{hatxxx}), and (\ref{hatxxxx}) into the above expression yields
\begin{align*}
&\frac{\hat{y}^2}{(\hat{x}-\lambda_1)(\hat{x}-\lambda_2)(\hat{x}-\lambda_3)(\hat{x}-\lambda_4)}\\&\qquad=e^{2i\theta_0}\left(\frac{\overline{\alpha}}{\alpha}\right)^2\left(\frac{\alpha\overline{\beta}}{\beta\overline{\alpha}}\right)\left(\frac{\alpha+\beta}{|\beta|^2\alpha+|\alpha|^2\beta}\right)\frac{(\lambda_3-\lambda_2)\alpha\overline{\beta}+(\lambda_2-\lambda_1)\beta\overline{\alpha}}{(\lambda_3-\lambda_1)}\\
&\qquad=\frac{\overline{\alpha}+\overline{\beta}}{\overline{\alpha}-\overline{\beta}}\cdot\frac{\beta|\alpha|^2-\alpha|\beta|^2}{|\beta|^2\alpha+|\alpha|^2\beta}\\
&\qquad=1,
\end{align*}
where we used (\ref{alphabetarel}) in the third equality.
Thus, $(\hat{x}, \hat{y})\in\Sigma_1$. 

Finally, it is straightforward to verify that
\begin{align*}
f(\hat{x},\hat{y})=e^{i\theta_0}\overline{f(x,y)},\quad g(\hat{x},\hat{y})=e^{i\theta_0}\overline{g(x,y)}.
\end{align*}
Since (\ref{haty}) and (\ref{hatxx}) show that the map $(x,y)\mapsto(\hat{x},\hat{y})$ is a diffeomorphism, we conclude that $S_1$ admits a symmetry of the form (\ref{symfgfgfg}).

We remark that there is a unique isomorphism given by the M\"obius automorphism on the Riemann sphere given by
$$z\mapsto \frac{\lambda_2 - \lambda_3}{\lambda_2 - \lambda_1}\cdot \frac{z- \lambda_1}{z-\lambda_3},$$
which maps the triple $(\lambda_1,\lambda_2,\lambda_3)$ to $(0,1,\infty)$. By (\ref{lamb4}), we have $\lambda_4\mapsto -1$, which implies that $\Sigma_1$ can also be identified with $\mathbb{C}/\Lambda(1,i)$, where $\Lambda(1,i)$ is the square lattice.
\end{proof}

\begin{rmk} \normalfont
If the parameter $\lambda_4$ does not satisfy the condition (\ref{lamb4}), then the resulting minimal embedding admits fewer than $8$ symmetries. In other words, although there exist many genus $1$ minimal embeddings asymptotic to $\Sigma_{LC}\cup\{(z,w)\in\mathbb{C}^2\ |\ z=w\}$, the one with $8$ symmetries is uniquely determined as described in the above theorem.
\end{rmk}

Now we consider the second case where $\left|\mathfrak{Sym}(S_1)\right|=12$.  We remark that this case appears as a new phenomenon in comparison to~\cite{HM2}. At the beginning of this section, we deduced from Theorem \ref{thm1} that, after reflecting the surface across the hyperplane $\{ x_2=0 \}$, the surface $S_1$ can represented as the image of a pair of meromorphic functions $(f,g)$ on $\Sigma_1$ into $\mathbb{C}^2$. We have the classification result as follows.

\begin{thm}\label{thm3}
Let $S_1$ be given by an embedding $(f,g):\Sigma_1\setminus\{q_1, q_2, q_3\}\to\mathbb{C}^2$, and suppose that $\left|\mathfrak{Sym}(S_1)\right|=12$. Then, after an appropriate scaling and rotation, \( S_1 \) is congruent to the following holomorphic curve in \( \mathbb{C}^2 \):
\[
\left\{ (z, w) \in \mathbb{C}^2 \ \middle|\ z w (z - w) = 1 \right\}.
\]
\end{thm}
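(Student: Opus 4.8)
The strategy is to combine the holomorphicity from Theorem~\ref{thm1} with the order-$3$ symmetry supplied by $\mathcal{H}$. Since $\left|\mathfrak{Sym}(S_1)\right|=12=4|\mathcal{H}|$ by Proposition~\ref{prop55}, we have $|\mathcal{H}|=3$, and the quotient $\mathcal{Q}_{\mathcal{H}}:\Sigma_1\to\Sigma_1/\langle\tilde{\mu}_{\mathcal{H}}\rangle\cong\mathbb{P}^1$ is a $3$-fold cyclic branched covering whose branch locus is exactly $\{q_1,q_2,q_3\}$. Let $\rho=(A_\rho,b_\rho)$ generate $\mathcal{H}$. As $\rho$ lies in $\mathfrak{Sym}^{id}(S_1)$ it fixes every end, hence preserves each asymptotic plane $\{(z,0)\}$, $\{(0,w)\}$, $\{z=w\}$ (all of which pass through the origin after the normalizing translation); therefore $b_\rho\in\{(z,0)\}\cap\{(0,w)\}=\{0\}$, so $A_\rho$ is linear and block-diagonal with blocks $B_1,B_2\in O(2)$. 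Since $\rho$ is orientation-preserving and $S_1$ is holomorphic, $B_1,B_2\in SO(2)$; invariance of $\{z=w\}$ forces $B_1=B_2$, and $\rho^3=\mathrm{id}$, $\rho\neq\mathrm{id}$ force $A_\rho$ to be multiplication by a primitive cube root of unity $\omega$. Consequently the induced biholomorphism $\tilde{\mu}_\rho$ of $\Sigma_1$ satisfies $f\circ\tilde{\mu}_\rho=\omega f$ and $g\circ\tilde{\mu}_\rho=\omega g$.

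Next I would determine the divisors of $f$ and $g$. Because $\phi_1=df$ and $\phi_3=dg$ are meromorphic with poles only at the ends (and vanishing residues), $f$ and $g$ are meromorphic on $\Sigma_1$ with poles only among $q_1,q_2,q_3$. Embeddedness of the planar ends gives: $f$ has a simple pole at $q_1$ and at $q_2$ and is holomorphic at $q_3$, while $g$ has a simple pole at $q_2$ and at $q_3$ and is holomorphic at $q_1$; hence $\deg f=\deg g=2$. Comparing the simple pole of $f$ at $q_1$ with $f\circ\tilde{\mu}_\rho=\omega f$ shows that $\tilde{\mu}_\rho$ acts on a local coordinate at $q_1$ by $\zeta\mapsto\omega^2\zeta$, and likewise at $q_2$ (via $f$) and at $q_3$ (via the simple pole of $g$). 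Since the end at $q_3$ is parallel to $\{(0,w)\}$, $f$ vanishes at $q_3$, and the equivariance together with the local action $\zeta\mapsto\omega^2\zeta$ forces the vanishing order to be $\equiv 2\pmod 3$, hence exactly $2$ by the degree count; symmetrically $g$ vanishes to order $2$ at $q_1$. Therefore
\[
(f)=2[q_3]-[q_1]-[q_2],\qquad (g)=2[q_1]-[q_2]-[q_3].
\]

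Now set $t:=f/g$. It is $\tilde{\mu}_\rho$-invariant, has divisor $(t)=3[q_3]-3[q_1]$, and satisfies $t(q_2)=1$ since $f$ and $g$ both have simple poles at $q_2$ with equal leading coefficients (the end there is parallel to $\{z=w\}$). Thus $t$ descends to a rational function on $\Sigma_1/\langle\tilde{\mu}_\rho\rangle\cong\mathbb{P}^1$ of degree $1$, i.e.\ an isomorphism, and $t:\Sigma_1\to\mathbb{P}^1$ realizes the $3$-fold cyclic cover branched over $\{0,1,\infty\}$, the images of $q_3,q_2,q_1$. Hence $\Sigma_1$ is identified with $\{(t,v)\in(\mathbb{C}\cup\{\infty\})^2\ |\ v^3=t(t-1)\}$ (the torus with $j$-invariant $0$, i.e.\ $\mathbb{C}/\Lambda(1,\tfrac{1}{2}+\tfrac{\sqrt3}{2}i)$), where $v$ is the meromorphic function with $(v)=[q_3]+[q_2]-2[q_1]$. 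Matching divisors yields $(t/v)=(f)$ and $(1/v)=(g)$, so $f=c_1\,t/v$ and $g=c_2/v$ for some nonzero constants, and $f/g=t$ forces $c_1=c_2=:c$. Then
\[
fg(f-g)=\frac{c\,t}{v}\cdot\frac{c}{v}\cdot\frac{c(t-1)}{v}=\frac{c^3\,t(t-1)}{v^3}=c^3,
\]
so after the orientation-preserving similarity $(z,w)\mapsto(z/c,w/c)$ the surface $S_1$ is exactly $\{(z,w)\in\mathbb{C}^2\ |\ zw(z-w)=1\}$, which is the assertion.

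The step I expect to be the crux is the divisor computation: one must carefully translate ``embedded planar end parallel to a plane through the origin'' into the precise local behavior of $f$ and $g$ (a simple pole, or a prescribed order of vanishing) and then combine this with the equivariance $f\circ\tilde{\mu}_\rho=\omega f$, $g\circ\tilde{\mu}_\rho=\omega g$ to pin down the orders of the zeros exactly rather than merely bound them. Once the divisors are in hand, everything else is routine divisor bookkeeping on the trigonal curve $\{v^3=t(t-1)\}$.
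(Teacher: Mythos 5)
Your proposal is correct and follows the same backbone as the paper's proof: determine the divisors of $f$, $g$, and $f-g$ from the asymptotic planes together with the order-$3$ symmetry, and conclude that $fg(f-g)$ has empty divisor, hence is a nonzero constant by Liouville. Two execution differences are worth noting. First, to show that the second zero of $f$ coincides with $q_3$ (and similarly for $g$ and $f-g$), the paper argues via the fixed-point set of $\tilde{\mu}_{\mathcal{H}}$: a zero $q^*\neq q_3$ would either produce a fourth fixed point or be forced back to $q_3$ by $\tilde{\mu}_{\mathcal{H}}^3=\mathrm{id}$. You instead read off the local rotation number of $\tilde{\mu}_{\mathcal{H}}$ at $q_3$ from the simple pole of $g$ and use $f\circ\tilde{\mu}_{\mathcal{H}}=\omega f$ to force the vanishing order to be $\equiv 2\pmod 3$, hence exactly $2$ since $\deg f=2$; both arguments are valid and of comparable length. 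Second, your explicit uniformization of $\Sigma_1$ as $\{v^3=t(t-1)\}$ via $t=f/g$ is not needed for the Liouville step (the paper applies Liouville directly on the compact $\Sigma_1$), but it does buy you something the paper treats only implicitly: the parametrization $(f,g)=(ct/v,\,c/v)$ shows that $\Sigma_1\setminus\{q_1,q_2,q_3\}$ maps \emph{onto} the affine cubic, so $S_1$ is all of $\{zw(z-w)=1\}$ rather than a proper subset. What you omit is the paper's closing verification that $\{zw(z-w)=1\}$ is itself an embedded genus-$1$ surface with three embedded planar ends and exactly $12$ symmetries; for the conditional statement as written this is not logically required, but it is what makes the classification non-vacuous, and the paper devotes the second half of its proof to it.
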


\begin{proof} 
We first observe that, since $S_1$ has finite total curvature, the meromorphic functions $f,g$, which together represent the surface, extend meromorphically across the punctures $q_1, q_2$ and $q_3$. The arrangement of poles and zeroes of \( f, g \) and \( f-g \) is similar to that in the proof of Theorem \ref{thm2}, where the asymptotic planes of the surface were used to determine them. The key difference in this case is that the surface does not pass through the origin (see Proposition \ref{prop55}). As a result, we obtain the following table:
\begin{table}[h!]
\centering
\begin{tabular}{|>{\centering\arraybackslash}p{1cm}|>{\centering\arraybackslash}p{1.5cm}|>{\centering\arraybackslash}p{1.5cm}|} 
\hline
    & \textbf{Pole} & \textbf{Zero} \\ \hline
\( f \) & \( q_1, q_2 \) & \( q_3, q^* \) \\ \hline
\( g \) & \( q_2, q_3 \) & \( q_1, q^{**} \) \\ \hline
\( f-g \) & \( q_1, q_3 \) & \( q_2, q^{***} \) \\ \hline
\end{tabular}
\caption{Poles and zeroes of \( f, g \) and \( f-g \).}
\label{table:1}
\end{table}   
 
We now show that \( q^* = q_3 \). Recall from Proposition \ref{prop55} that the biholomorphic map \( \tilde{\mu}_{\mathcal{H}} \) has order \( 3 \), and its fixed points are precisely \( q_1, q_2 \), and \( q_3 \). Suppose, for the sake of contradiction, that \( q^* \neq q_3 \). Then we have
\[
f(\tilde{\mu}_{\mathcal{H}}(q^*)) = e^{i\lambda} f(q^*) = 0,
\]
which implies that \( \tilde{\mu}_{\mathcal{H}}(q^*) \) is also a zero of \( f \). Therefore, \( \tilde{\mu}_{\mathcal{H}}(q^*) \in \{ q^*,\, q_3 \} \). If \( \tilde{\mu}_{\mathcal{H}}(q^*) = q_3 \), then applying \( \tilde{\mu}_{\mathcal{H}}^3 = \mathrm{id} \), we obtain
\[
q^* = \tilde{\mu}_{\mathcal{H}}^3(q^*) = \tilde{\mu}_{\mathcal{H}}^2(q_3) = q_3,
\]
which contradicts our assumption that \( q^* \neq q_3 \). On the other hand, if \( \tilde{\mu}_{\mathcal{H}}(q^*) = q^* \), then $q^*$ is a fixed point of \( \tilde{\mu}_{\mathcal{H}} \), implying that  \( \tilde{\mu}_{\mathcal{H}} \) has at least $4$ fixed points. This contradicts the known fact that \( \tilde{\mu}_{\mathcal{H}} \) has exactly $3$ fixed points. We conclude that \( q^* = q_3 \). The same argument can be applied to show that \( q^{**} = q_1 \) and \( q^{***} = q_2 \).

Now we consider the function 
$$ fg(f-g)$$
on the Riemann surface $\Sigma_1$. From Table $\ref{table:1}$, we deduce that this function has neither poles or zeroes. By Liouville's theorem, it must therefore be nonzero constant. By applying an appropriate scaling and rotation to the surface, we may assume that $$fg(f-g)=1.$$ Thus, it follows that $S_1\subset\left\{ (z, w) \in \mathbb{C}^2 \ \middle|\ z w (z - w) = 1 \right\}$. To complete the proof, it remains to verify that the holomorphic curve
$$\left\{ (z, w) \in \mathbb{C}^2 \ \middle|\ z w (z - w) = 1 \right\}=:\mathcal{S}$$
satisfies all the properties of $S_1$ assumed in the theorem. This will shows that, after an appropriate scaling and rotation, the surface $S_1$ is congruent to this holomorphic curve $\mathcal{S}$.

First of all, it is clear that $\mathcal{S}$ is asymptotic to $\Sigma_{LC}\cup\{(z,w)\in\mathbb{C}^2\ |\ z=w\}$. Moreover, $\mathcal{S}$ is invariant under the action
\begin{align} \label{sym12}
\left\{
\begin{aligned}
(z, w) &\mapsto \left(e^{i \frac{\pi}{3}} w,\, e^{i \frac{\pi}{3}} z\right), \\
(z, w) &\mapsto \left(\overline{z},\, \overline{w} \right).
\end{aligned}
\right.
\end{align}
It is straightforward to verify that the symmetry group generated by the transformations in \eqref{sym12} consists of exactly \( 12 \) elements.

Using this symmetry, we can show that the holomorphic curve is embedded. From the symmetry actions described in \eqref{sym12}, the fundamental domain of the surface is given by
$$\mathcal{S}_{F}:=\mathcal{S} \cap \left\{(z,w)\in\mathbb{C}^2\ \bigg{|}\ 0\le \textup{arg}(z),\textup{arg}(w) \le \frac{\pi}{3}\right\}.$$
If we can show that $\mathcal{S}_{F}$ is a graph over the domain 
$$\Omega_{\frac{\pi}{3}}:=\left\{z\in\mathbb{C}\ \bigg{|}\ 0\le \textup{arg}(z) \le \frac{\pi}{3}, z \neq 0\right\},$$ 
then $\mathcal{S}$ must be embedded, since the full surface is generated by applying the symmetry actions in (\ref{sym12}) to $\mathcal{S}_{F}$. To this end, consider the projection to the first coordinate: 
\begin{align*}
\textup{Pr}_1 : \mathcal{S}_{F}&\rightarrow \Omega_{\frac{\pi}{3}} \\
(z,w) &\mapsto z
\end{align*}
Since $z \neq 0$ in $\mathcal{S}$, the defining equation \( zw(z-w) = 1 \) implies
$$ \left( w-\frac{z}{2}\right)^2 = \frac{z^4 - 4z}{4z^2} = \frac{1}{4z} (z^3 - 4).$$
One can verify that for \( z \in \Omega_{\frac{\pi}{3}} \), the argument of the right-hand side satisfies
\[
0 \le \arg\left( \frac{1}{4z} (z^3 - 4) \right) \le \frac{2\pi}{3}.
\]
Therefore, the inverse of the projection is given explicitly by
\[
\text{Pr}_1^{-1}(z) = \left( z,\, \frac{z}{2} + \sqrt{ \frac{z^4 - 4z}{4z^2} } \right),
\]
which shows that $\mathcal{S}_{F}$ is indeed a graph over \( \Omega_{\frac{\pi}{3}} \), and hence $\mathcal{S}$ is embedded.

Applying the degree-genus formula (see \cite{D}, Chapter~7.2.2), we obtain:
\[
g = \frac{(3 - 1)(3 - 2)}{2} = 1,
\]
which confirms that the holomorphic curve $\mathcal{S}$ has genus $1$. 

This confirms that the holomorphic curve $\mathcal{S}$ is an embedded minimal surface of  genus $1$, with finite total curvature, three embedded planar ends and a symmetry group of order $12$. This completes the proof of the theorem.
\end{proof}


\section{Nonexistence: Genus $g\geq2$ and $\left|\mathfrak{Sym}(S_g)\right|\geq4(g+1)$}\label{sec8}
\setcounter{equation}{0}
In this section, we prove the following theorem:
\begin{thm}\label{Thm61}
There is no complete, oriented, embedded minimal surface $S_g\subset\mathbb{R}^4$ with finite total curvature and genus $g\geq 2$ such that $S_g$ has three embedded planar ends whose asymptotic planes are parallel to the triple $(Q_1, Q_2(0, 1), Q_3(0))$, corresponding to those of the union of the Lagrangian catenoid $\Sigma_{LC}$ and the center plane $\Pi_c$, and satisfies $\left|\mathfrak{Sym}(S_g)\right|\geq4(g+1)$.
\end{thm}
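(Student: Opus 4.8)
Proof proposal.

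The plan is to argue by contradiction, mirroring at the level of strategy the treatment of the genus $1$ case in Section \ref{sec5}: first show that any such $S_g$ would have to be $J$-holomorphic for some almost complex structure $J$, and then rule out $J$-holomorphic examples of genus $g\ge 2$. Assume $S_g$ exists. Since $|\mathfrak{Sym}(S_g)|\ge 4(g+1)$ and $g\ge 2$, Proposition \ref{prop59} identifies $\Sigma_g$, together with the generator $\tilde{\mu}_{\mathcal H}$ of $\mathcal H$, with one of the four explicit models $\overline{\mathcal C}_g$ carrying the automorphism $\nu_g(z,w)=(z,\zeta w)$, $\zeta:=e^{2\pi i/(g+1)}$, with $q_1,q_2,q_3,q_0$ sitting at $(1,0),\infty,(-1,0),(0,0)$ and the integers $N_{q_j}$ prescribed. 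Writing $w^{g+1}=P(z)$ for the corresponding polynomial and using $A_{\mathcal H}=\mathrm{diag}(\mathcal S_\lambda,\mathcal S_{-\lambda})$ from Lemma \ref{formA}, one checks that $\tilde{\mu}_{\mathcal H}^\ast(\phi_1+i\phi_2)=\zeta(\phi_1+i\phi_2)$, $\tilde{\mu}_{\mathcal H}^\ast(\phi_3-i\phi_4)=\zeta(\phi_3-i\phi_4)$, and the two remaining combinations are $\zeta^{-1}$-eigenforms. Hence, on $\overline{\mathcal C}_g$, one may write $\phi_1+i\phi_2=w\,a(z)\,dz$, $\phi_3-i\phi_4=w\,d(z)\,dz$, $\phi_1-i\phi_2=w^g b(z)\,dz=P(z)w^{-1}b(z)\,dz$, and $\phi_3+i\phi_4=P(z)w^{-1}c(z)\,dz$, for rational functions $a,b,c,d$. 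The order conditions $\mathrm{ord}_{q_j}\phi_i\ge -2$ with vanishing residues (Proposition \ref{PropOrdRes}), together with the normalized Gauss map images (\ref{GMI}), pin these rational functions down up to finitely many complex scalars, exactly as Proposition \ref{prop27} and Lemma \ref{etacase} do when $g=1$.

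The next step is to impose $\sum_{j=1}^4\phi_j^2\equiv 0$. In the above coordinates this becomes $P(z)\bigl(a(z)b(z)+c(z)d(z)\bigr)(dz)^2\equiv 0$, i.e. the rational identity $a(z)b(z)+c(z)d(z)\equiv 0$ in $z$ alone. I would then clear denominators and compare both sides as polynomials in $z$, exploiting that one side is (or contains) a perfect square, in the spirit of Lemmas \ref{etacase1} and \ref{etacase2}. This should force one of two outcomes: either one of $\phi_1\pm i\phi_2$, $\phi_3\pm i\phi_4$ vanishes identically, so that $S_g$ is $J$-holomorphic; or the Weierstrass data is completely rigid, as in case (2) of Lemma \ref{etacase2}. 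In the rigid case one computes the periods of the $\phi_j$ along a homology basis of $\overline{\mathcal C}_g$—using the explicit structure of the cyclic cover in place of the elliptic-function identities of Subsection \ref{periodcompute}—and shows that the real periods cannot all vanish, so that this data represents no complete surface. I expect this period computation on a general cyclic cover to be the principal technical obstacle, since we no longer have the apparatus of elliptic functions; the most optimistic route is to eliminate the rigid case at once, by checking that the Gauss map configuration it entails is incompatible with every admissible triple $(N_{q_0},N_{q_2},N_{q_3})$ in Proposition \ref{prop59} under the normalization $\Phi(q_1)=[1,i,0,0]$.

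It remains to rule out the $J$-holomorphic case, and here the symmetry hypothesis is no longer needed. If $S_g$ is $J$-holomorphic, then it is a holomorphic curve in $(\mathbb{R}^4,J)\cong\mathbb{C}^2$; as in Section \ref{sec5}, the vanishing of the real periods makes the relevant primitives single-valued, so (after a reflection if necessary) $S_g$ is the image of a pair of meromorphic functions on $\Sigma_g$. Its three asymptotic planes are parallel to three pairwise distinct $J$-complex lines $Q_1,Q_2,Q_3$, and for $j=1,2,3$ the $J$-linear projection $\pi_j\colon\mathbb{C}^2\to\mathbb{C}$ with kernel $Q_j$ gives a meromorphic function $\pi_j\circ X$ on $\Sigma_g$ that extends holomorphically across the end $q_j$ whose asymptotic plane is parallel to $Q_j$, and has a simple pole at each of the other two ends (those ends being embedded planar ends, by Proposition \ref{PropOrdRes}). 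Hence each $\pi_j\circ X$ has degree exactly $2$, so $\Sigma_g$ is hyperelliptic; since $g\ge 2$ its hyperelliptic involution $\iota$ is unique, and therefore $\iota$ is simultaneously the deck transformation of all three degree $2$ maps.

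Labelling the ends so that $\pi_1\circ X$ has its poles at $q_2,q_3$, $\pi_2\circ X$ at $q_1,q_3$, and $\pi_3\circ X$ at $q_1,q_2$, the involution $\iota$ must swap the two points in each of these fibres, which are non-branch fibres because the two poles are distinct and simple. Thus $\iota(q_3)=q_2$ and $\iota(q_3)=q_1$, forcing $q_1=q_2$, a contradiction. This rules out the $J$-holomorphic case and completes the proof, assuming the reduction to $J$-holomorphicity carried out in the first two steps.
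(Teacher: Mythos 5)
Your final step---ruling out the $J$-holomorphic case for $g\ge 2$---is correct, and it is a genuinely different argument from anything in the paper, needing no symmetry hypothesis at all. Writing the curve as $(f,g)$, the function $f$ has simple poles exactly at $q_1,q_2$, the function $g$ exactly at $q_2,q_3$, and $f-g$ exactly at $q_1,q_3$ (the leading poles at $q_2$ cancel because the limiting tangent plane there is the diagonal), so each has degree $2$; since the hyperelliptic involution is unique for $g\ge2$, it is simultaneously the deck transformation of all three maps and must swap the two points of each unramified fibre over $\infty$, giving $\iota(q_3)=q_2$ and $\iota(q_3)=q_1$ simultaneously. This is a clean contradiction, and it disposes of every holomorphic sub-case in one stroke, whereas the paper handles these piecemeal (e.g.\ via the Gauss-map degree count in Subsection \ref{subsec65}).

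The gap is in your first two steps, and it is not merely a matter of unexecuted routine computation. Your anticipated dichotomy ``either holomorphic or a rigid case that can be eliminated at once by inspecting the Gauss map configuration'' is false. After the eigenform decomposition and the order/residue/Gauss-map normalizations (which do work essentially as you describe, and which the paper carries out in Lemmas \ref{Lem63}--\ref{Lem68}), the identity $a(z)b(z)+c(z)d(z)\equiv0$ leaves behind, for the Riemann surface model with $(N_{q_0},N_{q_2},N_{q_3})=(g,g,1)$, a three-parameter family of \emph{non-holomorphic} Weierstrass data (Lemma \ref{Lem611}) whose Gauss map values at the ends are exactly the admissible ones $\Phi(q_2)=[1,-i,1,i]$, $\Phi(q_3)=[0,0,1,-i]$; so your ``optimistic route'' cannot exclude it, and the only thing that kills it is the period computation on the cyclic cover (Subsection \ref{subsec63}), i.e.\ precisely the step you identify as the principal obstacle and do not carry out. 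Likewise, the $g=3$ model with $(N_{q_0},N_{q_2},N_{q_3})=(1,1,1)$ admits non-holomorphic data passing the squared-sum test (Lemma \ref{Lem612} with $h_{10}\ne0$), and the paper excludes it by an entirely different mechanism --- comparing $\deg G_1+\deg G_2=2g+4$ with $|\deg G_1-\deg G_2|=|\frac{1}{2\pi}\int K^{\perp}|$, the latter computed from the self-intersection number at infinity using embeddedness --- which your outline does not anticipate. So the reduction to the holomorphic case is genuinely incomplete: two of the four cases survive all the pointwise algebraic constraints and require the period and degree arguments that constitute the technical core of the paper's proof.
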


The idea of the proof is as follows. Suppose that a surface $S_g$ satisfying the given conditions exists. Since $S_g$ has finite total curvature, it can be represented by an embedding $X:\Sigma_g\setminus\{q_1, q_2, q_3\}\to\mathbb{R}^4$ via the generalized Weierstrass representation (\ref{gwr}). Here we assume that each point $q_j$ corresponds to an embedded planar end of $S_g$ parallel to $Q_j$. By adjusting the orientation, we may assume that the generalized Gauss map at $q_1$ is given by $\Phi(q_1)=[1,i,0,0]\in\mathbb{P}^3$. 

This allows us to use Proposition \ref{prop59}, which provides explicit candidates for the underlying Riemann surface $\Sigma_g$. Given this description, the holomorphic and meromorphic differentials can also be explicitly determined, enabling us to construct the Weierstrass data in the form of Proposition \ref{prop27}. Based on this setup, we analyze all possible candidates for $\Sigma_g$ and the associated Weierstrass data in a systematic way.

Each of the following subsections addresses a specific part of the proof.
In Subsection \ref{subsec61}, we first specify the differentials and express the Weierstrass data in terms of the generalized Gauss map. Then, in Subsection \ref{subsec64}, we further simplify the expressions using symmetry. Once the simplified data are obtained, we examine whether the necessary conditions are satisfied: in Subsection \ref{subsec62}, we check whether the squared sum of the Weierstrass data vanishes, and in Subsection \ref{subsec63}, we compute the real periods. Finally, in Subsection \ref{subsec65}, we complete the proof by calculating the degree of the generalized Gauss map for the remaining cases.

\subsection{Explicit construction of the Weierstrass data}\label{subsec61}
As in Section \ref{RH}, let $(A_{\mathcal{H}}, b_{\mathcal{H}})$ be a generator of $\mathcal{H}\subset\mathfrak{Sym}(S_g)$ such that $A_{\mathcal{H}}\in O(4)$ is given by
\begin{align*}
A_{\mathcal{H}}=\begin{pmatrix}
\mathcal{S}_{\frac{2\pi}{|\mathcal{H}|}} & O\\
O & \mathcal{S}_{-\frac{2\pi}{|\mathcal{H}|}}
\end{pmatrix}
\end{align*}
with respect to the standard basis of $\mathbb{R}^4$, and let $\tilde{\mu}_{\mathcal{H}}$ be the biholomorphism of $\Sigma_g$ induced by $(A_{\mathcal{H}}, b_{\mathcal{H}})$. Consider the quotient map $\mathcal{Q}_{\mathcal{H}}:\Sigma_g\to\Sigma_g/\left\langle\tilde{\mu}_{\mathcal{H}}\right\rangle$ as in \cite{HM2}. 

Since $\left|\mathfrak{Sym}(S_g)\right|\geq 4(g+1)$ and $g\geq 2$, we have $|\mathcal{H}|=g+1$ from Lemma \ref{Lem58}. Then, by Proposition \ref{prop55}, the set of branch points of $\mathcal{Q}_{\mathcal{H}}$ is $\{q_0, q_1, q_2, q_3\}$, where $q_0\in\Sigma_g$ is the unique point such that $X(q_0)=(I-A_{\mathcal{H}})^{-1}b_{\mathcal{H}}$. In Proposition \ref{prop59}, we constructed cyclic coverings $\overline{\mathcal{C}}_g\to\overline{\mathcal{C}}_g/\langle\nu_g\rangle$, each of which is equivalent to the quotient map $\mathcal{Q}_{\mathcal{H}}:\Sigma_g\to\Sigma_g/\left\langle\tilde{\mu}_{\mathcal{H}}\right\rangle$ in the corresponding case. 

We now identify the quotient map $\mathcal{Q}_{\mathcal{H}}$ with the covering map in each case and examine them accordingly. Under this identification, the points $q_0$, $q_1$, $q_2$, and $q_3$ correspond to $(z,w)=(0,0)$, $(1,0)$, $\infty$, and $(-1,0)$, respectively.

Recall that the Weierstrass data were described in terms of the generalized Gauss map in Proposition \ref{prop27}, where they were expressed using the meromorphic $1$-forms
\begin{align*}
\eta_j\in H^0(\Sigma_g, K\otimes [2q_j])\setminus H^0(\Sigma_g,K)\ (j=1,2,3)
\end{align*}
and holomorphic $1$-forms. The following lemmas determine these differentials on each Riemann surface constructed in Proposition \ref{prop59}.
\begin{lemma} Using the same numbering as in Proposition \ref{prop59}, holomorphic differentials on each Riemann surface can be described as follows:
\begin{itemize}
\item[(1)] On $\Sigma_g=\left\{(z, w)\in\left(\mathbb{C}\cup\{\infty\}\right)^2\ |\ w^{g+1}=z^g(z+1)^g(z-1)\right\}$,
\begin{align*}
\Omega\in H^0(\Sigma_g, K)\ \text{if and only if}\ \Omega=h\left(\frac{z(z+1)}{w}\right)\frac{dz}{w}
\end{align*}
for some polynomial $h(t)$ of degree at most $g-1$.\\
\item[(2)] On $\Sigma_g=\left\{(z, w)\in\left(\mathbb{C}\cup\{\infty\}\right)^2\ |\ w^{g+1}=z(z+1)^g(z-1)\right\}$,
\begin{align*}
\Omega\in H^0(\Sigma_g, K)\ \text{if and only if}\ \Omega=h\left(\frac{z+1}{w}\right)\frac{dz}{w}
\end{align*}
for some polynomial $h(t)$ of degree at most $g-1$.\\
\item[(3)] On $\Sigma_g=\left\{(z, w)\in\left(\mathbb{C}\cup\{\infty\}\right)^2\ |\ w^{g+1}=z^g(z+1)(z-1)\right\}$,
\begin{align*}
\Omega\in H^0(\Sigma_g, K)\ \text{if and only if}\ \Omega=h\left(\frac{z}{w}\right)\frac{dz}{w}
\end{align*}
for some polynomial $h(t)$ of degree at most $g-1$.\\
\item[(4)] On $\Sigma_3=\left\{(z, w)\in\left(\mathbb{C}\cup\{\infty\}\right)^2\ |\ w^4=z(z+1)(z-1)\right\}$,
\begin{align*}
\Omega\in H^0(\Sigma_3, K)\ \text{if and only if}\ \Omega=\left(h_0+h_1z+h_2w\right)\frac{dz}{w^3}
\end{align*}
for some $h_0, h_1,h_2\in\mathbb{C}$.
\end{itemize}
\end{lemma}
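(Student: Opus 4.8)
The plan is to compute, on each of the four Riemann surfaces of Proposition~\ref{prop59}, the divisor of the candidate differential and of $dz$, and to verify directly that the candidate is holomorphic, then to count parameters against $\dim_{\mathbb{C}}H^0(\Sigma_g,K)=g$ to conclude that these exhaust all holomorphic $1$-forms. Throughout I will use the branched cyclic cover structure $w^{g+1}=z^{N_0}(z+1)^{N_3}(z-1)$ (with the relevant $(N_0,N_3)$ in each case, and $w^4 = z(z+1)(z-1)$ in case (4)), together with the standard rule for computing divisors of functions and of $dz$ on such superelliptic curves.

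First I would record the local structure of the covering $z:\Sigma_g\to\mathbb{C}\cup\{\infty\}$. Over a point $z_0$ where $\gcd$ with $g+1$ is as dictated by the exponents, the map is either unramified or totally ramified; in cases (1)--(3), each of the three finite marked points and $\infty$ is totally ramified (branching order $g$), and over a generic point there are $g+1$ sheets. From this one gets $\mathrm{div}(dz)$ on $\Sigma_g$: at a point $P$ over $z_0$ where $z$ has a local expression $z-z_0 = c\,t^{e}$ in a local coordinate $t$, one has $\mathrm{ord}_P(dz) = e-1$; at a point over $z=\infty$ where $z = c\,t^{-e}$, one has $\mathrm{ord}_P(dz) = -e-1$. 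Similarly $\mathrm{div}(w)$ is computed from $w^{g+1} = z^{N_0}(z+1)^{N_3}(z-1)$ by taking $\tfrac{1}{g+1}$ of the divisor of the right-hand side (all exponents being chosen precisely so that this is an integral divisor). Then for the claimed form $\Omega = h(\xi)\,\tfrac{dz}{w}$, where $\xi$ is the relevant ratio ($z(z+1)/w$, $(z+1)/w$, $z/w$, respectively), I would compute $\mathrm{div}(\Omega)$ at the four branch points and at the (non-branch) zeros of $w$ and poles at $\infty$, and check it is effective for every polynomial $h$ of degree $\le g-1$; conversely, that $\deg h \le g-1$ is forced by effectivity at $\infty$. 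The key identities are: the divisor of $dz/w$ is supported on the branch points and at the zeros of $w$, and multiplying by $h(\xi)$ with $\xi$ chosen as above exactly cancels the problematic poles at the zeros of $w$ (where $w$ vanishes but the branch structure makes $dz/w$ singular) while introducing a pole at $\infty$ of controlled order. Case (4) is the genus-$3$ curve $w^4 = z(z+1)(z-1)$; here $\gcd(4,\cdot)$ behaves differently (the finite branch points have branching order $4$ but $w$ vanishes simply there, and $z=\infty$ is not a branch point since $4 \nmid 3$ — rather $z$ has a pole of order $4/\gcd(4,3)=4$ at a single point), so I would treat it separately, computing $\mathrm{div}(dz/w^3)$, $\mathrm{div}(z\,dz/w^3)$, $\mathrm{div}(dz/w^2)$ and checking each is effective, giving the three-dimensional space $\{(h_0 + h_1 z + h_2 w)dz/w^3\}$, consistent with $g=3$.

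The dimension count closes the argument: in cases (1)--(3) the space $\{h(z(z+1)/w),\,h(\!(z+1)/w),\,h(z/w)\}\cdot dz/w$ with $\deg h\le g-1$ is exactly $g$-dimensional, matching $\dim H^0(\Sigma_g,K)=g$, so it must be all of $H^0(\Sigma_g,K)$; in case (4) the displayed space is $3$-dimensional matching $g=3$. I expect the main obstacle to be purely bookkeeping: getting all the ramification indices and the orders $\mathrm{ord}_P(dz)$, $\mathrm{ord}_P(w)$ exactly right at the four special points and at $z=\infty$ in each of the four (slightly different) curves, since the exponents $N_0, N_3\in\{1,g\}$ differ case by case and this changes which sheets collide and with what multiplicity. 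A useful sanity check at each stage is that $\deg\mathrm{div}(\Omega) = 2g-2$ for any nonzero $\Omega\in H^0(\Sigma_g,K)$, which I would verify for a generic choice of $h$; a second check is invariance under the automorphism $\nu_g(z,w)=(z,e^{2\pi i/(g+1)}w)$, under which $dz/w$ has weight $-1$ and $\xi$ has weight $0$ (resp. $\pm1$), so the $\nu_g$-eigenspace decomposition of $H^0(\Sigma_g,K)$ is visibly respected by the claimed basis.
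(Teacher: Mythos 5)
Your proposal is correct and follows essentially the same route as the paper: compute the divisors of the building blocks ($dz/w$ and the ratio $\xi$ in cases (1)--(3), resp.\ $dz/w^3$, $z\,dz/w^3$, $dz/w^2$ in case (4)) to verify holomorphicity, then invoke $\dim_{\mathbb{C}}H^0(\Sigma_g,K)=g$ together with linear independence to conclude that these exhaust all holomorphic differentials. One small slip: in case (4) the point $z=\infty$ \emph{is} a branch point (a single preimage with ramification index $4$, exactly as your own local computation $z=c\,t^{-4}$ shows, since $\gcd(4,3)=1$), but this does not affect your divisor bookkeeping or the conclusion.
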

\begin{proof}
In case $(1)$, the function $\frac{z(z+1)}{w}$ has simple zeros at $(0,0)$ and $(-1,0)$, and simple poles at $(1,0)$ and $\infty$. Moreover, the differential $\frac{dz}{w}$ is holomorphic and has zeros of order $g-1$ at $(1,0)$ and $\infty$. Thus, for every $0\leq l\leq g-1$, the differential
\begin{align*}
\left(\frac{z(z+1)}{w}\right)^l\frac{dz}{w}
\end{align*}
is holomorphic with zeros of order $l$ at $(0,0)$ and $(-1,0)$, and of order $g-1-l$ at $(1,0)$ and $\infty$. Since these differentials are linearly independent over $\mathbb{C}$ and $\dim_{\mathbb{C}}H^0(\Sigma_g, K)=g$, they form a basis for $H^0(\Sigma_g, K)$. Hence, every holomorphic differential can be written in the form 
\begin{align*}
\Omega=h\left(\frac{z(z+1)}{w}\right)\frac{dz}{w}
\end{align*}
for some polynomial $h(t)$ of degree at most $g-1$, and conversely, every such expression defines a holomorphic differential. The proofs for cases $(2)$ and $(3)$ follow similar computations and are omitted for brevity.

In case $(4)$, we observe that
\begin{align*}
\frac{wdz}{z(z+1)}\quad \text{and}\quad \frac{wdz}{z(z-1)}
\end{align*}
are holomorphic differentials with zeros of order $4$ at $(1,0)$ and $(-1,0)$, respectively. Moreover, the differential
\begin{align*}
\frac{dz}{w^2}
\end{align*}
is holomorphic with simple zeros at $(0,0)$, $(-1,0)$, $(1,0)$, and $\infty$. To verify their linear independence, assume that
\begin{align*}
a_1\frac{wdz}{z(z+1)}+a_2\frac{wdz}{z(z-1)}+a_3\frac{dz}{w^2}=\left(a_1(z-1)+a_2(z+1)+a_3w\right)\frac{dz}{w^3}\equiv0
\end{align*}
for some $a_1,a_2,a_3\in\mathbb{C}$. This implies that $a_1(z-1)+a_2(z+1)+a_3w\equiv 0$, and consequently, $a_1=a_2=a_3=0$. As $\dim_{\mathbb{C}}H^0(\Sigma_3, K)=3$, these differentials form a basis. Thus, every holomorphic differential can be written as a linear combination
\begin{align*}
a_1\frac{wdz}{z(z+1)}+a_2\frac{wdz}{z(z-1)}+a_3\frac{dz}{w^2}=\left(a_1(z-1)+a_2(z+1)+a_3w\right)\frac{dz}{w^3}
\end{align*}
for some $a_1,a_2,a_3\in\mathbb{C}$, which can be rewritten as 
\begin{align*}
\left(h_0+h_1z+h_2w\right)\frac{dz}{w^3}
\end{align*}
by letting $h_0=a_2-a_1$, $h_1=a_2+a_1$, and $h_2=a_3$.
\end{proof}

\begin{lemma}\label{Lem63}
Following the same order as in Proposition \ref{prop59}, meromorphic differentials 
$\eta_j\in H^0(\Sigma_g, K\otimes [2q_j])\setminus H^0(\Sigma_g,K)\ (j=1,2,3)$
on each Riemann surface can be chosen as follows:
\begin{itemize}
\item[(1)] On $\Sigma_g=\left\{(z, w)\in\left(\mathbb{C}\cup\{\infty\}\right)^2\ |\ w^{g+1}=z^g(z+1)^g(z-1)\right\}$,
\begin{align*}
\eta_1=\frac{dz}{w(z-1)},\quad \eta_2=\frac{(z-1)dz}{w},\quad \eta_3=\frac{wdz}{(z+1)^2(z-1)}.
\end{align*}
\item[(2)] On $\Sigma_g=\left\{(z, w)\in\left(\mathbb{C}\cup\{\infty\}\right)^2\ |\ w^{g+1}=z(z+1)^g(z-1)\right\}$,
\begin{align*}
\eta_1=\frac{dz}{w(z-1)},\quad \eta_2=\frac{wdz}{z(z-1)},\quad \eta_3=\frac{wdz}{z(z+1)^2(z-1)}.
\end{align*}
\item[(3)] On $\Sigma_g=\left\{(z, w)\in\left(\mathbb{C}\cup\{\infty\}\right)^2\ |\ w^{g+1}=z^g(z+1)(z-1)\right\}$,
\begin{align*}
\eta_1=\frac{dz}{w(z-1)},\quad \eta_2=\frac{wdz}{(z+1)(z-1)},\quad \eta_3=\frac{dz}{w(z+1)}.
\end{align*}
\item[(4)] On $\Sigma_3=\left\{(z, w)\in\left(\mathbb{C}\cup\{\infty\}\right)^2\ |\ w^4=z(z+1)(z-1)\right\}$,
\begin{align*}
\eta_1=\frac{dz}{w(z-1)},\quad \eta_2=\frac{dz}{w},\quad \eta_3=\frac{dz}{w(z+1)}.
\end{align*}
\end{itemize}
\end{lemma}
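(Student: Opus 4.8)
The plan is to verify the statement by a direct divisor computation on each of the four cyclic covers $\overline{\mathcal{C}}_g$ produced in Proposition \ref{prop59}. Throughout, write $P_0, P_1, P_{-1}, P_\infty$ for the points of $\overline{\mathcal{C}}_g$ lying over $z = 0, 1, -1, \infty$; by the identification fixed in Proposition \ref{prop59} these are respectively $q_0, q_1, q_3, q_2$, so the claim amounts to showing that each proposed $\eta_j$ is holomorphic away from the appropriate $P_\bullet$ and has there a pole of order exactly $2$.

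First I would record the ramification data of the $z$-projection $\overline{\mathcal{C}}_g \to \mathbb{C}\cup\{\infty\}$. In every case the defining equation near each of $z = 0, \pm 1, \infty$ takes the form $w^{n} = (z - a)^{m}\cdot(\mathrm{unit})$ with $n = g+1$ (and $n=4$ in case (4)) and $\gcd(m,n) = 1$; at $z=\infty$ the relevant exponent $m$ is the degree of the right-hand side, namely $2g+1$, $g+2$, $g+2$, $3$ in cases (1)--(4), each coprime to $n$. Hence $P_0, P_1, P_{-1}, P_\infty$ are single, totally ramified points of ramification index $n$, and the covering is unramified elsewhere. (Riemann--Hurwitz then recovers the correct genus, a useful consistency check.)

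Next I would write down the principal divisors. Since $z$, $z+1$, $z-1$ vanish only at $P_0, P_{-1}, P_1$ respectively, each with multiplicity $n$, and each has a pole of order $n$ only at $P_\infty$, their divisors are $nP_0 - nP_\infty$, $nP_{-1} - nP_\infty$, $nP_1 - nP_\infty$. Taking $n$-th roots of the defining relation yields $(w)$ in each case, and since $z$ agrees up to a unit with the $n$-th power of a local coordinate at each ramification point, one gets $\mathrm{ord}_P(dz) = n-1$ for $P\in\{P_0,P_1,P_{-1}\}$ and $\mathrm{ord}_{P_\infty}(dz) = -(n+1)$, so that $(dz) = (n-1)(P_0+P_1+P_{-1}) - (n+1)P_\infty$ (of degree $2g-2$, as required). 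With these divisors in hand, the divisor of each proposed $\eta_j$ is obtained by addition, and the resulting bookkeeping shows in every case that $\eta_j$ is holomorphic off $q_j$, has a pole of order exactly $2$ at $q_j$ (so in particular $\eta_j\notin H^0(\Sigma_g,K)$), and has total degree $2g-2$; this is exactly the assertion that $\eta_j \in H^0(\Sigma_g, K\otimes[2q_j])\setminus H^0(\Sigma_g,K)$.

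There is no genuine obstacle here: the argument is purely a matter of careful bookkeeping. The only point warranting separate treatment is case (4), whose defining equation has a different shape — all three linear factors to the first power and $n=4$ in place of $g+1$ — so its ramification and divisor computations should be performed on their own rather than specialized from cases (1)--(3). One may also remark that each $\eta_j$ so chosen automatically satisfies the normalization behind (\ref{ETA}): having a single double pole, its residue there vanishes by the residue theorem, in accordance with (\ref{ResEta}).
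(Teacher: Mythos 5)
Your proposal is correct and follows essentially the same route as the paper: the paper's proof simply records the divisors $(\eta_1)=2g\cdot\infty-2\cdot(1,0)$, $(\eta_2)=2g\cdot(1,0)-2\cdot\infty$, $(\eta_3)=2g\cdot(0,0)-2\cdot(-1,0)$ in case (1) and leaves the remaining cases to "similar calculations," which is exactly the divisor bookkeeping you carry out (you just make the intermediate steps — total ramification from $\gcd(m,g+1)=1$, and the divisors of $z$, $z\pm1$, $w$, $dz$ — explicit). Your checks of the exponents at infinity and the separate treatment of case (4) are accurate, so nothing further is needed.
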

\begin{proof}
In case $(1)$, as divisors on $\Sigma_g$, we may write
\begin{align*}
(\eta_1)&=\left(\frac{dz}{w(z-1)}\right)=2g\cdot\infty-2\cdot(1,0),\\
(\eta_2)&=\left(\frac{(z-1)dz}{w}\right)=2g\cdot(1,0)-2\cdot\infty,\\
(\eta_3)&=\left(\frac{wdz}{(z+1)^2(z-1)}\right)=2g\cdot(0,0)-2\cdot(-1,0).
\end{align*}
As the points $q_1$, $q_2$, and $q_3$ correspond to $(1,0)$, $\infty$, and $(-1,0)$, respectively, it follows that the differentials above have double poles precisely at these points, confirming that they are the desired differentials. The remaining cases can be verified through similar calculations, completing the proof.
\end{proof}
Recall that in Proposition \ref{prop59}, the values $N_{q_2}$ and $N_{q_3}$ are also given. These values are related to the image of the generalized Gauss map through (\ref{GMI}) and Lemma \ref{Lem56}. Using the differentials determined above along with the information on the generalized Gauss map, we can apply Proposition \ref{prop27} to express the Weierstrass data as follows.
\begin{lemma}\label{Lem64}
Let $\eta_1$, $\eta_2$, and $\eta_3$ be chosen as in Lemma \ref{Lem63}. Following the same order, the Weierstrass data can be expressed as follows:
\begin{itemize}
\item[(1)] On $\Sigma_g=\left\{(z, w)\in\left(\mathbb{C}\cup\{\infty\}\right)^2\ |\ w^{g+1}=z^g(z+1)^g(z-1)\right\}$,
\begin{align*}
&\phi_1=h_1\left(\frac{z(z+1)}{w}\right)\frac{dz}{w}+\alpha\eta_1+\beta\eta_2+0,\\
&\phi_2=h_2\left(\frac{z(z+1)}{w}\right)\frac{dz}{w}+i\alpha\eta_1+i\beta\eta_2+0,\\
&\phi_3=h_3\left(\frac{z(z+1)}{w}\right)\frac{dz}{w}+0+\beta\eta_2+\gamma\eta_3,\\
&\phi_4=h_4\left(\frac{z(z+1)}{w}\right)\frac{dz}{w}+0-i\beta\eta_2+i\gamma\eta_3,
\end{align*}
for some polynomials $h_1(t)$, $h_2(t)$, $h_3(t)$, and $h_4(t)$ of degree at most $g-1$ and nonzero complex numbers $\alpha$, $\beta$, and $\gamma$.\\
\item[(2)] On $\Sigma_g=\left\{(z, w)\in\left(\mathbb{C}\cup\{\infty\}\right)^2\ |\ w^{g+1}=z(z+1)^g(z-1)\right\}$,
\begin{align*}
&\phi_1=h_1\left(\frac{z+1}{w}\right)\frac{dz}{w}+\alpha\eta_1+\beta\eta_2+0,\\
&\phi_2=h_2\left(\frac{z+1}{w}\right)\frac{dz}{w}+i\alpha\eta_1-i\beta\eta_2+0,\\
&\phi_3=h_3\left(\frac{z+1}{w}\right)\frac{dz}{w}+0+\beta\eta_2+\gamma\eta_3,\\
&\phi_4=h_4\left(\frac{z+1}{w}\right)\frac{dz}{w}+0+i\beta\eta_2+i\gamma\eta_3,
\end{align*}
for some polynomials $h_1(t)$, $h_2(t)$, $h_3(t)$, and $h_4(t)$ of degree at most $g-1$ and nonzero complex numbers $\alpha$, $\beta$, and $\gamma$.\\
\item[(3)] On $\Sigma_g=\left\{(z, w)\in\left(\mathbb{C}\cup\{\infty\}\right)^2\ |\ w^{g+1}=z^g(z+1)(z-1)\right\}$,
\begin{align*}
&\phi_1=h_1\left(\frac{z}{w}\right)\frac{dz}{w}+\alpha\eta_1+\beta\eta_2+0,\\
&\phi_2=h_2\left(\frac{z}{w}\right)\frac{dz}{w}+i\alpha\eta_1-i\beta\eta_2+0,\\
&\phi_3=h_3\left(\frac{z}{w}\right)\frac{dz}{w}+0+\beta\eta_2+\gamma\eta_3,\\
&\phi_4=h_4\left(\frac{z}{w}\right)\frac{dz}{w}+0+i\beta\eta_2-i\gamma\eta_3,
\end{align*}
for some polynomials $h_1(t)$, $h_2(t)$, $h_3(t)$, and $h_4(t)$ of degree at most $g-1$ and nonzero complex numbers $\alpha$, $\beta$, and $\gamma$.\\
\item[(4)] On $\Sigma_3=\left\{(z, w)\in\left(\mathbb{C}\cup\{\infty\}\right)^2\ |\ w^4=z(z+1)(z-1)\right\}$,
\begin{align*}
&\phi_1=\left(h_{10}+h_{11}z+h_{12}w\right)\frac{dz}{w^3}+\alpha\eta_1+\beta\eta_2+0,\\
&\phi_2=\left(h_{20}+h_{21}z+h_{22}w\right)\frac{dz}{w^3}+i\alpha\eta_1+i\beta\eta_2+0,\\
&\phi_3=\left(h_{30}+h_{31}z+h_{32}w\right)\frac{dz}{w^3}+0+\beta\eta_2+\gamma\eta_3,\\
&\phi_4=\left(h_{40}+h_{41}z+h_{42}w\right)\frac{dz}{w^3}+0-i\beta\eta_2-i\gamma\eta_3,
\end{align*}
for some complex numbers $h_{ab}$ $(1\leq a\leq4, 0\leq b\leq2)$ and nonzero complex numbers $\alpha$, $\beta$, and $\gamma$.
\end{itemize}
\end{lemma}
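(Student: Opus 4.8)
The plan is to assemble the statement from three ingredients already established in this subsection: the decomposition of the Weierstrass data in terms of the generalized Gauss map (Proposition \ref{prop27}), the explicit bases of $H^0(\Sigma_g,K)$ on the four model surfaces (the preceding lemma on holomorphic differentials), and the explicit choice of $\eta_1,\eta_2,\eta_3$ (Lemma \ref{Lem63}). The only genuinely new input is the identification of the generalized Gauss map at the three ends in each of the four cases listed in Proposition \ref{prop59}.

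First I would pin down the Gauss map. By the orientation normalization fixed at the start of this section, $\Phi(q_1)=[1,i,0,0]$, i.e. $\sigma_1=1$ in (\ref{GMI}). For $q_2$ and $q_3$, recall that here $|\mathcal{H}|=g+1$ by Lemma \ref{Lem58}, so $N_{q_j}\in\{1,g\}$, while Lemma \ref{Lem56} gives $N_{q_j}\equiv\sigma_j\pmod{g+1}$; hence $\sigma_j=1$ when $N_{q_j}=1$ and $\sigma_j=-1$ when $N_{q_j}=g$. Reading the triples $(N_{q_0},N_{q_2},N_{q_3})$ from Proposition \ref{prop59} then determines $(\sigma_2,\sigma_3)$ in each case, and (\ref{GMI}) yields the corresponding vectors; for instance, in case (1) one has $(N_{q_2},N_{q_3})=(1,g)$, so $\Phi(q_2)=[1,i,1,-i]$ and $\Phi(q_3)=[0,0,1,i]$, while in case (3) one has $(N_{q_2},N_{q_3})=(g,1)$, so $\Phi(q_2)=[1,-i,1,i]$ and $\Phi(q_3)=[0,0,1,-i]$, and similarly in cases (2) and (4).

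Next I would apply Proposition \ref{prop27}: since $X$ has embedded planar ends, the data take the form $\phi_k=\omega_k+\alpha a_k\eta_1+\beta b_k\eta_2+\gamma c_k\eta_3$ with $\omega_k\in H^0(\Sigma_g,K)$ and nonzero $\alpha,\beta,\gamma$, the coefficient vectors $(a_k)$, $(b_k)$, $(c_k)$ being precisely $\Phi(q_1)$, $\Phi(q_2)$, $\Phi(q_3)$. Substituting the Gauss map vectors obtained above produces exactly the $\eta$-parts appearing in the statement; in particular $\Phi(q_1)=[1,i,0,0]$ forces the $\eta_1$-coefficients of $\phi_3$ and $\phi_4$ to vanish, giving the ``$0$'' entries there. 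Finally I would rewrite each $\omega_k$ using the preceding lemma --- $h_k\!\left(\tfrac{z(z+1)}{w}\right)\tfrac{dz}{w}$ in case (1), $h_k\!\left(\tfrac{z+1}{w}\right)\tfrac{dz}{w}$ in case (2), $h_k\!\left(\tfrac{z}{w}\right)\tfrac{dz}{w}$ in case (3) with $\deg h_k\le g-1$, and $(h_{k0}+h_{k1}z+h_{k2}w)\tfrac{dz}{w^3}$ in case (4) --- and substitute the explicit $\eta_j$ from Lemma \ref{Lem63}, which gives the stated expressions verbatim.

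Since everything reduces to substitution, I do not expect a serious obstacle; the one step that requires care is the passage from the combinatorial data $N_{q_j}$ to the analytic data $\sigma_j$ and hence $\Phi(q_j)$. Here one must remember that the orientation reversal occurring at an end --- discussed just before Lemma \ref{Lem56} --- has already been absorbed into the congruence $N_{q_j}\equiv\sigma_j$, so $\sigma_j$ is read off $N_{q_j}$ with no further sign flip; and that, because $q_j$ is the only end at which $\eta_j$ has a pole, the coefficient of $\eta_j$ in $(\phi_1,\phi_2,\phi_3,\phi_4)$ is a single nonzero multiple of $\Phi(q_j)$, which is exactly what defines the constants $\alpha,\beta,\gamma$.
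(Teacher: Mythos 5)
Your proposal is correct and is exactly the argument the paper intends (the lemma is stated there without a written proof, as a direct consequence of Proposition \ref{prop27}, the basis of $H^0(\Sigma_g,K)$, Lemma \ref{Lem63}, and the triples $(N_{q_0},N_{q_2},N_{q_3})$ from Proposition \ref{prop59} converted to $(\sigma_2,\sigma_3)$ via Lemma \ref{Lem56}). Your sign bookkeeping checks out in all four cases — e.g.\ in case (1), $(N_{q_2},N_{q_3})=(1,g)$ gives $(\sigma_2,\sigma_3)=(1,-1)$, hence $\eta_2$-coefficients proportional to $[1,i,1,-i]$ and $\eta_3$-coefficients to $[0,0,1,i]$, matching the stated data.
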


\subsection{Symmetry reduction}\label{subsec64}
It is worth noting that in Lemma \ref{Lem63}, $\eta_1$ can be chosen in the same form throughout all cases. This observation leads to the following lemma.
\begin{lemma}
The Weierstrass data $\phi_1$, $\phi_2$, $\phi_3$, and $\phi_4$ satisfy 
\begin{align}\label{PullBack}
\begin{pmatrix}
\nu_g^*\phi_1\\ \nu_g^*\phi_2\\ \nu_g^*\phi_3\\ \nu_g^*\phi_4
\end{pmatrix}=A_{\mathcal{H}}\begin{pmatrix}
\phi_1\\ \phi_2\\ \phi_3\\ \phi_4
\end{pmatrix},
\end{align}
where $\nu_g(z,w)=\left(z, e^{i\frac{2\pi}{g+1}}w\right)$ as defined in Proposition \ref{prop59}.
\end{lemma}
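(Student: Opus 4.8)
The plan is to derive (\ref{PullBack}) directly from the defining relation of the symmetry $(A_{\mathcal{H}}, b_{\mathcal{H}})$ by differentiating, with no use of the explicit equations of the Riemann surfaces of Proposition \ref{prop59}. Recall from Section \ref{RH} that $\tilde{\mu}_{\mathcal{H}}$ is a biholomorphism of $\Sigma_g$ and, by the very construction of $\mu_{(A,b)}$, satisfies
\begin{align*}
(A_{\mathcal{H}}, b_{\mathcal{H}})\circ X = X\circ\tilde{\mu}_{\mathcal{H}},\qquad\text{i.e.}\qquad X\circ\tilde{\mu}_{\mathcal{H}} = A_{\mathcal{H}}X + b_{\mathcal{H}}
\end{align*}
on $\Sigma_g\setminus\{q_1,q_2,q_3\}$. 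Under the identification of $\Sigma_g$ with $\overline{\mathcal{C}}_g$ furnished by Proposition \ref{prop59} — where the equivalence of cyclic coverings was arranged so that the loop around each $q_j$ lifts to a path whose endpoints differ by the same power of the respective deck generator on both sides, and where we have normalised $\sigma_1=1$ so that $N_{q_1}=1$ — the generator $\tilde{\mu}_{\mathcal{H}}$ corresponds precisely to $\nu_g$, and not merely to some power $\nu_g^k$. Hence the relation above reads $X\circ\nu_g = A_{\mathcal{H}}X + b_{\mathcal{H}}$.

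The next step is to apply $\partial$ to this identity. Writing $\Phi := (\phi_1,\phi_2,\phi_3,\phi_4)$ for the $\mathbb{C}^4$-valued meromorphic $1$-form of (\ref{gwr}), the generalized Weierstrass representation gives $\Phi = 2\,\partial X$ (locally $\Phi = 2X_z\,dz$, exactly as computed in Example \ref{EXDCa}). Since $\nu_g$ is holomorphic, pullback by $\nu_g$ preserves the bidegree decomposition of forms and commutes with $\partial$, so $\partial(X\circ\nu_g) = \nu_g^*(\partial X)$ componentwise; on the other hand $A_{\mathcal{H}}$ and $b_{\mathcal{H}}$ are constant, so $\partial(A_{\mathcal{H}}X + b_{\mathcal{H}}) = A_{\mathcal{H}}\,\partial X$. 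Taking $\partial$ of $X\circ\nu_g = A_{\mathcal{H}}X + b_{\mathcal{H}}$ and multiplying by $2$ therefore yields
\begin{align*}
\nu_g^*\Phi = A_{\mathcal{H}}\Phi
\end{align*}
on $\Sigma_g\setminus\{q_1,q_2,q_3\}$, and hence on all of $\Sigma_g$ by meromorphy since both sides are meromorphic $1$-forms; this is exactly (\ref{PullBack}).

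This argument is short, and the only points requiring care are (i) confirming that the covering equivalence of Proposition \ref{prop59} matches $\tilde{\mu}_{\mathcal{H}}$ with $\nu_g$ itself rather than with some $\nu_g^k$ ($\gcd(k,g+1)=1$), which is forced by the normalization $N_{q_1}=1$ recorded there, and (ii) the bookkeeping that $2\partial X$ is precisely the vector of Weierstrass $1$-forms and that $\nu_g^*$ commutes with $\partial$ and with multiplication by the scalar $2$. I expect (i) to be the only genuine subtlety, and it is a purely formal consequence of how the equivalence of coverings was set up.
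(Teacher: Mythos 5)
Your proof is correct, but it takes a genuinely different route from the paper's. The paper starts from the equivariance of the \emph{projective} generalized Gauss map, $\nu_g^*\Phi=A_{\mathcal{H}}\Phi$ in $\mathbb{P}^3$, which only yields
\begin{align*}
\begin{pmatrix}\nu_g^*\phi_1\\ \nu_g^*\phi_2\\ \nu_g^*\phi_3\\ \nu_g^*\phi_4\end{pmatrix}=f\,A_{\mathcal{H}}\begin{pmatrix}\phi_1\\ \phi_2\\ \phi_3\\ \phi_4\end{pmatrix}
\end{align*}
for an a priori unknown scalar function $f$; it then pins down $f\equiv1$ by computing $\nu_g^*\eta_1=e^{-i\frac{2\pi}{g+1}}\eta_1$ and comparing the coefficient of $\eta_1$ in the first component, using the explicit expressions of Lemma \ref{Lem64}. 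You instead differentiate the affine equivariance $X\circ\nu_g=A_{\mathcal{H}}X+b_{\mathcal{H}}$ of the immersion itself: since $\nu_g$ is holomorphic and $A_{\mathcal{H}}$ is a constant real matrix, taking $(1,0)$-parts of $d(X\circ\nu_g)=A_{\mathcal{H}}\,dX$ gives $\nu_g^*(2\partial X)=A_{\mathcal{H}}(2\partial X)$ directly, with no scalar ambiguity and no need for the explicit form of the data. Your approach is cleaner and more general; what it loses is only that it makes the identification $\tilde{\mu}_{\mathcal{H}}\leftrightarrow\nu_g$ (rather than $\nu_g^k$) load-bearing, and you correctly observe that this is forced by the normalization $N_{q_1}=1$: an equivalence of the cyclic coverings sends the deck transformation determined by the loop $\gamma_{q_1}$, namely $\tilde{\mu}_{\mathcal{H}}^{N_{q_1}}=\tilde{\mu}_{\mathcal{H}}$, to the one determined by the corresponding loop on $\overline{\mathcal{C}}_g$, namely $\nu_g^{N_{(1,0)}}=\nu_g$. (The paper's proof tacitly relies on the same identification when it writes $\nu_g^*\Phi=A_{\mathcal{H}}\Phi$, so this is not an extra hypothesis on your part.) Both arguments are valid; yours trades the coefficient computation for a small amount of covering-space bookkeeping.
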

\begin{proof}
Since the symmetry of $S_g$ induces a symmetry of the generalized Gauss map, we have 
\begin{align*}
\nu_g^*\Phi=A_{\mathcal{H}}\Phi.
\end{align*}
This implies that
\begin{align*}
\begin{pmatrix}
\nu_g^*\phi_1\\ \nu_g^*\phi_2\\ \nu_g^*\phi_3\\ \nu_g^*\phi_4
\end{pmatrix}=fA_{\mathcal{H}}\begin{pmatrix}
\phi_1\\ \phi_2\\ \phi_3\\ \phi_4
\end{pmatrix}=f\begin{pmatrix}
\cos\left(\frac{2\pi}{g+1}\right)\phi_1-\sin\left(\frac{2\pi}{g+1}\right)\phi_2\\ \sin\left(\frac{2\pi}{g+1}\right)\phi_1+\cos\left(\frac{2\pi}{g+1}\right)\phi_2\\ \cos\left(\frac{2\pi}{g+1}\right)\phi_3+\sin\left(\frac{2\pi}{g+1}\right)\phi_4\\ -\sin\left(\frac{2\pi}{g+1}\right)\phi_3+\cos\left(\frac{2\pi}{g+1}\right)\phi_4
\end{pmatrix}
\end{align*}
for some function $f$ on $\Sigma_g$. On the other hand, we have
\begin{align*}
\nu_g^*\eta_1=\nu_g^*\left(\frac{dz}{w(z-1)}\right)=e^{-i\frac{2\pi}{g+1}}\eta_1.
\end{align*}
Using the expression obtained in Lemma \ref{Lem64} and comparing the coefficients of $\eta_1$ in the first terms of both sides, 
\begin{align*}
\nu_g^*\phi_1\ \text{and}\ f\left(\cos\left(\frac{2\pi}{g+1}\right)\phi_1-\sin\left(\frac{2\pi}{g+1}\right)\phi_2\right),
\end{align*}
we obtain
\begin{align*}
\alpha e^{-i\frac{2\pi}{g+1}}=f\alpha\left(\cos\left(\frac{2\pi}{g+1}\right)-i\sin\left(\frac{2\pi}{g+1}\right)\right)=f\alpha e^{-i\frac{2\pi}{g+1}}.
\end{align*}
Since $\alpha\neq 0$, it follows that $f\equiv1$. This completes the proof.
\end{proof}
Using the above lemma, we can further simplify the holomorphic differential parts of the Weierstrass data in Lemma \ref{Lem64}.
\begin{lemma}\label{Lem66}
In cases $(1)$, $(2)$, and $(3)$ of Lemma \ref{Lem64}, the polynomials $h_1, h_2, h_3, h_4$ should be of the following forms to be compatible with (\ref{PullBack}):
\begin{align*}
&h_1(t)=a_0+a_1t^{g-1},\quad h_2(t)=ia_0-ia_1t^{g-1},\\
&h_3(t)=b_0+b_1t^{g-1},\quad h_4(t)=-ib_0+ib_1t^{g-1}
\end{align*}
for some $a_0, a_1, b_0, b_1\in\mathbb{C}$.
\end{lemma}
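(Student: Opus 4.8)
The plan is to diagonalize the action of $\nu_g^*$ on the spaces of differentials involved and to read off (\ref{PullBack}) in these eigencoordinates. Write $\zeta := e^{i\frac{2\pi}{g+1}}$, so that $\nu_g(z,w)=(z,\zeta w)$. Since $z\circ\nu_g=z$ we have $\nu_g^*(dz)=dz$ and $\nu_g^*(1/w)=\zeta^{-1}/w$, hence $\nu_g^*(dz/w)=\zeta^{-1}\,dz/w$. In each of the cases $(1)$--$(3)$, the function $\xi$ appearing in the description of $H^0(\Sigma_g,K)$ — namely $z(z+1)/w$, $(z+1)/w$ or $z/w$ — satisfies $\nu_g^*\xi=\zeta^{-1}\xi$. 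Consequently, for a polynomial $h(t)=\sum_{k=0}^{g-1}c_k t^k$ one has
\begin{align*}
\nu_g^*\!\left(h(\xi)\,\frac{dz}{w}\right)=\tilde{h}(\xi)\,\frac{dz}{w},\qquad \tilde{h}(t):=\sum_{k=0}^{g-1}c_k\zeta^{-k-1}t^k,
\end{align*}
so $\nu_g^*$ acts diagonally on $H^0(\Sigma_g,K)$ in the basis $\{\xi^k\,dz/w\}_{0\le k\le g-1}$, with eigenvalue $\zeta^{-k-1}$ on $\xi^k\,dz/w$. Moreover, with the explicit choices in Lemma \ref{Lem63}, each $\eta_j$ is an eigenvector of $\nu_g^*$ (with eigenvalue $\zeta^{\pm1}$), so by Lemma \ref{NoEta1} and the splitting (\ref{Decomp1}) the operator $\nu_g^*$ preserves the direct sum $H^0(\Sigma_g,K)\oplus\mathbb{C}\eta_1\oplus\mathbb{C}\eta_2\oplus\mathbb{C}\eta_3$ in which all the $\phi_i$ live.

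Next I would rewrite (\ref{PullBack}) in complex-linear combinations. From the block form of $A_{\mathcal{H}}$ (with $\mathcal{S}_{\pm\frac{2\pi}{g+1}}$ on the diagonal blocks), (\ref{PullBack}) is equivalent to the four scalar identities
\begin{align*}
\nu_g^*(\phi_1+i\phi_2)=\zeta(\phi_1+i\phi_2),\qquad \nu_g^*(\phi_1-i\phi_2)=\zeta^{-1}(\phi_1-i\phi_2),
\end{align*}
\begin{align*}
\nu_g^*(\phi_3-i\phi_4)=\zeta(\phi_3-i\phi_4),\qquad \nu_g^*(\phi_3+i\phi_4)=\zeta^{-1}(\phi_3+i\phi_4).
\end{align*}
Because $\nu_g^*$ respects the splitting above, each of these identities may be projected onto its $H^0(\Sigma_g,K)$-component. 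In the cases $(1)$--$(3)$ of Lemma \ref{Lem64}, the holomorphic parts of $\phi_1\pm i\phi_2$ and $\phi_3\pm i\phi_4$ are $(h_1\pm ih_2)(\xi)\,dz/w$ and $(h_3\pm ih_4)(\xi)\,dz/w$, respectively; so the projected identities say exactly that $h_1+ih_2$ and $h_3-ih_4$ are $\zeta$-eigenvectors, and $h_1-ih_2$ and $h_3+ih_4$ are $\zeta^{-1}$-eigenvectors, of the diagonal operator $h\mapsto\tilde{h}$.

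It then remains to solve these eigenvalue conditions coefficient by coefficient. The equation $\tilde{h}=\zeta h$ forces $c_k\zeta^{-k-1}=\zeta c_k$, i.e. $c_k=0$ unless $(g+1)\mid(k+2)$; since $0\le k\le g-1$ this leaves only $k=g-1$, so a $\zeta$-eigenvector is a scalar multiple of $t^{g-1}$. Likewise $\tilde{h}=\zeta^{-1}h$ forces $c_k=0$ unless $(g+1)\mid k$, leaving only $k=0$, so a $\zeta^{-1}$-eigenvector is a constant. Therefore $h_1+ih_2$ and $h_3-ih_4$ lie in $\mathbb{C}\,t^{g-1}$ while $h_1-ih_2$ and $h_3+ih_4$ lie in $\mathbb{C}$; solving the resulting $2\times2$ linear systems for $(h_1,h_2)$ and $(h_3,h_4)$ and renaming the constants as $a_0,a_1,b_0,b_1$ gives precisely the stated forms $h_1=a_0+a_1t^{g-1}$, $h_2=ia_0-ia_1t^{g-1}$, $h_3=b_0+b_1t^{g-1}$, $h_4=-ib_0+ib_1t^{g-1}$.

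The argument is essentially mechanical once this framework is set up; the only delicate point is the justification for projecting (\ref{PullBack}) onto the holomorphic component, which rests on $\nu_g^*$ respecting the decomposition $H^0(\Sigma_g,K)\oplus\bigoplus_j\mathbb{C}\eta_j$. Thus I expect the main — and rather minor — obstacle to be the bookkeeping check that, with the explicit choices of Lemma \ref{Lem63}, each $\eta_j$ really is a $\nu_g^*$-eigenvector in all three cases, noting that in cases $(2)$ and $(3)$ some $\eta_j$ pull back with eigenvalue $\zeta$ rather than $\zeta^{-1}$, which is harmless since the lemma only constrains the holomorphic parts.
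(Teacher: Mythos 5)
Your proposal is correct and follows essentially the same route as the paper: both arguments pass to the combinations $h_1\pm ih_2$ and $h_3\pm ih_4$, which diagonalize the rotation $\mathcal{S}_{\pm\frac{2\pi}{g+1}}$, and then observe that the resulting scaling identities $\tilde h=\zeta^{\pm1}h$ force each combination to be a monomial of degree $g-1$ or $0$. Your eigenspace/projection framing is just a slightly more structural packaging of the paper's direct substitution and coefficient comparison, and all the computations (eigenvalues $\zeta^{-k-1}$ on $\xi^k\,dz/w$, the divisibility conditions, and the final linear solve) match.
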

\begin{proof}
Since the proof is nearly identical in all cases, we provide the argument only for case $(1)$. Substituting the Weierstrass data $\phi_j$ in case $(1)$ of Lemma \ref{Lem64} into (\ref{PullBack}), we have
\begin{align*}
&\begin{pmatrix}
\nu_g^*\phi_1\\ \nu_g^*\phi_2\\ \nu_g^*\phi_3\\ \nu_g^*\phi_4
\end{pmatrix}-A_{\mathcal{H}}\begin{pmatrix}
\phi_1\\ \phi_2\\ \phi_3\\ \phi_4
\end{pmatrix}\\
&=\begin{pmatrix}
h_1\left(\frac{z(z+1)}{\rho w}\right)\frac{dz}{\rho w}-\cos\left(\frac{2\pi}{g+1}\right)h_1\left(\frac{z(z+1)}{w}\right)\frac{dz}{w}+\sin\left(\frac{2\pi}{g+1}\right)h_2\left(\frac{z(z+1)}{w}\right)\frac{dz}{w}\\ 
h_2\left(\frac{z(z+1)}{\rho w}\right)\frac{dz}{\rho w}-\sin\left(\frac{2\pi}{g+1}\right)h_1\left(\frac{z(z+1)}{w}\right)\frac{dz}{w}-\cos\left(\frac{2\pi}{g+1}\right)h_2\left(\frac{z(z+1)}{w}\right)\frac{dz}{w}\\ 
h_3\left(\frac{z(z+1)}{\rho w}\right)\frac{dz}{\rho w}-\cos\left(\frac{2\pi}{g+1}\right)h_3\left(\frac{z(z+1)}{w}\right)\frac{dz}{w}-\sin\left(\frac{2\pi}{g+1}\right)h_4\left(\frac{z(z+1)}{w}\right)\frac{dz}{w}\\
h_4\left(\frac{z(z+1)}{\rho w}\right)\frac{dz}{\rho w}+\sin\left(\frac{2\pi}{g+1}\right)h_3\left(\frac{z(z+1)}{w}\right)\frac{dz}{w}-\cos\left(\frac{2\pi}{g+1}\right)h_4\left(\frac{z(z+1)}{w}\right)\frac{dz}{w}
\end{pmatrix}\\
&=0
\end{align*}
for all $(z,w)\in\Sigma_g$, where $\rho=e^{i\frac{2\pi}{g+1}}$. Since $\frac{z(z+1)}{w}$ can take infinitely many values, the polynomials $h_1$, $h_2$, $h_3$, and $h_4$ should satisfy the following identities:
\begin{align*}
\frac{1}{\rho} h_1\left(\frac{t}{\rho}\right)&=\cos\left(\frac{2\pi}{g+1}\right)h_1(t)-\sin\left(\frac{2\pi}{g+1}\right)h_2(t),\\
\frac{1}{\rho} h_2\left(\frac{t}{\rho}\right)&=\sin\left(\frac{2\pi}{g+1}\right)h_1(t)+\cos\left(\frac{2\pi}{g+1}\right)h_2(t),\\
\frac{1}{\rho} h_3\left(\frac{t}{\rho}\right)&=\cos\left(\frac{2\pi}{g+1}\right)h_3(t)+\sin\left(\frac{2\pi}{g+1}\right)h_4(t),\\
\frac{1}{\rho} h_4\left(\frac{t}{\rho}\right)&=-\sin\left(\frac{2\pi}{g+1}\right)h_3(t)+\cos\left(\frac{2\pi}{g+1}\right)h_4(t).
\end{align*}

Define $A_1(t):=h_1(t)+i h_2(t)$ and $A_2(t):=h_1(t)-i h_2(t)$. Then the first two identities imply that
\begin{align*}
\frac{1}{\rho}A_1\left(\frac{t}{\rho}\right)=\rho A_1(t),\quad \frac{1}{\rho}A_2\left(\frac{t}{\rho}\right)=\frac{1}{\rho} A_2(t).
\end{align*}
The only polynomials of degree at most $g-1$ satisfying these identities are 
\begin{align*}
A_1(t)=2a_1t^{g-1},\quad A_2(t)=2a_0
\end{align*}
for some $a_0, a_1\in\mathbb{C}$. This leads to the expressions
\begin{align*}
h_1(t)=a_0+a_1t^{g-1},\quad h_2(t)=ia_0-ia_1t^{g-1}.
\end{align*}

Similarly, define $B_1(t):=h_3(t)+i h_4(t)$ and $B_2(t):=h_3(t)-i h_4(t)$. The last two identities yield
\begin{align*}
\frac{1}{\rho}B_1\left(\frac{t}{\rho}\right)=\frac{1}{\rho} B_1(t),\quad \frac{1}{\rho}B_2\left(\frac{t}{\rho}\right)=\rho B_2(t).
\end{align*}
Thus, we obtain
\begin{align*}
B_1(t)=2b_0,\quad B_2(t)=2b_1t^{g-1}
\end{align*}
for some $b_0, b_1\in\mathbb{C}$, which implies that
\begin{align*}
h_3(t)=b_0+b_1t^{g-1},\quad h_4(t)=-ib_0+ib_1t^{g-1}.
\end{align*}
\end{proof}

\begin{lemma}\label{Lem67}
In case $(4)$ of Lemma \ref{Lem64}, the complex numbers $h_{ab}$ satisfy the following relations in order to satisfy (\ref{PullBack}):
\begin{align*}
&h_{20}=-ih_{10},\quad h_{21}=-ih_{11},\quad h_{12}=h_{22}=0,\\
&h_{40}=ih_{30},\quad h_{41}=ih_{31},\quad h_{32}=h_{42}=0.
\end{align*}
\end{lemma}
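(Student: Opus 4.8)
The plan is to mimic the proof of Lemma \ref{Lem66}, with the difference that on $\Sigma_3$ the holomorphic differentials span a space parametrized by $(h_0,h_1,h_2)$ rather than by a polynomial of degree $g-1$. First I would substitute the expressions for $\phi_1,\phi_2,\phi_3,\phi_4$ from case $(4)$ of Lemma \ref{Lem64} into the identity \eqref{PullBack}. As in the previous lemma, the coefficients of $\eta_1,\eta_2,\eta_3$ on both sides already match (this is exactly what \eqref{PullBack} encodes together with the computation $\nu_3^*\eta_1=e^{-i\frac{2\pi}{4}}\eta_1$, and analogous transformation laws for $\eta_2,\eta_3$), so the content of the lemma is that the \emph{holomorphic} parts must separately satisfy \eqref{PullBack}. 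Writing $\Omega_a=\left(h_{a0}+h_{a1}z+h_{a2}w\right)\frac{dz}{w^3}$ for $a=1,2,3,4$, I would record that $\nu_3^*\left(\frac{dz}{w^3}\right)=e^{-i\frac{6\pi}{4}}\frac{dz}{w^3}=e^{i\frac{\pi}{2}}\frac{dz}{w^3}=i\,\frac{dz}{w^3}$, that $z$ is $\nu_3$-invariant, and that $\nu_3^*w=e^{i\frac{\pi}{2}}w=iw$; hence
\begin{align*}
\nu_3^*\Omega_a=i\left(h_{a0}+h_{a1}z+i h_{a2}w\right)\frac{dz}{w^3}.
\end{align*}

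Next I would impose $\nu_3^*\Omega_1=\cos\left(\frac{\pi}{2}\right)\Omega_1-\sin\left(\frac{\pi}{2}\right)\Omega_2=-\Omega_2$ and the three analogous equations coming from the rows of $A_{\mathcal{H}}$ (with angle $\frac{2\pi}{g+1}=\frac{\pi}{2}$ since $g=3$). Introducing $\Omega_1+i\Omega_2$ and $\Omega_1-i\Omega_2$ diagonalizes the first pair of equations, exactly as $A_1(t),A_2(t)$ did in Lemma \ref{Lem66}: one combination must satisfy $\nu_3^*(\Omega_1+i\Omega_2)=i(\Omega_1+i\Omega_2)$ and the other $\nu_3^*(\Omega_1-i\Omega_2)=-i(\Omega_1-i\Omega_2)$. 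Comparing with the explicit formula for $\nu_3^*$ above, the eigenvalue $i$ forces the $z$-independent and $z$-linear coefficients to vanish and leaves only the $w$-term, while the eigenvalue $-i$ forces the $w$-term to vanish and leaves the $z$-independent and $z$-linear coefficients; matching these against $h_{10},h_{11},h_{12},h_{20},h_{21},h_{22}$ yields $h_{20}=-ih_{10}$, $h_{21}=-ih_{11}$, and $h_{12}=h_{22}=0$. The same procedure applied to the pair $\Omega_3,\Omega_4$ — now diagonalized via $\Omega_3+i\Omega_4$ and $\Omega_3-i\Omega_4$, whose $A_{\mathcal{H}}$-action has the opposite sign convention (the lower-right block is $\mathcal{S}_{-\frac{2\pi}{g+1}}$) — gives $h_{40}=ih_{30}$, $h_{41}=ih_{31}$, and $h_{32}=h_{42}=0$.

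I do not expect a genuine obstacle here; the argument is a direct finite-dimensional linear computation and is essentially the $g=3$, three-dimensional instance of Lemma \ref{Lem66}. The only point requiring care is the bookkeeping of the exponents: one must use $w^4=z(z+1)(z-1)$ to see that $w$ is well-defined up to fourth roots of unity under $\nu_3$ and that $\frac{dz}{w^3}$ picks up the factor $i$ (not $i^{-1}$), and one must keep straight which of $\Omega_1\pm i\Omega_2$ is an eigenvector for $i$ versus $-i$, so that the vanishing conditions are assigned to the correct coefficients $h_{ab}$. Once these sign conventions are fixed consistently with the statement of Proposition \ref{prop59} and Lemma \ref{Lem64}, the stated relations follow immediately, completing the proof.
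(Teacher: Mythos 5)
Your overall strategy is sound and is essentially the paper's: the $\eta_j$-parts of \eqref{PullBack} cancel automatically, so the condition reduces to a finite-dimensional linear constraint on the holomorphic parts $\Omega_a=(h_{a0}+h_{a1}z+h_{a2}w)\frac{dz}{w^3}$, which one reads off by comparing coefficients of $1$, $z$, and $w$ (the paper does this comparison directly, without passing to the combinations $\Omega_a\pm i\Omega_b$, but that is a cosmetic difference). Your pullback computation $\nu_3^*\Omega_a=i\left(h_{a0}+h_{a1}z+ih_{a2}w\right)\frac{dz}{w^3}$ is correct, as is the identification of the eigenvalue equations for $\Omega_1\pm i\Omega_2$.

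However, the read-off from those eigenvalue equations is stated backwards, and as written it would produce the wrong relations. From your own formula, the monomial differentials $\frac{dz}{w^3}$, $z\frac{dz}{w^3}$, $w\frac{dz}{w^3}$ are $\nu_3^*$-eigenvectors with eigenvalues $i$, $i$, $-1$, respectively. Hence the condition $\nu_3^*(\Omega_1+i\Omega_2)=i(\Omega_1+i\Omega_2)$ forces the $w$-coefficient $h_{12}+ih_{22}$ to vanish and leaves the $1$- and $z$-coefficients free --- the opposite of what you wrote --- while the condition $\nu_3^*(\Omega_1-i\Omega_2)=-i(\Omega_1-i\Omega_2)$ forces \emph{all three} coefficients $h_{1j}-ih_{2j}$ to vanish, since no monomial has eigenvalue $-i$; it does not merely kill the $w$-term. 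Following your stated claims literally would yield $h_{10}+ih_{20}=0$ (i.e.\ $h_{20}=ih_{10}$) and $h_{12}-ih_{22}=0$ with $h_{12}$ otherwise unconstrained, contradicting the conclusion of the lemma. The correct read-off --- $\Omega_1-i\Omega_2\equiv0$ together with $h_{12}+ih_{22}=0$ --- gives $h_{20}=-ih_{10}$, $h_{21}=-ih_{11}$, $h_{12}=h_{22}=0$, and the analogous computation for $\Omega_3\pm i\Omega_4$ (where the lower-right block $\mathcal{S}_{-\frac{\pi}{2}}$ makes $\Omega_3+i\Omega_4$ the combination that must vanish identically) gives the remaining relations. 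So the approach is salvageable with a one-line correction, but the key step as stated is incorrect.
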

\begin{proof}
In this case, we have $\nu_3(z,w)=(z,iw)$ as $g=3$. Substituting the Weierstrass data $\phi_j$ from case $(4)$ of Lemma \ref{Lem64} into (\ref{PullBack}), we obtain
\begin{align*}
\begin{pmatrix}
\nu_3^*\phi_1\\ \nu_3^*\phi_2\\ \nu_3^*\phi_3\\ \nu_3^*\phi_4
\end{pmatrix}-A_{\mathcal{H}}\begin{pmatrix}
\phi_1\\ \phi_2\\ \phi_3\\ \phi_4
\end{pmatrix}=\begin{pmatrix}
\left[(ih_{10}+h_{20})+(ih_{11}+h_{21})z-(h_{12}-h_{22})w\right]\frac{dz}{w^3}\\
\left[(ih_{20}-h_{10})+(ih_{21}-h_{11})z-(h_{22}+h_{12})w\right]\frac{dz}{w^3}\\
\left[(ih_{30}-h_{40})+(ih_{31}-h_{41})z-(h_{32}+h_{42})w\right]\frac{dz}{w^3}\\
\left[(ih_{40}+h_{30})+(ih_{41}+h_{31})z-(h_{42}-h_{32})w\right]\frac{dz}{w^3}
\end{pmatrix}=0.
\end{align*}
Since this identity must hold for all $(z,w)\in\Sigma_3$, each coefficient must vanish. This yields the following relations:
\begin{align*}
&h_{20}=-ih_{10},\quad h_{21}=-ih_{11},\quad h_{12}=h_{22}=0,\\
&h_{40}=ih_{30},\quad h_{41}=ih_{31},\quad h_{32}=h_{42}=0.
\end{align*}
This completes the proof.
\end{proof}

Applying Lemma \ref{Lem66} and Lemma \ref{Lem67}, we derive the following simplified expressions for the Weierstrass data given in Lemma \ref{Lem64}.
\begin{lemma}\label{Lem68}
Let $\eta_1$, $\eta_2$, and $\eta_3$ be chosen as in Lemma \ref{Lem63}. Using the same numbering as in Proposition \ref{prop59}, the Weierstrass data can be expressed as follows:
\begin{itemize}
\item[(1)] On $\Sigma_g=\left\{(z, w)\in\left(\mathbb{C}\cup\{\infty\}\right)^2\ |\ w^{g+1}=z^g(z+1)^g(z-1)\right\}$,
\begin{align*}
&\phi_1=\left(a_0+a_1\left(\frac{z(z+1)}{w}\right)^{g-1}\right)\frac{dz}{w}+\alpha\eta_1+\beta\eta_2+0,\\
&\phi_2=\left(ia_0-ia_1\left(\frac{z(z+1)}{w}\right)^{g-1}\right)\frac{dz}{w}+i\alpha\eta_1+i\beta\eta_2+0,\\
&\phi_3=\left(b_0+b_1\left(\frac{z(z+1)}{w}\right)^{g-1}\right)\frac{dz}{w}+0+\beta\eta_2+\gamma\eta_3,\\
&\phi_4=\left(-ib_0+ib_1\left(\frac{z(z+1)}{w}\right)^{g-1}\right)\frac{dz}{w}+0-i\beta\eta_2+i\gamma\eta_3,
\end{align*}
for some $a_0, a_1, b_0, b_1\in\mathbb{C}$ and nonzero complex numbers $\alpha$, $\beta$, and $\gamma$.\\
\item[(2)] On $\Sigma_g=\left\{(z, w)\in\left(\mathbb{C}\cup\{\infty\}\right)^2\ |\ w^{g+1}=z(z+1)^g(z-1)\right\}$,
\begin{align*}
&\phi_1=\left(a_0+a_1\left(\frac{z+1}{w}\right)^{g-1}\right)\frac{dz}{w}+\alpha\eta_1+\beta\eta_2+0,\\
&\phi_2=\left(ia_0-ia_1\left(\frac{z+1}{w}\right)^{g-1}\right)\frac{dz}{w}+i\alpha\eta_1-i\beta\eta_2+0,\\
&\phi_3=\left(b_0+b_1\left(\frac{z+1}{w}\right)^{g-1}\right)\frac{dz}{w}+0+\beta\eta_2+\gamma\eta_3,\\
&\phi_4=\left(-ib_0+ib_1\left(\frac{z+1}{w}\right)^{g-1}\right)\frac{dz}{w}+0+i\beta\eta_2+i\gamma\eta_3,
\end{align*}
for some $a_0, a_1, b_0, b_1\in\mathbb{C}$ and nonzero complex numbers $\alpha$, $\beta$, and $\gamma$.\\
\item[(3)] On $\Sigma_g=\left\{(z, w)\in\left(\mathbb{C}\cup\{\infty\}\right)^2\ |\ w^{g+1}=z^g(z+1)(z-1)\right\}$,
\begin{align*}
&\phi_1=\left(a_0+a_1\left(\frac{z}{w}\right)^{g-1}\right)\frac{dz}{w}+\alpha\eta_1+\beta\eta_2+0,\\
&\phi_2=\left(ia_0-ia_1\left(\frac{z}{w}\right)^{g-1}\right)\frac{dz}{w}+i\alpha\eta_1-i\beta\eta_2+0,\\
&\phi_3=\left(b_0+b_1\left(\frac{z}{w}\right)^{g-1}\right)\frac{dz}{w}+0+\beta\eta_2+\gamma\eta_3,\\
&\phi_4=\left(-ib_0+ib_1\left(\frac{z}{w}\right)^{g-1}\right)\frac{dz}{w}+0+i\beta\eta_2-i\gamma\eta_3,
\end{align*}
for some $a_0, a_1, b_0, b_1\in\mathbb{C}$ and nonzero complex numbers $\alpha$, $\beta$, and $\gamma$.\\
\item[(4)] On $\Sigma_3=\left\{(z, w)\in\left(\mathbb{C}\cup\{\infty\}\right)^2\ |\ w^4=z(z+1)(z-1)\right\}$,
\begin{align*}
&\phi_1=\left(h_{10}+h_{11}z\right)\frac{dz}{w^3}+\alpha\eta_1+\beta\eta_2+0,\\
&\phi_2=\left(-ih_{10}-ih_{11}z\right)\frac{dz}{w^3}+i\alpha\eta_1+i\beta\eta_2+0,\\
&\phi_3=\left(h_{30}+h_{31}z\right)\frac{dz}{w^3}+0+\beta\eta_2+\gamma\eta_3,\\
&\phi_4=\left(ih_{30}+ih_{31}z\right)\frac{dz}{w^3}+0-i\beta\eta_2-i\gamma\eta_3,
\end{align*}
for some $h_{10}, h_{11}, h_{30}, h_{31}\in\mathbb{C}$ and nonzero complex numbers $\alpha$, $\beta$, and $\gamma$.
\end{itemize}
\end{lemma}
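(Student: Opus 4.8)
The statement is essentially a bookkeeping consequence of the three preceding lemmas, so the plan is purely one of assembly. Recall that Lemma~\ref{Lem64} expresses each $\phi_j$ as a sum of a holomorphic $1$-form and a linear combination of $\eta_1,\eta_2,\eta_3$ dictated by the generalized Gauss map through Proposition~\ref{prop27}, and that the preceding (unlabelled) lemma shows these Weierstrass data satisfy the equivariance relation (\ref{PullBack}). Feeding (\ref{PullBack}) into the holomorphic parts, Lemma~\ref{Lem66} (in cases (1)--(3)) and Lemma~\ref{Lem67} (in case (4)) pin down the holomorphic differentials up to the indicated free parameters. Thus the only task is to substitute these constrained forms back into the expressions of Lemma~\ref{Lem64} and simplify.

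For cases (1), (2), (3), I would take the polynomials supplied by Lemma~\ref{Lem66},
\[ h_1(t)=a_0+a_1t^{g-1},\quad h_2(t)=ia_0-ia_1t^{g-1},\quad h_3(t)=b_0+b_1t^{g-1},\quad h_4(t)=-ib_0+ib_1t^{g-1}, \]
and substitute $t=\tfrac{z(z+1)}{w}$, $t=\tfrac{z+1}{w}$, $t=\tfrac{z}{w}$ respectively. These are precisely the rational functions whose powers $t^{l}$ ($0\le l\le g-1$), multiplied by $\tfrac{dz}{w}$, form the basis of $H^0(\Sigma_g,K)$ identified in the lemma immediately before Lemma~\ref{Lem63}, so the substitution is unambiguous. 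Multiplying through by $\tfrac{dz}{w}$ and leaving the $\eta$-terms untouched yields exactly the displayed formulas. In case (4), I would take the relations $h_{20}=-ih_{10}$, $h_{21}=-ih_{11}$, $h_{12}=h_{22}=0$, $h_{40}=ih_{30}$, $h_{41}=ih_{31}$, $h_{32}=h_{42}=0$ from Lemma~\ref{Lem67}, substitute them into the four holomorphic differentials $(h_{a0}+h_{a1}z+h_{a2}w)\tfrac{dz}{w^3}$ appearing in Lemma~\ref{Lem64}, and rename the surviving free parameters $h_{10},h_{11},h_{30},h_{31}$; the $w/w^3$ terms drop out, and $\phi_2,\phi_4$ become the stated multiples of the leading parts of $\phi_1,\phi_3$.

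There is no substantive obstacle here; the step most prone to error is matching each of the three case-specific substitutions $t\mapsto\{z(z+1)/w,\ (z+1)/w,\ z/w\}$ to the correct defining equation of $\Sigma_g$, and checking that the degree-$(g-1)$ truncation allowed by Lemma~\ref{Lem66} remains compatible with the divisor computations of Lemma~\ref{Lem63}, so that each $\phi_j$ still has at worst a double pole exactly at $q_1,q_2,q_3$ as required by Proposition~\ref{PropOrdRes}. If one preferred a self-contained argument, one could instead re-derive the forms directly: insert the general expressions of Lemma~\ref{Lem64} into (\ref{PullBack}), use that $z(z+1)/w$ (resp.\ its analogues) assumes infinitely many values to reduce to a functional equation for the polynomials, and solve it — but since Lemmas~\ref{Lem66} and~\ref{Lem67} already carry this out, the proof of Lemma~\ref{Lem68} collapses to the substitutions just described.
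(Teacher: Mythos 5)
Your proposal is correct and matches the paper's treatment exactly: the paper offers no separate proof of this lemma, merely the sentence that it follows by "applying Lemma \ref{Lem66} and Lemma \ref{Lem67}" to the expressions of Lemma \ref{Lem64}, which is precisely the substitution you carry out. The coefficient bookkeeping in all four cases checks out, and the extra compatibility checks you flag (pole orders, choice of the rational function $t$) are already guaranteed by Lemmas \ref{Lem63} and \ref{Lem64}, so nothing further is needed.
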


\subsection{Squared sum test}\label{subsec62}
We now check whether the Weierstrass data obtained in Lemma \ref{Lem68} can satisfy the condition that their squared sum vanishes identically.
\begin{lemma}\label{Lem69}
In case $(1)$ of Lemma \ref{Lem68}, the Weierstrass data cannot satisfy the identity $\sum_{j=1}^4\phi_j^2\equiv0$.
\end{lemma}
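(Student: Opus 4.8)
The plan is to substitute the Weierstrass data of case $(1)$ of Lemma \ref{Lem68} into $\sum_{j=1}^4\phi_j^2$ and show that the resulting expression cannot vanish identically. First I would compute $\phi_1^2+\phi_2^2$ and $\phi_3^2+\phi_4^2$ separately, since the block structure of $A_{\mathcal H}$ makes these cleaner. Writing $P(z,w):=\left(a_0+a_1\left(\frac{z(z+1)}{w}\right)^{g-1}\right)$, $\tilde P(z,w):=\left(ia_0-ia_1\left(\frac{z(z+1)}{w}\right)^{g-1}\right)$, and similarly $Q,\tilde Q$ for $b_0,b_1$, one has $\phi_1=P\frac{dz}{w}+\alpha\eta_1+\beta\eta_2$, $\phi_2=\tilde P\frac{dz}{w}+i\alpha\eta_1+i\beta\eta_2$, so
\begin{align*}
\phi_1^2+\phi_2^2=\left(P^2+\tilde P^2\right)\frac{dz^2}{w^2}+2(P+i\tilde P)\frac{dz}{w}(\alpha\eta_1+\beta\eta_2).
\end{align*}
The key simplifying observation is that $P+i\tilde P=2a_0$ is constant and $Q-i\tilde Q=2b_0$ is constant, while $\eta_1\eta_2=\frac{dz^2}{w^2}\cdot\frac{1}{z-1}\cdot(z-1)\cdot\frac{1}{w^0}$ etc.\ — I would carefully record that $\eta_1\eta_2$, $\eta_2\eta_3$, $\eta_1^2$, $\eta_2^2$, $\eta_3^2$ are all explicit rational multiples of $\frac{dz^2}{w^2}$ or $\frac{w^{2g-2}dz^2}{(\cdots)}$, using $w^{g+1}=z^g(z+1)^g(z-1)$ to reduce powers of $w$. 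After clearing a common factor of $\frac{dz^2}{w^2}$, the identity $\sum\phi_j^2\equiv0$ becomes a rational-function identity in $z$ and $w$ which, on the curve, is a polynomial identity once one lifts $w^{g+1}$; the strategy then is to extract enough coefficients to force a contradiction.

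The main steps, in order, would be: (i) reduce $\sum_{j=1}^4\phi_j^2\equiv0$ to an identity of the form $R_0(z)+R_1(z)w^{g-1}+R_2(z)w^{2g-2}\equiv0$ (after multiplying through by an appropriate polynomial in $z$ to clear the denominators coming from $\eta_1,\eta_3$), where the $w^{g-1}$ and $w^{2g-2}$ terms come from the holomorphic pieces $\left(\frac{z(z+1)}{w}\right)^{g-1}$; (ii) since $1,w,\dots,w^{g}$ are linearly independent over the field of rational functions in $z$ on the curve (the curve has degree $g+1$ in $w$), conclude $R_0\equiv R_1\equiv R_2\equiv0$ provided $g-1$ and $2g-2$ are both $<g+1$, i.e.\ $g<3$; for $g\ge 3$ one instead reduces $w^{2g-2}=w^{g-3}\cdot z^g(z+1)^g(z-1)$ and regroups, but the conclusion that each ``$w$-homogeneous'' block vanishes still holds after this reduction; (iii) examine the block containing $\eta_1^2=\frac{dz^2}{w^2(z-1)^2}$: its coefficient $\alpha^2$ contributes a term with a double pole at $z=1$ that no other term can cancel unless it is killed by a cross term, but the only cross terms involving $\eta_1$ are $2a_0\frac{dz}{w}\cdot\alpha\eta_1$ (with constant coefficient $2\alpha a_0$, giving only a simple pole at $z=1$ after accounting for the zero of $\frac{dz}{w}$ there is order... ) — so the coefficient of $\frac{1}{(z-1)^2}$ forces $\alpha^2\cdot(\text{something nonzero})=0$, contradicting $\alpha\neq0$. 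Analogously the $\eta_3^2=\frac{w^2dz^2}{(z+1)^4(z-1)^2}$ term has a pole of order $4$ at $z=-1$ with coefficient $\gamma^2$ that nothing else matches, giving $\gamma=0$. Either contradiction finishes the proof.

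The hard part will be bookkeeping the orders of vanishing and poles of the various products $\eta_i\eta_j$ and $\frac{dz}{w}\cdot\eta_i$ at the four distinguished points $(0,0),(1,0),\infty,(-1,0)$, because $\frac{dz}{w}$ itself has zeros of order $g-1$ at $(1,0)$ and $\infty$ (from the earlier lemma on holomorphic differentials), so the ``constant $2a_0$'' cross term is not as harmless as it first looks and one must check it genuinely cannot cancel the double pole of $\alpha^2\eta_1^2$. I expect the cleanest route is actually to localize: expand everything in a local coordinate near $q_1=(1,0)$ (where $w$ is a local parameter up to a unit, since the branching order is $N_{q_1}=1$ so $z-1\sim$ unit$\cdot w^{g+1}$... actually here $z-1$ vanishes to order $1$ in $z$ and $w^{g+1}=z^g(z+1)^g(z-1)$ so $w$ vanishes to order... ) and read off the leading Laurent coefficient of $\sum\phi_j^2$, which must vanish; this isolates $\alpha$ (or $\beta$) cleanly. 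I would present the argument via this local analysis at $q_1$ and at $q_3$ rather than the global coefficient comparison, as it avoids the degree gymnastics for large $g$, though I would double-check that the local computation at $q_1$ does not accidentally also kill a term that could be cancelled by the $\beta\eta_2$ contribution — that cross-check is where an error is most likely to hide.
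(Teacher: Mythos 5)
There is a genuine gap, and it sits exactly where you suspected trouble might hide, but it is more severe than a bookkeeping issue. Your step (iii) proposes to derive the contradiction from the terms $\alpha^2\eta_1^2$ and $\gamma^2\eta_3^2$, arguing that their poles at $z=1$ and $z=-1$ cannot be cancelled. But these terms are not present in $\sum_{j=1}^4\phi_j^2$ at all: since the $\eta_1$-part of $\phi_2$ is $i$ times that of $\phi_1$, the contribution of $(\alpha\eta_1+\beta\eta_2)^2$ to $\phi_1^2$ is exactly cancelled by $(i\alpha\eta_1+i\beta\eta_2)^2$ in $\phi_2^2$; likewise the $\gamma^2\eta_3^2$ and $\beta^2\eta_2^2$ terms cancel between $\phi_3^2$ and $\phi_4^2$ (leaving only the cross term $4\beta\gamma\,\eta_2\eta_3$). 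This cancellation of the ``isotropic squares'' is the whole point of the $\pm i$ structure of the data, so no contradiction of the form $\alpha^2=0$ or $\gamma^2=0$ can ever arise. Relatedly, your ``key simplifying observation'' has the wrong combination: the cross term in $\phi_1^2+\phi_2^2$ carries $P+i\tilde P=2a_1\bigl(\tfrac{z(z+1)}{w}\bigr)^{g-1}$, not the constant $2a_0$ (it is $P-i\tilde P$ that equals $2a_0$), so the $\eta_1$ cross term is $4\alpha a_1\bigl(\tfrac{z(z+1)}{w}\bigr)^{g-1}\tfrac{dz}{w}\eta_1$, involving $a_1$ rather than $a_0$.

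Your steps (i)--(ii) are also heavier than needed: on this curve $\bigl(\tfrac{z(z+1)}{w}\bigr)^{g-1}\tfrac{1}{w^2}=\tfrac{(z(z+1))^{g-1}}{w^{g+1}}=\tfrac{1}{z(z+1)(z-1)}$ exactly, so after expansion the whole expression collapses to a rational function of $z$ alone --- no linear-independence argument over powers of $w$ and no case split on $g$ is required. The paper's proof then clears denominators to get a polynomial identity
\begin{align*}
(a_0a_1+b_0b_1)(z+1)(z-1)+\alpha a_1(z+1)+\beta(a_1+b_1)(z+1)(z-1)^2+\gamma b_0\,z(z-1)+\beta\gamma\,z(z-1)^2\equiv 0,
\end{align*}
and the contradiction comes in three stages: evaluation at $z=1$ gives $\alpha a_1=0$, hence $a_1=0$ (note this is \emph{not} yet a contradiction, since $a_1$ is allowed to vanish); the coefficient of $z^3$ then gives $b_1=-\gamma$; and substituting back forces $\beta\gamma=0$, contradicting $\beta,\gamma\neq0$. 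The decisive term is the cross term $4\beta\gamma\,\eta_2\eta_3$, which your outline never isolates. As written, your argument would not close.
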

\begin{proof}
Using the expressions in case $(1)$ of Lemma \ref{Lem68}, together with the differentials
\begin{align*}
\eta_1=\frac{dz}{w(z-1)},\quad \eta_2=\frac{(z-1)dz}{w},\quad \eta_3=\frac{wdz}{(z+1)^2(z-1)}
\end{align*}
as given in Lemma \ref{Lem63}, we compute
\begin{align*}
&\sum_{j=1}^4\phi_j^2\\
&=4\left[(a_0a_1+b_0b_1)\left(\frac{z(z+1)}{w}\right)^{g-1}\frac{1}{w^2}+\alpha a_1\left(\frac{z(z+1)}{w}\right)^{g-1}\frac{1}{w^2(z-1)}\right.\\
&\qquad\left.+\beta(a_1+b_1)\left(\frac{z(z+1)}{w}\right)^{g-1}\frac{(z-1)}{w^2}+\gamma b_0\frac{1}{(z+1)^2(z-1)}+\beta\gamma\frac{1}{(z+1)^2}\right](dz)^2\\
&=4\left[(a_0a_1+b_0b_1)\frac{1}{z(z+1)(z-1)}+\alpha a_1\frac{1}{z(z+1)(z-1)^2}\right.\\
&\qquad\left.+\beta(a_1+b_1)\frac{1}{z(z+1)}+\gamma b_0\frac{1}{(z+1)^2(z-1)}+\beta\gamma\frac{1}{(z+1)^2}\right](dz)^2\\
&=4\left[(a_0a_1+b_0b_1)(z+1)(z-1)+\alpha a_1(z+1)+\beta(a_1+b_1)(z+1)(z-1)^2\right.\\
&\qquad\left.+\gamma b_0z(z-1)+\beta\gamma z(z-1)^2\right]\frac{(dz)^2}{z(z+1)^2(z-1)^2},
\end{align*}
where we applied the identity $w^{g+1}=z^g(z+1)^g(z-1)$ in the second equality. Since
\begin{align*}
\frac{(dz)^2}{z(z+1)^2(z-1)^2}
\end{align*}
vanishes at $(0,0)$ and $\infty$, we have
\begin{align}\label{EEE1}
(a_0a_1&+b_0b_1)(z+1)(z-1)+\alpha a_1(z+1)\nonumber\\
&+\beta(a_1+b_1)(z+1)(z-1)^2+\gamma b_0z(z-1)+\beta\gamma z(z-1)^2=0
\end{align}
for all $z\in\mathbb{C}\setminus\{0\}$, in order for the identity $\sum_{j=1}^4\phi_j^2\equiv0$ to hold.

Substituting $z=1$ into (\ref{EEE1}) gives $\alpha a_1=0$. Since $\alpha\neq0$, it follows that $a_1=0$. Next, looking at the coefficient of $z^3$, we obtain
\begin{align*}
\beta b_1+\beta\gamma=0,
\end{align*}
which implies that $b_1=-\gamma$ as $\beta\neq0$. Substituting back into (\ref{EEE1}), we now have
\begin{align*}
-(z-1)(\beta\gamma(z-1)+\gamma b_0)=0.
\end{align*}
This equation holds for all $z\in\mathbb{C}\setminus\{0\}$ if and only if $\beta\gamma=\gamma b_0=0$. This contradicts the assumption that both $\beta$ and $\gamma$ are nonzero. \end{proof}

\begin{lemma}\label{Lem610}
In case $(2)$ of Lemma \ref{Lem68}, the Weierstrass data cannot satisfy the identity $\sum_{j=1}^4\phi_j^2\equiv0$.
\end{lemma}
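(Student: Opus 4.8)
The plan is to mimic the argument of Lemma~\ref{Lem69} verbatim, only substituting the differentials $\eta_1,\eta_2,\eta_3$ attached to case~$(2)$ of Proposition~\ref{prop59}. Recall from Lemma~\ref{Lem63} that in this case
\begin{align*}
\eta_1=\frac{dz}{w(z-1)},\quad \eta_2=\frac{wdz}{z(z-1)},\quad \eta_3=\frac{wdz}{z(z+1)^2(z-1)},
\end{align*}
and from case~$(2)$ of Lemma~\ref{Lem68} the Weierstrass data involve the common holomorphic factor $\left(\tfrac{z+1}{w}\right)^{g-1}\tfrac{dz}{w}$ together with the coefficients $a_0,a_1,b_0,b_1\in\mathbb C$ and the nonzero $\alpha,\beta,\gamma$. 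First I would expand $\sum_{j=1}^4\phi_j^2$ by squaring each $\phi_j$. The cross terms between the two holomorphic pieces and between a holomorphic piece and an $\eta_j$ all carry the factor $\left(\tfrac{z+1}{w}\right)^{g-1}$, while $\eta_i\eta_j$ terms are rational in $z$; using the defining relation $w^{g+1}=z(z+1)^g(z-1)$ one clears all $w$'s, since $\left(\tfrac{z+1}{w}\right)^{g-1}\tfrac1w$, $\left(\tfrac{z+1}{w}\right)^{g-1}\eta_i$ and each $\eta_i\eta_j$ become rational $1$-forms $(\cdot)(dz)^2$. (One should double-check the bookkeeping: note the sign change $\phi_2=(\ldots)dz+i\alpha\eta_1-i\beta\eta_2$, so the $\eta_1\eta_2$ cross terms coming from $\phi_1^2+\phi_2^2$ partially cancel, and similarly for $\phi_3,\phi_4$ — this is the only place where case~$(2)$ differs structurally from case~$(1)$, and it actually simplifies matters.)

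The next step is to factor out the common pole/zero structure. After clearing denominators one arrives at an identity of the form $P(z)\cdot\tfrac{(dz)^2}{z^{m}(z+1)^{m'}(z-1)^{m''}}\equiv0$ for an explicit polynomial $P$ and explicit exponents; since that $(dz)^2$ factor is a nonzero meromorphic $2$-differential, the condition $\sum_{j=1}^4\phi_j^2\equiv0$ forces $P(z)\equiv0$ as a polynomial (equivalently, it vanishes on $\mathbb C$ minus the punctures, hence identically by analyticity). Then I would extract linear relations among $a_0,a_1,b_0,b_1,\alpha,\beta,\gamma$ by the same three moves used in Lemma~\ref{Lem69}: evaluate $P$ at the roots $z=1$, $z=-1$ and at $z=0$ to isolate the residue-type coefficients, and compare the top-degree coefficient of $P$ to pin down another relation. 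As in the previous lemma, each evaluation produces an equation of the shape (nonzero constant)$\times$(one of $\alpha,\beta,\gamma$)$\times$(something) $=0$, which, because $\alpha\beta\gamma\neq0$, collapses successive coefficients to zero and eventually yields a contradiction of the type $\beta\gamma=0$ or $\alpha a_1=0$ with $a_1$ forced nonzero elsewhere.

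I expect the computation to be essentially mechanical, so the only genuine obstacle is the combinatorial one: getting the exponents $m,m',m''$ and the polynomial $P$ exactly right, in particular keeping track of the orders of vanishing of $\eta_2=\tfrac{wdz}{z(z-1)}$ and $\eta_3=\tfrac{wdz}{z(z+1)^2(z-1)}$ at $(0,0)$, $(-1,0)$, $(1,0)$ and $\infty$ (these are precisely the divisor computations recorded in Lemma~\ref{Lem63}, which must be reused here), and making sure the $\left(\tfrac{z+1}{w}\right)^{g-1}$ factors combine correctly with $w^{g+1}=z(z+1)^g(z-1)$ to leave no fractional powers of $w$. Once $P$ is written down correctly, the contradiction should fall out in three or four lines exactly as in Lemma~\ref{Lem69}; I would present only those lines and suppress the expansion. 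Should the naive evaluation at a single root fail to isolate a coefficient (because of a higher-order zero there), the remedy is to differentiate $P$ once at that root or to compare the relevant leading coefficients directly, which is what I anticipate may be needed at $z=-1$ given the $(z+1)^2$ in the denominator of $\eta_3$.
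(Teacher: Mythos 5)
Your plan is exactly the paper's proof: expand $\sum_{j=1}^4\phi_j^2$, use $w^{g+1}=z(z+1)^g(z-1)$ to clear all powers of $w$ and reduce to a polynomial identity $P(z)\equiv 0$ multiplying $\frac{(dz)^2}{z(z+1)^2(z-1)^2}$, then evaluate at $z=-1$ (giving $\gamma b_0=0$, hence $b_0=0$), read off the top-degree coefficient (giving $\beta a_0=0$, hence $a_0=0$), and conclude $\alpha\beta=\alpha a_1=0$, contradicting $\alpha\beta\neq0$. One correction to your parenthetical: in case $(2)$ the relative sign in $\phi_2=(\cdots)dz+i\alpha\eta_1-i\beta\eta_2$ makes the pure squares $\alpha^2\eta_1^2$ and $\beta^2\eta_2^2$ cancel while the cross term \emph{survives} as $4\alpha\beta\,\eta_1\eta_2$ (it is precisely this $\alpha\beta$ term that produces the final contradiction), so the cross terms do not cancel — but this bookkeeping slip does not affect the validity of your plan.
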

\begin{proof}
The proof proceeds similarly to the previous lemma. In case $(2)$ of Lemma \ref{Lem68}, we compute
\begin{align*}
\sum_{j=1}^4\phi_j^2&=4\left[(a_0a_1+b_0b_1)\left(\frac{z+1}{w}\right)^{g-1}\frac{1}{w^2}+\alpha a_1\left(\frac{z+1}{w}\right)^{g-1}\frac{1}{w^2(z-1)}\right.\\
&\qquad\left.+\beta(a_0+b_0)\frac{1}{z(z-1)}+\gamma b_0\frac{1}{z(z+1)^2(z-1)}+\alpha\beta\frac{1}{z(z-1)^2}\right](dz)^2\\
&=4\left[(a_0a_1+b_0b_1)\frac{1}{z(z+1)(z-1)}+\alpha a_1\frac{1}{z(z+1)(z-1)^2}\right.\\
&\qquad\left.+\beta(a_0+b_0)\frac{1}{z(z-1)}+\gamma b_0\frac{1}{z(z+1)^2(z-1)}+\alpha\beta\frac{1}{z(z-1)^2}\right](dz)^2\\
&=4\left[(a_0a_1+b_0b_1)(z+1)(z-1)+\alpha a_1(z+1)+\beta(a_0+b_0)(z+1)^2(z-1)\right.\\
&\qquad\left.+\gamma b_0(z-1)+\alpha\beta (z+1)^2\right]\frac{(dz)^2}{z(z+1)^2(z-1)^2},
\end{align*}
where we used the identity $w^{g+1}=z(z+1)^g(z-1)$ in the second equality. As
\begin{align*}
\frac{(dz)^2}{z(z+1)^2(z-1)^2}
\end{align*}
vanishes at $(0,0)$ and $\infty$, the identity $\sum_{j=1}^4\phi_j^2\equiv0$ holds if and only if
\begin{align}\label{EEE2}
(a_0a_1&+b_0b_1)(z+1)(z-1)+\alpha a_1(z+1)\nonumber\\
&+\beta(a_0+b_0)(z+1)^2(z-1)+\gamma b_0(z-1)+\alpha\beta (z+1)^2=0
\end{align}
for all $z\in\mathbb{C}\setminus\{0\}$. 

Substituting $z=-1$ into (\ref{EEE2}) gives $\gamma b_0=0$. Since $\gamma\neq0$, we obtain $b_0=0$. By looking at the coefficient of $z^3$, we find $\beta a_0=0$, which implies that $a_0=0$ as $\beta\neq0$. Then (\ref{EEE2}) simplifies to
\begin{align*}
(z+1)(\alpha\beta(z+1)+\alpha a_1)=0.
\end{align*}
This equation holds for all $z\in\mathbb{C}\setminus\{0\}$ if and only if $\alpha\beta=\alpha a_1=0$. This contradicts the assumption that both $\alpha$ and $\beta$ are nonzero.
\end{proof}

In cases $(3)$ and $(4)$ of Lemma \ref{Lem68}, the squared sum vanishes for particular choices of the coefficients.

\begin{lemma}\label{Lem611}
The identity $\sum_{j=1}^4\phi_j^2\equiv0$ is satisfied in case $(3)$ of Lemma \ref{Lem68} if and only if
\begin{align*}
a_0=-b_0=\frac{\alpha+\gamma}{2},\quad a_1=-b_1=-\beta.
\end{align*}
\end{lemma}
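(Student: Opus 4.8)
The plan is to compute $\sum_{j=1}^{4}\phi_j^2$ explicitly for the data in case~$(3)$ of Lemma~\ref{Lem68} and to determine exactly when it vanishes. Rather than squaring each $\phi_j$ separately, I would use the factorization
\[
\sum_{j=1}^{4}\phi_j^2 = (\phi_1+i\phi_2)(\phi_1-i\phi_2) + (\phi_3+i\phi_4)(\phi_3-i\phi_4),
\]
because the four combinations collapse dramatically for this particular data: a direct check gives $\phi_1-i\phi_2 = 2a_0\,\frac{dz}{w}+2\alpha\eta_1$, $\phi_1+i\phi_2 = 2a_1\bigl(\tfrac{z}{w}\bigr)^{g-1}\frac{dz}{w}+2\beta\eta_2$, $\phi_3+i\phi_4 = 2b_0\,\frac{dz}{w}+2\gamma\eta_3$, and $\phi_3-i\phi_4 = 2b_1\bigl(\tfrac{z}{w}\bigr)^{g-1}\frac{dz}{w}+2\beta\eta_2$. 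In particular, all dependence on the genus and on $w$ is about to disappear.

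First I would substitute $\eta_1=\frac{dz}{w(z-1)}$, $\eta_2=\frac{w\,dz}{(z+1)(z-1)}$, $\eta_3=\frac{dz}{w(z+1)}$ from Lemma~\ref{Lem63}, and then use the defining equation $w^{g+1}=z^g(z+1)(z-1)$ in the form $z^{g-1}w^{-g}=\frac{w}{z(z+1)(z-1)}$ to eliminate every power of $w$. A short computation then yields
\[
\sum_{j=1}^{4}\phi_j^2 = \frac{4\,N(z)}{z(z+1)^2(z-1)^2}\,(dz)^2,\qquad N(z):=(a_1+\beta z)\bigl(a_0(z-1)+\alpha\bigr)(z+1)+\bigl(b_0(z+1)+\gamma\bigr)(b_1+\beta z)(z-1),
\]
a cubic polynomial in $z$. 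Since this quadratic differential is the pullback under the nonconstant map $\mathcal{Q}_{\mathcal{H}}\colon\Sigma_g\to S^2\simeq\mathbb{C}\cup\{\infty\}$ (equivalently, since $z$ takes infinitely many values on $\Sigma_g$), it vanishes identically if and only if $N(z)\equiv0$.

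It then remains to solve $N\equiv0$. Expanding and equating the coefficients of $z^3,z^2,z^1,z^0$ yields, respectively, $\beta(a_0+b_0)=0$, $a_0a_1+b_0b_1+\beta(\alpha+\gamma)=0$, $\alpha a_1+\gamma b_1+\beta(\alpha-\gamma)-\beta(a_0+b_0)=0$, and $\alpha a_1-a_0a_1-b_0b_1-\gamma b_1=0$. From the $z^3$ equation and $\beta\neq0$ we get $b_0=-a_0$; feeding this into the $z^0$ and $z^2$ equations, combining with the $z^1$ equation, and using $\alpha\neq0$ and $\gamma\neq0$, one solves the resulting linear system for $a_1=-\beta$ and $b_1=\beta$, after which the $z^2$ equation forces $a_0=\tfrac{\alpha+\gamma}{2}$ (again using $\beta\neq0$), hence $b_0=-\tfrac{\alpha+\gamma}{2}$. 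Conversely, these values visibly make $N\equiv0$, which gives the stated equivalence. The only real work is the bookkeeping in expanding $N$ and carrying out the elimination; there is no conceptual obstacle, since the factorization trick has already reduced everything to a single cubic identity in one variable.
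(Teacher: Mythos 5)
Your proof is correct and follows essentially the same route as the paper: both reduce $\sum_{j=1}^4\phi_j^2\equiv 0$ to the vanishing of the identical cubic polynomial in $z$ (your $N(z)$ is exactly the bracketed expression in the paper's computation) and then solve the resulting linear conditions on $a_0,a_1,b_0,b_1$. The only differences are cosmetic — your factorization through $\phi_1\pm i\phi_2$ and $\phi_3\pm i\phi_4$ automates the cancellation of the pure-square terms, and you compare all four coefficients where the paper evaluates at $z=\pm1$ before comparing the $z^3$ coefficient — but the substance is the same and your elimination is carried out correctly.
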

\begin{proof}
A direct computation shows that
\begin{align*}
&\sum_{j=1}^4\phi_j^2\\
&=4\left[(a_0a_1+b_0b_1)\left(\frac{z}{w}\right)^{g-1}\frac{1}{w^2}+\alpha a_1\left(\frac{z}{w}\right)^{g-1}\frac{1}{w^2(z-1)}+\beta(a_0+b_0)\frac{1}{(z+1)(z-1)}\right.\\
&\qquad\left.+\gamma b_1\left(\frac{z}{w}\right)^{g-1}\frac{1}{w^2(z+1)}+\alpha\beta\frac{1}{(z+1)(z-1)^2}+\beta\gamma\frac{1}{(z+1)^2(z-1)}\right](dz)^2\\
&=4\left[(a_0a_1+b_0b_1)\frac{1}{z(z+1)(z-1)}+\alpha a_1\frac{1}{z(z+1)(z-1)^2}+\beta(a_0+b_0)\frac{1}{(z+1)(z-1)}\right.\\
&\qquad\left.+\gamma b_1\frac{1}{z(z+1)^2(z-1)}+\alpha\beta\frac{1}{(z+1)(z-1)^2}+\beta\gamma\frac{1}{(z+1)^2(z-1)}\right](dz)^2\\
&=4\left[(a_0a_1+b_0b_1)(z+1)(z-1)+\alpha a_1(z+1)+\beta(a_0+b_0)z(z+1)(z-1)\right.\\
&\qquad\left.+\gamma b_1(z-1)+\alpha\beta z(z+1)+\beta\gamma z(z-1)\right]\frac{(dz)^2}{z(z+1)^2(z-1)^2},
\end{align*}
where we used the identity $w^{g+1}=z^g(z+1)(z-1)$ in the second equality. Since
\begin{align*}
\frac{(dz)^2}{z(z+1)^2(z-1)^2}
\end{align*}
vanishes at $(0,0)$ and $\infty$, the identity $\sum_{j=1}^4\phi_j^2\equiv0$ holds if and only if
\begin{align}\label{EEE3}
(a_0a_1+b_0b_1)(z+1)(z-1)+&\alpha a_1(z+1)+\beta(a_0+b_0)z(z+1)(z-1)\nonumber\\
&+\gamma b_1(z-1)+\alpha\beta z(z+1)+\beta\gamma z(z-1)=0
\end{align}
for all $z\in\mathbb{C}\setminus\{0\}$. 

Evaluating (\ref{EEE3}) at $z=1$ yields $\alpha a_1+\alpha\beta=0$, which implies that $a_1=-\beta$ as $\alpha\neq0$. Similarly, evaluating at $z=-1$ gives $-\gamma b_1+\beta\gamma=0$, and hence $b_1=\beta$ since $\gamma\neq0$. Moreover, the coefficient of $z^3$ in (\ref{EEE3}), $\beta(a_0+b_0)$, must vanish, which implies that $b_0=-a_0$ as $\beta\neq0$. With these substitutions, (\ref{EEE3}) simplifies to 
\begin{align*}
\beta(z+1)(z-1)(\gamma+\alpha-2a_0)=0.
\end{align*}
This leads to the condition that $\gamma+\alpha-2a_0=0$. 

Conversely, one can verify that if 
\begin{align*}
a_0=-b_0=\frac{\alpha+\gamma}{2},\quad a_1=-b_1=-\beta,
\end{align*}
then (\ref{EEE3}) holds for all $z\in\mathbb{C}$, completing the proof.
\end{proof}

\begin{lemma}\label{Lem612}
The identity $\sum_{j=1}^4\phi_j^2\equiv0$ is satisfied in case $(4)$ of Lemma \ref{Lem68} if and only if 
\begin{align*}
h_{10}=-h_{11}=h_{30}=h_{31}\quad \text{and}\quad h_{10}(2\beta-\alpha+\gamma)=0.
\end{align*}
\end{lemma}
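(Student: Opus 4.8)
## Proof Proposal for Lemma \ref{Lem612}

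The plan is to follow the same strategy as in the proofs of Lemmas \ref{Lem69}--\ref{Lem611}: substitute the Weierstrass data from case $(4)$ of Lemma \ref{Lem68}, together with the explicit differentials $\eta_1 = \frac{dz}{w(z-1)}$, $\eta_2 = \frac{dz}{w}$, $\eta_3 = \frac{dz}{w(z+1)}$ from Lemma \ref{Lem63}, into $\sum_{j=1}^4 \phi_j^2$, and extract the vanishing condition as a polynomial identity in $z$. First I would compute $\sum_{j=1}^4 \phi_j^2$ directly. The holomorphic parts contribute $\left(h_{10}+h_{11}z\right)^2 + \left(-ih_{10}-ih_{11}z\right)^2 = 0$ and similarly $\left(h_{30}+h_{31}z\right)^2 + \left(ih_{30}+ih_{31}z\right)^2 = 0$, so all pure holomorphic-squared terms drop out; what survives are the cross terms between the holomorphic parts and the $\eta_j$'s, plus the pure $\eta_j$-squared and $\eta_i\eta_j$ cross terms. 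Using $w^4 = z(z+1)(z-1)$ to rewrite $\frac{1}{w^2}$ as $\frac{w^2}{z(z+1)(z-1)}$ wherever needed, and clearing the common factor $\frac{(dz)^2}{z(z+1)^2(z-1)^2}$ (which vanishes at $(0,0)$ and $\infty$, hence imposing no further constraint there), the identity $\sum_{j=1}^4\phi_j^2 \equiv 0$ reduces to the vanishing of some polynomial $P(z)$ of low degree, whose coefficients are bilinear expressions in $\{h_{10},h_{11},h_{30},h_{31}\}$ and $\{\alpha,\beta,\gamma\}$.

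Next I would extract the conditions by evaluating $P$ at the special points and comparing coefficients, exactly as in the earlier lemmas. Evaluating at $z=1$ should isolate a term forcing a relation between $h_{10}+h_{11}$ (the value of the holomorphic-part polynomial at $z=1$) and $\alpha$ — specifically, since $\eta_1$ has its relevant pole structure at $q_1 \leftrightarrow (1,0)$, the $z=1$ evaluation should yield something like $\alpha(h_{10}+h_{11}) = 0$ after the cross-term bookkeeping, giving $h_{11} = -h_{10}$ since $\alpha \neq 0$. Evaluating at $z=-1$ should similarly isolate a $\gamma$-relation forcing $h_{31} = -h_{30}$ (or a sign variant), and comparing the top-degree coefficient should force $h_{30} = h_{10}$ via $\beta \neq 0$ — the signs in $\phi_3,\phi_4$ ($+\beta\eta_2$ versus $-i\beta\eta_2$, $-i\gamma\eta_3$) need to be tracked carefully to land on $h_{10} = -h_{11} = h_{30} = h_{31}$ rather than a variant. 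After substituting these equalities back into $P(z)$, the remaining identity should collapse to a multiple of $(z+1)(z-1)$ (or similar) times a single scalar factor, which I expect to be $h_{10}(2\beta - \alpha + \gamma)$; setting that factor to zero gives the second stated condition. Finally I would verify the converse by direct substitution: plugging $h_{10}=-h_{11}=h_{30}=h_{31}$ and $h_{10}(2\beta-\alpha+\gamma)=0$ back into the computed $P(z)$ and checking it vanishes identically.

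The main obstacle I anticipate is purely bookkeeping rather than conceptual: the cross terms $2\phi_j$ between holomorphic and meromorphic parts, together with the three distinct $\eta_i\eta_j$ products, produce a fair number of rational summands with denominators $z(z\pm1)$, $(z\pm1)^2$, etc., and after clearing to the common denominator $z(z+1)^2(z-1)^2$ one must correctly collect the resulting polynomial coefficients. A subtle point is that $\eta_2 = \frac{dz}{w}$ here (not $\frac{(z-1)dz}{w}$ or $\frac{wdz}{(z+1)(z-1)}$ as in cases (1)--(3)), so $\eta_2^2 = \frac{(dz)^2}{w^2} = \frac{(dz)^2}{z(z+1)(z-1)}$ contributes a $\beta^2$ term with a different pole pattern than in the earlier cases; one must be sure the degree count of $P(z)$ is handled correctly so that the "vanishes at $(0,0)$ and $\infty$" argument genuinely removes only the endpoint constraints and the residual polynomial identity is the correct one. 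I expect no genuine difficulty in reaching the stated necessary-and-sufficient condition once the algebra is organized, since the structure exactly parallels Lemma \ref{Lem611}.
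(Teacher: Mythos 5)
Your proposal is correct and follows essentially the same route as the paper: expand $\sum_{j=1}^4\phi_j^2$, clear the common factor $\frac{(dz)^2}{w^4(z+1)(z-1)}$ (which vanishes at $(0,0)$ and $\infty$), read off the conditions from the resulting cubic polynomial by evaluating at $z=\pm1$ and at the $z^3$-coefficient, and verify the converse by substitution. One small correction to your anticipated bookkeeping: in case $(4)$ the sign pattern of the meromorphic parts of $\phi_2$ and $\phi_4$ forces \emph{all} of the pure $\eta_j^2$ and $\eta_i\eta_j$ terms to cancel as well, so only the holomorphic-times-meromorphic cross terms survive and the computation is actually lighter than in cases $(1)$--$(3)$.
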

\begin{proof}
Expanding the expressions in case $(4)$ of Lemma \ref{Lem68}, we obtain
\begin{align*}
\sum_{j=1}^4\phi_j^2&=4\left[\alpha(h_{10}+h_{11}z)\frac{1}{w^4(z-1)}+\gamma(h_{30}+h_{31}z)\frac{1}{w^4(z+1)}\right.\\
&\qquad\left.+\beta(h_{10}+h_{11}z+h_{30}+h_{31}z)\frac{1}{w^4}\right](dz)^2\\
&=4\left[\alpha(h_{10}+h_{11}z)(z+1)+\gamma(h_{30}+h_{31}z)(z-1)\right.\\
&\qquad\left.+\beta(h_{10}+h_{11}z+h_{30}+h_{31}z)(z+1)(z-1)\right]\frac{(dz)^2}{w^4(z+1)(z-1)}.
\end{align*}
Since 
\begin{align*}
\frac{(dz)^2}{w^4(z+1)(z-1)}
\end{align*}
vanishes at $(0,0)$ and $\infty$, we have $\sum_{j=1}^4\phi_j^2\equiv0$ if and only if
\begin{align}\label{EEE4}
\alpha(h_{10}+h_{11}z)(z&+1)+\gamma(h_{30}+h_{31}z)(z-1)\nonumber\\
&+\beta(h_{10}+h_{11}z+h_{30}+h_{31}z)(z+1)(z-1)=0
\end{align}
for all $z\in\mathbb{C}\setminus\{0\}$. 

If $z=1$, then (\ref{EEE4}) gives $\alpha(h_{10}+h_{11})=0$. As $\alpha\neq0$, it follows that $h_{10}+h_{11}=0$. Similarly, if $z=-1$, then $\gamma(h_{30}-h_{31})=0$, and hence $h_{30}-h_{31}=0$ as $\gamma\neq0$. Additionally, the coefficient of $z^3$ is $\beta(h_{11}+h_{31})$, which must vanish. Since $\beta\neq0$, we have $h_{11}+h_{31}=0$. Combining these, we obtain $h_{10}=-h_{11}=h_{30}=h_{31}$.

Substituting this into (\ref{EEE4}) gives
\begin{align*}
h_{10}(2\beta-\alpha+\gamma)(z+1)(z-1)=0.
\end{align*}
This equation holds for all $z\in\mathbb{C}\setminus\{0\}$ if and only if $h_{10}(2\beta-\alpha+\gamma)=0$.

Conversely, if $h_{10}=-h_{11}=h_{30}=h_{31}$ and $h_{10}(2\beta-\alpha+\gamma)=0$, then one can directly verify that (\ref{EEE4}) holds for all $z\in\mathbb{C}$. This completes the proof.
\end{proof}

\subsection{Period computation}\label{subsec63}
In what follows, we show that the Weierstrass data in case $(3)$ of Lemma \ref{Lem68} necessarily have real periods. By applying Lemma \ref{Lem611}, we see that the Weierstrass data in this case take the following form for some nonzero complex numbers $\alpha$, $\beta$, and $\gamma$:
\begin{align*}
&\phi_1=\left(\frac{\alpha+\gamma}{2}-\beta\left(\frac{z}{w}\right)^{g-1}\right)\frac{dz}{w}+\alpha\eta_1+\beta\eta_2+0,\\
&\phi_2=i\left(\frac{\alpha+\gamma}{2}+\beta\left(\frac{z}{w}\right)^{g-1}\right)\frac{dz}{w}+i\alpha\eta_1-i\beta\eta_2+0,\\
&\phi_3=\left(-\frac{\alpha+\gamma}{2}+\beta\left(\frac{z}{w}\right)^{g-1}\right)\frac{dz}{w}+0+\beta\eta_2+\gamma\eta_3,\\
&\phi_4=i\left(\frac{\alpha+\gamma}{2}+\beta\left(\frac{z}{w}\right)^{g-1}\right)\frac{dz}{w}+0+i\beta\eta_2-i\gamma\eta_3
\end{align*}
on $\Sigma_g=\left\{(z, w)\in\left(\mathbb{C}\cup\{\infty\}\right)\times\left(\mathbb{C}\cup\{\infty\}\right)\ |\ w^{g+1}=z^g(z+1)(z-1)\right\}$, where
\begin{align*}
\eta_1=\frac{dz}{w(z-1)},\quad \eta_2=\frac{wdz}{(z+1)(z-1)},\quad \eta_3=\frac{dz}{w(z+1)}.
\end{align*}
For convenience, let us denote
\begin{align*}
\xi_1:=\frac{dz}{w},\quad \xi_2:=\left(\frac{z}{w}\right)^{g-1}\frac{dz}{w}=\frac{wdz}{z(z+1)(z-1)}.
\end{align*}

It is worth noting that the underlying Riemann surface $\Sigma_g$ coincides with that of the Costa-Hoffman-Meeks surface described in \cite{HM2}. Accordingly, we may compute the periods with respect to the same homology basis introduced therein. Furthermore, we use the same branch of $\left(z^g(z+1)(z-1)\right)^{\frac{1}{g+1}}$ for evalutating the integrals.
 
Let $c(t)=\frac{1}{2}+e^{it}$ ($0\leq t\leq 2\pi$), and let $\tilde{c}(t)=(c(t),w(t))$ denote its lift such that $w(0)\in\mathbb{R}$. Recall that $\Sigma_g$ has dihedral group symmetry generated by
\begin{align*}
\kappa(z,w)=(\overline{z},\overline{w}),\quad \lambda(z,w)=(-z,\delta w),
\end{align*}
where $\delta=e^{i\frac{g\pi}{g+1}}$ (see \cite[Lemma 3.1]{HM2} for more details). The curve $\tilde{c}$ and its images under these symmetries generate the homology basis. For our purposes, we compute the period integrals along the curves $\tilde{c}$ and $\lambda\circ\tilde{c}$.

We first evaluate the integrals of $\xi_1$, $\xi_2$, $\eta_1$, $\eta_2$, and $\eta_3$ along the curve $\tilde{c}$. 
\begin{lemma}\label{Lem613}
The integrals along $\tilde{c}$ are given by:
\begin{align*}
&\int_{\tilde{c}}\xi_1=2i\sin\left(\frac{\pi}{g+1}\right)\int_0^1\frac{1}{\left(x^g(1-x^2)\right)^{\frac{1}{g+1}}}dx,\\
&\int_{\tilde{c}}\xi_2=2i\sin\left(\frac{\pi}{g+1}\right)\int_0^1\frac{\left(x^g(1-x^2)\right)^{\frac{1}{g+1}}}{x(1-x^2)}dx,\\
&\int_{\tilde{c}}\eta_1=-2i\sin\left(\frac{\pi}{g+1}\right)\int_0^1\frac{x}{\left(x^g(1-x^2)\right)^{\frac{1}{g+1}}(x+1)}dx,\\
&\int_{\tilde{c}}\eta_2=2i\sin\left(\frac{\pi}{g+1}\right)\int_0^1\frac{\left(x^g(1-x^2)\right)^{\frac{1}{g+1}}}{1-x^2}dx,\\
&\int_{\tilde{c}}\eta_3=2i\sin\left(\frac{\pi}{g+1}\right)\int_0^1\frac{1}{\left(x^g(1-x^2)\right)^{\frac{1}{g+1}}(x+1)}dx.
\end{align*}
\end{lemma}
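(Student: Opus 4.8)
The plan is to compute each of the five period integrals $\int_{\tilde c}\xi_1,\int_{\tilde c}\xi_2,\int_{\tilde c}\eta_1,\int_{\tilde c}\eta_2,\int_{\tilde c}\eta_3$ directly by pulling back along the explicit parametrization $\tilde c(t)=(c(t),w(t))$ with $c(t)=\tfrac12+e^{it}$, using the symmetry $\kappa(z,w)=(\bar z,\bar w)$ to fold the contour integral over the full circle into a real integral over the segment $[0,1]$. The key geometric observation is that the circle $|z-\tfrac12|=1$ passes through $z=-\tfrac12+\tfrac{\sqrt3}{2}i$, $z=\tfrac32$, and crucially through $z=-1$ at $t=\pi$ and $z=1$ is not on it — wait, more precisely the circle passes through $z=\tfrac32$ and $z=-\tfrac12\pm\tfrac{\sqrt3}{2}i$ and encircles $z=0$ and $z=1$ but not $z=-1$; the relevant feature exploited in \cite{HM2} is that the real diameter of this circle runs from $-\tfrac12$ to $\tfrac32$, and after the substitution the integrand's branch structure collapses onto the interval where $x^g(1-x^2)$ controls the relevant powers. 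Concretely, I would substitute so that the upper and lower halves of the circle contribute complex-conjugate (up to a root-of-unity twist $e^{\pm i\pi/(g+1)}$ coming from the branch of $w=\left(z^g(z+1)(z-1)\right)^{1/(g+1)}$) quantities, so their sum produces the factor $2i\sin\!\left(\tfrac{\pi}{g+1}\right)$ and a manifestly real remaining integral.

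First I would record the branch choice: fix $w(0)\in\mathbb R$ as in the statement, and track $\arg\big(z^g(z+1)(z-1)\big)$ as $z$ traverses $c(t)$. Since the circle winds once around $z=0$ (contributing $g$ to the winding of the argument via the $z^g$ factor) and once around $z=1$ (contributing $1$), but does not enclose $z=-1$, the total winding of $z^g(z+1)(z-1)$ is $g+1$, hence $w$ returns to its start — confirming $\tilde c$ is a closed loop, consistent with it being a homology class. More importantly, comparing the value of $w$ at a point $z=x+i0^+$ on the top of the real segment with its value at $z=x-i0^+$ on the bottom gives the twist factor $e^{\pm i\pi g/(g+1)}$ or $e^{\pm i\pi/(g+1)}$ depending on which subinterval of $[-\tfrac12,\tfrac32]$ we are on; the net effect after the contour folding is the single factor $\sin\!\left(\tfrac{\pi}{g+1}\right)$ appearing uniformly. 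Then for each of the five $1$-forms I substitute $z=x$, $dz=dx$, express $w$ as $\pm\big(x^g(1-x^2)\big)^{1/(g+1)}$ times the appropriate unimodular branch factor on each subinterval, and simplify: for $\xi_1=\frac{dz}{w}$ this gives $\frac{dx}{(x^g(1-x^2))^{1/(g+1)}}$; for $\xi_2=\frac{w\,dz}{z(z+1)(z-1)}$ it gives $\frac{(x^g(1-x^2))^{1/(g+1)}}{x(1-x^2)}dx$ (noting $z(z+1)(z-1)=x(x+1)(x-1)=-x(1-x^2)$ on $[0,1]$, and the sign is absorbed); for $\eta_1=\frac{dz}{w(z-1)}$ it gives $-\frac{x\,dx}{(x^g(1-x^2))^{1/(g+1)}(x+1)}$ after writing $\frac{1}{w(x-1)}$ in terms of the root and using $\frac{1}{x-1}$ versus $\frac{x}{w^{g+1}}\cdot(\text{stuff})$ — actually the cleanest route is to use $w^{g+1}=z^g(z+1)(z-1)$ to rewrite $\frac{1}{w(z-1)}=\frac{w^g}{z^g(z+1)(z-1)^2}$ but more efficiently just $\frac{1}{w(z-1)}$ with $w\sim (x^g(1-x^2))^{1/(g+1)}$; and similarly for $\eta_2,\eta_3$.

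The cleanest bookkeeping is actually to notice that $\eta_1,\eta_3$ can each be rewritten using $w^{g+1}=z^g(z+1)(z-1)$ so that the pole at $z=1$ or $z=-1$ is handled correctly; e.g. $\frac{1}{w(z-1)}=\frac{1}{w(z-1)}$, and on $[0,1]$ we have $x-1<0$ so $\frac{1}{x-1}=-\frac{1}{1-x}$, which combines with $1-x^2=(1-x)(1+x)$ inside the root to produce the stated $\frac{x}{(x^g(1-x^2))^{1/(g+1)}(x+1)}$ with a leading minus sign; the factor $x$ in the numerator comes from $\frac{w^{?}}{\cdots}$ — I will need to be careful here, but this is the one place where a short algebraic identity ($\frac{1}{w(z-1)}=\frac{w}{z^g(z+1)(z-1)^2}\cdot\frac{z^g(z+1)(z-1)}{w^{g+1}}$ type manipulation, or simply clearing the root to a common power) pins down the exponent. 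For $\eta_2=\frac{w\,dz}{(z+1)(z-1)}$ we get $\frac{(x^g(1-x^2))^{1/(g+1)}}{(x+1)(x-1)}dx=-\frac{(x^g(1-x^2))^{1/(g+1)}}{1-x^2}dx$, and the overall sign flips back via the contour orientation to give the stated $+$. The main obstacle is precisely this branch-and-sign accounting: making sure that across the three relevant intervals determined by the points $z=0,1$ (and the endpoints $-\tfrac12,\tfrac32$ of the diameter, though by symmetry only $[0,1]$ survives after folding) the unimodular twist factors line up to produce exactly $2i\sin\!\left(\tfrac{\pi}{g+1}\right)$ and not some interval-dependent combination, and that each rational prefactor lands with the sign claimed. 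This is routine but error-prone; I would organize it by first doing $\xi_1$ in full detail to fix conventions, then treating $\xi_2,\eta_1,\eta_2,\eta_3$ by the identical substitution, citing \cite[Section 3]{HM2} for the structure of the homology basis and the branch of the root since $\Sigma_g$ here is literally the Costa--Hoffman--Meeks Riemann surface. Once all five are in hand the lemma follows immediately.
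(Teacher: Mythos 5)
Your overall strategy is the one the paper uses: collapse the loop $\tilde c$ onto the branch cut $[0,1]$ (the circle $|z-\tfrac12|=1$ encircles the branch points $0$ and $1$ but not $-1$), so that the two sides of the cut contribute the phase factors $e^{\pm i\pi/(g+1)}$ from the chosen branch of $w=(z^g(z+1)(z-1))^{1/(g+1)}$, producing the common prefactor $2i\sin\bigl(\tfrac{\pi}{g+1}\bigr)$ times a real integral. This works verbatim for $\xi_1$, $\xi_2$, $\eta_2$, $\eta_3$, because in each of those cases the collapsed integrand has only integrable singularities at $x=0$ and $x=1$ (exponents $-\tfrac{g}{g+1}$ and $\tfrac{1}{g+1}-1$ or $-\tfrac{1}{g+1}$), so the contributions of the small circles around the branch points vanish as $\epsilon\to0$.

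The genuine gap is your treatment of $\eta_1=\frac{dz}{w(z-1)}$. Here the collapsing argument fails: near $z=1$ one has $\bigl|\tfrac{1}{w_0(z-1)}\bigr|\sim \mathrm{const}\cdot\epsilon^{-\frac{g+2}{g+1}}$ on $|z-1|=\epsilon$, so the small circle around $z=1$ does \emph{not} contribute zero in the limit, and the naive substitution yields the divergent integral $\int_0^1\frac{dx}{(x^g(1-x^2))^{1/(g+1)}(1-x)}$. Your proposed fix --- rewriting $\frac{1}{x-1}=-\frac{1}{1-x}$ and combining with the root to "produce'' $\frac{x}{(x^g(1-x^2))^{1/(g+1)}(x+1)}$ --- cannot work, because there is no pointwise algebraic identity turning $\frac{1}{w(z-1)}$ into $-\frac{z}{w(z+1)}$; these two $1$-forms agree only up to an exact form. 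That is exactly what the paper exploits: using $w^{g+1}=z^g(z+1)(z-1)$ one computes $d\bigl(\tfrac{z}{w}\bigr)=-\tfrac{1}{g+1}\tfrac{(z^2+1)\,dz}{(z^2-1)w}$, hence $\eta_1+(g+1)\,d\bigl(\tfrac{z}{w}\bigr)=-\tfrac{z\,dz}{w(z+1)}$, whose collapsed integrand is integrable; since the added form is exact its period along $\tilde c$ vanishes, and collapsing the corrected form gives the stated answer with the factor $x/(x+1)$ and the leading minus sign. Without this cohomologous replacement (or an equivalent explicit accounting of the non-vanishing small-circle contribution at $z=1$), your computation of $\int_{\tilde c}\eta_1$ does not go through. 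The minor misstatements about which points lie on the circle $|z-\tfrac12|=1$ are harmless, since you correctly identify that it encircles $0$ and $1$ only.
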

\begin{proof}
Take the branch of $\left(z^g(z+1)(z-1)\right)^{\frac{1}{g+1}}$ defined on $\mathbb{C}\setminus \left((-\infty, -1]\cup [0,1]\right)$, denoted by $w_0$, which satisfies
\begin{align*}
\lim_{\ \epsilon\to 0^+}w_0\left(\frac{1}{2}-i\epsilon\right)=\left(\frac{3}{2^{g+2}}\right)^{\frac{1}{g+1}}e^{-i\frac{\pi}{g+1}}.
\end{align*} 
Then we have $w\circ\tilde{c}=w_0\circ c$. As in \cite{HM2}, we compute the integrals by collapsing the curve $c$ onto the unit interval.

For $\xi_1$, we have
\begin{align*}
\int_{\tilde{c}}\xi_1=\int_{\tilde{c}}\frac{dz}{w}=\int_{c}\frac{dz}{w_0}.
\end{align*}
Since $\left|\frac{1}{w_0}\right|\sim \text{const}\cdot\epsilon^{-\frac{g}{g+1}}$ on $|z|=\epsilon$, and $\left|\frac{1}{w_0}\right|\sim \text{const}\cdot\epsilon^{-\frac{1}{g+1}}$ on $|z-1|=\epsilon$ for small $\epsilon>0$, the above integral collapses to
\begin{align*}
e^{i\frac{\pi}{g+1}}\int_0^1\frac{1}{\left(x^g(1-x^2)\right)^{\frac{1}{g+1}}}dx+e^{-i\frac{\pi}{g+1}}\int_1^0\frac{1}{\left(x^g(1-x^2)\right)^{\frac{1}{g+1}}}dx.
\end{align*}
Therefore, we obtain
\begin{align*}
\int_{\tilde{c}}\xi_1=2i\sin\left(\frac{\pi}{g+1}\right)\int_0^1\frac{1}{\left(x^g(1-x^2)\right)^{\frac{1}{g+1}}}dx.
\end{align*}
Similar computations give 
\begin{align*}
&\int_{\tilde{c}}\xi_2=\int_{c}\frac{w_0dz}{z(z+1)(z-1)}=2i\sin\left(\frac{\pi}{g+1}\right)\int_0^1\frac{\left(x^g(1-x^2)\right)^{\frac{1}{g+1}}}{x(1-x^2)}dx,\\
&\int_{\tilde{c}}\eta_2=\int_{c}\frac{w_0dz}{(z+1)(z-1)}=2i\sin\left(\frac{\pi}{g+1}\right)\int_0^1\frac{\left(x^g(1-x^2)\right)^{\frac{1}{g+1}}}{1-x^2}dx,\\
&\int_{\tilde{c}}\eta_3=\int_{c}\frac{dz}{w_0(z+1)}=2i\sin\left(\frac{\pi}{g+1}\right)\int_0^1\frac{1}{\left(x^g(1-x^2)\right)^{\frac{1}{g+1}}(x+1)}dx.
\end{align*}

However, for $\eta_1=\frac{dz}{w(z-1)}$, a modification as in \cite{HM2} is required, since $\left|\frac{1}{w_0(z-1)}\right|\sim\text{const}\cdot\epsilon^{-\frac{g+2}{g+1}}$ on $|z-1|=\epsilon$ for small $\epsilon>0$. Consider the exact $1$-form 
\begin{align*}
d\left(\frac{z}{w}\right)=\frac{1}{w}dz-\frac{z}{w^2}dw.
\end{align*}
Differentiating both sides of $w^{g+1}=z^g(z+1)(z-1)$, we derive
\begin{align*}
dw=\frac{z^{g-1}((g+1)z^2-g)}{(g+1)w^g}dz.
\end{align*}
Substituting into the above, we obtain
\begin{align*}
d\left(\frac{z}{w}\right)&=\frac{1}{w}dz-\frac{z^{g}((g+1)z^2-g)}{(g+1)w^{g+2}}dz\\
&=\frac{1}{w}dz-\frac{(g+1)z^2-g}{(g+1)w(z^2-1)}dz\\
&=-\frac{1}{g+1}\frac{(z^2+1)dz}{(z^2-1)w},
\end{align*}
where we used $w^{g+1}=z^g(z+1)(z-1)$ in the second step. Then we compute
\begin{align*}
\eta_1+(g+1)d\left(\frac{z}{w}\right)=-\frac{zdz}{w(z+1)},
\end{align*}
which has the desired order near $z=0$ and $z=1$. Thus, a similar computation gives
\begin{align*}
\int_{\tilde{c}}\eta_1&=\int_{\tilde{c}}\left(\eta_1+(g+1)d\left(\frac{z}{w}\right)\right)\\
&=\int_{c}-\frac{zdz}{w_0(z+1)}=-2i\sin\left(\frac{\pi}{g+1}\right)\int_0^1\frac{x}{\left(x^g(1-x^2)\right)^{\frac{1}{g+1}}(x+1)}dx.
\end{align*}
\end{proof}

We prove the claim by contradiction. Suppose that the Weierstrass data $\phi_1$, $\phi_2$, $\phi_3$, and $\phi_4$ do not have real periods. Then, for any closed curve $\sigma$ on $\Sigma_g$, we have
\begin{align*}
\text{Re}\int_{\sigma}\phi_1&=\text{Re}\int_{\sigma}\left(\frac{\alpha+\gamma}{2}\xi_1-\beta\xi_2+\alpha\eta_1+\beta\eta_2\right)=0,\\
\text{Re}\int_{\sigma}\phi_2&=\text{Re}\int_{\sigma}\left(i\frac{\alpha+\gamma}{2}\xi_1+i\beta\xi_2+i\alpha\eta_1-i\beta\eta_2\right)\\
&=-\text{Im}\int_{\sigma}\left(\frac{\alpha+\gamma}{2}\xi_1+\beta\xi_2+\alpha\eta_1-\beta\eta_2\right)=0.
\end{align*}
Combining these, we obtain
\begin{align}\label{TTT1}
\int_{\sigma}\left(\frac{\alpha+\gamma}{2}\xi_1+\alpha\eta_1\right)=\overline{\int_{\sigma}\left(\beta\xi_2-\beta\eta_2\right)}.
\end{align}
Similarly, it follows from 
\begin{align*}
\text{Re}\int_{\sigma}\phi_3&=\text{Re}\int_{\sigma}\left(-\frac{\alpha+\gamma}{2}\xi_1+\beta\xi_2+\beta\eta_2+\gamma\eta_3\right)=0,\\
\text{Re}\int_{\sigma}\phi_4&=\text{Re}\int_{\sigma}\left(i\frac{\alpha+\gamma}{2}\xi_1+i\beta\xi_2+i\beta\eta_2-i\gamma\eta_3\right)\\
&=-\text{Im}\int_{\sigma}\left(\frac{\alpha+\gamma}{2}\xi_1+\beta\xi_2+\beta\eta_2-\gamma\eta_3\right)=0
\end{align*}
that
\begin{align}\label{TTT2}
\int_{\sigma}\left(\frac{\alpha+\gamma}{2}\xi_1-\gamma\eta_3\right)=\overline{\int_{\sigma}\left(\beta\xi_2+\beta\eta_2\right)}.
\end{align}

Evaluating (\ref{TTT1}) and (\ref{TTT2}) along the curves $\tilde{c}$ and $\lambda\circ\tilde{c}$ give the following:
\begin{lemma}
We have $\alpha+\gamma=0$.
\end{lemma}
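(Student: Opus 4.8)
The plan is to exploit the dihedral symmetry of $\Sigma_g$, which forces the period vectors along $\tilde c$ and $\lambda\circ\tilde c$ to be related by the explicit action of $\lambda$, and combine this with the period identities \eqref{TTT1} and \eqref{TTT2} to extract a scalar relation. First I would record how $\lambda(z,w)=(-z,\delta w)$ pulls back the five basic differentials $\xi_1,\xi_2,\eta_1,\eta_2,\eta_3$; each is an eigenform for $\lambda^*$ up to the change of variable $z\mapsto -z$, so $\int_{\lambda\circ\tilde c}$ of each one is a constant (a power of $\delta$ times a sign) multiple of the corresponding integral computed along a curve encircling $-\tfrac12$, which by the $\kappa$-symmetry reduces to the conjugate or a rotate of the integrals in Lemma~\ref{Lem613}. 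In particular the quantities $I_1:=\int_{\tilde c}\xi_1$, $I_2:=\int_{\tilde c}\xi_2$, $J_1:=\int_{\tilde c}\eta_1$, $J_2:=\int_{\tilde c}\eta_2$, $J_3:=\int_{\tilde c}\eta_3$ are all of the form $2i\sin(\tfrac{\pi}{g+1})$ times a \emph{positive real} number (this is visible from the integral representations, since every integrand in Lemma~\ref{Lem613} is real and of one sign on $(0,1)$, with $J_1<0$ and the rest positive).

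Next I would substitute $\sigma=\tilde c$ into \eqref{TTT1} and \eqref{TTT2}. Because $I_1,J_1,J_2,I_2,J_3$ are all purely imaginary (real multiples of $i$), taking complex conjugates on the right-hand sides of \eqref{TTT1}, \eqref{TTT2} simply flips a sign. Writing $I_1=i\rho_1$, $I_2=i\rho_2$, $J_1=i\tau_1$, $J_2=i\tau_2$, $J_3=i\tau_3$ with $\rho_j,\tau_j\in\mathbb R$ (and $\tau_1<0$, the rest $>0$), equation \eqref{TTT1} becomes
\begin{align}\label{proofeqA}
i\left(\frac{\alpha+\gamma}{2}\rho_1+\alpha\tau_1\right)=-i\left(\bar\beta\rho_2-\bar\beta\tau_2\right),
\end{align}
and \eqref{TTT2} becomes
\begin{align}\label{proofeqB}
i\left(\frac{\alpha+\gamma}{2}\rho_1-\gamma\tau_3\right)=-i\left(\bar\beta\rho_2+\bar\beta\tau_2\right).
\end{align}
Then I would repeat the substitution $\sigma=\lambda\circ\tilde c$, which produces two more equations of the same shape but with $\rho_j,\tau_j$ replaced by the rotated/conjugated values coming from $\lambda^*$; adding and subtracting the four equations appropriately should eliminate $\bar\beta$ (and its rotate) entirely, leaving a homogeneous linear relation purely among $\alpha$, $\gamma$, $\alpha+\gamma$. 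The key point is that the coefficient of $\alpha-\gamma$ (or whichever independent combination survives) will be a nonzero real number — a product of the $\rho_j,\tau_j$ and trigonometric factors, none of which vanish because the integrands in Lemma~\ref{Lem613} are strictly of one sign — forcing the surviving combination to vanish and yielding $\alpha+\gamma=0$.

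The main obstacle I anticipate is bookkeeping the $\lambda^*$-action correctly: the factor $\delta=e^{i g\pi/(g+1)}$ interacts differently with $\xi_1,\xi_2$ (which carry one power of $1/w$, resp.\ $w/(\cdots)$) than with $\eta_1,\eta_2,\eta_3$, and one must also track the orientation-reversal in $z\mapsto -z$ together with the $\kappa$-conjugation that identifies the integral over a loop around $-\tfrac12$ with a conjugate of a loop around $\tfrac12$. Getting these phases exactly right is what determines whether $\bar\beta$ really drops out and whether the residual coefficient is manifestly nonzero. A secondary subtlety is confirming that the residual real coefficient cannot accidentally vanish for special $g$; here I would fall back on the sign information — each of $\rho_1,\rho_2,\tau_2,\tau_3>0$ and $\tau_1<0$, established directly from the positivity of the integrands on $(0,1)$ — to rule this out uniformly in $g\ge 2$. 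Once $\alpha+\gamma=0$ is in hand, the Weierstrass data of case $(3)$ collapse dramatically, which is what the subsequent argument needs.
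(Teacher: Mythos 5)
Your overall strategy coincides with the paper's: evaluate (\ref{TTT1}) along $\tilde c$ and (\ref{TTT2}) along $\lambda\circ\tilde c$, push the latter back to $\tilde c$ via $\lambda^*$, observe that the two right-hand sides become the same quantity $\overline{\int_{\tilde c}(\beta\xi_2-\beta\eta_2)}$, and subtract to get $(\alpha+\gamma)\int_{\tilde c}(\xi_1+\eta_1)=0$. Two points in your outline, however, are stated incorrectly and sit exactly at the places you flag as critical. First, it is not true that each of the five differentials is an eigenform of $\lambda^*$: one computes $\lambda^*\xi_1=-\delta^{-1}\xi_1$, $\lambda^*\xi_2=\delta\xi_2$, $\lambda^*\eta_2=-\delta\eta_2$, but $\lambda^*\eta_3=\delta^{-1}\eta_1$ and $\lambda^*\eta_1=\delta^{-1}\eta_3$, so $\eta_1$ and $\eta_3$ are \emph{interchanged}. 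This interchange is the whole mechanism of the proof: it converts the $\gamma\eta_3$-period in (\ref{TTT2}) into a $\gamma\eta_1$-period, landing $\gamma$ in the same slot as the $\alpha\eta_1$-period of (\ref{TTT1}), while the factor $\delta^{-1}$ on the left matches $\overline{\delta}=\delta^{-1}$ produced by conjugation on the right, so that $\overline{\beta}$ cancels identically. If you treat $\eta_1$ and $\eta_3$ as eigenforms, $\overline{\beta}$ will not drop out and no relation between $\alpha$ and $\gamma$ emerges. (There is also no need for the detour through $\kappa$ and loops around $-\tfrac12$; the pullback identities above reduce everything directly to integrals over $\tilde c$.) Second, the surviving coefficient is $\rho_1+\tau_1$ in your notation, and since you yourself note $\tau_1<0<\rho_1$, its nonvanishing does \emph{not} follow from the sign-definiteness of the individual integrands; you must combine them, which gives
\begin{align*}
\int_{\tilde c}(\xi_1+\eta_1)=2i\sin\left(\tfrac{\pi}{g+1}\right)\int_0^1\frac{dx}{\left(x^g(1-x^2)\right)^{\frac{1}{g+1}}(x+1)}\neq0,
\end{align*}
exactly as in Lemma \ref{Lem613}. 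With these two repairs your argument becomes the paper's proof.
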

\begin{proof}
Let us consider (\ref{TTT2}) with $\sigma=\lambda\circ\tilde{c}$. It is straightforward to verify that
\begin{align*}
\lambda^*\xi_1=-\frac{1}{\delta}\xi_1,\quad \lambda^*\xi_2=\delta\xi_2,\quad \lambda^*\eta_2=-\delta\eta_2,\quad \lambda^*\eta_3=\frac{1}{\delta}\eta_1.
\end{align*}
Then we compute
\begin{align*}
\int_{\lambda\circ\tilde{c}}\left(\frac{\alpha+\gamma}{2}\xi_1-\gamma\eta_3\right)=\int_{\tilde{c}}\left(\frac{\alpha+\gamma}{2}\lambda^*\xi_1-\gamma\lambda^*\eta_3\right)=-\frac{1}{\delta}\int_{\tilde{c}}\left(\frac{\alpha+\gamma}{2}\xi_1+\gamma\eta_1\right)
\end{align*}
and
\begin{align*}
\overline{\int_{\lambda\circ\tilde{c}}\left(\beta\xi_2+\beta\eta_2\right)}=\overline{\int_{\tilde{c}}\left(\beta\lambda^*\xi_2+\beta\lambda^*\eta_2\right)}=\overline{\delta\int_{\tilde{c}}\left(\beta\xi_2-\beta\eta_2\right)}=\frac{1}{\delta}\cdot\overline{\int_{\tilde{c}}\left(\beta\xi_2-\beta\eta_2\right)}.
\end{align*}
Thus, from (\ref{TTT2}), we obtain
\begin{align*}
-\int_{\tilde{c}}\left(\frac{\alpha+\gamma}{2}\xi_1+\gamma\eta_1\right)=\overline{\int_{\tilde{c}}\left(\beta\xi_2-\beta\eta_2\right)}.
\end{align*}
On the other hand, from (\ref{TTT1}) with $\sigma=\tilde{c}$, we have
\begin{align*}
\int_{\tilde{c}}\left(\frac{\alpha+\gamma}{2}\xi_1+\alpha\eta_1\right)=\overline{\int_{\tilde{c}}\left(\beta\xi_2-\beta\eta_2\right)}.
\end{align*}
Comparing these, we conclude that
\begin{align*}
(\alpha+\gamma)\int_{\tilde{c}}(\xi_1+\eta_1)=0. 
\end{align*}
Since Lemma \ref{Lem613} shows that
\begin{align*}
\int_{\tilde{c}}(\xi_1+\eta_1)=2i\sin\left(\frac{\pi}{g+1}\right)\int_0^1\frac{1}{\left(x^g(1-x^2)\right)^{\frac{1}{g+1}}(x+1)}dx\neq0,
\end{align*}
we deduce that $\alpha+\gamma=0$.
\end{proof}
This observation, together with (\ref{TTT1}) and (\ref{TTT2}), implies that $\alpha$ and $\beta$ satisfy the following:
\begin{align}\label{TTT3}
\alpha\int_{\tilde{c}}\eta_1=\overline{\beta}\cdot\overline{\int_{\tilde{c}}(\xi_2-\eta_2)},\quad \alpha\int_{\tilde{c}}\eta_3=\overline{\beta}\cdot\overline{\int_{\tilde{c}}(\xi_2+\eta_2)}.
\end{align}
In the next lemma, we prove that this is impossible, leading to a contradiction. Therefore, the Weierstrass data in case $(3)$ of Lemma \ref{Lem68} must have real periods.

\begin{lemma}
There exist no nonzero complex numbers $\alpha$ and $\beta$ such that (\ref{TTT3}) holds.
\end{lemma}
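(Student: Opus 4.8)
The plan is to argue by contradiction: assume nonzero $\alpha,\beta$ satisfy (\ref{TTT3}) and derive an incompatible sign condition. The first step is to record, via Lemma \ref{Lem613}, that every period along $\tilde{c}$ appearing in (\ref{TTT3}) is \emph{purely imaginary}. Writing $s:=2\sin\!\big(\tfrac{\pi}{g+1}\big)>0$, Lemma \ref{Lem613} gives
\[
\int_{\tilde{c}}\eta_1=-is\,I_1,\qquad \int_{\tilde{c}}\eta_3=is\,I_3,\qquad \int_{\tilde{c}}\xi_2=is\,I_{\xi},\qquad \int_{\tilde{c}}\eta_2=is\,I_{\eta},
\]
where $I_1,I_3,I_\xi,I_\eta$ denote the four explicit integrals over $[0,1]$ from that lemma. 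Each of these four real integrals is finite and strictly positive: the integrands are positive on $(0,1)$, and the exponents of $x$ near $0$ and of $1-x$ near $1$ are all strictly greater than $-1$, so convergence holds. (Note that the $\xi_1$-terms have already dropped out of (\ref{TTT3}) thanks to $\alpha+\gamma=0$, which is exactly what makes the remaining system rigid.)

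Next I would substitute these expressions into (\ref{TTT3}) and use $\overline{isX}=-isX$ for real $X$; the common factor $is$ cancels and the two equations of (\ref{TTT3}) reduce to the real-coefficient relations
\[
\alpha\,I_1=\overline{\beta}\,(I_\xi-I_\eta),\qquad \alpha\,I_3=-\,\overline{\beta}\,(I_\xi+I_\eta).
\]
If $\overline{\beta}=0$ the first relation forces $\alpha=0$, and if $\alpha=0$ it forces $\overline{\beta}(I_\xi-I_\eta)=0$; so once we know $I_\xi\neq I_\eta$, it suffices to rule out the case $\alpha,\overline{\beta}$ both nonzero.

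The crux is the pointwise comparison of the integrands of $\xi_2$ and $\eta_2$ in Lemma \ref{Lem613}: their quotient is exactly $1/x$, which is $>1$ on $(0,1)$, so $I_\xi>I_\eta>0$. Then the first relation yields $\alpha/\overline{\beta}=(I_\xi-I_\eta)/I_1>0$, whereas the second yields $\alpha/\overline{\beta}=-(I_\xi+I_\eta)/I_3<0$, a contradiction. Hence no such $\alpha,\beta$ exist, completing the proof of the lemma (and, together with the preceding lemmas, excluding case $(3)$ of Lemma \ref{Lem68}).

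I do not anticipate a real obstacle here: the argument is entirely sign bookkeeping resting on Lemma \ref{Lem613}. The only points needing mild care are confirming that all four real integrals are finite and positive — so that the inequality $I_\xi>I_\eta$ and the sign of $\alpha/\overline{\beta}$ are meaningful — and correctly tracking the conjugations forced by the right-hand sides of (\ref{TTT3}).
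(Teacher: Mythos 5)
Your proof is correct and follows essentially the same route as the paper: substitute the purely imaginary periods from Lemma \ref{Lem613}, cancel the common factor, and derive the incompatible sign conditions $\alpha/\overline{\beta}>0$ and $\alpha/\overline{\beta}<0$. The only cosmetic difference is that the paper combines the integrands of $\xi_2\mp\eta_2$ into the single positive integrands $\frac{(x^g(1-x^2))^{1/(g+1)}}{x(x+1)}$ and $\frac{(x^g(1-x^2))^{1/(g+1)}}{x(1-x)}$, whereas you compare $I_\xi$ and $I_\eta$ pointwise; both are fine.
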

\begin{proof}
By Lemma \ref{Lem613}, (\ref{TTT3}) is equivalent to 
\begin{align*}
\alpha\int_0^1\frac{x}{\left(x^g(1-x^2)\right)^{\frac{1}{g+1}}(x+1)}dx&=\overline{\beta}\int_0^1\frac{\left(x^g(1-x^2)\right)^{\frac{1}{g+1}}}{x(x+1)}dx,\\
-\alpha\int_0^1\frac{1}{\left(x^g(1-x^2)\right)^{\frac{1}{g+1}}(x+1)}dx&=\overline{\beta}\int_0^1\frac{\left(x^g(1-x^2)\right)^{\frac{1}{g+1}}}{x(1-x)}dx.
\end{align*}
The first identity implies that $\frac{\overline{\beta}}{\alpha}>0$, while the second implies $\frac{\overline{\beta}}{\alpha}<0$. Therefore, no nonzero complex numbers $\alpha$ and $\beta$ satisfy (\ref{TTT3}).
\end{proof}

\subsection{Degree of the generalized Gauss map}\label{subsec65}
This subsection completes the proof of Theorem \ref{Thm61}. The previous subsections ruled out cases $(1)$, $(2)$, and $(3)$ in Lemma \ref{Lem68}, where none of them yield admissible Weierstrass data. Therefore, we now turn to case $(4)$. According to Lemma \ref{Lem612}, we have
\begin{align*}
&\phi_1=h_{10}(1-z)\frac{dz}{w^3}+\alpha\eta_1+\beta\eta_2+0,\\
&\phi_2=-ih_{10}(1-z)\frac{dz}{w^3}+i\alpha\eta_1+i\beta\eta_2+0,\\
&\phi_3=h_{10}(1+z)\frac{dz}{w^3}+0+\beta\eta_2+\gamma\eta_3,\\
&\phi_4=ih_{10}(1+z)\frac{dz}{w^3}+0-i\beta\eta_2-i\gamma\eta_3
\end{align*} 
for some complex number $h_{10}$ and nonzero complex numbers $\alpha$, $\beta$, and $\gamma$ satisfying $h_{10}(2\beta-\alpha+\gamma)=0$. Here, the underlying Riemann surface has genus $g=3$. 

We analyze the degree of the generalized Gauss map given by the pair $(G_1, G_2)$ (see (\ref{GaussMeroftn})). Recall that the Gauss curvature $K$ and the normal Gauss curvature $K^{\perp}$ can be expressed using the Jacobians of $G_1$ and $G_2$: namely, $K=\text{Jac}(G_1)+\text{Jac}(G_2)$ and $K^{\perp}=\text{Jac}(G_1)-\text{Jac}(G_2)$. We refer to \cite[Proposition 4.5]{HO2} for further details. Since the surface $S_3$ has three embedded planar ends, the Gauss-Bonnet formula implies that
\begin{align}\label{RRR1}
\deg G_1+\deg G_2=2g+4=10.
\end{align}
In addition, we have
\begin{align*}
\left|\deg G_1-\deg G_2\right|=\left|\frac{1}{2\pi}\int_{S_3}K^{\perp}dA\right|.
\end{align*}

By applying the stereographic projection, we may regard $S_3$ as a surface in the round $4$-sphere $\mathbb{S}^4$. Since the surface has embedded planar ends, the normal bundle of $S_3$ naturally extends to the compactified surface $\overline{S_3}:=S_3\cup\{\infty\}$. Due to the conformal invariance of $K^{\perp}dA$, we also have 
\begin{align*}
\left|\frac{1}{2\pi}\int_{S_3}K^{\perp}dA\right|=\left|\frac{1}{2\pi}\int_{\overline{S_3}}K^{\perp}dA\right|.
\end{align*}

The last integral is known to be equal to twice the self-intersection number of $\overline{S_3}$. Since $S_3$ is assumed to be embedded, the only possible self-intersection of $\overline{S_3}$ occurs at $\infty$. Based on the orientation of embedded planar ends given in the Weierstrass data above, we conclude that
\begin{align}\label{RRR2}
\left|\deg G_1-\deg G_2\right|=\left|\frac{1}{2\pi}\int_{\overline{S_3}}K^{\perp}dA\right|=6.
\end{align}
From (\ref{RRR1}) and (\ref{RRR2}), it follows that
\begin{align}\label{RRR3}
(\deg G_1, \deg G_2)=(2,8)\ \text{or}\ (8,2).
\end{align}

In the following lemma, we prove that (\ref{RRR3}) cannot hold for the Weierstrass data considered above. This rules out the last remaining case of Lemma \ref{Lem68}, thereby completing the proof of Theorem \ref{Thm61}.

\begin{lemma}
The generalized Gauss map $(G_1, G_2)$ corresponding to the above Weierstrass data does not satisfy (\ref{RRR3}).
\end{lemma}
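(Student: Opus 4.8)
The plan is to compute the pair $(G_1,G_2)$ explicitly from the Weierstrass data of case $(4)$ and simply read off a degree that is incompatible with (\ref{RRR3}). First I would form the three combinations entering (\ref{GaussMeroftn}). Substituting the data above, the holomorphic terms $h_{10}(1\mp z)\frac{dz}{w^3}$ cancel in the first two combinations, leaving
\begin{align*}
\phi_1-i\phi_2=2\alpha\eta_1+2\beta\eta_2,\qquad \phi_3+i\phi_4=2\beta\eta_2+2\gamma\eta_3,\qquad -\phi_3+i\phi_4=-2h_{10}(1+z)\tfrac{dz}{w^3}.
\end{align*}
Since $\alpha,\beta\neq0$ we have $\phi_1-i\phi_2\not\equiv0$, so (\ref{GaussMeroftn}) applies and gives
\begin{align*}
G_1=\frac{\beta\eta_2+\gamma\eta_3}{\alpha\eta_1+\beta\eta_2},\qquad G_2=\frac{-h_{10}(1+z)\tfrac{dz}{w^3}}{\alpha\eta_1+\beta\eta_2}.
\end{align*}
Using the explicit forms of $\eta_1,\eta_2,\eta_3$ from Lemma \ref{Lem63} and cancelling the common factor $\frac{dz}{w}$, this simplifies to
\begin{align*}
G_1=\frac{(\beta z+\beta+\gamma)(z-1)}{(\beta z+\alpha-\beta)(z+1)}.
\end{align*}

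Next I would invoke the constraint $h_{10}(2\beta-\alpha+\gamma)=0$ from Lemma \ref{Lem612} and split into two cases. If $h_{10}=0$, then $G_2\equiv0$, so $\deg G_2=0$, and $(\deg G_1,\deg G_2)$ is neither $(2,8)$ nor $(8,2)$. If instead $h_{10}\neq0$, then $\alpha=2\beta+\gamma$, hence $\alpha-\beta=\beta+\gamma$, and the factor $\beta z+\beta+\gamma$ — which is not identically zero since $\beta\neq0$ — cancels, giving $G_1=\dfrac{z-1}{z+1}$. Because the projection $(z,w)\mapsto z$ is a degree-$4$ map $\Sigma_3\to\mathbb{P}^1$ (the polynomial $z(z+1)(z-1)$ has degree coprime to $4$) and $\zeta\mapsto\frac{\zeta-1}{\zeta+1}$ is a Möbius transformation, we obtain $\deg G_1=4$; together with $\deg G_1+\deg G_2=10$ from (\ref{RRR1}) this forces $(\deg G_1,\deg G_2)=(4,6)$, again neither $(2,8)$ nor $(8,2)$. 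In both cases (\ref{RRR3}) fails, which is the assertion of the lemma, and hence case $(4)$ of Lemma \ref{Lem68} is excluded, completing the proof of Theorem \ref{Thm61}.

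The argument is essentially mechanical, so I do not expect a serious obstacle. The two points that require care are: (i) checking that the holomorphic parts of the data cancel \emph{exactly} in the combinations $\phi_1-i\phi_2$ and $\phi_3+i\phi_4$ — this is precisely the structural reason why $G_1$ ends up depending only on $z$; and (ii) confirming, when $h_{10}\neq0$, that the surviving Möbius factor in $G_1$ does not collapse to a constant, i.e. that $\beta z+\beta+\gamma\not\equiv0$. Both follow immediately from $\alpha,\beta,\gamma\neq0$, and the only external input is the degree count for the $z$-projection of $\Sigma_3$, which is standard.
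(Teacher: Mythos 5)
Your proof is correct and follows essentially the same route as the paper: split on whether $h_{10}$ vanishes, note that a component of the Gauss map is constant (here $G_2\equiv0$) when $h_{10}=0$, and show $\deg G_1=4$ when $h_{10}\neq0$, which is incompatible with (\ref{RRR3}). The only difference is cosmetic — you compute $G_1=\frac{\phi_3+i\phi_4}{\phi_1-i\phi_2}$ directly and cancel the common linear factor using the constraint $\alpha=2\beta+\gamma$, whereas the paper obtains $1/G_1=\frac{-\phi_3+i\phi_4}{\phi_1+i\phi_2}=\frac{z+1}{z-1}$ from the quadric identity $\sum_j\phi_j^2\equiv0$ without needing that relation.
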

\begin{proof}
We compute
\begin{align*}
\phi_1+i\phi_2=2h_{10}(1-z)\frac{dz}{w^3},\quad -\phi_3+i\phi_4=-2h_{10}(1+z)\frac{dz}{w^3}
\end{align*}
from the above expressions. If $h_{10}=0$, then both $\phi_1+i\phi_2$ and $-\phi_3+i\phi_4$ vanish identically. As in the case of Example \ref{EXDCa}, this implies that one of the functions $G_1$ or $G_2$ must be constant. Hence, either $\deg G_1=0$ or $\deg G_2=0$, which contradicts (\ref{RRR3}).

If $h_{10}\neq 0$, we compute
\begin{align*}
\frac{-\phi_3+i\phi_4}{\phi_1+i\phi_2}=\frac{z+1}{z-1}.
\end{align*}
On $\Sigma_3=\left\{(z, w)\in\left(\mathbb{C}\cup\{\infty\}\right)\times\left(\mathbb{C}\cup\{\infty\}\right)\ |\ w^4=z(z+1)(z-1)\right\}$, this meromorphic function has degree $4$. 

On the other hand, using the identity $\sum_{j=1}^4\phi_j^2\equiv0$ and (\ref{GaussMeroftn}), we have
\begin{align*}
\frac{-\phi_3+i\phi_4}{\phi_1+i\phi_2}=\frac{\phi_1-i\phi_2}{\phi_3+i\phi_4}=\frac{1}{G_1}.
\end{align*}
Therefore, $\deg G_1=4$, which again contradicts (\ref{RRR3}).
\end{proof}
\section{final remarks}\label{finalremarks}
\setcounter{equation}{0}
We have shown that a complete, oriented, embedded minimal surface in $\mathbb{R}^4$ with finite total curvature and three embedded planar ends must be $J$-holomorphic under two separate scenarios: either when the genus is $1$, or when the genus $g$ is at least $2$ and the surface admits at least $4(g+1)$ symmetries. The appearance of holomorphicity in these cases is somewhat unexpected from a geometric perspective, and further investigation is required.

First, in all the cases we considered, the asymptotic planes of the embedded ends are aligned holomorphically or anti-holomorphically with respect to some almost complex structure. This raises the question of whether one can construct non-holomorphic examples when the asymptotic planes are not of such type. In particular, it would be interesting to investigate whether non-holomorphic examples can exist under these asymptotic end conditions and the presence of large symmetry. This provides a potential direction for future research.

Second, unlike ends with logarithmic growth, embedded planar ends automatically have vanishing periods at the punctures. This suggests that the assumption of embedded planar ends is more algebraic in nature, as also seen in our computations. Therefore, when a surface has embedded planar ends, holomorphicity might still arise under alternative conditions even without assuming a large number of symmetries. For instance, since a holomorphic curve cannot contain a straight line unless it is a plane, one could ask whether a plane can be characterized by the conditions of containing a line and having embedded planar ends.

Third, when $g\geq2$, it remains an open problem whether non-holomorphic examples can exist when the number of symmetries is less than $4(g+1)$. Addressing this may require developing new techniques that are more adaptable to higher codimension settings.

Lastly, in the genus $1$ case, we were able to carry out a complete classification without any assumption on symmetry through direct computation of periods. This leads to the possibility of studying non-holomorphic examples when planar ends and ends with logarithmic growth coexist. For instance, one may ask whether the union of a doubly-connected minimal surface with logarithmic growth (from the classification of Hoffman and Osserman \cite[Proposition 6.6]{HO}) and a plane can be desingularized. While our construction was motivated by the circle-foliated nature of the Lagrangian catenoid, analogous to the catenoid in $\mathbb{R}^3$, exploring this idea from the perspective of logarithmic growth may yield another natural generalization.


\section{Appendix}
\setcounter{equation}{0}
\subsection{Appendix A} We compute the values of $\Theta$ between $Q_1(=Q_1(a))$, $Q_2(=Q_2(a, r_0))$, and $Q_3$ as defined in Section \ref{sym}. To calculate $\Theta$, we first derive an expression for the unit vectors in each vector space. For a $2$-dimensional subspace $V$ of $\mathbb{R}^4$ with a basis $\{v_1, v_2\}$, the set of linear combinations $(\cos\alpha)v_1+(\sin\alpha)v_2$ for $\alpha\in\mathbb{R}$ forms an ellipse within $V$. By normalizing each vector in this set, we obtain
\begin{align*}
\left\{\frac{(\cos\alpha)v_1+(\sin\alpha)v_2}{\left|(\cos\alpha)v_1+(\sin\alpha)v_2\right|}\in V\ \bigg{|}\ \alpha\in\mathbb{R}\right\},
\end{align*} 
which represents the complete set of unit vectors in $V$. Using the bases introduced at the beginning of Section \ref{sym}, we have the following expressions for unit vectors in $Q_1$, $Q_2$, and $Q_3$, respectively:
\begin{align*}
&u_1(\alpha):=\frac{1}{\sqrt{1+|a|^2}}
\begin{pmatrix}
\cos\alpha\\ \sin\alpha\\ |a|\cos(\alpha+\theta_a)\\ |a|\sin(\alpha+\theta_a)
\end{pmatrix},\\ 
&u_2(\alpha):=\frac{1}{\sqrt{1+{r_0}^4(1+|a|^2)+2{r_0}^2|a|\cos(2\alpha+\theta_a)}}
\begin{pmatrix}
{r_0}^2\cos\alpha\\ {r_0}^2\sin\alpha\\ {r_0}^2|a|\cos(\alpha+\theta_a)+\cos\alpha\\ {r_0}^2|a|\sin(\alpha+\theta_a)-\sin\alpha
\end{pmatrix},\\ 
&u_3(\alpha):=\begin{pmatrix}
0\\ 0\\ \cos\alpha\\ \sin\alpha
\end{pmatrix}.
\end{align*}

We now compute the values $\Theta_{Q_i, Q_j}$ in the following lemmas:
\begin{lemma}
$\Theta_{Q_2,Q_3}=\frac{1+{r_0}^2|a|}{\sqrt{\left(1+{r_0}^2|a|\right)^2+{r_0}^4}}$.
\end{lemma}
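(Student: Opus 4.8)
The plan is to compute $\Theta_{Q_2,Q_3}$ directly from the explicit unit-vector parametrizations given in Appendix A. By definition, $\Theta_{Q_2,Q_3}$ is the supremum of $\langle u_2(\alpha), u_3(\beta)\rangle$ over all $\alpha,\beta\in\mathbb{R}$. First I would fix $\alpha$ and maximize over $\beta$: since $u_3(\beta)=(0,0,\cos\beta,\sin\beta)^{T}$ and $u_2(\alpha)$ has last two coordinates $\big({r_0}^2|a|\cos(\alpha+\theta_a)+\cos\alpha,\ {r_0}^2|a|\sin(\alpha+\theta_a)-\sin\alpha\big)$ up to the normalization factor, the inner product $\langle u_2(\alpha), u_3(\beta)\rangle$ is, for fixed $\alpha$, a sinusoid in $\beta$ whose maximum over $\beta$ equals the Euclidean norm of the projection of $u_2(\alpha)$ onto $Q_3$, namely
\[
\frac{\sqrt{\big({r_0}^2|a|\cos(\alpha+\theta_a)+\cos\alpha\big)^2+\big({r_0}^2|a|\sin(\alpha+\theta_a)-\sin\alpha\big)^2}}{\sqrt{1+{r_0}^4(1+|a|^2)+2{r_0}^2|a|\cos(2\alpha+\theta_a)}}.
\]

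Next I would simplify both the numerator and denominator under the square roots using the addition formulas. Expanding the numerator gives ${r_0}^4|a|^2+1+2{r_0}^2|a|\big(\cos(\alpha+\theta_a)\cos\alpha-\sin(\alpha+\theta_a)\sin\alpha\big)={r_0}^4|a|^2+1+2{r_0}^2|a|\cos(2\alpha+\theta_a)$. The denominator's radicand is $1+{r_0}^4+{r_0}^4|a|^2+2{r_0}^2|a|\cos(2\alpha+\theta_a)$, which is exactly the numerator's radicand plus ${r_0}^4$. So, writing $t:=\cos(2\alpha+\theta_a)\in[-1,1]$ and $P(t):={r_0}^4|a|^2+1+2{r_0}^2|a|t$, the quantity to maximize over $t$ becomes $\sqrt{P(t)/(P(t)+{r_0}^4)}$. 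Since $s\mapsto s/(s+{r_0}^4)$ is strictly increasing on $s>0$ and $P(t)$ is increasing in $t$, the maximum is attained at $t=1$, i.e.\ at $2\alpha+\theta_a\equiv 0\pmod{2\pi}$, giving $P(1)=({r_0}^2|a|+1)^2$. Substituting yields
\[
\Theta_{Q_2,Q_3}=\sqrt{\frac{(1+{r_0}^2|a|)^2}{(1+{r_0}^2|a|)^2+{r_0}^4}}=\frac{1+{r_0}^2|a|}{\sqrt{(1+{r_0}^2|a|)^2+{r_0}^4}},
\]
as claimed, where in the last step one uses that $1+{r_0}^2|a|>0$ so the square root of the square is the quantity itself.

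There is no serious obstacle here; the argument is a routine optimization. The only point requiring a little care is the interchange of the two suprema (over $\alpha$ and over $\beta$) and the observation that, once $\beta$ is optimized out, what remains is monotone in the single parameter $t=\cos(2\alpha+\theta_a)$, so that the constrained supremum over $\alpha\in\mathbb{R}$ reduces to evaluating at $t=1$. One should also note that the value $t=1$ is actually achieved (choose $\alpha=-\theta_a/2$), so the supremum is a maximum and the formula is exact rather than merely an upper bound. The analogous computations for $\Theta_{Q_1,Q_2}$ and $\Theta_{Q_1,Q_3}$ in the subsequent lemmas proceed along identical lines, with $Q_3$ replaced by $Q_1$ and the corresponding substitution of unit vectors.
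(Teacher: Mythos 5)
Your proposal is correct and follows essentially the same route as the paper: the paper applies Cauchy--Schwarz to bound the inner product by the norm of the $Q_3$-projection of the $u_2$ vector (your step of optimizing out the sinusoid in $\beta$), simplifies the radicand to $1+{r_0}^4|a|^2+2{r_0}^2|a|\cos(2\alpha+\theta_a)$, observes that the resulting expression is monotone in $\cos(2\alpha+\theta_a)$, and checks that equality is attained. The only differences are cosmetic (swapped parameter names and your cleaner packaging of the monotonicity as $s\mapsto s/(s+{r_0}^4)$).
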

\begin{proof}
For $\alpha, \beta\in\mathbb{R}$, the inner product between $u_3(\alpha)$ and $u_2(\beta)$ is given by
\begin{align*}
\left\langle u_3(\alpha), u_2(\beta)\right\rangle=\frac{\left({r_0}^2|a|\cos(\beta+\theta_a)+\cos\beta\right)\cos\alpha+\left({r_0}^2|a|\sin(\beta+\theta_a)-\sin\beta\right)\sin\alpha}{\sqrt{1+{r_0}^4(1+|a|^2)+2{r_0}^2|a|\cos(2\beta+\theta_a)}}.
\end{align*}
By the Cauchy-Schwarz inequality,
\begin{align*}
&\left({r_0}^2|a|\cos(\beta+\theta_a)+\cos\beta\right)\cos\alpha+\left({r_0}^2|a|\sin(\beta+\theta_a)-\sin\beta\right)\sin\alpha\\
&\leq\sqrt{\left({r_0}^2|a|\cos(\beta+\theta_a)+\cos\beta\right)^2+\left({r_0}^2|a|\sin(\beta+\theta_a)-\sin\beta\right)^2}\\
&=\sqrt{1+{r_0}^4|a|^2+2{r_0}^2|a|\cos(2\beta+\theta_a)}.
\end{align*}
It follows that
\begin{align*}
\left\langle u_3(\alpha), u_2(\beta)\right\rangle&\leq\frac{\sqrt{1+{r_0}^4|a|^2+2{r_0}^2|a|\cos(2\beta+\theta_a)}}{\sqrt{1+{r_0}^4(1+|a|^2)+2{r_0}^2|a|\cos(2\beta+\theta_a)}}\\
&=\sqrt{1-\frac{{r_0}^4}{1+{r_0}^4(1+|a|^2)+2{r_0}^2|a|\cos(2\beta+\theta_a)}}\\
&\leq\sqrt{1-\frac{{r_0}^4}{1+{r_0}^4(1+|a|^2)+2{r_0}^2|a|}}\\
&=\frac{1+{r_0}^2|a|}{\sqrt{\left(1+{r_0}^2|a|\right)^2+{r_0}^4}}.
\end{align*}
One can show that the equality 
\begin{align*}
\left\langle u_3(\alpha), u_2(\beta)\right\rangle=\frac{1+{r_0}^2|a|}{\sqrt{\left(1+{r_0}^2|a|\right)^2+{r_0}^4}}
\end{align*}
is achieved when $\cos(2\beta+\theta_a)=1$ and $\cos(\alpha+\beta)=1$. For instance, setting $\alpha=-\beta=\frac{\theta_a}{2}$ satisfies these conditions, completing the proof.
\end{proof}

\begin{lemma}
$\Theta_{Q_1,Q_2}=\frac{1}{\sqrt{1+|a|^2}}\cdot\frac{{r_0}^2\left(1+|a|^2\right)+|a|}{\sqrt{\left(1+{r_0}^2|a|\right)^2+{r_0}^4}}$.
\end{lemma}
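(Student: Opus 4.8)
\textbf{Proof proposal for the computation of $\Theta_{Q_1,Q_2}$.} The plan is to proceed exactly as in the preceding lemma computing $\Theta_{Q_2,Q_3}$, but now pairing a general unit vector $u_1(\alpha)$ in $Q_1$ with a general unit vector $u_2(\beta)$ in $Q_2$, using the explicit parametrizations $u_1(\alpha)$ and $u_2(\beta)$ recorded at the start of the appendix. First I would write out the inner product $\langle u_1(\alpha), u_2(\beta)\rangle$ in terms of $\alpha$, $\beta$, $|a|$, $\theta_a$, and $r_0$. The numerator is a sum of four products of trigonometric terms; using the angle-addition identities it collapses to an expression of the form $\bigl(P(\beta)\bigr)\cos\alpha + \bigl(Q(\beta)\bigr)\sin\alpha$ times $\tfrac{1}{\sqrt{1+|a|^2}}$, where $P$ and $Q$ depend on $\beta$ and the fixed parameters, while the denominator carries the normalizing factor $\sqrt{1+{r_0}^4(1+|a|^2)+2{r_0}^2|a|\cos(2\beta+\theta_a)}$ from $u_2(\beta)$.

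Next I would apply the Cauchy–Schwarz inequality in the variable $\alpha$: for fixed $\beta$,
\begin{align*}
P(\beta)\cos\alpha + Q(\beta)\sin\alpha \le \sqrt{P(\beta)^2 + Q(\beta)^2}.
\end{align*}
The key computation is then to simplify $P(\beta)^2+Q(\beta)^2$. Expanding, one should find that the cross terms combine into a multiple of $\cos(2\beta+\theta_a)$ together with constants, so that $P(\beta)^2+Q(\beta)^2$ has the form $c_0 + c_1\cos(2\beta+\theta_a)$ for explicit constants $c_0, c_1$ depending on $|a|$ and $r_0$. After dividing by the denominator's square, the ratio becomes a function of the single quantity $t := \cos(2\beta+\theta_a)\in[-1,1]$, namely something of the shape $\tfrac{1}{1+|a|^2}\cdot\tfrac{c_0+c_1 t}{1+{r_0}^4(1+|a|^2)+2{r_0}^2|a|t}$; I would then maximize this Möbius-type function of $t$ over $[-1,1]$ — monotonicity in $t$ reduces it to checking the endpoints — and verify the maximum is attained at $t=1$ (i.e. $\cos(2\beta+\theta_a)=1$), yielding the stated value $\tfrac{1}{\sqrt{1+|a|^2}}\cdot\tfrac{{r_0}^2(1+|a|^2)+|a|}{\sqrt{(1+{r_0}^2|a|)^2+{r_0}^4}}$.

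Finally, to confirm the supremum is actually achieved (so that equality, not just $\le$, holds), I would exhibit a concrete pair $(\alpha,\beta)$: choose $\beta$ with $\cos(2\beta+\theta_a)=1$, and then choose $\alpha$ so that the Cauchy–Schwarz equality holds, i.e. $(\cos\alpha,\sin\alpha)$ is the unit vector in the direction $(P(\beta),Q(\beta))$. One checks, as in the previous lemma where $\alpha=-\beta=\tfrac{\theta_a}{2}$ worked, that such a choice is consistent, which closes the argument.

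I expect the main obstacle to be the algebraic simplification of $P(\beta)^2+Q(\beta)^2$ and the verification that, after dividing by the normalization, the resulting function of $t=\cos(2\beta+\theta_a)$ is monotone on $[-1,1]$ so that its maximum sits at $t=1$; the sign of the relevant derivative (equivalently, the sign of $c_1\bigl(1+{r_0}^4(1+|a|^2)\bigr) - 2{r_0}^2|a|\,c_0$) must be pinned down carefully, since a priori one must also rule out the endpoint $t=-1$ giving a larger value. Everything else is routine trigonometric bookkeeping of the same flavor as the already-completed case $\Theta_{Q_2,Q_3}$.
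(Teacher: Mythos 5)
Your proposal is correct and follows essentially the same route as the paper: write out $\langle u_1,u_2\rangle$, apply Cauchy--Schwarz in the $Q_1$-angle for fixed $Q_2$-angle, and maximize the resulting function of $t=\cos(2\beta+\theta_a)$ at $t=1$, then exhibit an equality pair. The only cosmetic difference is that the paper sidesteps your "main obstacle" by noting that the squared ratio collapses to $1-\tfrac{1}{(1+|a|^2)\left(1+{r_0}^4(1+|a|^2)+2{r_0}^2|a|\,t\right)}$, which is manifestly increasing in $t$; your direct check of the sign of $c_1D_0-c_0D_1$ gives the same conclusion.
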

\begin{proof}
For $\alpha, \beta\in \mathbb{R}$, we compute
\begin{align*}
\left\langle u_2(\alpha), u_1(\beta)\right\rangle=\frac{1}{\sqrt{1+|a|^2}}\cdot \frac{{r_0}^2(1+|a|^2)\cos(\alpha-\beta)+|a|\cos(\alpha+\beta+\theta_a)}{\sqrt{1+{r_0}^4(1+|a|^2)+2{r_0}^2|a|\cos(2\alpha+\theta_a)}}.
\end{align*}
Decomposing the numerator as
\begin{align*}
&{r_0}^2(1+|a|^2)\cos(\alpha-\beta)+|a|\cos(\alpha+\beta+\theta_a)\\
&=\left({r_0}^2(1+|a|^2)\cos\alpha+|a|\cos(\alpha+\theta_a)\right)\cos\beta\\
&\ \ \ +\left({r_0}^2(1+|a|^2)\sin\alpha-|a|\sin(\alpha+\theta_a)\right)\sin\beta,
\end{align*}
the Cauchy-Schwarz inequality gives
\begin{align*}
&{r_0}^2(1+|a|^2)\cos(\alpha-\beta)+|a|\cos(\alpha+\beta+\theta_a)\\
&\leq\sqrt{\left({r_0}^2(1+|a|^2)\cos\alpha+|a|\cos(\alpha+\theta_a)\right)^2+\left({r_0}^2(1+|a|^2)\sin\alpha-|a|\sin(\alpha+\theta_a)\right)^2}\\
&=\sqrt{{r_0}^4(1+|a|^2)^2+|a|^2+2{r_0}^2|a|(1+|a|^2)\cos(2\alpha+\theta_a)}.
\end{align*}
Substituting this into the expression for $\langle u_2(\alpha),u_1(\beta)\rangle$, we obtain
\begin{align*}
\left\langle u_2(\alpha), u_1(\beta)\right\rangle&\leq\frac{1}{\sqrt{1+|a|^2}}\cdot \frac{\sqrt{{r_0}^4(1+|a|^2)^2+|a|^2+2{r_0}^2|a|(1+|a|^2)\cos(2\alpha+\theta_a)}}{\sqrt{1+{r_0}^4(1+|a|^2)+2{r_0}^2|a|\cos(2\alpha+\theta_a)}}\\
&=\sqrt{1-\frac{1}{1+|a|^2}\cdot\frac{1}{1+{r_0}^4(1+|a|^2)+2{r_0}^2|a|\cos(2\alpha+\theta_a)}}\\
&\leq\sqrt{1-\frac{1}{1+|a|^2}\cdot\frac{1}{1+{r_0}^4(1+|a|^2)+2{r_0}^2|a|}}\\
&=\frac{1}{\sqrt{1+|a|^2}}\cdot\frac{{r_0}^2\left(1+|a|^2\right)+|a|}{\sqrt{\left(1+{r_0}^2|a|\right)^2+{r_0}^4}}.
\end{align*}
The equality 
\begin{align*}
\left\langle u_2(\alpha), u_1(\beta)\right\rangle=\frac{1}{\sqrt{1+|a|^2}}\cdot\frac{{r_0}^2\left(1+|a|^2\right)+|a|}{\sqrt{\left(1+{r_0}^2|a|\right)^2+{r_0}^4}}
\end{align*}
is achieved when $\cos(2\alpha+\theta_a)=1$ and $\cos(\alpha+\beta+\theta_a)=1$. For example, by setting $\alpha=\beta=-\frac{\theta_a}{2}$ we have the equality, and this completes the proof.
\end{proof}

\begin{lemma}
$\Theta_{Q_1,Q_3}=\frac{|a|}{\sqrt{1+|a|^2}}$.
\end{lemma}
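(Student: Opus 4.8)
\textbf{Proof proposal for $\Theta_{Q_1,Q_3}=\dfrac{|a|}{\sqrt{1+|a|^2}}$.}

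The plan is to follow exactly the pattern used in the preceding two lemmas. First I would take the unit vectors $u_1(\alpha)$ and $u_3(\beta)$ obtained at the beginning of the appendix and compute their Euclidean inner product directly. Since $u_3(\beta)$ has its first two coordinates zero, only the last two coordinates of $u_1(\alpha)$ contribute, and the normalization factor of $u_1(\alpha)$ is the constant $\frac{1}{\sqrt{1+|a|^2}}$. A short computation gives
\begin{align*}
\left\langle u_1(\alpha), u_3(\beta)\right\rangle=\frac{|a|}{\sqrt{1+|a|^2}}\Big(\cos(\alpha+\theta_a)\cos\beta+\sin(\alpha+\theta_a)\sin\beta\Big)=\frac{|a|}{\sqrt{1+|a|^2}}\cos(\alpha+\theta_a-\beta).
\end{align*}

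Next I would bound this expression: since $\cos(\alpha+\theta_a-\beta)\leq 1$ for all $\alpha,\beta\in\mathbb{R}$, we immediately get $\left\langle u_1(\alpha), u_3(\beta)\right\rangle\leq \frac{|a|}{\sqrt{1+|a|^2}}$, and the supremum over all unit vectors in $Q_1$ and $Q_3$ is therefore at most $\frac{|a|}{\sqrt{1+|a|^2}}$. Finally I would exhibit the equality case: choosing $\beta=\alpha+\theta_a$ (for instance $\alpha=0$, $\beta=\theta_a$) makes $\cos(\alpha+\theta_a-\beta)=1$, so the bound is attained. This shows $\Theta_{Q_1,Q_3}=\frac{|a|}{\sqrt{1+|a|^2}}$.

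Unlike the previous two lemmas, there is essentially no obstacle here: because $Q_3$ is spanned by two of the standard basis vectors, the inner product collapses to a single cosine with no $r_0$-dependence, so no Cauchy--Schwarz step or second optimization over a trigonometric denominator is needed. The only point requiring a line of care is the bookkeeping that the normalization of $u_1(\alpha)$ is genuinely constant in $\alpha$ (it is, since $\cos^2\alpha+\sin^2\alpha+|a|^2\cos^2(\alpha+\theta_a)+|a|^2\sin^2(\alpha+\theta_a)=1+|a|^2$), which makes the maximization trivial.
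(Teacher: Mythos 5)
Your proposal is correct and follows essentially the same argument as the paper: compute $\langle u_1(\alpha),u_3(\beta)\rangle=\frac{|a|}{\sqrt{1+|a|^2}}\cos(\alpha+\theta_a-\beta)$, bound the cosine by $1$, and exhibit the equality case. The only difference is the (immaterial) interchange of the roles of $\alpha$ and $\beta$.
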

\begin{proof}
For $\alpha, \beta\in\mathbb{R}$, we have
\begin{align*}
\left\langle u_3(\alpha), u_1(\beta)\right\rangle=\frac{|a|}{\sqrt{1+|a|^2}}\cos(\beta-\alpha+\theta_a).
\end{align*}
Since $\cos(\beta-\alpha+\theta_a)\leq 1$, it follows that 
\begin{align*}
\left\langle u_3(\alpha), u_1(\beta)\right\rangle\leq\frac{|a|}{\sqrt{1+|a|^2}}.
\end{align*}
The equality is achieved when $\cos(\beta-\alpha+\theta_a)=1$. Thus, the value of $\Theta_{Q_1,Q_3}$ is $\frac{|a|}{\sqrt{1+|a|^2}}$.
\end{proof}



\begin{thebibliography}{0}
\bibitem{CL} Z. Chen and C.-S. Lin, {\em Critical points of the classical Eisenstein series of weight two}, 
J. Diff. Geom., \textbf{113} (2019), 189--226.
\bibitem{CS} S. -S. Chern and E. Spanier, {\em A theorem on orientable surfaces in four-dimensional space}, Comm. Math. Helv., \textbf{25} (1951), 205--209.
\bibitem{C} C. J. Costa, {\em Uniqueness of minimal surfaces embedded in $\mathbb{R}^3$ with total curvature $12\pi$}, J. Diff. Geom., \textbf{30}(3) (1989), 597--618.
\bibitem{C2} C. J. Costa, {\em Classification of complete minimal surfaces in $\mathbb{R}^3$ with total curvature $12\pi$}, Invent. Math., \textbf{105} (1991), 273--303.
\bibitem{D} S. Donaldson, {\em Riemann surfaces}, Oxford Grduate Texts in Mathematics \textbf{22}, Oxford Univiersity Press, Oxford (2011).
\bibitem{EZ} M. Eichler and D. Zagier, {\em On the zeros of the Weierstrass $\wp$-function}, Math. Ann., \textbf{258} (1982), 399--407.
\bibitem{HM2} D. Hoffman and W. H. Meeks \RNum{3}, {\em Embedded minimal surfaces of finite topology}, Ann. of Math., \textbf{131}(1) (1990), 1--34.
\bibitem{HO} D. Hoffman and R. Osserman, {\em The geometry of the generalized {G}auss map}, Mem. Am. Math. Soc., \textbf{236} (1980).
\bibitem{HO2} D. Hoffman and R. Osserman, {\em The {G}auss map of surfaces in $\mathbb{R}^3$ and $\mathbb{R}^4$}, Proc. Lond. Math. Soc., \textbf{50}(3) (1985), 27--56.
\bibitem{JL} J. Lee, {\em Minimal surfaces in $\mathbb{R}^4$ like the Lagrangian catenoid}, J. Geom. Anal., \textbf{32} (2022), 98.
\bibitem{S} C. L. Siegel, {\em Topics in Complex Function Theory, Volume 1: Elliptic Functions and Uniformization Theory}, Wiley, New York (1988).

\end{thebibliography}
\end{document}